\DeclareMathAlphabet{\mathbbold}{U}{bbold}{m}{n}
\theoremstyle{plain}
        \newtheorem{theorem}{Theorem}[section]
        \newtheorem*{theorem*}{Theorem}
        \newtheorem*{conj*}{Conjecture}
        \newtheorem{lemma}[theorem]{Lemma}
        \newtheorem{prop}[theorem]{Proposition}
\theoremstyle{definition}
        \newtheorem{definition}[theorem]{Definition}
        \newtheorem{rem}[theorem]{Remark}
         \newtheorem{rems}[theorem]{Remarks}
         \newtheorem*{assumptions}{The Assumptions}
\theoremstyle{remark}
        \newtheorem*{remark}{Remark}
\numberwithin{equation}{section}
\numberwithin{theorem}{section}
\numberwithin{table}{section}
\numberwithin{figure}{section}
\providecommand{\defn}[1]{\emph{#1}}
\renewcommand{\leq}{\leqslant}
\renewcommand{\geq}{\geqslant}
\newcommand{\diam}  {\operatorname{diam}}
\newcommand{\inte}  {\operatorname{inte}}
\newcommand{\inter}  {\operatorname{int}}
\newcommand{\card} {\operatorname{card}}
\newcommand{\supp}{\operatorname{supp}}
\newcommand{\R}{\mathbb{R}}
\newcommand{\B}{\mathbb{B}}    
\newcommand{\C}{\mathbb{C}}      
\newcommand{\Pillow}{\mathbb{P}}
\newcommand{\N}{\mathbb{N}}      
\newcommand{\Z}{\mathbb{Z}}      
\newcommand{\D}{\mathbb{D}}
\providecommand{\abs}[1]{\lvert#1\rvert}
\newcommand{\Abs}[1]{\left\lvert#1\right\rvert}
\providecommand{\Absbig}[1]{\bigl\lvert#1\bigr\rvert}
\providecommand{\Absbigg}[1]{\biggl\lvert#1\biggr\rvert}
\providecommand{\norm}[1]{\|#1\|}
\providecommand{\Norm}[1]{\left\|#1\right\|}
\renewcommand{\:}{\colon}
\renewcommand{\b}{\mathfrak{b}}
\newcommand{\w}{\mathfrak{w}}
\newcommand{\I}{\mathbf{i}}
\newcommand{\crit}{\operatorname{crit}}
\newcommand{\post}{\operatorname{post}}
\newcommand{\LIP}{\operatorname{LIP}}
\newcommand{\lip}{\operatorname{lip}}
\newcommand{\Lip}{\operatorname{Lip}}
\newcommand{\CC}{\mathcal{C}}
 \newcommand{\DD}{\mathbf{D}}
\newcommand{\X} {\mathbf{X}}
\newcommand{\E} {\mathbf{E}}
\newcommand{\V} {\mathbf{V}}
\newcommand{\CCC}{C}
\newcommand{\PPP}{\mathcal{P}}
\newcommand{\MMM}{\mathcal{M}}
\newcommand{\Holder}[1] {\CCC^{0,#1}}
\newcommand{\holder}[1] {c^{0,#1}}
\newcommand{\Hseminorm}[2] {\Abs{#2}_{#1}}
\newcommand{\Hseminormbig}[2] {\Absbig{#2}_{#1}}
\newcommand{\Hnorm}[3] {\Norm{#2}_{{#1},\,{#3}}}
\newcommand{\RR}{\mathcal{L}}
\newcommand{\lcm}{\operatorname{lcm}}
\newcommand{\XX}{\mathbb{X}}
\newcommand{\wt}[1]{\widetilde{#1}}
\newcommand{\circsmall}{\scalebox{0.6}[0.6]{$\circ$}}
\newcommand{\ti}{\vartriangle}
\renewcommand{\=}{\coloneqq}
\renewcommand{\O}{\mathcal{O}}
\newcommand{\Mmax}{\MMM_{\operatorname{max}}}
\newcommand{\wh}{\widehat}
\newcommand{\cl}{\operatorname{cl}}
\newcommand{\Lock}{\operatorname{Lock}}
\newcommand{\lock}{\operatorname{lock}}
\newcommand{\cA}{\mathcal{A}}
\newcommand{\cF}{\mathcal{F}}
\newcommand{\cG}{\mathcal{G}}
\newcommand{\cK}{\mathcal{K}}
\newcommand{\sF}{\mathscr{F}}
\newcommand{\sP}{\mathscr{P}}
\newcommand{\oW}{\overline{W}}
\begin{document}
\title[Ground states and periodic orbits]{Ground states and periodic orbits for\\ expanding Thurston maps}
\author{Zhiqiang~Li \and Yiwei~Zhang}
\address{Zhiqiang~Li, School of Mathematical Sciences \& Beijing International Center for Mathematical Research, Peking University, Beijing 100871, China.}
\email{zli@math.pku.edu.cn}
\address{Yiwei~Zhang, Department of Mathematics, Southern University of Science and Technology, Shenzhen, Guangdong 518055, China.}
\email{zhangyw@sustech.edu.cn}
\dedicatory{Dedicated to Professor Mario Bonk on the occasion of his sixtieth birthday}


\subjclass[2010]{Primary: 37F10; Secondary: 37D99, 37A99, 37D35, 37F15}

\keywords{expanding Thurston map, Latt\`{e}s map, rational map, Liv\v{s}ic theorem, ergodic optimization, ground state, maximizing measure, Bousch operator.}

\begin{abstract}
Expanding Thurston maps form a class of branched covering maps on the topological $2$-sphere $S^{2}$, which are topological models of some non-uniformly expanding rational maps without any smoothness or holomorphicity assumption initially investigated by W.~P.~Thurston, M.~Bonk, D.~Meyer, P.~Ha\"issinsky, and K.~M.~Pilgrim. The measures of maximal entropy and the absolutely continuous invariant measures for these maps have been studied by these authors, and equilibrium states by the first-named author. In this paper, we initiate the investigation on two new classes of invariant measures, namely, the maximizing measures and ground states, and establish the Liv\v{s}ic theorem, a local Anosov closing lemma, and give a positive answer to the Typically Periodic Optimization Conjecture from ergodic optimization for these maps. As an application, we establish these results for Misiurewicz--Thurston rational maps (i.e., postcritically-finite rational maps without periodic critical points) on the Riemann sphere including the Latt\`es maps with respect to the spherical metric. Our strategy relies on the visual metrics developed by the above authors.

In particular, we verify, in a first non-uniformly expanding setting, the Typically Periodic Optimization Conjecture, establishing that for a generic H\"{o}lder continuous potential, there exists a unique maximizing measure, moreover, this measure is supported on a periodic orbit,  it satisfies the locking property, and it is the unique ground state. The expanding Thurston maps we consider include those that are not topologically conjugate to rational maps; in particular, they can have periodic critical points.
\end{abstract}

\maketitle

\tableofcontents

\section{Introduction}     \label{sct_Introduction}
Uniformization and rigidity problems play important roles in the studies in geometry, group theory, dynamics, analysis, and the intersections of these fields, especially in relation to classical complex analysis (see for example, \cite{Kl06, Bon06, Sp04}). Inspired by the quest for a proof of Cannon's Conjecture \cite{Ca94} in geometric group theory, which can be seen as a quasisymmetric uniformization problem on the topological $2$-sphere $S^2$ closely related to Thurston's Hyperbolization Conjecture (\cite{Kl06, Bon06}), M.~Bonk and D.~Meyer \cite{BM10, BM17} initiated investigations on a class of branched covering maps on $S^2$ on the opposite side of Cannon's Conjecture in Sullivan's dictionary. See also related works of P.~Ha\"issinsky and K.~M.~Pilgrim \cite{HP09}.

In the early 1980s, D.~P.~Sullivan \cite{Su85, Su83} introduced a ``dictionary,'' known as \emph{Sullivan's dictionary} nowadays, linking two aspects of conformal dynamics, namely, geometric group theory and complex dynamics. The former mainly concerns the study of Kleinian groups acting on the Riemann sphere, and the latter focuses mainly on rational maps. Many dynamical objects in both areas can be similarly defined, and results can be similarly proven, yet essential and important differences remain.

In Sullivan's dictionary, Kleinian groups, i.e., discrete subgroups of M\"obius transformations on the Riemann sphere, correspond to rational maps, and convex-cocompact Kleinian groups correspond to rational maps that exhibit certain expansion properties such as hyperbolic rational maps, semi-hyperbolic rational maps, and postcritically-finite sub-hyperbolic rational maps, depending on the context of investigations. See insightful discussions on this part of the dictionary in \cite[Chapter~1]{BM17}, \cite[Chapter~1]{HP09}, and \cite[Section~1]{LM97}.

The class of branched covering maps M.~Bonk and D.~Meyer proposed to study in \cite{BM10, BM17}, called \emph{expanding Thurston maps}, are those whose finitely-many critical points are all preperiodic and who expand in a subtle way despite the presence of critical points. These maps are topological models of postcritically-finite rational maps on the Riemann sphere with empty Fatou sets. The expanding Thurston maps belong to the bigger class of branched covering maps on $S^2$ investigated by W.~P.~Thurston in his celebrated combinatorial characterization theorem, sometimes known as the fundamental theorem of complex dynamics, in which he characterized postcritically-finite rational maps among a class of more general topological maps, known as Thurston maps nowadays \cite{DH93}. A \emph{Thurston map} is a (non-homeomorphic) branched covering map on the topological $2$-sphere $S^2$ whose finitely-many critical points are all preperiodic. Thurston's theorem asserts that a Thurston map is essentially a postcritically-finite rational map if and only if there exists no so-called \emph{Thurston obstruction}, i.e., a collection of simple closed curves on $S^2$ subject to certain conditions. For generalizations of Thurston's theorem, see for example, the works of Guizhen~Cui, Yunping~Jiang, D.~Sullivan, Lei~Tan, and Gaofei~Zhang \cite{CJS04, JZ09, CT11}.

Inspired by Thurston's theorem, it is desirable to investigate the most essential dynamical and geometric properties of postcritically-finite rational maps in the setting of Thurston maps instead, with the conformality or any smoothness assumptions removed.

Under Sullivan's dictionary, the counterpart to Thurston's theorem in the geometric group theory is Cannon's Conjecture \cite{Ca94}. An equivalent formulation of Cannon's Conjecture \cite[Conjecture~5.2]{Bon06} from a quasisymmetric uniformization point of view predicts that if the boundary at infinity $\partial_\infty G$ of a Gromov hyperbolic group $G$ is homeomorphic to $S^2$, then $\partial_\infty G$ equipped with a visual metric is quasisymmetrically equivalent to $S^2$ equipped with the spherical metric. The notion of quasisymmetry recalled in Definition~\ref{d_Quasi_Symmetry} is fundamental in coarse geometry. Gromov hyperbolic groups can be considered as metric-topological systems generalizing the conformal systems in the context, namely, convex-cocompact Kleinian groups. Inspired by Sullivan's dictionary and their interest in Cannon's Conjecture, M.~Bonk and D.~Meyer \cite{BM10, BM17}, P.~Ha\"issinsky and K.~M.~Pilgrim \cite{HP09}, along with others, studied a subclass of Thurston maps by imposing some additional condition of expansion. Roughly speaking, we say that a Thurston map is \emph{expanding} if for any two points $x, \, y\in S^2$, their preimages under iterations of the map get closer and closer. It is important to keep in mind that this condition is much weaker than the usual distance-expanding condition and the expansive condition due to the presence of critical points. For a related class of \emph{weakly coarse expanding} dynamical systems, see \cite{DPTUZ21}. For a closer investigation on the weak expansion properties of these maps and related dynamical systems, see \cite{Li15, LZ23}.

Expanding Thurston maps give natural topological models for investigating the dynamical and geometric properties of (non-uniformly expanding) postcritically-finite rational maps.  On the other hand, an expanding Thurston map may be \emph{obstructed}, i.e., not topologically conjugate to a rational map; in particular, it can have periodic critical points. See Appendix~\ref{apx_illustrations} for examples of expanding Thurston maps.

More examples of expanding Thurston maps can be found in \cite{BM17}, in particular, \cite[Section~1.9]{BM17}. Since a postcricitally-finite rational map (i.e., rational Thurston map) is expanding if and only if it has no periodic critical points (see \cite[Proposition~2.3]{BM17}), it is straightforward to check whether a postcricitally-finite rational map is an expanding Thurston map. For examples, $1- 2/z^2$, $1- 2/z^4$, $\frac{\I}{2} (z + 1/z)$, $\I\bigl( z^4 - \I \bigr) \big/ \bigl( z^4 + \I \bigr)$, and $ 4 z \bigl( 1-z^2 \bigr) \big/ \bigl( 1+z^2 \bigr)^2$ are all expanding Thurston maps. Examples of expanding Thurston maps not only can be given by formulas, but also can be obtained from combinatorial data. More specifically, a Thurston map can be constructed from a certain \emph{two-tile subdivision rule} on $S^2$, and moreover, such a two-tile subdivision rule can be realized by an expanding Thurston map if and only if it is \emph{combinatorially expanding}. See Appendix~\ref{apx_illustrations} for such examples, and see \cite[Chapters~12 and 14]{BM17} for more details. Note that the combinatorial construction can also produce obstructed expanding Thurston maps.

The geometric and dynamical properties of expanding Thurston maps have been explored extensively by M.~Bonk and D.~Meyer and summarized in their monograph \cite{BM10, BM17} (see also \cite{HP09}). See also the subsequent related works \cite{Yi11, HP12, Me13, HP14, Li17, BD18, HM18, Wu19, BM20, DPTUZ21, Liw22, LZ24a, LZ24b, LZ24c, LS24}, and this list is far from exhaustive.

Postcritically-finite rational maps, in particular, are at the center of various directions of active investigations extending well beyond the scope of complex dynamics. For example, for their connections to self-similar groups, see for example, \cite{Ne05, BN06, HM18}; for their connections to geometric group theory, see for example, \cite{Su85, Bon06, BM17, HP09, HP14, Me13}; for their connections to arithmetic dynamics\footnote{See \cite[Section~4.7]{HP09} for an account on some formal similarities between the $p$-adic dynamics setting and our visual metrics setting.}, see for example, \cite{BIJMST19, De18}.

In dynamics, invariant measures and periodic orbits are both indispensable tools and basic objects for investigations themselves. The locations of periodic points in terms of certain combinatorial structures were studied, and an exact formula for the number of periodic points for each period was found in \cite{Li16} for expanding Thurston maps. Equidistribution and large deviation results for periodic points were studied in \cite{Li18, Li15}. An asymptotic formula for the number of periodic orbits with a certain weight induced by a H\"older continuous potential similar to the prime number theorem in number theory was established in \cite{LZ24a, LZ24b, LZ24c} (see also \cite{LZ18}). Regarding important classes of invariant measures of these maps, the measures of maximal entropy were studied in \cite{BM10, BM17, HP09, Li16}, the absolutely continuous invariant measures in \cite{BM17}, and the equilibrium measures by the first-named author \cite{Li18} and later by Das et al.~\cite{DPTUZ21} in broader settings with a different approach building upon prior works of P.~Ha\"issinsky and K.~M.~Pilgrim \cite{HP09}.  The first-named author has been informed that similar results on equilibrium states have also been obtained independently by P.~Ha\"issinsky.

In this paper, we investigate some other basic properties of periodic points of expanding Thurston maps and initiate the studies of another two classes of invariant measures, namely, (potential-energy-)maximizing measures and ground states. More precisely, we establish in our context the Liv\v{s}ic Theorem and various closing lemmas, including a Bressaud--Quas closing lemma and a local version of Anosov closing lemma. The Liv\v{s}ic Theorem, dating back to the work of Liv\v{s}ic \cite{Liv72}, has played important roles in the study of rigidity problems in dynamics (see for example, \cite{Sp04}). It basically states that a real-valued function (called a potential) is determined uniquely (up to a coboundary of the same regularity) by the sum of its values along periodic orbits (see Theorem~\ref{t_livsic}). The usual approach to Liv\v{s}ic Theorems only yielded a partial result \cite[Proposition~8.8]{Li18} for expanding Thurston maps. Our current approach is to establish a stronger result, called the bilateral Ma\~{n}\'{e} lemma, using tools from ergodic optimization. Due to the presence of critical points, the lack of Markov partitions, and the non-uniform expansion nature in our setting, the full version of the classical Anosov closing lemma is beyond reach (c.f.\ \cite{CKY88} in a one-dimensional real dynamics setting). A version of the (global) Anosov closing lemma was established by the first-named author in \cite[Lemma~8.6]{Li18}, albeit coarse in the temporal direction, i.e., it holds only for sufficiently high iterates of the map and sufficiently long orbits. In this paper, we establish a local version (away from critical points) of the Anosov closing lemma (Lemma~\ref{l_Local_Closing_Lemma}) without the assumptions on the iterate of the map or lengths of the orbits. On the other hand, we formulate fairly general rules to deduce the Bressaud--Quas shadowing property between related systems, such as factors and iterations, before verifying this property for our maps, proving the Bressaud--Quas closing lemma. Combining these two kinds of closing lemmas, we are able to formulate and establish another local closing lemma in Lemma~\ref{l_bound_by_gap}, which is crucial in our investigations on the new classes of invariant measures, i.e., the maximizing measures and ground states. We prove the existence, uniqueness, and periodicity of the maximizing measures for generic H\"older continuous potentials for expanding Thurston maps, establishing the Typically Periodic Optimization (TPO) Conjecture (\cite[Conjecture~1.1]{YH99}) in our setting. We say that an invariant measure is \emph{periodic} if it is supported on a periodic orbit.  Outside of complex dynamics, the TPO Conjecture has previously been fully verified mostly in uniformly expanding and uniformly hyperbolic systems (see \cite{Co16} for distance expanding maps and \cite{HLMXZ19} for  Axiom A attractors and Anosov diffeomorphisms). The TPO Conjecture has connections to various fields such as the Finiteness Conjecture in control theory \cite{Boc18} and the random minimum (or maximum) mean cycle problems in probability theory \cite{BZ16, DLZ24}. Our theorem may be the first to verify the TPO Conjecture in a non-uniformly expanding setting for H\"older continuous potentials to the best of our knowledge. Finally, we prove that for an expanding Thurston map and a generic H\"older potential, there exists a unique ground state which the equilibrium state converges to at zero temperature.

To better understand our results for the invariant measures mentioned above, we quickly review some basic notions from the thermodynamic formalism in ergodic theory, dating back to the works of Ya.~G.\ Sinai, R.~Bowen, D.~Ruelle, and others around early the 1970s \cite{Do68, Sin72, Bow75, Ru78}, inspired by statistical mechanics. Recall the measure-theoretic pressure
\begin{equation}   \label{e_pressure}
P_\mu(T,\psi) = h_\mu(T) + \int\! \psi \,\mathrm{d}\mu,
\end{equation}
where $h_\mu(T)$ is the measure-theoretic entropy. In the language of statistical mechanics, the measure-theoretic entropy and the integral of the potential represent the kinetic and potential energy, respectively, while the measure-theoretic pressure represents the free energy, which is the sum of the kinetic energy and the potential energy. Under this interpretation, a measure of maximal entropy maximizes the kinetic energy (i.e., $h_\mu(T)$), and an equilibrium state maximizes the free energy (i.e., $P_\mu(T,f)$). It is thereby natural to consider measures of maximal potential energy.

We denote the maximal potential energy by 
\begin{equation}\label{e_ergodicmax}
  Q(T,\psi)\=\sup\limits_{\mu\in\MMM(X,T)}\int\! \psi \, \mathrm{d}\mu=\max\limits_{\mu\in\MMM(X,T)}\int\! \psi \, \mathrm{d}\mu.
\end{equation}
The last identity follows from the weak$^*$-compactness of the set $\MMM(X, T)$ of $T$-invariant Borel probability measures on $X$. 


We call a measure $\mu \in \MMM(X,T)$ that maximizes the potential energy $\int\!\psi \,\mathrm{d}\mu$ a \emph{measure of maximal potential energy} with respect to $\psi$ or a \emph{$\psi$-potential-energy-maximizing measure} (or a \emph{$\psi$-maximizing measure}). We denote the (nonempty) set of \emph{$\psi$-maximizing measures} by
\begin{equation}\label{e_setofmaximizngmeasures}
\Mmax(T, \, \psi) \=\biggl\{ \mu\in\MMM(X,T) :   \int \! \psi \,\mathrm{d} \mu = Q(T,\psi) \biggr\}.
\end{equation}
We say that a measure $\mu\in \MMM(X, T)$ is a \emph{ground state} for $T$ and $\psi$ if it is the limit of a sequence of equilibrium states $\mu_{t_i \psi}$ in the weak$^*$ topology for some sequence $\{t_i\}_{i\in\N}$ of real numbers that tends to infinity. Since $t=t_i$ can be considered as \emph{inverse temperature}, we say that equilibrium state $\mu_{t \psi}$ \emph{converges at zero temperature} if there exists a unique ground state. One can show that if $T$ admits a finite topological entropy, then any ground state is a $\psi$-maximizing measure (see the discussion preceding Theorem~\ref{t_gound_states}). The ground states may thereby be considered as the most physically relevant maximizing measures. 


The study of maximizing measures and ground states is the main theme for a field in ergodic theory known as \emph{ergodic optimization}. Despite its close connections to the theory of thermodynamic formalism, ergodic optimization originated in the 1990s from the works of B.~R.~Hunt and E.~Ott \cite{HO96a, HO96b}, with motivation from control theory \cite{OGY90, SGYO93}, and the Ph.D.\ thesis of O.~Jenkinson \cite{Je96}. Much of the early work in ergodic optimization focused on specific maps and potentials from finite-dimensional function spaces \cite{HO96a, HO96b, Je96, Je00, Je01, Bou00}. Guocheng~Yuan and B.~R.~Hunt conjectured \cite[Conjecture~1.1]{YH99} that for an axiom A or a uniformly expanding map, and for a (topologically) generic Lipschitz continuous or a (topologically) generic $\CCC^{1}$-smooth potential $\psi$, there is a $\psi$-maximizing measure and it is supported on a periodic orbit. It was recently settled by \cite{Co16, HLMXZ19}. In the more general setting of any suitably hyperbolic system and any space of suitably regular potentials, the Yuan--Hunt Conjecture is known as the Typically Periodic Optimization Conjecture.  In the context of Hamiltonian systems, a similar conjecture is known as Ma\~{n}\'{e}'s Conjecture \cite{Ma96}.

Complementary to the genericity in the positive statement of the TPO Conjecture, examples of non-con\-ver\-gence of equilibrium states at zero temperature have been constructed in symbolic settings  (see for example, \cite{BGT18, CRL15, CH10,vER07}). Moreover, D.~Coronel and J.~Rivera-Leterlier introduced another approach to the non-convergence of equilibrium states at zero temperature \cite{CRL15}. They constructed families of quadratic-like maps satisfying an assumption of non-uniform expansion called the Collet--Eckmann condition so that the ground states with respect to the geometrical potential are not unique in a robust sense, i.e., there is an open set of analytic $2$-parameter families of quadratic-like maps that exhibit so-called ``sensitive dependence'' of geometric Gibbs states \cite{CRL17}. More recently, D.~Coronel and J.~Rivera-Leterlier established analogous examples of the sensitivity dependence of geometric Gibbs states at positive temperature \cite{CRL19}.

Considering the recent breakthroughs on maximizing measures and ground states in uniformly expanding and uniformly hyperbolic settings mentioned above, it is natural to consider the properties of these measures for generic potentials in non-uniformly expanding settings.

One primary source of non-uniform expansion in dynamical systems arises from the existence of critical points. A basic setting of low-dimensional dynamical systems with critical points is that of rational maps on the Riemann sphere $\wh\C$. To limit the complexity, we assume that all critical points are preperiodic (i.e., have finite forward orbits) in this paper. These rational maps are called the \emph{postcritically-finite} rational maps and have been studied extensively in complex dynamics independent of the interests from ergodic theory. From the ergodic theory point of view, these maps serve as the most basic setting of non-uniformly expanding rational maps. The subclass of these maps with parabolic orbifolds, called \emph{Latt\`es maps}, are ubiquitous in the study of complex dynamics, appearing in many key theorems in the field. For recent works related to Latt\`es maps in the context of Thurston maps in particular, see \cite{LZ18, BM20, BHI21}.

Motivated by Sullivan's dictionary, for each expanding Thurston map, we can equip the topological $2$-sphere $S^2$ with a natural class of metrics, called \emph{visual metrics}, constructed in a similar fashion as the visual metrics on the boundary $\partial_\infty G$ at infinity of a Gromov hyperbolic group $G$. A characterization theorem of rational maps from a quasisymmetric uniformization point of view is established in this context by M.~Bonk and D.~Meyer \cite{BM10, BM17}, and by P.~Ha\"issinsky and K.~M.~Pilgrim \cite{HP09}:
An expanding Thurston map is conjugate to a rational map if and only if the sphere $(S^2,d)$ equipped with a visual metric $d$ is quasisymmetrically equivalent to the Riemann sphere $\widehat\C$ equipped with the spherical metric.



Our main approach to the study of the dynamics of our non-uniformly expanding systems differs from classical approaches in that it relies on the study of coarse geometric properties of the associated metric spaces. This should not be a surprise, considering the rich connections to geometric group theory and coarse geometry mentioned above. Such an approach has been pioneered by the first-named author to develop a reasonable theory of thermodynamic formalism for expanding Thurston maps in his thesis (see \cite{Li18, Li17}), and more recently, in his collaborated work with Tianyi~Zheng to establish Prime Orbit Theorems with exponential error terms \cite{LZ24a, LZ24b, LZ24c, LZ18}.

To get a glimpse of this approach, we note that in the setting of Latt\`es maps or more general postcritically-finite rational maps, with respect to the spherical metric on the Riemann sphere, the Ruelle--Perron--Frobenius (transfer) operator does not leave the space of $\alpha$-H\"older continuous functions invariant \cite[Remark~3.1]{DPU96}. Similarly, the quests for the $\alpha$-H\"older continuous fixed point of the Bousch operator associated to an $\alpha$-H\"older continuous potential, leading to the Ma\~{n}\'{e} lemma, have only produced partial results on fixed points with lower H\"older exponent in various contexts (see for example, \cite{Mo09, CLT01}). 
With the help of visual metrics, however, we get in our context well-defined Ruelle--Perron--Frobenius (transfer) operators, and similarly, Bousch operators, which admit H\"older continuous fixed points with the desired H\"older exponent. Moreover, even though the strongest form of the Anosov closing lemma is still beyond reach, we manage to establish weaker forms of closing lemmas with appropriate choices of metrics. Both of these observations turned out to be crucial in our current investigations.

\subsection{Main Results}
Let $(X,d)$ be a compact metric space with infinite cardinality. The space $\Lip(X,d)$ of real-valued Lipschitz functions consists of functions $\phi \in \CCC(X)$ with
\begin{equation*}
\sup\{ \abs{ \phi(x) - \phi(y) } : x,\,y \in X, \, d(x,y) \leq r \} = O(r) \qquad \text{ as } r \to 0.
\end{equation*}
Similarly, a function $\phi \in \CCC(X)$ is a \emph{little Lipschitz function} or \emph{locally flat Lipschitz function} if
\begin{equation*}
\sup\{ \abs{ \phi(x) - \phi(y) } : x,\,y \in X, \, d(x,y) \leq r \} = o(r) \qquad \text{ as } r \to 0.
\end{equation*}
The space of real-valued little Lipschitz functions on $(X,d)$ is denoted by $\lip(X,d)$ and is called the \emph{little Lipschitz space}. Clearly $\lip(X,d) \subseteq \Lip(X,d)$. We equip both spaces with the usual Lipschitz norm. The Lipschitz space $\Lip(X,d)$ is sometimes also called the \emph{big Lipschitz space}.

Little Lipschitz spaces, like big Lipschitz spaces, play central roles in the study of Lipschitz analysis and Lipschitz algebras in the analysis on metric spaces and functional analysis; see the beautiful monographs of N.~Weaver \cite{We18} and of \c{S}.~Cobza\c{s}, R.~Miculescu, and A.~Nicolae \cite{CMN19}, as well as references to the vast literature therein. Among other important properties, little Lipschitz spaces are double preduals of the big Lipschitz spaces under mild assumptions (see for example, \cite[Section~4.3]{We18}). To quote from \cite[Section~4.1]{We18}: ``The theory of little Lipschitz spaces is, generally speaking, parallel to but a bit more difficult than the theory of (big) Lipschitz spaces.''

Note that for each $\alpha \in (0,1]$, the snowflake $d^\alpha$ of $d$ given by $d^\alpha(x,y) \= d(x,y)^\alpha$ is also a metric. Since the space $\Holder{\alpha}(X,d)$ of real-valued H\"older continuous on $(X,d)$ with exponent $\alpha$ coincides with the space $\Lip(X,d^\alpha)$ of real-valued Lipschitz functions on $(X,d^\alpha)$ with the same norm, we sometimes say that a function $\phi \in \Lip(X,d^\alpha)$ is H\"older continuous with exponent $\alpha$ when there is little chance for confusion.  The space $\holder{\alpha}(X,d) \= \lip(X,d^\alpha)$ is called the little H\"older space with exponent $\alpha$ on $(X,d)$ by some authors.

The Liv\v{s}ic theorem plays important roles in rigidity problems in dynamics \cite{Sp04}. We state our first main theorem below.

\begin{theorem}[{\bf Liv\v{s}ic theorem for expanding Thurston maps}]   \label{t_livsic}
Let $f \: X \rightarrow X$ be either an expanding Thurston map on the topological $2$-sphere $X = S^2$ equipped with a visual metric or a postcricitally-finite rational map with no periodic critical points on the Riemann sphere $X = \wh\C$ equipped with the chordal or spherical metric. Let $\phi \: X \rightarrow \R$ be H\"older continuous. Then the following statements are equivalent:
\begin{enumerate}
\smallskip
\item[(i)] If $x\in X$ satisfies $f^n(x) = x$ for some $n\in \N$ then $\sum_{i=0}^{n-1} \phi ( f^i( x ) ) = 0$.

\smallskip
\item[(ii)] There exists a H\"older continuous function $u \: X \rightarrow \R$ such that $\phi = u -  u \circ f$.
\end{enumerate}
\end{theorem}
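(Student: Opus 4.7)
\emph{Proof plan.} The direction (ii)$\Rightarrow$(i) is the telescoping identity
\[
\sum_{i=0}^{n-1}\phi(f^i(x)) = u(x) - u(f^n(x)) = 0 \qquad\text{whenever } f^n(x)=x.
\]
For (i)$\Rightarrow$(ii), my first step is to reduce the rational-map case (with chordal or spherical metric) to the expanding Thurston map case on $(S^2, d)$ with $d$ a visual metric. For a postcritically-finite rational map without periodic critical points, the spherical metric on $\wh\C$ is quasisymmetrically equivalent to any visual metric by the quasisymmetric uniformization theorem of Bonk--Meyer and Ha\"issinsky--Pilgrim \cite{BM17, HP09}. A quasisymmetric homeomorphism transfers H\"older continuity between the two metrics (with a possibly different exponent), so the validity of (i)$\Leftrightarrow$(ii) in one metric forces it in the other. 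Hence it suffices to treat an expanding Thurston map $f \: S^2 \to S^2$ equipped with a visual metric.

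Next I recast (i) in the language of ergodic optimization. Periodic orbit measures are weak$^*$-dense in $\MMM(S^2, f)$ for expanding Thurston maps (a consequence of specification-type properties), so hypothesis (i) is equivalent to $\int\phi\,\mathrm{d}\mu = 0$ for every $\mu\in\MMM(S^2, f)$; in particular $Q(f,\phi) = Q(f,-\phi) = 0$ and every periodic orbit measure is simultaneously $\phi$-maximizing and $(-\phi)$-maximizing. With this in hand I invoke the bilateral Ma\~{n}\'{e} lemma, the main technical device developed earlier in the paper from the local Anosov closing lemma together with the Bressaud--Quas closing lemma. Applied to $\phi$ and to $-\phi$ respectively, it furnishes H\"older continuous subactions $u_+$ and $u_-$, with the \emph{same} H\"older exponent as $\phi$, satisfying
\[
\phi \leq u_+\circ f - u_+ \qquad\text{and}\qquad -\phi \leq u_-\circ f - u_-
\]
pointwise on $S^2$.

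Summing these two inequalities gives $u_+ + u_- \leq (u_+ + u_-)\circ f$. Integrating against the measure of maximal entropy $\mu_0$ (which is ergodic and of full support for expanding Thurston maps) forces equality $\mu_0$-almost everywhere, and, by continuity of $u_++u_-$ together with $\supp\mu_0=S^2$, equality everywhere. Ergodicity of $\mu_0$ then makes the continuous $f$-invariant function $u_+ + u_-$ constant. Writing $u_- = c - u_+$ and substituting back collapses the two subaction inequalities to the equality $\phi = u_+\circ f - u_+$; setting $u \= -u_+$ delivers the desired H\"older coboundary representation $\phi = u - u\circ f$.

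The hardest part of the argument is producing the subactions $u_\pm$ with \emph{no loss} of H\"older exponent. Because of the critical points, $f$ is only non-uniformly expanding, so the classical shadowing-based Liv\v{s}ic argument and the standard fixed-point arguments for the Bousch operator both degrade regularity; the usual route only yielded the partial result \cite[Proposition~8.8]{Li18}. The key novelty we rely on is to replace global Anosov shadowing by the local Anosov closing lemma (valid away from the postcritical set), combine it with the Bressaud--Quas closing lemma into a hybrid closing lemma, and feed the outcome into the bilateral Ma\~{n}\'{e} construction to secure sharp H\"older regularity of $u_\pm$.
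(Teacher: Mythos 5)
Your proposal is correct, and the first two thirds of it coincide with the paper's argument: the telescoping identity for (ii)$\Rightarrow$(i), the reduction of the rational case to the visual-metric case via quasisymmetry/bi-H\"olderness, and the use of the density of periodic orbit measures (Lemma~\ref{l_periodic_measure_dense}) to convert hypothesis~(i) into $Q(f,\phi)=Q(f,-\phi)=0$. Where you diverge is the final step. The paper simply feeds $Q(f,\phi)=0$ into the bilateral Ma\~{n}\'{e} lemma (Theorem~\ref{t_mane}~(ii)), whose proof combines the two one-sided sub-actions into a single function $u$ via Bousch's theorem \cite[Theorem~1]{Bou02}. You instead keep the two one-sided sub-actions $u_\pm$ separate, sum the inequalities to get $u_++u_-\leq (u_++u_-)\circ f$, and then use invariance, ergodicity, and full support of the measure of maximal entropy to force $u_++u_-$ to be constant, which collapses both inequalities to equalities. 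This is a valid and in fact somewhat more self-contained route for this particular theorem, since in the special case $Q(f,\pm\phi)=0$ it bypasses the external input from \cite{Bou02}; the paper's route has the advantage of establishing the bilateral Ma\~{n}\'{e} lemma as a standalone result for arbitrary H\"older potentials. Your appeal to the measure of maximal entropy is legitimate: for expanding Thurston maps it is ergodic with $\supp\mu_0=S^2$, so a continuous nonnegative function with zero integral vanishes identically and a continuous invariant function is constant.

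One inaccuracy worth flagging, though it does not affect the validity of your proof: you attribute the Ma\~{n}\'{e} lemma (with no loss of H\"older exponent) to the local Anosov closing lemma and the Bressaud--Quas closing lemma. In the paper those closing lemmas play no role in the Ma\~{n}\'{e} lemma; the sub-action with the correct exponent comes from the fixed point of the Bousch operator (Proposition~\ref{p_calibrated_sub-action_exists}), whose regularity rests on the distortion estimates for visual metrics (Lemmas~\ref{l_MetricDistortion} and~\ref{l_key_estimate}) and linear local connectivity. The closing lemmas enter only in the proofs of the periodic-maximization theorems. Also, the local Anosov closing lemma operates away from the critical set, not the postcritical set. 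Since your argument only uses the \emph{statement} of the Ma\~{n}\'{e} lemma, which the paper does supply with the sharp exponent, this misattribution is cosmetic rather than a gap.
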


A special case of the Liv\v{s}ic Theorem for expanding Thurston maps was established in Proposition~8.8 in \cite{Li18} under the additional assumption of the existence of an invariant Jordan curve $\CC\subseteq S^2$ containing the postcritical points. As pointed out by the referee, Theorem~\ref{t_livsic} also follows from Theorem~1.2, the proof of Proposition~6.1, and Corollary~6.3  of \cite{DPTUZ21}.

Regarding the preservation of regularity in Theorem~\ref{t_livsic} and Theorem~\ref{t_mane} below, see Remark~\ref{r_livsic}. By working with the Bousch operators using techniques from the theory of the Ruelle operators, we establish in this paper the Ma\~{n}\'{e} lemma and the bilateral Ma\~{n}\'{e} lemma for expanding Thurston maps (without additional assumptions). The latter is a strengthening of the Liv\v{s}ic theorem for expanding Thurston maps.

\begin{theorem}[{\bf Ma\~{n}\'{e} and bilateral Ma\~{n}\'{e} lemmas for expanding Thurston maps}]   \label{t_mane}
Let $f \: X \rightarrow X$ be either an expanding Thurston map on the topological $2$-sphere $X = S^2$ equipped with a visual metric or a postcricitally-finite rational map with no periodic critical points on the Riemann sphere $X = \wh\C$ equipped with the chordal or spherical metric. Let $\phi \: X \rightarrow \R$ be H\"older continuous. Then the following statements hold:
\begin{enumerate}
\smallskip
\item[(i)] {\bf (The Ma\~{n}\'{e} lemma.)} There exists a H\"older continuous function $u \: X \rightarrow \R$ such that $\phi (x) - u(x) + ( u \circ f )(x) \leq Q (f, \phi)$ for all $x\in X$.

\smallskip
\item[(ii)] {\bf (The bilateral Ma\~{n}\'{e} lemma.)} There exists a H\"older continuous function $u \: X \rightarrow \R$ such that $- Q (f, - \phi ) \leq \phi (x) - u(x) + ( u \circ f )(x) \leq Q (f, \phi)$ for all $x\in X$.
\end{enumerate}
\end{theorem}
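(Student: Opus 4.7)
The plan is to prove both parts via the Bousch operator, the max-analog of the Ruelle--Perron--Frobenius transfer operator. The new ingredient relative to classical smooth settings is that working with a visual metric $d$ on $S^2$ (and transferring to the rational case via the quasisymmetric equivalence of $d$ with the spherical metric established by Bonk--Meyer and Ha\"issinsky--Pilgrim) restores the sharp H\"older exponent $\alpha$ of the operator's fixed point, which is generically lost near the postcritical set when one works directly in the chordal or spherical metric (as recalled above). Fix such a visual metric $d$ with expansion factor $\Lambda > 1$, set $Q \= Q(f,\phi)$, and define
\[
(B_\phi u)(x) \= \max_{y \in f^{-1}(x)} \bigl( \phi(y) + u(y) \bigr),
\]
so that $(B_\phi^n u)(x) = \max_{y \in f^{-n}(x)} \bigl( S_n\phi(y) + u(y) \bigr)$ with $S_n\phi \= \sum_{k=0}^{n-1} \phi \circ f^k$.

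For part~(i), fix $x_0 \in X$ and consider the normalized iterates $v_n \= B_\phi^n(0) - B_\phi^n(0)(x_0)$. The central technical estimate is a uniform $\alpha$-H\"older seminorm bound on $\{v_n\}_{n \in \N}$. Given $x, x' \in X$ close and $y \in f^{-n}(x)$ realizing the maximum in $B_\phi^n(0)(x)$, a backward-shadowing argument using the visual-metric expansion produces a paired $y' \in f^{-n}(x')$ with $d(f^k(y), f^k(y')) \lesssim \Lambda^{-(n-k)} d(x, x')$ for $k = 0, \ldots, n$; summing the $\alpha$-H\"older estimates for $\phi$ along the paired towers yields a geometric series of ratio $\Lambda^{-\alpha}$, bounded by a constant multiple of $d(x,x')^\alpha$ uniformly in $n$. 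Arzel\`a--Ascoli extracts a subsequential uniform limit $v_\infty \in \Holder{\alpha}(X, d)$, and a near-subadditivity argument (using the bounded oscillation of the $v_n$) shows $B_\phi^n(0)(x_0)/n \to Q$ together with $B_\phi v_\infty = v_\infty + Q$. Setting $u \= -v_\infty$ then gives $\phi - u + u \circ f \leq Q(f, \phi)$.

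For part~(ii), apply part~(i) to $\phi$ to obtain the H\"older sub-action $u_+$ above; the upper bound is then in hand, and the task is to show that $u_+$ also satisfies the lower bound $\phi - u_+ + u_+ \circ f \geq -Q(f,-\phi)$. Suppose, for contradiction, that $h \= \phi - u_+ + u_+ \circ f$ takes a value strictly less than $-Q(f,-\phi) - 3\varepsilon$ somewhere; H\"older continuity of $h$ propagates this to a small ball, and a recurrence argument applied to a typical point of a minimizing measure $\mu \in \Mmax(f, -\phi)$ together with the local Anosov closing lemma (Lemma~\ref{l_Local_Closing_Lemma}) and the Bressaud--Quas closing lemma developed elsewhere in the paper produces a periodic orbit on which the Birkhoff average of $h$ --- equivalently, of $\phi$, since the coboundary $-u_+ + u_+ \circ f$ telescopes to $0$ over the orbit --- is strictly less than $-Q(f,-\phi)$, violating the definition of $-Q(f,-\phi) = \inf_{\mu \in \MMM(X,f)} \int \phi \,\mathrm{d}\mu$.

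The principal obstacle is the uniform $\alpha$-H\"older seminorm bound in part~(i): in uniformly expanding settings this geometric telescoping is routine, but for expanding Thurston maps one a priori loses uniform expansion near critical points when working in the chordal or spherical metric, and passing to a visual metric is precisely what restores the exponential shrinkage of backward orbits with no bad distortion. A secondary obstacle is the bilateral upgrade in~(ii): the one-sided Bousch fixed point does not automatically satisfy the lower bound, and bridging orbit-average control to pointwise control genuinely requires the new closing-lemma machinery of this paper.
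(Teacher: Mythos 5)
Your part~(i) is essentially the paper's argument: one constructs a fixed point of the Bousch operator $\RR_{\overline\phi}$ by iterating on $\mathbbold{0}$, with the uniform $\alpha$-H\"older bound on the iterates (Lemma~\ref{l_Bousch_Op_preserve_space}) supplied by the visual-metric distortion estimates (Lemmas~\ref{l_MetricDistortion} and~\ref{l_key_estimate}); the paper takes a $\limsup$ rather than an Arzel\`a--Ascoli subsequence, which sidesteps the need to control the normalizing constants $B_\phi^{n}(0)(x_0)$ along a subsequence, and it needs the linear local connectivity of $(S^2,d)$ to pair preimage branches of points lying in different $0$-tiles --- but these are repairable details, not conceptual gaps.

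Part~(ii), however, contains a genuine error. You claim that the one-sided calibrated sub-action $u_+$ from part~(i) automatically satisfies the lower bound, and you propose to derive a contradiction by producing a periodic orbit on which the Birkhoff average of $h = \phi - u_+ + u_+\circ f$ is strictly less than $-Q(f,-\phi)$. No such orbit can ever exist: for any periodic orbit $\O$ the coboundary telescopes, so the average of $h$ over $\O$ equals $\int\phi\,\mathrm{d}\mu_\O \geq \inf_{\nu\in\MMM(X,f)}\int\phi\,\mathrm{d}\nu = -Q(f,-\phi)$ by definition. The deeper problem is that $h$ being very negative at a single point (or on a small ball) says nothing about Birkhoff averages, since an orbit visits that ball rarely and $h$ can equal $Q(f,\phi)$ elsewhere; the calibration identity $\RR_{\overline\phi}(u_+)=u_+$ only forces $h = Q(f,\phi)$ along calibrated backward branches, and at a point $x$ that does not realize the maximum over the fiber $f^{-1}(f(x))$ the value $h(x)$ can lie far below $-Q(f,-\phi)$ (e.g.\ for a potential with a deep narrow well at a non-calibrated preimage). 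The bilateral statement genuinely requires a \emph{different} function $u$: the paper applies part~(i) to both $\phi$ and $-\phi$, uses the sub-action for $-\phi$ to replace $\phi$ by a cohomologous $\psi$ satisfying the lower bound pointwise with no correction, and then invokes Bousch's interpolation theorem \cite[Theorem~1]{Bou02} to produce a single $v$ achieving both bounds. No closing lemmas enter this proof (they appear only later, in Sections~\ref{sct_Closing_Lemmas} and~\ref{sct_Holder_Density}), so your appeal to them here also mislocates where the work lies.
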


The bilateral Ma\~{n}\'{e} lemma was first introduced by T.~Bousch \cite{Bou02} for the doubling map on $S^1$.

For a continuous map $T\:X \rightarrow X$, we define subsets 
\begin{equation*}
\sP(X) \subseteq \CCC(X), \qquad
\Lock (X,d^\alpha) \subseteq \Lip(X,d^\alpha), \qquad  
\lock (X,d^\alpha) \subseteq \lip(X, d^\alpha)
\end{equation*}
for $\alpha \in (0,1]$, of the set $\CCC(X)$ of real-valued continuous functions as follows: $\sP(X)$ is the set of $\phi \in \CCC(X)$ with a $\phi$-maximizing measure supported on a periodic orbit of $T$. If a function $\phi \in \sP(X) \cap  \Lip(X,d^\alpha)$ (resp.\ $\phi \in \sP(X) \cap \lip(X,d^\alpha)$) satisfies $\card \Mmax(T, \, \phi) = 1$ and $\Mmax(T, \, \phi) = \Mmax(T, \, \psi)$ for all $\psi \in  \Lip(X,d^\alpha)$ (resp.\ $\psi \in \lip(X,d^\alpha)$) sufficient close to $\phi$ in $ \Lip(X,d^\alpha)$,  we say that $\phi$ has the \emph{locking property} in $ \Lip(X,d^\alpha)$ (resp.\ $\lip(X,d^\alpha)$). The set $\Lock(X,d^\alpha)$ (resp.\ $\lock(X,d^\alpha)$) consists of all $\phi \in \sP(X)$ with the locking property in $ \Lip(X,d^\alpha)$ (resp.\ $\lip(X,d^\alpha)$).

\smallskip

In this paper, by ``generic'' we mean ``open and dense''.

\begin{theorem}[{\bf Generic periodic maximization and locking for Misiurewicz--Thurston rational maps}]  \label{t_Density_Rational_little}
Let $f\: \wh\C \rightarrow \wh\C$ be a Misiurewicz--Thurston rational map (i.e., a postcritically-finite rational map without periodic critical points). Let $\sigma$ be the chordal metric on the Riemann sphere $\wh\C$. Then there exists a number $\gamma \in (0,1)$ such that for each $\beta\in (0,\gamma)$, the set $\lock  ( \wh\C, \sigma^{\beta}  )$ is an open and dense subset of $\lip ( \wh\C, \sigma^{\beta} )$, and in particular, the set $\sP (\wh\C )$ contains an open and dense subset of $\lip ( \wh\C, \sigma^{\beta} )$.
\end{theorem}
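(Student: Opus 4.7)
The plan is to deduce Theorem~\ref{t_Density_Rational_little} from the corresponding result for expanding Thurston maps with respect to visual metrics, transferring it through the quasisymmetric equivalence between visual and chordal metrics.

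Viewing $\wh\C = S^{2}$, a Misiurewicz--Thurston rational map $f$ is an expanding Thurston map without periodic critical points. Fix a visual metric $d$ for $f$. The paper's main theorem for expanding Thurston maps (established elsewhere in the paper for arbitrary such maps equipped with a visual metric) yields an exponent $\alpha_{0}\in(0,1]$ such that for every $\alpha\in(0,\alpha_{0})$ the set $\lock(S^{2}, d^{\alpha})$ is open and dense in $\lip(S^{2}, d^{\alpha})$. By the Bonk--Meyer / Ha\"{\i}ssinsky--Pilgrim quasisymmetric uniformization theorem quoted in the introduction, the rationality of $f$ implies that the identity $(\wh\C,\sigma)\to(\wh\C,d)$ is a quasisymmetric homeomorphism. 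Both spaces are Ahlfors regular (the chordal metric trivially and the visual metric by Bonk--Meyer), so this homeomorphism is bi-H\"older: there exist $C\geq 1$ and $s\in(0,1]$ with
\[
C^{-1}\sigma(x,y)^{1/s} \leq d(x,y) \leq C\,\sigma(x,y)^{s}
\quad\text{for all } x,y\in\wh\C.
\]
Set $\gamma \= s\alpha_{0}/2 \in (0,1)$. For any $\beta\in(0,\gamma)$ the right-hand inequality above yields a continuous inclusion of Banach spaces $\lip(\wh\C,\sigma^{\beta}) \hookrightarrow \lip(S^{2}, d^{s\beta})$ whose target exponent $s\beta<\alpha_{0}$ falls within the range of the main theorem.

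Openness of $\lock(\wh\C, \sigma^{\beta})$ in $\lip(\wh\C, \sigma^{\beta})$ is essentially formal: the locking condition is local, and a $\lip(S^{2},d^{s\beta})$-neighborhood on which $\Mmax(f,\cdot)$ is constant pulls back through the continuous inclusion above to a $\lip(\wh\C,\sigma^{\beta})$-neighborhood with the same property. The ``in particular'' statement about $\sP(\wh\C)$ follows from $\lock\subseteq\sP$.

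The density of $\lock(\wh\C, \sigma^{\beta})$ in $\lip(\wh\C, \sigma^{\beta})$ will be the main obstacle. A naive transfer from $\lip(S^{2}, d^{s\beta})$ fails because the $\lip(\wh\C,\sigma^{\beta})$-norm is strictly stronger than the subspace norm inherited from $\lip(S^{2},d^{s\beta})$, so approximation in the weaker norm does not suffice. Instead, I would replay the density argument directly inside $\lip(\wh\C,\sigma^{\beta})$: begin with the bilateral Ma\~{n}\'{e} lemma (Theorem~\ref{t_mane}), which is already formulated for the chordal metric, to cohomologously replace $\phi$ by a calibrated potential vanishing on the support of some maximizing measure and strictly negative elsewhere; invoke a local closing lemma (which transfers between the two metrics, since ``sufficiently close in $d$'' is equivalent to ``sufficiently close in $\sigma$'' by the bi-H\"older estimate) to produce a nearby periodic orbit; then subtract a small $\sigma$-smooth bump that vanishes along the chosen orbit and is strictly positive away from it, making that orbit the unique maximizer. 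Because $\sigma$ is smooth on $\wh\C$, such bumps are Lipschitz in $\sigma$ and hence lie in $\lip(\wh\C,\sigma^{\beta})$ for every $\beta<1$, with $\lip(\wh\C,\sigma^{\beta})$-norm that can be made arbitrarily small. The key technical point to verify, as alluded to in Remark~\ref{r_livsic}, is that the Ma\~{n}\'{e} coboundary $u$ in Theorem~\ref{t_mane} can be chosen to preserve little-Lipschitz regularity on $(\wh\C,\sigma^{\beta})$, so that the cohomological reduction remains inside the target space throughout.
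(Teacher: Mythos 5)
Your reduction to the visual-metric results and your treatment of openness are fine (openness of $\lock(\wh\C,\sigma^{\beta})$ is in fact immediate from its definition, and Theorem~\ref{t_lock} already packages this together with the reduction of density in $\lip(\wh\C,\sigma^{\beta})$ to density of $\sP(\wh\C)\cap\lip(\wh\C,\sigma^{\beta})$). You also correctly diagnose the real obstacle: the $\sigma^{\beta}$-norm dominates the $d^{s\beta}$-norm, so density with respect to the visual-metric norm does not transfer naively. But your proposed fix --- replaying the perturbative density argument directly inside $\lip(\wh\C,\sigma^{\beta})$ --- breaks down at exactly the point you flag as ``the key technical point to verify.'' The calibrated sub-action of Proposition~\ref{p_calibrated_sub-action_exists} lives in $\Lip(\wh\C,d^{\alpha})$ for a \emph{visual} metric $d$; converting to the chordal metric costs a H\"older exponent, because for a general Misiurewicz--Thurston map the identity $(\wh\C,\sigma)\to(\wh\C,d)$ is only bi-H\"older, never bi-Lipschitz or snowflake (that happens only in the Latt\`es case). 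The paper has no Ma\~{n}\'{e} lemma producing a coboundary with unchanged $\sigma^{\beta}$-regularity; on the contrary, this kind of exponent loss for Bousch/Ruelle operators in the spherical metric (cf.\ \cite{DPU96}, \cite{Mo09}) is precisely what the visual-metric machinery is designed to circumvent. Consequently the quantitative estimates of Section~\ref{sct_Holder_Density}, which compare $\wt\varphi$ and $\psi$ in the metric $d$ rather than $\sigma$, cannot simply be rerun in $(\wh\C,\sigma^{\beta})$, and your density argument is left without its main ingredient.

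The paper resolves the norm mismatch by a soft closure argument instead, and this is where the little Lipschitz space and the strict inequality $\beta<\gamma$ are essential. From $\sigma\leq C_{6}d^{\eta}\leq C_{7}\sigma^{\gamma}$ one gets $\Lip(\wh\C,\sigma)\subseteq\Lip(\wh\C,d^{\eta})\subseteq\Lip(\wh\C,\sigma^{\gamma})$, so by Proposition~\ref{p_Lip_dense_lip_alpha} the space $\Lip(\wh\C,d^{\eta})$ is $\sigma^{\beta}$-dense in $\lip(\wh\C,\sigma^{\beta})$. Given $\phi\in\lip(\wh\C,\sigma^{\beta})$, one first approximates it in the $\sigma^{\beta}$-norm by some $\psi\in\Lip(\wh\C,d^{\eta})$, and then approximates $\psi$ by elements of $\sP(\wh\C)\cap\Lip(\wh\C,d^{\eta})$ in the \emph{stronger} $d^{\eta}$-norm using Theorem~\ref{t_Density_Thurston}~(i); since $\Hseminorm{\sigma^{\beta}}{\cdot}\lesssim\Hseminorm{\sigma^{\gamma}}{\cdot}\lesssim\Hseminorm{d^{\eta}}{\cdot}$, the second approximation is also a $\sigma^{\beta}$-approximation. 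This yields density of $\sP(\wh\C)\cap\lip(\wh\C,\sigma^{\beta})$ in $\lip(\wh\C,\sigma^{\beta})$, and Theorem~\ref{t_lock} concludes. Salvaging your route would require a chordal-metric Ma\~{n}\'{e} lemma without exponent loss, a substantially harder problem that the closure chain sidesteps entirely.
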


We recall that a rational Thurston map (i.e., a postcritically-finite rational map) is expanding if and only if it has no periodic critical points (\cite[Proposition~2.3]{BM17}).

For a more general expanding Thurston map $f\: S^2 \rightarrow S^2$ defined on the topological $2$-sphere $S^2$ (that may or may not be conjugate to a rational map), there is no canonical smooth structure on $S^2$, so it is natural (in view of the connection to Kleinian groups via Sullivan's dictionary) to formulate the corresponding theorem in terms of the visual metrics.

\begin{theorem}[{\bf Generic periodic maximization and locking for expanding Thurston maps}]  \label{t_Density_Thurston}
Let $f\:S^2 \rightarrow S^2$ be an expanding Thurston map (for example, a postcritically-finite rational map with no periodic points). Let $d$ be a visual metric on $S^2$ for $f$. Fix numbers $\alpha \in (0,1]$ and $\beta \in (0,1)$. Then the following statements hold:
\begin{enumerate}
\smallskip
\item[(i)] The set $\Lock(S^2, d^{\alpha})$ is an open and dense subset of  $\Lip(S^2, d^{\alpha})$. In particular, the set $\sP (S^2 )$ contains an open and dense subset of  $\Lip(S^2, d^{\alpha})$.

\smallskip
\item[(ii)] The set $\lock(S^2, d^{\beta})$ is an open and dense subset of $\lip(S^2, d^{\beta})$. In particular, the set $\sP (S^2 )$ contains an open and dense subset of $\lip(S^2, d^{\beta})$. 
\end{enumerate}
\end{theorem}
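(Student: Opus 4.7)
The plan is to adapt the Contreras/Huang--Lian--Ma--Xu--Zhao strategy for generic TPO to the non-uniformly expanding setting of expanding Thurston maps by systematically replacing distance-expanding ingredients with the tools the paper has built: the Ma\~n\'e and bilateral Ma\~n\'e lemmas (Theorem~\ref{t_mane}), the Liv\v{s}ic theorem (Theorem~\ref{t_livsic}), the local Anosov closing lemma (Lemma~\ref{l_Local_Closing_Lemma}), and the gap estimate of Lemma~\ref{l_bound_by_gap}. Openness is immediate from the locking property itself: any neighborhood $U$ of $\phi \in \Lock(S^2,d^\alpha)$ on which $\Mmax(f,\cdot)$ is identically the unique periodic measure $\mu_{\mathcal{O}}$ automatically certifies locking for every $\psi \in U$, giving $U \subseteq \Lock(S^2,d^\alpha)$; the same reasoning works in $\lip(S^2,d^\beta)$.

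For density, fix $\phi$ in $\Lip(S^2, d^\alpha)$ (resp.\ in $\lip(S^2,d^\beta)$) and $\varepsilon > 0$. Apply the bilateral Ma\~n\'e lemma to cohomologously replace $\phi$ by $\widetilde\phi \= \phi + u - u\circ f - Q(f,\phi)$, which is nonpositive and attains zero precisely on a closed $f$-invariant set $\cN$ supporting every $\phi$-maximizing measure. Choose an ergodic $\phi$-maximizing measure and use the local Anosov closing lemma (Lemma~\ref{l_Local_Closing_Lemma}) away from critical points to produce a periodic orbit $\mathcal{O} \subseteq \cN$ along which the Birkhoff sum of $\widetilde\phi$ vanishes. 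Perturb by a distance-to-orbit bump
\begin{equation*}
\phi' \= \phi - c \dist(\cdot, \mathcal{O})^{\gamma},
\end{equation*}
with $\gamma = \alpha$ in part~(i), $\gamma \in (\beta, 1]$ in part~(ii), and $c > 0$ chosen so that the bump has ambient norm at most $\varepsilon$. The strict inequality $\gamma > \beta$, available precisely because $\beta < 1$, places the bump in $\lip(S^2,d^\beta)$.

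Since the bump vanishes on $\mathcal{O}$, $\mu_{\mathcal{O}}$ still realizes $\int\!\phi'\,\mathrm{d}\mu_{\mathcal{O}} = Q(f,\phi)$, while every other invariant measure has a strictly smaller $\phi'$-integral. To promote this to the locking property, invoke Lemma~\ref{l_bound_by_gap}: it quantitatively couples $Q(f,\phi) - \int\!\phi\,\mathrm{d}\mu$ with the visual-metric distance of $\supp\mu$ from $\mathcal{O}$, yielding a gap that, combined with the bump's contribution $c\int \dist(\cdot,\mathcal{O})^\gamma \,\mathrm{d}\mu$, dominates the variation $\int\!\xi\,\mathrm{d}\mu - \int\!\xi\,\mathrm{d}\mu_{\mathcal{O}}$ produced by any perturbation $\xi$ of ambient norm at most $c/2$. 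Hence $\mu_{\mathcal{O}}$ is the unique maximizing measure of $\phi' + \xi$ for all such $\xi$, so $\phi' \in \Lock(S^2,d^\alpha)$ (resp.\ $\lock(S^2,d^\beta)$).

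The main obstacle is part~(ii). Elements of $\lip(S^2,d^\beta)$ satisfy a strict $o(r^\beta)$ oscillation condition, so a bump of H\"older exponent exactly $\beta$ is inadmissible; the constraint $\beta < 1$ is precisely what makes the choice $\gamma \in (\beta,1]$ available. Balancing this $\gamma$ against the exponent appearing in Lemma~\ref{l_bound_by_gap} so that the combined gap strictly dominates the little-Lipschitz variation of generic perturbations is the delicate calibration at the heart of the proof, and the reason parts~(i) and~(ii) are separated. A subsidiary issue is that the Ma\~n\'e transfer function $u$ must itself lie in $\lip(S^2,d^\beta)$ in part~(ii), requiring preservation of little-H\"older regularity in the bilateral Ma\~n\'e lemma.
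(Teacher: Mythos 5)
Your overall skeleton (Ma\~n\'e lemma, closing lemma, bump perturbation $-c\,\dist(\cdot,\O)^\gamma$) matches the paper's, but there is a genuine gap at the decisive step. You assert that the local Anosov closing lemma produces a periodic orbit $\O \subseteq \cN$ on which the Birkhoff sums of $\wt\phi$ vanish. In the only case that matters --- $\phi$ having no periodic maximizing measure --- the zero set $\cN$ (the set $\cK$ of Lemma~\ref{l_Bousch_Op_normalizing_potential}) contains \emph{no} periodic orbit, so no closing lemma can give you $\O \subseteq \cN$; if it could, $\mu_\O$ would already maximize $\phi$ and nothing would need proving. Consequently your claim that ``$\mu_{\O}$ still realizes $\int\phi'\,\mathrm{d}\mu_{\O}=Q(f,\phi)$'' is false: one has $\int\phi'\,\mathrm{d}\mu_\O=\int\phi\,\mathrm{d}\mu_\O<Q(f,\phi)$, and the real task is to show $\mu_\O$ maximizes $\phi'$ at the strictly smaller level $Q(f,\phi')$. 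The paper's mechanism for this is Lemma~\ref{l_bound_by_gap}, which you also mischaracterize: it does not couple the action deficit of an arbitrary invariant measure with its distance to $\O$; it produces a periodic orbit whose \emph{total} distance to $\cK$ is controlled by its own $(r,\theta)$-gap, $\sum_{x\in\O}d(x,\cK)^\alpha\le\tau\,(\Delta_{r,\theta}(\O))^\alpha$. That gap-relative bound is exactly what lets the loss $\abs{\int\wt\varphi\,\mathrm{d}\mu_\O}$ be absorbed by the gain from the bump on competing orbits, and obtaining it requires combining the local Anosov closing lemma with the Bressaud--Quas closing lemma (Theorem~\ref{t_BQ_for_ETM}) in a recursive construction, followed by a Birkhoff-sum analysis via the Peres characterization $Q(f,\psi)=\sup_x\inf_n S_n\psi(x)/n$. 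None of this is recoverable from ``$\O$ is $\epsilon$-close to $\cK$'' alone.

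On part (ii), your plan to rerun the perturbation inside $\lip(S^2,d^\beta)$ with a bump of exponent $\gamma\in(\beta,1]$, and your worry about the Ma\~n\'e transfer function lying in the little Lipschitz space, are both unnecessary detours. The paper deduces (ii) from (i) by pure density bookkeeping: $\Lip(S^2,d)$ is dense in $\lip(S^2,d^\beta)$ (Proposition~\ref{p_Lip_dense_lip_alpha}), so density of $\sP\cap\Lip$ in $\Lip$ transfers to density of $\sP\cap\lip$ in $\lip$ (Theorem~\ref{t_P_dense_in_lip}), and Theorem~\ref{t_lock} --- where the exponent-mismatch between the bump $d^\gamma(\cdot,\O)$ and little-Lipschitz perturbations is resolved once and for all via the Lipschitz extension theorem and the Wasserstein-type bound of Proposition~\ref{p_Wasserstein} --- upgrades density of $\sP$ to density of $\lock$. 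No calibrated sub-action in $\lip(S^2,d^\beta)$ is ever needed.
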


Note that an expanding Thurston map may have periodic critical points, in which case the investigations of certain dynamical properties can be more involved (compare \cite{Me13, Li15, Li18, DPTUZ21}). In our approach to establishing Theorem~\ref{t_Density_Thurston}, all expanding Thurston maps are treated simultaneously without extra care given to these special maps.

In the special case of Latt\`es maps, the following theorem holds.

\begin{theorem}[{\bf Generic periodic maximization and locking for Latt\`es maps}]  \label{t_Density_Lattes}
Let $f\: \wh\C \rightarrow \wh\C$ be a Latt\`es map. Let $\sigma$ be the chordal metric on the Riemann sphere $\wh\C$. Fix numbers $\alpha \in (0,1]$ and $\beta \in (0,1)$. Then the following statements hold:
\begin{enumerate}
\smallskip
\item[(i)] The set $\Lock(\wh\C,\sigma^{\alpha})$ is an open and dense subset of $\Lip(\wh\C, \sigma^{\alpha})$. In particular, the set $\sP (\wh\C )$ contains an open and dense subset of $\Lip(\wh\C, \sigma^{\alpha})$.

\smallskip
\item[(ii)] The set $\lock  ( \wh\C, \sigma^{\beta}  )$ is an open and dense subset of $\lip ( \wh\C, \sigma^{\beta} )$. In particular, the set $\sP (\wh\C )$ contains an open and dense subset of $\lip ( \wh\C, \sigma^{\beta} )$. 
\end{enumerate}
\end{theorem}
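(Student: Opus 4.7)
The plan is to reduce Theorem~\ref{t_Density_Lattes} to Theorem~\ref{t_Density_Thurston} by exploiting a special geometric feature of Latt\`es maps: the chordal metric $\sigma$ on $\wh\C$ is biLipschitz equivalent to a visual metric for $f$. Specifically, using the classical description of Latt\`es maps as quotients of affine self-maps of complex tori by a finite crystallographic group, and the flat orbifold-Euclidean structure this induces on $\wh\C$, one shows (as explained in \cite{BM17}) that there is a visual metric $d$ on $S^2 = \wh\C$ for $f$ that is biLipschitz equivalent to $\sigma$ itself. This is the key structural fact distinguishing Latt\`es maps from general Misiurewicz--Thurston rational maps, where only the weaker relation $d \asymp \sigma^\gamma$ with some $\gamma \in (0,1)$ strictly less than $1$ is available; this is precisely what forces the restriction $\beta \in (0,\gamma)$ in Theorem~\ref{t_Density_Rational_little}.

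Next, since biLipschitz equivalence is preserved under snowflaking, $d^\alpha$ and $\sigma^\alpha$ are biLipschitz equivalent for every $\alpha > 0$. Consequently the Banach spaces $\Lip(\wh\C, \sigma^\alpha)$ and $\Lip(S^2, d^\alpha)$ coincide as sets with equivalent norms, and the same holds for the little Lipschitz spaces $\lip(\wh\C, \sigma^\beta)$ and $\lip(S^2, d^\beta)$ (the ``$o(r)$ as $r \to 0$'' defining condition of $\lip$ is manifestly preserved under a biLipschitz change of metric). Since both $\Mmax(f, \phi)$ and $Q(f, \phi)$ are purely dynamical quantities depending only on $\phi$ and $f$ and not on the metric, and the locking property is defined in terms of the Banach space topology on the ambient function space, one obtains the set-theoretic identifications $\Lock(\wh\C, \sigma^\alpha) = \Lock(S^2, d^\alpha)$ and $\lock(\wh\C, \sigma^\beta) = \lock(S^2, d^\beta)$, with openness and density transferring in both directions through the biLipschitz equivalence.

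To conclude, I would apply Theorem~\ref{t_Density_Thurston} to the expanding Thurston map $f$ equipped with the visual metric $d$ at the given exponents $\alpha \in (0,1]$ and $\beta \in (0,1)$. This yields directly that $\Lock(S^2, d^\alpha)$ is open and dense in $\Lip(S^2, d^\alpha)$ and that $\lock(S^2, d^\beta)$ is open and dense in $\lip(S^2, d^\beta)$; transferring these statements back via the biLipschitz equivalence above proves both parts (i) and (ii) simultaneously.

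The only substantive obstacle in this plan is the initial step: verifying that the chordal metric on $\wh\C$ is biLipschitz equivalent (and not merely biLipschitz equivalent to a proper snowflake) of a visual metric for a Latt\`es map. This requires a careful analysis of the orbifold cone structure at the branch values of the Latt\`es projection and a comparison between the pullback of $\sigma$ to the uniformizing flat torus and the Euclidean metric there; once this identification is in hand, the remainder of the argument is a routine functorial transfer from the visual setting of Theorem~\ref{t_Density_Thurston} to the spherical setting.
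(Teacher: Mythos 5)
Your reduction breaks down at exactly the step you flag as ``the only substantive obstacle,'' and it breaks down irreparably: for a Latt\`es map the chordal metric $\sigma$ is \emph{not} bi-Lipschitz equivalent to any visual metric in the sense needed (identity map bi-Lipschitz, Definition~\ref{d_Quasi_Symmetry}). What is true (Remark~\ref{r_Canonical_Orbifold_Metric}, quoting \cite[Lemma~A.34 and Appendix~A.10]{BM17}) is that the canonical orbifold metric $\omega_f$ is a visual metric and that the metric \emph{spaces} $(\wh\C,\omega_f)$ and $(\wh\C,\sigma)$ are bi-Lipschitz equivalent via some homeomorphism $h$ --- but $h$ is necessarily not the identity, and the identity map between the two metrics is never bi-Lipschitz. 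The obstruction is the orbifold cone structure you propose to analyze: at a postcritical point $p$ with ramification $\alpha_f(p)=\nu\geq 2$ the covering $\Theta$ is locally $w\mapsto w^\nu$, so $\omega_f(p,\cdot)\asymp\sigma(p,\cdot)^{1/\nu}$ near $p$. This is why the paper only has the one-sided global estimate $C_3^{-1}\sigma(x,y)\leq d(x,y)\leq C_3\sigma(x,y)^\eta$ with $\eta<1$, giving the strict inclusion $\Lip(\wh\C,\sigma^\alpha)\subsetneq\Lip(\wh\C,d^\alpha)$ rather than equality of the function spaces. An abstract bi-Lipschitz equivalence via a non-identity $h$ transfers nothing, since $h$ does not conjugate $f$ to itself and does not act as the identity on potentials.

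Concretely, density of $\sP(\wh\C)$ in the larger space $\Lip(\wh\C,d^\alpha)$ (Theorem~\ref{t_Density_Thurston}) does not yield density in the smaller space $\Lip(\wh\C,\sigma^\alpha)$ with its stronger norm: the perturbation produced in the Thurston-map argument is $-\epsilon\, d(\cdot,\O)^\alpha$, which is only $\sigma^{\eta\alpha}$-H\"older and in general does not belong to $\Lip(\wh\C,\sigma^\alpha)$ at all. The paper's actual proof of part~(i) therefore cannot be a functorial transfer; it reruns the entire perturbation argument with the admissible perturbation $-\epsilon\,\sigma(\cdot,\O)^\alpha$, carries out the quantitative estimates in the canonical orbifold metric $d$, and uses the \emph{local} bi-Lipschitz comparability of $d$ and $\sigma$ away from $\post f$ (statement~(\ref{e_Pf_t_Density_Lattes_metrics_compare_away_crit})) together with a careful arrangement that the periodic orbit $\O$ and the relevant forward orbits stay quantitatively away from $\crit f\cup\post f$. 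Part~(ii) then follows from part~(i) via Theorem~\ref{t_P_dense_in_lip}, not from Theorem~\ref{t_Density_Thurston}~(ii).
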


\begin{remark}
We can replace the chordal metric by the spherical metric in the statement and the theorem still holds due to the bi-Lipschitz equivalence between these two metrics. Note that $\beta$ cannot be $1$ since the only little Lipschitz functions in $\lip(\wh\C, \sigma)$ are the constant functions (c.f.~\cite[Example~4.9]{We18}).
\end{remark}

\begin{theorem}[{\bf Generic uniqueness of ground states}] \label{t_gound_states}
The following statements hold:

\begin{enumerate}
\smallskip
\item[(i)] Under the assumptions in Theorem~\ref{t_Density_Rational_little},~\ref{t_Density_Thurston}, or~\ref{t_Density_Lattes} with $\rho$ being $d$ or $\sigma$ accordingly, every $\phi\in\lock(X,\rho^\beta)$ has a unique ground state, and consequently, for a generic $\psi \in \lip(X,\rho^{\beta})$ there exists a unique ground state for $f$ and $\psi$, i.e., the equilibrium state $\mu_{t\psi}$ converges at zero temperature.

\smallskip
\item[(ii)] Under the assumptions in Theorem~\ref{t_Density_Thurston} or~\ref{t_Density_Lattes} with $\rho$ being $d$ or $\sigma$ accordingly, every $\phi\in\Lock(X,\rho^\alpha)$ has a unique ground state, and consequently, for a generic $\psi \in \Lip(X,\rho^{\alpha})$ there exists a unique ground state for $f$ and $\psi$, i.e., the equilibrium state $\mu_{t\psi}$ converges at zero temperature.
\end{enumerate}
\end{theorem}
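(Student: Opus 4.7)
The plan is to reduce the theorem to the previously established locking theorems by proving the following deterministic statement: whenever $\phi$ lies in $\lock(X, \rho^\beta)$ (respectively $\Lock(X, \rho^\alpha)$), the unique element $\mu_\phi$ of $\Mmax(f, \phi)$ is the unique ground state for $\phi$. Granting this, the ``generic'' halves of (i) and (ii) follow immediately from Theorems~\ref{t_Density_Rational_little},~\ref{t_Density_Thurston}, and~\ref{t_Density_Lattes}, which assert that $\lock(X, \rho^\beta)$ and $\Lock(X, \rho^\alpha)$ are open and dense in their respective ambient Lipschitz/H\"older spaces.

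The first step will be the standard ``soft'' argument showing that whenever $h_{\rm top}(f) < \infty$, every ground state of $\phi$ belongs to $\Mmax(f, \phi)$. Concretely, let $\{t_i\}_{i \in \N} \subseteq \R$ with $t_i \to +\infty$ and suppose the equilibrium states $\mu_{t_i \phi}$ (existence and uniqueness of which, for our H\"older potentials, are available through the thermodynamic formalism of \cite{Li18}) converge weak$^*$ to some $\mu \in \MMM(X, f)$. For an arbitrary $\nu \in \MMM(X, f)$, the equilibrium-state inequality
\begin{equation*}
h_{\mu_{t_i\phi}}(f) + t_i \int \! \phi \,\mathrm{d}\mu_{t_i\phi} \;\geq\; h_\nu(f) + t_i \int \! \phi \,\mathrm{d}\nu
\end{equation*}
divided by $t_i > 0$, combined with $0 \leq h_{\mu_{t_i\phi}}(f),\, h_\nu(f) \leq h_{\rm top}(f) < \infty$ and the weak$^*$ continuity of $\nu \mapsto \int \! \phi \,\mathrm{d}\nu$, yields $\int \! \phi \,\mathrm{d}\mu \geq \int \! \phi \,\mathrm{d}\nu$ in the limit. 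Since $\nu$ was arbitrary, $\mu \in \Mmax(f, \phi)$.

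The second step then invokes the locking hypothesis directly. By definition of $\lock(X, \rho^\beta)$ (respectively $\Lock(X, \rho^\alpha)$), the set $\Mmax(f, \phi) = \{\mu_\phi\}$ is a singleton, with $\mu_\phi$ supported on a periodic orbit. Step one forces every weak$^*$ accumulation point of $\{\mu_{t\phi}\}_{t \to +\infty}$ to equal $\mu_\phi$; because $\MMM(X, f)$ is weak$^*$-compact and metrizable, this promotes convergence along subsequences to full convergence $\mu_{t\phi} \to \mu_\phi$ as $t \to +\infty$. Thus $\mu_\phi$ is the unique ground state of $\phi$, completing the pointwise assertion and, via the density of $\lock$ and $\Lock$, the generic assertion.

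No serious obstacle is anticipated at this stage: all the genuine dynamical content sits inside the locking theorems and the thermodynamic formalism already available in this setting, while the passage from ``$\Mmax(f, \phi)$ is a singleton'' to ``$\mu_{t\phi}$ converges at zero temperature'' is a standard compactness-plus-variational-principle argument. The only point that deserves a careful citation is the finiteness of $h_{\rm top}(f)$ for expanding Thurston maps and Misiurewicz--Thurston rational maps, which is classical (see \cite{BM10, BM17}).
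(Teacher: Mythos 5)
Your proposal is correct and follows essentially the same route as the paper: the variational inequality for equilibrium states divided by $t_i$, together with the finiteness of $h_{\operatorname{top}}(f)=\log(\deg f)$, shows every ground state lies in $\Mmax(f,\phi)$, and the locking theorems then force this set to be the singleton $\{\mu_\phi\}$, with weak$^*$-compactness upgrading subsequential to full convergence. The paper's proof is just a more compressed version of the same argument.
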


The following theorem is a consequence of the results in \cite{Co16, BZ15, HLMXZ19} and our investigations on little Lipschitz functions.
\begin{theorem}[{\bf Generic periodic maximization and locking in little Lipschitz spaces}]  \label{t_Distance_expanding}
Let a map $T \: X \rightarrow X$ on a compact metric space $(X,d)$ be a distance expanding map, an Axiom A attractor, or an Anosov diffeomorphism on a compact Riemannian manifold. Let $\beta \in (0, 1)$. Then the set $\lock(X, d^{\beta})$ is an open and dense subset of $\lip(X, d^{\beta})$. In particular, the set $\sP (X )$ contains an open and dense subset of $\lip(X, d^{\beta})$. 
\end{theorem}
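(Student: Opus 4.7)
The plan is to deduce Theorem~\ref{t_Distance_expanding} from the cited works \cite{Co16, BZ15, HLMXZ19}, which establish the analogous genericity statement in the big Lipschitz space $\Lip(X, d^{\beta'})$ for every H\"older exponent $\beta' \in (0, 1]$, by transferring the conclusion to the little Lipschitz space $\lip(X, d^\beta)$ via a snowflaking argument. Openness of $\lock(X, d^\beta)$ in $\lip(X, d^\beta)$ is tautological from the definition of the locking property: if $\phi \in \lock(X, d^\beta)$, every $\psi$ within the $\lip(X,d^\beta)$-locking radius of $\phi$ shares the unique maximizing periodic measure with $\phi$, and hence a still smaller $\lip(X, d^\beta)$-neighborhood of $\psi$ lies inside $\lock(X, d^\beta)$. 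The main task is density.

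Fix an intermediate exponent $\beta' \in (\beta, 1]$. The elementary estimate
\begin{equation*}
\frac{|\phi(x) - \phi(y)|}{d(x,y)^\beta} \leq [\phi]_{\beta'} \, d(x,y)^{\beta' - \beta} \longrightarrow 0 \quad \text{as } d(x,y) \to 0
\end{equation*}
gives a bounded inclusion $\Lip(X, d^{\beta'}) \hookrightarrow \lip(X, d^\beta)$. I first argue that this inclusion has dense image in the $\lip(X, d^\beta)$-norm. Given $\phi \in \lip(X, d^\beta)$ and $\varepsilon > 0$, choose $r > 0$ small enough that $|\phi(x) - \phi(y)| < \varepsilon \, d(x,y)^\beta$ whenever $d(x,y) \leq 3r$, build a Lipschitz-in-$d$ partition of unity $\{\eta_i\}$ subordinate to a cover of $X$ by $d$-balls of radius $r$ with centers $\{x_i\}$, and set $\phi_\varepsilon \= \sum_i \phi(x_i) \eta_i \in \Lip(X, d) \subseteq \Lip(X, d^{\beta'})$. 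Estimating $[\phi - \phi_\varepsilon]_\beta$ separately on $\{d(x,y) \leq r\}$ (via the little H\"older estimate on $\phi$ combined with the $O(1/r)$ Lipschitz constant of the $\eta_i$) and on $\{d(x,y) > r\}$ (via the $O(\varepsilon \, r^\beta)$-smallness of $\|\phi - \phi_\varepsilon\|_\infty$) yields $\|\phi - \phi_\varepsilon\|_{\lip(X, d^\beta)} = O(\varepsilon)$.

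Now given $\phi \in \lip(X, d^\beta)$ and $\varepsilon > 0$, I first approximate $\phi$ by some $\phi_\varepsilon \in \Lip(X, d^{\beta'})$ within $\varepsilon/2$ in $\lip(X, d^\beta)$, then invoke the cited density of $\Lock(X, d^{\beta'})$ in $\Lip(X, d^{\beta'})$ from \cite{Co16, BZ15, HLMXZ19} to produce $\psi \in \Lock(X, d^{\beta'})$ with $\|\phi_\varepsilon - \psi\|_{\Lip(X, d^{\beta'})}$ so small that $\|\phi_\varepsilon - \psi\|_{\lip(X, d^\beta)} < \varepsilon/2$. It remains to upgrade the locking of $\psi$ from $\Lip(X, d^{\beta'})$ to $\lip(X, d^\beta)$: locking in $\Lip(X, d^{\beta'})$ supplies a H\"older coboundary decomposition $\psi - u + u \circ T \leq Q(T, \psi)$ with strict gap $Q(T, \psi) - c$ off a neighborhood of the maximizing periodic orbit, and this gap is preserved under any $C^0$-small perturbation, hence in particular under $\lip(X, d^\beta)$-small ones. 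Thus $\psi \in \lock(X, d^\beta)$ and is within $\varepsilon$ of $\phi$.

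The main technical obstacle is the snowflake density step: passing from the classical density in the uniform norm to density in the full little H\"older norm requires exploiting the infinitesimal flatness encoded in the $\lip$ condition, together with a careful balance between the scale $r$ of the partition of unity and the exponent gap $\beta' - \beta$. In the manifold cases (Axiom A attractors, Anosov diffeomorphisms) one can alternatively use convolution with a smooth mollifier; in the general compact-metric distance-expanding setting, Lipschitz partitions of unity do the job. Once density is in hand, the transfer of locking from the stronger to the weaker norm is routine via the coboundary-gap characterization.
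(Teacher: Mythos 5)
Your overall reduction---openness is immediate from the definition of the locking property, and density should follow from the known density of $\sP(X)\cap\Lip(X,d^{\beta'})$ in $\Lip(X,d^{\beta'})$ together with the density of $\Lip(X,d^{\beta'})$ in $\lip(X,d^\beta)$---matches the paper's strategy (the paper invokes Proposition~\ref{p_Lip_dense_lip_alpha} for the snowflake density rather than rebuilding it with partitions of unity, and routes everything through Theorems~\ref{t_lock} and~\ref{t_P_dense_in_lip}). However, your final ``upgrade'' step contains a genuine gap, and it is precisely the step where all the real work lives. You need the inclusion $\Lock(X,d^{\beta'})\subseteq\lock(X,d^\beta)$, i.e.\ that locking with respect to the \emph{stronger} norm implies locking with respect to the \emph{weaker} one. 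This goes in the hard direction: a perturbation $\tau$ that is small in $\Hnorm{d^\beta}{\cdot}{X}$ need not be small (or even finite) in $\Hnorm{d^{\beta'}}{\cdot}{X}$, so the class of admissible perturbations for $\lip(X,d^\beta)$-locking is strictly larger than the one controlled by the cited results.

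Your proposed mechanism---a sub-action giving $\tilde\psi\le -c<0$ off a neighborhood $U$ of the maximizing orbit $\O$, with the gap ``preserved under $C^0$-small perturbations''---does not close this. First, such a strict gap off every neighborhood of $\O$ is not a consequence of $\psi\in\Lock(X,d^{\beta'})$: uniqueness of the maximizing measure only forces $\bigcap_j T^{-j}\bigl(\tilde\psi^{-1}(0)\bigr)=\O$, not $\tilde\psi^{-1}(0)\subseteq U$. Second, even granting the gap, the $C^0$-stability argument only yields $\nu(X\setminus U)\lesssim\norm{\tau}_{\CCC^0}/c$ for any maximizing measure $\nu$ of $\psi+\tau$; it does not identify $\nu$ with $\mu_\O$ (e.g.\ $\nu$ could live on a nearby periodic orbit shadowing $\O$). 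Pinning down $\nu=\mu_\O$ requires the Wasserstein-type estimate of Proposition~\ref{p_Wasserstein}, which compares $\langle\tau,\nu-\mu_\O\rangle$ to $\Hseminorm{d^{\beta''}}{\tau}\langle d^{\beta''}(\cdot,\O),\nu\rangle$ and therefore demands that the potential be pre-suppressed by a term $t\,d^{\beta''}(\cdot,\O)$ whose exponent matches the H\"older seminorm of the perturbation. For perturbations in $\lip(X,d^\beta)$ the natural suppressing term $d^\beta(\cdot,\O)$ is \emph{not} in $\lip(X,d^\beta)$, which is exactly the exponent mismatch that the paper's Theorem~\ref{t_lock} resolves: one suppresses by $d^{\beta''}(\cdot,\O)$ with $\beta''>\beta$ and splits an arbitrary small perturbation $\tau\in\lip(X,d^\beta)$ as $\tau=\tau_{\beta''}+\sigma$ with $\Hnorm{d^{\beta''}}{\tau_{\beta''}}{X}$ small (via the Lipschitz extension theorem on a finite net) and $\norm{\sigma}_{\CCC^0}$ small. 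Without this argument---or an equivalent substitute---your proof is incomplete.
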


See \cite{Co16, HLMXZ19} for more discussions on the systems mentioned in Theorem~\ref{t_Distance_expanding}.

The result in Theorem~\ref{t_Distance_expanding} for the little Lipschitz spaces does not follow directly from the similar result for the space $\cF_{\alpha+}$ considered in \cite{CLT01}. In the case of distance expanding maps, a similar result is claimed in \cite[Section~1]{Co16} for the space $C^{!a}(X,\R)$ whose definition coincides with $\lip(X,d^\alpha)$ in our notation.

Finally, we remark that the periodic maximization phenomena in the main theorems above are only expected to hold in a generic sense. Clearly, every invariant Borel probability measure is a maximizing measure for constant potentials (and more generally, for potentials cohomologous to constants). For the doubling map on $[0,1]$, it follows from \cite[Theorem~A]{Bou00} that there exist real analytic functions not contained in $\sP ([0,1])$. Examples of such functions can be found within the one-parameter family $\phi_{\theta}(x) = \cos(2\pi(x - \theta))$. For certain values of $\theta$, the $\phi_{\theta}$-maximizing measure is a so-called Sturmian measure supported on a Cantor set. 

\subsection{Strategy and plan of the paper}
Even though results in the uniformly expanding and uniformly hyperbolic settings similar to those in this paper are either classical or have been recently established, new strategies are needed to settle them in our non-uniformly expanding context in complex dynamics.

Roughly speaking, by investigating the coarse geometric properties of various relevant metrics (spherical, chordal, visual, canonical orbifold, and singular conformal metrics) and their interplay with associated combinatorial structures (tiles, flowers, and bouquets), we convert and split the difficulties from non-uniform expansion into two categories: ones of combinatorial nature and ones related to metric geometry, and try to tackle them separately.

Due to the coarse geometric nature of our approach, various multiplicative and additive constants, both spatial and temporal, arise naturally in our analysis throughout the paper. This may make our proofs seem more technical than they really are.

We establish an appropriate form of the Ma\~{n}\'{e} lemma (to construct the so-called sub-actions with appropriate regularity) and formulate and prove fine quantitative (local) versions of closing lemmas. To investigate the maximizing measures and ground states, we adopt some of the ideas from \cite{Co16} and \cite{HLMXZ19} for distance expanding maps, Axiom A attractors, and Anosov diffeomorphisms. Since the strongest forms of the ingredients are not available in our non-uniformly expanding setting, the arguments in the proofs in Section~\ref{sct_Holder_Density} are delicate and quantitative in nature. One challenge we face is to formulate appropriate weaker versions of the classical results to fit them together in the proofs in Section~\ref{sct_Holder_Density}.

We discuss our approach in more detail below.

We need to establish a form of the closing lemma that produces, for a nonempty compact forward-invariant set $\cK$ (disjoint from critical points), a periodic orbit $\O$ close to $\cK$ in terms of its \emph{$(r,\theta)$-gap} $\Delta_{r,\,\theta} (\O)$ we introduce (see Lemma~\ref{l_bound_by_gap}). Such a closing lemma is ultimately built upon the Anosov closing lemma as well as a closing lemma due to Bressaud--Quas \cite{BQ07}. A version of the Anosov closing lemma for expanding Thurston maps was established by the first-named author in \cite[Lemma~8.6]{Li18}, albeit coarse in the temporal direction, i.e., it holds only for sufficiently high iterates of the map and sufficiently long orbits. Although it appears to be relevant, this version turns out to be insufficient for our needs in this paper. Instead, we establish a local version (away from critical points) of the Anosov closing lemma in our setting in Lemma~\ref{l_Local_Closing_Lemma}. Its proof relies on the combinatorial and metric properties of combinatorial objects like tiles, flowers, and bouquets, while successfully avoiding the more complicated combinatorics near critical points. We have to improve the qualitative \emph{uniform local injectivity property} of our maps from \cite{Li15} to a quantitative \emph{uniform local expansion property} in Section~\ref{sct_ULEAFC} for this purpose (as well as the final perturbation argument). On the other hand, if a Bressaud--Quas closing lemma holds for a dynamical system, we say such a system has the \emph{Bressaud--Quas shadowing property}. We establish fairly general rules to deduce the Bressaud--Quas shadowing property between related systems, such as factors and iterations, before establishing this property for expanding Thurston maps in Subsection~\ref{subsct_BQ}.

On the other hand, the existence and properties of sub-actions are important in our proof. A \emph{sub-action} is a function $h$ that satisfies the cohomological inequality $\phi + h \circ T - h \leq Q(T,\phi)$, where $T\: X \rightarrow X$ is a finite-to-one surjective continuous map on a compact metric space $X$ and $\phi$ is a potential. It was established by many authors in various settings that for some uniformly hyperbolic system $T$ and a sufficiently regular $\phi$, a sub-action exists \cite{CG93, Sav99, Bou00, Bou01, CLT01, Bou11, PoSh04}). Such results are called \emph{non-positive Liv\v{s}ic theorems, Ma\~{n}\'{e}--Conze--Guivarc'h lemmas} or \emph{Ma\~{n}\'{e} lemmas} for short. As a simple example, if $T$ is a subshift of finite type and $\phi$ is $\alpha$-H\"older continuous, then there exists an $\alpha$-H\"older continuous sub-action. Having such a result, including the desired regularity of $h$, is crucial in our setting. However, the regularity for sub-actions may not always be as good as that of $\phi$ even if $T$ is uniformly expanding (see \cite{BJ02}). Beyond uniformly expanding and uniformly hyperbolic systems, such an issue is even more serious. For example, for some intermittent map $T$ and some $\alpha$-H\"older continuous $\phi$, the H\"older exponent of a sub-action is strictly smaller than $\alpha$ \cite{Mo09}.

In order to find a sub-action $h$, T.~Bousch \cite{Bou00} proposed that it suffices to find a solution $h$ (also known as a \emph{calibrated sub-action} nowadays \cite{Ga17}) for the functional equation
\begin{equation}  \label{e_Bousch_function_equation}
h(x)  = - Q(T,\phi) + \max  \bigl\{ \phi (y) + h (y) : y \in T^{-1} (x)  \bigr\}.
\end{equation}
Equivalently, a function $h$ that satisfies (\ref{e_Bousch_function_equation}) is a fixed point of the nonlinear operator $\RR$ on the set of all real-valued functions on $X$ given by
\begin{equation} \label{e_Bousch_op}
\RR(u)(x) = - Q(T,\phi) + \max  \bigl\{ \phi (y) + h (y) : y \in T^{-1} (x)  \bigr\}.
\end{equation}
A related operator on a quotient function space was also studied by T.~Bousch in \cite{Bou00}.

We call the operator in (\ref{e_Bousch_op}) a \emph{Bousch operator} (or a \emph{Bousch--Lax operator}). In the context of Hamiltonian systems, an analogous construction gives the \emph{Lax--Oleinik semi-groups}\footnote{For this reason, the Bousch operator is sometimes referred to as the Lax--Oleinik operator or the Lax operators.} as studied by A.~Fathi \cite{Fa10}. The Bousch operator can be considered as a tropical version of the Ruelle--Perron--Frobenius operator in thermodynamic formalism, which was introduced by D.~Ruelle as an analog of the Ruelle--Araki transfer operator from classical statistical mechanics. In order to emphasize its connection to the Ruelle--Perron--Frobenius operator, we adopt in this paper a slightly modified version of the Bousch operator (see Section~\ref{sct_BouschOp}).

By investigating the Bousch operator in our setting with respect to visual metrics, we find a fixed point $u_\phi$ of the Bousch operator $\RR_{\overline\phi}$ for an $\alpha$-H\"older continuous $\phi$ and show that $u_\phi$ is $\alpha$-H\"older continuous (Proposition~\ref{p_calibrated_sub-action_exists}), establishing the Ma\~{n}\'{e} lemma. We view the Bousch operator as a Ruelle--Perron--Frobenius operator with respect to the max-plus algebra on $\R \cup \{-\infty\}$, and our proof of Proposition~\ref{p_calibrated_sub-action_exists} follows the proof of the corresponding result of the eigenfunctions for the Ruelle--Perron--Frobenius operators in \cite[Theorem~5.16]{Li18}.

We combine our local closing lemma and the existence of a calibrated sub-action in quantitative analysis to establish Theorem~\ref{t_Density_Lattes}~(i). More precisely, we show that for an arbitrary $\alpha$-H\"older continuous potential $\varphi \in \Lip(\wh\C, \sigma^{\alpha})$ with respect to the chordal metric $\sigma$, any perturbation of the form $\varphi' = \varphi - \epsilon \sigma (\cdot, \O)^\alpha$, with $\epsilon>0$ sufficiently small, belongs to $\sP^\alpha(\wh\C,\sigma)$, where $\O$ is some special periodic orbit produced from a calibrated sub-action and our local closing lemma away from critical points. The quantitative analysis is, however, carried out in the canonical orbifold metric $d$ on the related potentials $\wt\varphi$ and $\psi$, which are $\alpha$-H\"older continuous with respect to $d$ but not with respect to $\sigma$. In the case of Latt\`es maps, the canonical orbifold metric is also a visual metric. The technical parts are (1) to quantitatively avoid critical points $\crit f$ where the combinatorics are more involved in order to apply our local closing lemma as well as uniform local expansion property, and (2) to quantitatively avoid postcritical points $\post f$ where the conversion between $d$ and $\sigma$ is more involved. In fact, by applying various properties of the canonical orbifold metric and considering the orbifold ramification function, we get that the canonical orbifold metric and the chordal metric are ``locally comparable away from postcritical points''. We know that the identity map on $\wh\C$ between these two metrics is never bi-Lipschitz (see \cite[Appendix~A.10]{BM17}).

It is worth noting that even though sometimes certain ergodic properties of some non-uniformly expanding systems can be derived from associated symbolic models, and while an expanding Thurston map is (up to a sufficiently high iterate) semi-conjugate to a subshift of finite type via cell decompositions of $S^2$ and is semi-conjugate to a full shift via the geometric coding tree, it is not clear how to retrieve either the main theorems, or any one of the two main ingredients (i.e., the fine closing lemma in Lemma~\ref{l_bound_by_gap} and the existence of a calibrated sub-action with correct H\"older exponent in Proposition~\ref{p_calibrated_sub-action_exists}) for the proofs of the main theorems, or even the (local) Anosov closing lemma in Lemma~\ref{l_Local_Closing_Lemma} needed in the proof of Lemma~\ref{l_bound_by_gap} from the corresponding results for the symbolic systems. Our analysis is therefore focused on the phase space in order to retain as much information on the interactions between the dynamics, combinatorics, and metric geometry as possible.

For future directions, the discussions from this section suggest a ``dictionary'' for the correspondences between ergodic optimization and thermodynamic formalism. In this dictionary, the Bousch operators $\RR_\varphi$, $\RR_{\overline{\varphi}}$, and $\RR_{\wt\varphi}$ translate to the corresponding Ruelle--Perron--Frobenius operators, the maximal potential energy to the topological pressure, the maximizing measures (measures of maximal potential energy) to the equilibrium states, the existence and construction of the calibrated sub-action in Proposition~\ref{p_calibrated_sub-action_exists} to the existence and construction of the eigenfunction of the   Ruelle--Perron--Frobenius operator in \cite[Theorem~5.16]{Li18}, and finally our main theorems may be considered as the (degenerated) counterpart to the existence, uniqueness, and equidistribution of periodic points for the equilibrium state. Any new entries in this dictionary would be interesting. Moreover, our recent work \cite{DLZ24} with Jian~Ding on a probabilistic version of the TPO Conjecture, known as the Hunt--Ott conjecture \cite{HO96a, HO96b} and the prior work \cite{BZ16} suggest yet another ``language'' in the dictionary, namely, a probabilistic point of view through random maximum mean cycle problems on (directed) graphs, where periodic orbits correspond to (directed) cycles, periodic maximizing measures to cycles with maximum mean-weight, and the TPO and the Hunt--Ott Conjectures resemble the subcritical phenomena in \cite{Di13, DSW15}.

The local closing lemmas and local perturbation techniques we create in this paper should have applications in settings beyond uniformly expanding and uniformly hyperbolic ones. For example, Yinying~Huang, O.~Jenkinson, and the first-named author have applied them to a countable setting and established the Ma\~{n}\'{e} lemma and a version of the TPO Conjecture for the Gauss map \cite{HJL24}; Zelai~Hao, Yinying~Huang, O.~Jenkinson, and the first-named author have also applied them to a discontinuous setting and established the Ma\~{n}\'{e} lemma and a version of the TPO Conjecture for $\beta$-transformations \cite{HHJL24}. 

For a more general setting in low-dimensional dynamics, the topological Collet--Eckmann maps of one real or complex variable form another popular class of non-uniformly expanding systems, which has been extensively studied by S.~Smirnov, F.~Przytycki, J.~Rivera-Letelier, Weixiao~Shen, and others. It is natural to ask whether the TPO Conjecture holds for topological Collet--Eckmann maps with the geometric potential. In this setting, one may want to apply fine inducing schemes developed in the literature, but some structural stability result in the appropriate topology may also be needed, which is currently unavailable. 

Similarly, as remarked by the referee, it is plausible that a form of the TPO Conjecture should hold for weakly coarse expanding dynamical systems from \cite{DPTUZ21}, and it would be worth exploring this direction.

On the other hand, T.~Bousch established the TPO Conjecture for the space of potentials satisfying the Walters condition in \cite{Bou01} for the full shift. Similar to this space and the little Lipschitz spaces $\Lip(X,d^\alpha)$ ($\alpha \in (0,1)$),  it is interesting to consider other spaces of potentials larger than the Lipschitz space $\Lip(X,d)$ such as the Dini potentials studied by Aihua~Fan and Yunping~Jiang \cite{FJ01a, FJ01b}.

\smallskip

We now summarize the structure of this paper. In Section~\ref{sct_Notation}, some frequently used notations are recalled for the convenience of the reader. In Section~\ref{sct_Preliminaries}, we give a brief review of expanding Thurston maps, visual metrics, orbifolds, universal orbifold covering maps, Latt\`es maps, and the canonical orbifold metric. We also discuss a symbolic model for a sufficiently high iterate of an expanding Thurston map. Discussions on the little Lipschitz functions are limited to Sections~\ref{sct_lock} and~\ref{sct_lip}. In Section~\ref{sct_lock}, after recalling some facts on little Lipschitz functions, we establish a general result asserting that $\lock(X,d^\alpha)$ is equal to the dense interior of $\sP(X) \cap \lip(X,d^\alpha)$ for each $\alpha \in (0,1)$ and each continuous map on a compact metric space $(X,d)$ (Theorem~\ref{t_lock}). In Section~\ref{sct_Assumptions}, we state some assumptions on frequently-used objects in the paper for us to refer back later in order to simplify the presentation. In Section~\ref{sct_BouschOp}, we discuss the Bousch operators and some of their basic properties for general dynamical systems before proving the existence of an eigenfunction for the Bousch operator, also known as a calibrated sub-action, for an expanding Thurston map. We then establish the Liv\v{s}ic theorem, the Ma\~{n}\'{e} lemma, and the bilateral Ma\~{n}\'{e} lemma in our context. In Section~\ref{sct_ULEAFC}, we formulate and prove the uniform local expansion property of expanding Thurston maps away from critical points, which is crucial in the quantitative analysis in Sections~\ref{sct_Closing_Lemmas} and~\ref{sct_Holder_Density}. In Section~\ref{sct_Closing_Lemmas}, we establish in Lemma~\ref{l_bound_by_gap} a local closing lemma away from critical points. The proof relies on a local Anosov closing lemma and a (global) Bressaud--Quas closing lemma established in Subsection~\ref{subsct_Local_Anosov_Closing_Lemma} and Subsection~\ref{subsct_BQ}, respectively. Mechanisms to establish a Bressaud--Quas closing lemma for a general dynamical system are also discussed. In Section~\ref{sct_Holder_Density}, a proof of Theorem~\ref{t_Density_Lattes}~(i) is given, and then modifications necessary to establish Theorem~\ref{t_Density_Thurston}~(i) are described. Section~\ref{sct_lip} is devoted to the proofs of Theorems~\ref{t_Density_Rational_little},~\ref{t_Density_Thurston}~(ii),~\ref{t_Density_Lattes}~(ii),~\ref{t_gound_states}, and~\ref{t_Distance_expanding}.

An appendix is added to give illustrations of the combinatorial structures of three examples of expanding Thurston maps to help the reader gain some intuition. The rest of the paper is completely independent of the Appendix.

\section{Notation} \label{sct_Notation}
Let $\C$ be the complex plane and $\wh\C$ be the Riemann sphere. We follow the convention that $\N \coloneqq \{1, \, 2, \, 3, \, \dots\}$, $\N_0 \coloneqq \{0\} \cup \N$, and $\wh{\N} \coloneqq \N\cup \{+\infty\}$, with the order relations $<$, $\leq$, $>$, $\geq$ defined in the obvious way. For $x\in\R$, we define $\lfloor x\rfloor$ as the greatest integer $\leq x$. As usual, the symbol $\log$ denotes the logarithm to the base $e$, and $\log_c$ the logarithm to the base $c$ for $c>0$. The cardinality of a set $A$ is denoted by $\card{A}$. 

The collection of all maps from a set $X$ to a set $Y$ is denoted by $Y^X$. We denote the restriction of a map $g \: X \rightarrow Y$ to a subset $Z$ of $X$ by $g|_Z$. 

For a map $f\: X\rightarrow X$ and a real-valued function $\varphi\: X\rightarrow \R$, we write
$
S_n \varphi (x)   \coloneqq \sum_{j=0}^{n-1} \varphi \bigl( f^j(x) \bigr) 
$
for $x\in X$ and $n\in\N_0$. Note that by definition, we always have $S_0 \varphi = 0$.

Let $(X,d)$ be a metric space. For subsets $A,B\subseteq X$, we set $d(A,B) \coloneqq \inf \{d(x,y) : x\in A,\,y\in B\}$, and $d(A,x)=d(x,A) \coloneqq d(A,\{x\})$ for $x\in X$. For each subset $Y\subseteq X$, we denote the diameter of $Y$ by $\diam_d(Y) \coloneqq \sup\{d(x,y) : x, \, y\in Y\}$, the interior of $Y$ by $\inter Y$, and the max-plus characteristic function of $Y$ by $\mathbbold{0}_Y$, which maps each $x\in Y$ to $0\in\R$ and vanishes otherwise. We use the convention that $\mathbbold{0}=\mathbbold{0}_X$ when the space $X$ is clear from the context. For each $r>0$, we define $N^r_d(A)$ to be the open $r$-neighborhood $\{y\in X : d(y,A)<r\}$ of $A$, and $\overline{N}^r_d(A)$ the closed $r$-neighborhood $\{y\in X : d(y,A)\leq r\}$ of $A$. For $x\in X$, we denote the open (resp.\ closed) ball of radius $r$ centered at $x$ by $B_d(x, r)$ (resp.\ $\overline{B}_d(x,r)$). 

We set $\CCC(X)$ to be the space of continuous functions from $X$ to $\R$, $\MMM(X)$ the set of finite signed Borel measures, and $\PPP(X)$ the set of Borel probability measures on $X$. If we do not specify otherwise, we equip $\CCC(X)$ with the uniform norm $\Norm{\cdot}_{\CCC^0}$. For a continuous map $g\: X \rightarrow X$, $\MMM(X,g)$ is the set of $g$-invariant Borel probability measures on $X$. For each $x\in X$, we denote by $\delta_x$ the Dirac delta measure on $x$ given by $\delta_x(A) = 1$ if $x\in A$ and $0$ otherwise for all Borel measurable set $A \subseteq X$. 

We use $\Lip(X,d^\alpha)$ to denote the space of real-valued H\"{o}lder continuous functions on $(X,d)$ with an exponent $\alpha\in (0,1]$. For each $\psi\in\Lip(X,d^{\alpha})$, we denote
\begin{equation}   \label{e_Def|.|alpha}
\Hseminorm{d^\alpha}{\psi} \coloneqq \sup \{ \abs{\psi(x)- \psi(y)} / d(x,y)^\alpha  : x, \, y\in X, \,x\neq y \},
\end{equation}
and the H\"{o}lder norm is defined as $\Hnorm{d^\alpha}{\psi}{X} \coloneqq  \Hseminorm{d^\alpha}{\psi}  + \Norm{\psi}_{\CCC^0}$.

For a Lipschitz map $g\: (X,d)\rightarrow (X,d)$, we denote the Lipschitz constant by
\begin{equation}   \label{e_DefLipConst}
\LIP_d(g) \coloneqq \sup \{ d(g(x),g(y)) / d(x,y)  : x, \, y\in X \text{ with } x\neq y \}.
\end{equation}

\section{Preliminaries}  \label{sct_Preliminaries}

We review the definitions of expanding Thurston maps and visual metrics, and discuss some combinatorial structures and key metric properties associated to expanding Thurston maps in Subsection~\ref{subsct_ThurstonMap}. We then recall the notions of orbifolds and universal orbifold covering maps associated to expanding Thurston maps in Subsection~\ref{subsct_Orbifold}, leading to the definition of Latt\`es maps and the canonical orbifold metric in Subsection~\ref{subsct_Lattes_maps}. These two subsections are crucial in the proof of Theorem~\ref{t_Density_Lattes}, but not needed for Theorem~\ref{t_Density_Thurston}. Finally, in Subsection~\ref{subsct_SFT}, we quickly recall some notations from symbolic dynamics and show that a sufficiently high iterate of an expanding Thurston map is a factor of a one-sided subshift of finite type with a H\"older continuous factor map.

\subsection{Thurston maps} \label{subsct_ThurstonMap}
In this subsection, we go over some key concepts and results on Thurston maps, and expanding Thurston maps in particular. For a more thorough treatment of the subject, we refer to \cite{BM17}.

Let $S^2$ denote an oriented topological $2$-sphere. A continuous map $f\:S^2\rightarrow S^2$ is called a \defn{branched covering map} on $S^2$ if for each point $x\in S^2$, there exists a positive integer $d\in \N$, open neighborhoods $U$ of $x$ and $V$ of $y=f(x)$, open neighborhoods $U'$ and $V'$ of $0$ in $\wh\C$, and orientation-preserving homeomorphisms $\varphi\:U\rightarrow U'$ and $\eta\:V\rightarrow V'$ such that $\varphi(x)=0$, $\eta(y)=0$, and
\begin{equation*}
(\eta\circ f\circ\varphi^{-1})(z)=z^d
\end{equation*}
for each $z\in U'$. The positive integer $d$ above is called the \defn{local degree} of $f$ at $x$ and is denoted by $\deg_f (x)$. 

The \defn{degree} of $f$ is
\begin{equation}   \label{e_Deg=SumLocalDegree}
\deg f=\sum\limits_{x\in f^{-1}(y)} \deg_f (x)
\end{equation}
for $y\in S^2$ and is independent of $y$. If $f\:S^2\rightarrow S^2$ and $g\:S^2\rightarrow S^2$ are two branched covering maps on $S^2$, then so is $f\circ g$, and
\begin{equation} \label{e_LocalDegreeProduct}
 \deg_{f\circsmall g}(x) = \deg_g(x)\deg_f(g(x)), \qquad \text{for each } x\in S^2,
\end{equation}   
and moreover, 
\begin{equation}  \label{e_DegreeProduct}
\deg(f\circ g) =  (\deg f)( \deg g).
\end{equation}

A point $x\in S^2$ is a \defn{critical point} of $f$ if $\deg_f(x) \geq 2$. It follows immediately from the definition of the branched covering map above and the compactness of $S^2$ that there are only finitely many critical points of $f$. The set of critical points of $f$ is denoted by $\crit f$. A point $y\in S^2$ is a \defn{postcritical point} of $f$ if $y = f^n(x)$ for some $x\in\crit f$ and $n\in\N$. The set of postcritical points of $f$ is denoted by $\post f$. Note that $\post f=\post f^n$ for all $n\in\N$.

\begin{definition} [Thurston maps] \label{d_ThurstonMap}
A Thurston map is a branched covering map $f\:S^2\rightarrow S^2$ on $S^2$ with $\deg f\geq 2$ and $\card(\post f)<+\infty$.
\end{definition}

We now recall the notation for cell decompositions of $S^2$ used in \cite{BM17} and \cite{Li17}. A \defn{cell of dimension $n$} in $S^2$, $n \in \{1, \, 2\}$, is a subset $c\subseteq S^2$ that is homeomorphic to the closed unit ball $\overline{\B^n}$ in $\R^n$. We define the \defn{boundary of $c$}, denoted by $\partial c$, to be the set of points corresponding to $\partial\B^n$ under such a homeomorphism between $c$ and $\overline{\B^n}$. The \defn{interior of $c$} is defined to be $\inte (c) = c \setminus \partial c$. For each point $x\in S^2$, the set $\{x\}$ is considered as a \defn{cell of dimension $0$} in $S^2$. For a cell $c$ of dimension $0$, we adopt the convention that $\partial c=\emptyset$ and $\inte (c) =c$. 

We record the following three definitions from \cite{BM17}.

\begin{definition}[Cell decompositions]\label{d_celldecomp}
Let $\DD$ be a collection of cells in $S^2$.  We say that $\DD$ is a \defn{cell decomposition of $S^2$} if the following conditions are satisfied:

\begin{itemize}

\smallskip
\item[(i)]
the union of all cells in $\DD$ is equal to $S^2$,

\smallskip
\item[(ii)] if $c\in \DD$, then $\partial c$ is a union of cells in $\DD$,

\smallskip
\item[(iii)] for $c_1, \, c_2 \in \DD$ with $c_1 \neq c_2$, we have $\inte (c_1) \cap \inte (c_2)= \emptyset$,  

\smallskip
\item[(iv)] every point in $S^2$ has a neighborhood that meets only finitely-many cells in $\DD$.

\end{itemize}
\end{definition}

\begin{definition}[Refinements]\label{d_refine}
Let $\DD'$ and $\DD$ be two cell decompositions of $S^2$. We
say that $\DD'$ is a \defn{refinement} of $\DD$ if the following conditions are satisfied:
\begin{itemize}

\smallskip
\item[(i)] every cell $c\in \DD$ is the union of all cells $c'\in \DD'$ with $c'\subseteq c$,

\smallskip
\item[(ii)] for every cell $c'\in \DD'$ there exits a cell $c\in \DD$ with $c'\subseteq c$.

\end{itemize}
\end{definition}

\begin{definition}[Cellular maps and cellular Markov partitions]\label{d_cellular}
Let $\DD'$ and $\DD$ be two cell decompositions of  $S^2$. We say that a continuous map $f \: S^2 \rightarrow S^2$ is \defn{cellular} for  $(\DD', \DD)$ if for every cell $c\in \DD'$, the restriction $f|_c$ of $f$ to $c$ is a homeomorphism of $c$ onto a cell in $\DD$. We say that $(\DD',\DD)$ is a \defn{cellular Markov partition} for $f$ if $f$ is cellular for $(\DD',\DD)$ and $\DD'$ is a refinement of $\DD$.
\end{definition}

Let $f\:S^2 \rightarrow S^2$ be a Thurston map, and $\CC\subseteq S^2$ be a Jordan curve containing $\post f$. Then the pair $f$ and $\CC$ induces natural cell decompositions $\DD^n(f,\CC)$ of $S^2$, for $n\in\N_0$, in the following way:

By the Jordan curve theorem, the set $S^2\setminus\CC$ has two connected components. We call the closure of one of them the \defn{white $0$-tile} for $(f,\CC)$, denoted by $X^0_\w$, and the closure of the other the \defn{black $0$-tile} for $(f,\CC)$, denoted by $X^0_\b$. The set of \defn{$0$-tiles} is $\X^0(f,\CC) \coloneqq \bigl\{ X_\b^0, \, X_\w^0 \bigr\}$. The set of \defn{$0$-vertices} is $\V^0(f,\CC) \coloneqq \post f$. We set $\overline\V^0(f,\CC) \coloneqq \{ \{x\} : x\in \V^0(f,\CC) \}$. The set of \defn{$0$-edges} $\E^0(f,\CC)$ is the set of the closures of the connected components of $\CC \setminus  \post f$. Then we get a cell decomposition 
\begin{equation*}
\DD^0(f,\CC) \coloneqq \X^0(f,\CC) \cup \E^0(f,\CC) \cup \overline\V^0(f,\CC)
\end{equation*}
of $S^2$ consisting of \emph{cells of level $0$}, or \defn{$0$-cells}.

We can recursively define unique cell decompositions $\DD^n(f,\CC)$, $n\in\N$, consisting of \defn{$n$-cells} such that $f$ is cellular for $(\DD^{n+1}(f,\CC),\DD^n(f,\CC))$. We refer to \cite[Lemma~5.12]{BM17} for more details. We denote by $\X^n(f,\CC)$ the set of $n$-cells of dimension 2, called \defn{$n$-tiles}; by $\E^n(f,\CC)$ the set of $n$-cells of dimension 1, called \defn{$n$-edges}; by $\overline\V^n(f,\CC)$ the set of $n$-cells of dimension 0; and by $\V^n(f,\CC)$ the set $\bigl\{x  :  \{x\}\in \overline\V^n(f,\CC)\bigr\}$, called the set of \defn{$n$-vertices}. The \defn{$k$-skeleton}, for $k\in\{0, \, 1, \, 2\}$, of $\DD^n(f,\CC)$ is the union of all $n$-cells of dimension $k$ in this cell decomposition. 

We record Proposition~5.16 of \cite{BM17} here in order to summarize properties of the cell decompositions $\DD^n(f,\CC)$ defined above.

\begin{prop}[M.~Bonk \& D.~Meyer \cite{BM17}] \label{p_CellDecomp}
Let $k, \, n\in \N_0$, let   $f\: S^2\rightarrow S^2$ be a Thurston map,  $\CC\subseteq S^2$ be a Jordan curve with $\post f \subseteq \CC$, and   $m=\card(\post f)$. 
 
\smallskip
\begin{itemize}

\smallskip
\item[(i)] The map  $f^k$ is cellular for $\bigl( \DD^{n+k}(f,\CC), \DD^n(f,\CC) \bigr)$. In particular, if  $c$ is any $(n+k)$-cell, then $f^k(c)$ is an $n$-cell, and $f^k|_c$ is a homeomorphism of $c$ onto $f^k(c)$.

\smallskip
\item[(ii)]  Let  $c$ be  an $n$-cell.  Then $f^{-k}(c)$ is equal to the union of all 
$(n+k)$-cells $c'$ with $f^k(c')=c$.

\smallskip
\item[(iii)] The $1$-skeleton of $\DD^n(f,\CC)$ is  equal to  $f^{-n}(\CC)$. The $0$-skeleton of $\DD^n(f,\CC)$ is the set $\V^n(f,\CC)=f^{-n}(\post f )$, and we have $\V^n(f,\CC) \subseteq \V^{n+k}(f,\CC)$. 

\smallskip
\item[(iv)] $\card(\X^n(f,\CC))=2(\deg f)^n$,  $\card(\E^n(f,\CC))=m(\deg f)^n$,  and $\card (\V^n(f,\CC)) \leq m (\deg f)^n$.

\smallskip
\item[(v)] The $n$-edges are precisely the closures of the connected components of $f^{-n}(\CC)\setminus f^{-n}(\post f )$. The $n$-tiles are precisely the closures of the connected components of $S^2\setminus f^{-n}(\CC)$.

\smallskip
\item[(vi)] Every $n$-tile  is an $m$-gon, i.e., the number of $n$-edges and the number of $n$-vertices contained in its boundary are equal to $m$.  

\smallskip
\item[(vii)] Let $F\coloneqq f^k$ be an iterate of $f$ with $k \in \N$. Then $\DD^n(F,\CC) = \DD^{nk}(f,\CC)$.
\end{itemize}
\end{prop}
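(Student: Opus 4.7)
The overall strategy is induction on $k$ (and occasionally on $n$), leveraging the defining property that $f$ is cellular for $(\DD^{n+1}(f,\CC),\DD^n(f,\CC))$, together with the general properties of cellular Markov partitions from Definitions~\ref{d_celldecomp}--\ref{d_cellular}. The plan is to establish (i) first, derive (ii)--(iii) as essentially formal consequences, then handle the combinatorial counts in (iv) by degree-counting, obtain (v)--(vi) from the cellular structure, and finally prove (vii) by a uniqueness argument.

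For (i), the case $k=1$ is precisely the recursive definition of $\DD^{n+1}(f,\CC)$. For the inductive step, if $f^k$ is cellular for $(\DD^{n+k},\DD^n)$ and $f$ is cellular for $(\DD^{n+k+1},\DD^{n+k})$, then $f^{k+1}=f\circ f^k$ sends each $(n+k+1)$-cell $c'$ homeomorphically to an $(n+k)$-cell and then to an $n$-cell, using that each factor is a homeomorphism on $c'$. Statement (ii) then follows because the $(n+k)$-cells tile $S^2$ with disjoint interiors (Definition~\ref{d_celldecomp}(iii)) and the image $f^k(c')$ of any $(n+k)$-cell $c'$ is either contained in $c$ or has interior disjoint from $\inte(c)$; since $f^k|_{c'}$ is a homeomorphism onto a single $n$-cell, $f^{-k}(c)$ decomposes as the stated union. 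Statement (iii) follows by induction: the $1$-skeleton of $\DD^0$ is $\CC$ by definition, and since $f$ is cellular, the preimage under $f$ of the $1$-skeleton (resp.\ $0$-skeleton) of $\DD^n$ is exactly the $1$-skeleton (resp.\ $0$-skeleton) of $\DD^{n+1}$; the identity $\V^n(f,\CC)\subseteq\V^{n+k}(f,\CC)$ comes from $\post f\subseteq\CC$, which implies $f^{-n}(\post f)\subseteq f^{-(n+k)}(\post f)$.

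For the counts in (iv), use (i) together with~\eqref{e_Deg=SumLocalDegree}--\eqref{e_DegreeProduct}: over any $n$-tile $X$, there are exactly $\deg f^n=(\deg f)^n$ preimage $n$-tiles counted by local degree, but since $f^n$ restricted to the interior of each $(n+0)$-tile is a homeomorphism, the number of $n$-tiles above $X$ is $(\deg f)^n$. Summing over the two $0$-tiles gives $2(\deg f)^n$. The count for edges is analogous, starting from $m$ $0$-edges. For vertices the inequality (rather than equality) reflects that distinct preimages of different postcritical points may coincide at critical points of iterates, so one only gets $\leq m(\deg f)^n$. Statements (v)--(vi) follow from (iii) and (i): the components of $S^2\setminus f^{-n}(\CC)$ are the images of components of $S^2\setminus\CC$ under the branches of $f^{-n}$, hence their closures are the $n$-tiles; each $0$-tile is an $m$-gon (its boundary circle traverses the $m$ postcritical points on $\CC$), and the homeomorphism $f^n|_{X}$ from an $n$-tile $X$ onto a $0$-tile preserves the number of boundary vertices and edges, by~(i).

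For (vii), the point is that both $\DD^n(F,\CC)$ and $\DD^{nk}(f,\CC)$ are cell decompositions satisfying $F$-cellularity with respect to~$\DD^0(F,\CC)=\DD^0(f,\CC)$. By~(i), $F=f^k$ is cellular for $(\DD^{nk}(f,\CC),\DD^{(n-1)k}(f,\CC))$ for every $n\in\N$, so $\{\DD^{nk}(f,\CC)\}_{n\in\N_0}$ satisfies the same recursive characterization as $\{\DD^n(F,\CC)\}_{n\in\N_0}$; the uniqueness part of the recursive construction (\cite[Lemma~5.12]{BM17}) then forces equality. The main obstacle throughout is bookkeeping rather than conceptual difficulty: one must carefully justify that the inductive constructions and the combinatorial counts interact correctly with the potentially coincident preimages at critical points of iterates. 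This is where the inequality in the vertex count arises, and where one must be careful not to overcount in the edge and tile statements; the cleanest approach is to phrase the counts in terms of the interiors, where $f^n$ is a covering of degree $(\deg f)^n$ away from the critical set.
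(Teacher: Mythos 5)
The paper itself gives no proof of this proposition: it is quoted from \cite[Proposition~5.16]{BM17} purely as a summary of facts, so there is no in-paper argument to compare against. Your outline follows the standard route (induction on $k$ via cellularity of compositions, covering-space degree counts away from $f^{-n}(\post f)$ using that the critical values of $f^n$ lie in $\post f\subseteq\CC$, and uniqueness of the recursive construction for (vii)), and it is structurally sound.

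Two of your justifications are wrong, even though the conclusions they support are correct. First, the inclusion $\V^n(f,\CC)\subseteq\V^{n+k}(f,\CC)$, i.e.\ $f^{-n}(\post f)\subseteq f^{-(n+k)}(\post f)$, is equivalent to $\post f\subseteq f^{-k}(\post f)$, which follows from the forward invariance $f(\post f)\subseteq\post f$ of the postcritical set (equivalently $\post f=\post f^k$ up to this invariance), not from $\post f\subseteq\CC$; the latter plays no role in that inclusion. Second, your explanation of the inequality $\card(\V^n)\le m(\deg f)^n$ --- that ``distinct preimages of different postcritical points may coincide'' --- describes an impossibility: if $f^n(x)=p$ and $f^n(x)=q$ then $p=q$. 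The inequality can be strict only because a single $p\in\post f$ may have fewer than $(\deg f)^n$ preimages, namely when some preimage is a critical point of $f^n$, since by \eqref{e_Deg=SumLocalDegree} the local degrees over $p$ sum to $(\deg f)^n$. With these corrections the sketch is acceptable; when writing the details you will also need to make explicit, in (ii) and (vi), that every point of $S^2$ lies in the interior of exactly one cell of a cell decomposition and that a homeomorphism between cells carries boundary to boundary, which is what guarantees both that $f^{-k}(c)$ is covered by $(n+k)$-cells mapping \emph{onto} $c$ and that the boundary of an $n$-tile contains exactly $m$ edges rather than several edges with the same image.
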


We also note that for each $n$-edge $e\in\E^n(f,\CC)$, $n\in\N_0$, there exist exactly two $n$-tiles $X, \, X'\in\X^n(f,\CC)$ such that $X\cap X' = e$.

From now on, if the map $f$ and the Jordan curve $\CC$ are apparent from the context, we will sometimes omit $(f,\CC)$ in the notation above.

If we fix the cell decomposition $\DD^n(f,\CC)$, $n\in\N_0$, we can define for each $v\in \V^n$ the \defn{$n$-flower of $v$} as
\begin{equation}   \label{e_Def_Flower}
W^n(v) \coloneqq \bigcup  \{\inte (c) : c\in \DD^n,\, v\in c \}.
\end{equation}
Note that flowers are open (in the standard topology on $S^2$). Let $\overline{W}^n(v)$ be the closure of $W^n(v)$.
\begin{rem}  \label{r_Flower}
For $n\in\N_0$ and $v\in\V^n$, we have 
\begin{equation*}
\overline{W}^n(v)=X_1\cup X_2\cup \cdots \cup X_m,
\end{equation*}
where $m \coloneqq 2\deg_{f^n}(v)$, and $X_1, X_2, \dots X_m$ are all the $n$-tiles that contain $v$ as a vertex (see \cite[Lemma~5.28]{BM17}). Moreover, each flower is mapped under $f$ to another flower in such a way that is similar to the map $z\mapsto z^k$ on the complex plane. More precisely, for $n\in\N_0$ and $v\in \V^{n+1}$, there exist orientation preserving homeomorphisms $\varphi\: W^{n+1}(v) \rightarrow \D$ and $\eta\: W^{n}(f(v)) \rightarrow \D$ such that $\D$ is the unit disk on $\C$, $\varphi(v)=0$, $\eta(f(v))=0$, and 
\begin{equation*}
(\eta\circ f \circ \varphi^{-1}) (z) = z^k
\end{equation*}
for all $z\in \D$, where $k \coloneqq \deg_f(v)$. Let $\overline{W}^{n+1}(v)= X_1\cup X_2\cup \cdots \cup X_m$ and $\overline{W}^n(f(v))= X'_1\cup X'_2\cup \cdots \cup X'_{m'}$, where $X_1,\, X_2,\, \dots, \, X_m$ are all the $(n+1)$-tiles that contain $v$ as a vertex, listed counterclockwise, and $X'_1, \, X'_2, \, \dots, \, X'_{m'}$ are all the $n$-tiles that contain $f(v)$ as a vertex, listed counterclockwise, and $f(X_1)=X'_1$. Then $m= m'k$, and $f(X_i)=X'_j$ if $i\equiv j \pmod{k}$, where $k=\deg_f(v)$. (See also Case~3 of the proof of Lemma~5.24 in \cite{BM17} for more details.) In particular, $W^n(v)$ is simply connected.
\end{rem}

We denote, for each $x\in S^2$ and $n\in\Z$, the \emph{$n$-bouquet of $x$}
\begin{equation}  \label{e_Def_U^n}
U^n(x) \coloneqq \bigcup \{Y^n\in \X^n  :    \text{there exists } X^n\in\X^n  
                                        \text{ with } x\in X^n, \, X^n\cap Y^n \neq \emptyset  \}  
\end{equation}
if $n\geq 0$, and set $U^n(x) \coloneqq S^2$ otherwise. 

We can now give a definition of expanding Thurston maps.

\begin{definition} [Expansion] \label{d_Expanding}
A Thurston map $f\:S^2\rightarrow S^2$ is called \defn{expanding} if there exists a metric $d$ on $S^2$ that induces the standard topology on $S^2$ and a Jordan curve $\CC\subseteq S^2$ containing $\post f$ such that 
\begin{equation*}
\lim\limits_{n\to+\infty}\max \{\diam_d(X) :  X\in \X^n(f,\CC)\}=0.
\end{equation*}
\end{definition}

\begin{rems}  \label{r_Expanding}
It is clear from Proposition~\ref{p_CellDecomp}~(vii) and Definition~\ref{d_Expanding} that if $f$ is an expanding Thurston map, so is $f^n$ for each $n\in\N$. We observe that being expanding is a topological property of a Thurston map and independent of the choice of the metric $d$ that generates the standard topology on $S^2$. By Lemma~6.2 in \cite{BM17}, it is also independent of the choice of the Jordan curve $\CC$ containing $\post f$. More precisely, if $f$ is an expanding Thurston map, then
\begin{equation*}
\lim\limits_{n\to+\infty}\max \!\bigl\{ \! \diam_{\wt{d}}(X) : X\in \X^n\bigl(f,\wt\CC \hspace{0.5mm}\bigr)\hspace{-0.3mm} \bigr\}\hspace{-0.3mm}=0,
\end{equation*}
for each metric $\wt{d}$ that generates the standard topology on $S^2$ and each Jordan curve $\wt\CC\subseteq S^2$ that contains $\post f$.
\end{rems}

P.~Ha\"{\i}ssinsky and K.~M.~Pilgrim developed a notion of expansion in a more general context for finite branched coverings between topological spaces (see \cite[Section~2.1 and Section~2.2]{HP09}). This applies to Thurston maps, and their notion of expansion is equivalent to our notion defined above in the context of Thurston maps (see \cite[Proposition~6.4]{BM17}). Such concepts of expansion are natural analogs, in the non-uniform setting, to some of the more classical notions, such as forward-expansive maps and distance-expanding maps. Our notion of expansion is not equivalent to any such classical notion in the context of Thurston maps. One topological obstruction comes from the presence of critical points for (non-homeomorphic) branched covering maps on $S^2$. In fact, as mentioned in the introduction, there are subtle connections between our notion of expansion and some classical notions of weak expansion. More precisely, one can prove that an expanding Thurston map is asymptotically $h$-expansive if and only if it has no periodic points. Moreover, such a map is never $h$-expansive. See \cite{Li15} for details.

For an expanding Thurston map $f$, we can fix a natural class of metrics $d$ on $S^2$ called \emph{visual metrics for $f$}. The construction of these metrics, inspired by Sullivan's dictionary, is closely related to the visual metrics on the boundary at infinity $\partial_\infty G$ of a Gromov hyperbolic group $G$. For the existence and properties of such metrics, see \cite[Chapter~8]{BM17}. For a visual metric $d$ for $f$, there exists a unique constant $\Lambda > 1$ called the \emph{expansion factor} of $d$ (under $f$) (see \cite[Chapter~8]{BM17} for more details). One major advantage of a visual metric $d$ is that in $(S^2,d)$, we have good quantitative control over the sizes of the cells in the cell decompositions discussed above. We summarize several results of this type (\cite[Proposition~8.4, Lemma~8.10, Lemma~8.11]{BM17}) in the lemma below.

\begin{lemma}[M.~Bonk \& D.~Meyer \cite{BM17}]   \label{l_CellBoundsBM}
Let $f\:S^2 \rightarrow S^2$ be an expanding Thurston map, and $\CC \subseteq S^2$ be a Jordan curve containing $\post f$. Let $d$ be a visual metric on $S^2$ for $f$ with expansion factor $\Lambda>1$. Then there exist constants $C\geq 1$, $C'\geq 1$, $K\geq 1$, and $n_0\in\N_0$ with the following properties:
\begin{enumerate}
\smallskip
\item[(i)] $d(\sigma,\tau) \geq C^{-1} \Lambda^{-n}$ whenever $\sigma$ and $\tau$ are disjoint $n$-cells for $n\in \N_0$.

\smallskip
\item[(ii)] $C^{-1} \Lambda^{-n} \leq \diam_d(\tau) \leq C\Lambda^{-n}$ for all $n$-edges and all $n$-tiles $\tau$ for $n\in\N_0$.

\smallskip
\item[(iii)] $B_d(x,K^{-1} \Lambda^{-n} ) \subseteq U^n(x) \subseteq B_d(x, K\Lambda^{-n})$ for $x\in S^2$ and $n\in\N_0$.

\smallskip
\item[(iv)] $U^{n+n_0} (x)\subseteq B_d(x,r) \subseteq U^{n-n_0}(x)$ where $n= \lceil -\log r / \log \Lambda \rceil$ for $r>0$ and $x\in S^2$.

\smallskip
\item[(v)] For every $n$-tile $X^n\in\X^n(f,\CC)$, $n\in\N_0$, there exists a point $p\in X^n$ such that $B_d(p,C^{-1}\Lambda^{-n}) \subseteq X^n \subseteq B_d(p,C\Lambda^{-n})$.
\end{enumerate}

Conversely, if $\wt{d}$ is a metric on $S^2$ satisfying conditions \textnormal{(i)} and \textnormal{(ii)} for some constant $C\geq 1$, then $\wt{d}$ is a visual metric with expansion factor $\Lambda>1$.
\end{lemma}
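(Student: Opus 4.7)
The plan is to take conditions (i) and (ii) as essentially the defining features of a visual metric and derive (iii), (iv), (v) as geometric consequences, before addressing the converse. Conditions (i) and (ii) are built into the standard construction of a visual metric through a combinatorial ``level function'' $m(x,y) \in \N_0 \cup \{\infty\}$ that records the largest $n$ for which some $n$-cell containing $x$ meets some $n$-cell containing $y$, paired with the relation $d(x,y) \asymp \Lambda^{-m(x,y)}$. In this language (i) asserts $m(x,y) < n$ whenever $x$, $y$ lie in disjoint $n$-cells, and (ii) is a uniform diameter bound that follows because any two points in an $n$-cell $\tau$ can be joined by a short chain of adjacent subcells of $\tau$, forcing $m(x,y) \geq n$ up to an additive constant depending only on the combinatorics of $f$ and $\CC$.

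For (v), given $X^n \in \X^n$, any point outside $X^n$ lies in an $n$-tile $Y$ with $Y \cap \inte(X^n) = \emptyset$, so (i) shows that any point within distance $C^{-1}\Lambda^{-n}$ of a suitably chosen interior point $p \in X^n$ lies in $X^n$. The existence of such a $p$ with $d(p,\partial X^n) \geq C^{-1}\Lambda^{-n}$ comes from (ii) combined with a pigeonhole argument, using the uniformly bounded combinatorial fact that only finitely many $n$-subcells constitute $\partial X^n$. The outer bound $X^n \subseteq B_d(p, C\Lambda^{-n})$ is immediate from (ii). For (iii), the outer inclusion $U^n(x) \subseteq B_d(x, K\Lambda^{-n})$ is obtained by writing any $y \in U^n(x)$ via a tile $X \ni x$ and an adjacent tile $Y \ni y$, then applying (ii) twice to obtain $d(x,y) \leq 2C\Lambda^{-n}$. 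The inner inclusion $B_d(x, K^{-1}\Lambda^{-n}) \subseteq U^n(x)$ follows because any point outside $U^n(x)$ lies in an $n$-tile disjoint from every $n$-tile containing $x$, hence at distance at least $C^{-1}\Lambda^{-n}$ from $x$ by (i). Statement (iv) is a direct translation of (iii) by choosing $n \approx -\log r / \log \Lambda$ and absorbing the discrepancy into an additive constant $n_0$.

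The converse is the more subtle direction: from a metric $\tilde{d}$ satisfying (i) and (ii) for some $C \geq 1$ and some $\Lambda > 1$, one must reconstruct the combinatorial level function and verify $\tilde{d}(x,y) \asymp \Lambda^{-m(x,y)}$. The upper bound uses (ii) on a chain of two adjacent $m(x,y)$-cells through a common point of their intersection, while the lower bound uses (i) at level $m(x,y) + 1$, where by maximality of $m(x,y)$ no such meeting occurs. I expect the main obstacle to be bookkeeping the constants so that they do not blow up with $n$; this requires the uniform combinatorial bound on how many $n$-cells can meet any given $n$-cell, which is where one invokes the full strength of expansion of $f$ relative to $\CC$. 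Once this comparison is established, $\tilde{d}$ is bi-Lipschitz equivalent (on each scale) to the Bonk--Meyer visual metric with expansion factor $\Lambda$, and is therefore visual itself.
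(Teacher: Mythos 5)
The paper itself offers no proof of this lemma: it is quoted verbatim from \cite[Proposition~8.4, Lemma~8.10, Lemma~8.11]{BM17}, so your sketch is really a reconstruction of the Bonk--Meyer arguments. Most of it is structurally correct: the derivations of (iii) and (iv) from (i)--(ii) are fine, and the converse is fine as well --- indeed your worry about constants blowing up there is unfounded, since the two-tile chain through a point of $X \cap Y$ gives the uniform upper constant $2C$, while applying (i) at level $m(x,y)+1$ (where, by maximality, every $(m+1)$-tile containing $x$ is disjoint from every $(m+1)$-tile containing $y$) gives the uniform lower constant $C^{-1}\Lambda^{-1}$; this is precisely the defining inequality $\tilde{d}(x,y) \asymp \Lambda^{-m(x,y)}$ of a visual metric with expansion factor $\Lambda$, with no dependence on $n$.

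There are, however, two genuine gaps. First, your reading of (i) as ``$m(x,y) < n$ whenever $x,y$ lie in disjoint $n$-cells'' is false: two distinct $n$-vertices on a common $n$-edge are disjoint $n$-cells yet lie in a common $n$-tile, so $m(x,y) \geq n$. What is actually true, and what (i) encodes, is $m(x,y) \leq n + k_0$ for a constant $k_0$ depending only on $f$ and $\CC$; establishing this $k_0$ is a nontrivial consequence of expansion, not of the definition of $m$. Second, and more seriously, your argument for (v) does not close. Condition (i) compares disjoint cells of the \emph{same} level, so it cannot be applied to the point $p$ against an $n$-tile $Y$ that merely avoids $\inte(X^n)$: adjacent $n$-tiles share edges, so $Y$ and $X^n$ need not be disjoint, and $\{p\}$ is not an $n$-cell. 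The correct mechanism is to produce a uniform $k\in\N$ such that every $n$-tile $X^n$ contains an $(n+k)$-tile $X'$ disjoint from $\partial X^n$; then for $p \in X'$ and any $y \notin X^n$, the $(n+k)$-cells covering the $n$-tile containing $y$ are all disjoint from $X'$, and (i) at level $n+k$ yields $d(p,y) \geq C^{-1}\Lambda^{-(n+k)}$. The existence of such a $k$ is \emph{not} obtained by pigeonholing the boundary subcells --- the number of deep-level tiles meeting $\partial X^n$ can be of the same order as the total number of deep-level tiles in $X^n$, and (i)--(ii) alone do not visibly exclude every point of $X^n$ being very close to $\partial X^n$ --- but is a separate combinatorial consequence of expansion (\cite[Lemma~5.33]{BM17}). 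That lemma is the missing ingredient your sketch needs to name.
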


Recall that $U^n(x)$ is defined in (\ref{e_Def_U^n}).

In addition, we will need the fact that a visual metric $d$ induces the standard topology on $S^2$ (\cite[Proposition~8.3]{BM17}) and the fact that the metric space $(S^2,d)$ is linearly locally connected (\cite[Proposition~18.5]{BM17}). A metric space $(X,d)$ is \defn{linearly locally connected} if there exists a constant $L\geq 1$ such that the following conditions are satisfied:
\begin{enumerate}
\smallskip

\item  For all $z\in X$, $r > 0$, and $x, \, y\in B_d(z,r)$ with $x\neq y$, there exists a continuum $E\subseteq X$ with $x, \, y\in E$ and $E\subseteq B_d(z,rL)$.

\smallskip

\item For all $z\in X$, $r > 0$, and $x, \, y\in X \setminus B_d(z,r)$ with $x\neq y$, there exists a continuum $E\subseteq X$ with $x, \, y\in E$ and $E\subseteq X \setminus B_d(z,r/L)$.
\end{enumerate}
We call such a constant $L \geq 1$ a \defn{linear local connectivity constant of $d$}.

In fact, visual metrics serve a crucial role in connecting the dynamical arguments with geometric properties for rational expanding Thurston maps, especially Latt\`{e}s maps.

We first recall the following notions of equivalence between metric spaces.

\begin{definition}  \label{d_Quasi_Symmetry}
Consider two metric spaces $(X_1,d_1)$ and $(X_2,d_2)$. Let $g\: X_1 \rightarrow X_2$ be a homeomorphism. Then
\begin{enumerate}
\smallskip
\item[(i)] $g$ is \defn{bi-Lipschitz} if there exists a constant $C \geq 1$ such that for all $u,\, v \in X_1$,
\begin{equation*}
C^{-1} d_1( u, v ) \leq d_2 ( g(u), g(v) ) \leq C d_1 ( u, v ).
\end{equation*}

\smallskip
\item[(ii)] $g$ is \defn{bi-H\"older} if there exist constants $\alpha,\,\beta \in (0,1]$ and $C \geq 1$ such that for all $u,\, v \in X_1$,
\begin{equation*}
C^{-1} d_1( u, v )^{1/\beta} \leq d_2 ( g(u), g(v) ) \leq C d_1 ( u, v )^\alpha.
\end{equation*}

\smallskip
\item[(iii)] $g$ is a \defn{snowflake homeomorphism} if there exist constants $\alpha >0$ and $C \geq 1$ such that for all $u,\, v \in X_1$,
\begin{equation*}
C^{-1} d_1( u, v )^\alpha \leq d_2 ( g(u), g(v) ) \leq C d_1 ( u, v )^\alpha.
\end{equation*}

\smallskip
\item[(iv)] $g$ is a \defn{quasisymmetric homeomorphism} or a \defn{quasisymmetry} if there exists a homeomorphism $\eta \: [0, +\infty) \rightarrow [0, +\infty)$ such that for all pairwise distinct $u,\,v,\,w \in X_1$,
\begin{equation*}
\frac{ d_2 ( g(u), g(v) ) }{ d_2 ( g(u), g(w) ) } 
\leq \eta \biggl(  \frac{ d_1 ( u, v ) }{ d_1 ( u, w ) }  \biggr).
\end{equation*}
\end{enumerate} 
Moreover, the metric spaces $(X_1,d_1)$ and $(X_2,d_2)$ are \emph{bi-Lipschitz}, \emph{snowflake}, or \defn{quasisymmetrically equivalent} if there exists a homeomorphism from $(X_1,d_1)$ to $(X_2,d_2)$ with the corresponding property.

When $X_1 = X_2 \eqqcolon X$, then we say the metrics $d_1$ and $d_2$ are \emph{bi-Lipschitz}, \emph{bi-H\"older}, \emph{snowflake}, or \defn{quasisymmetrically equivalent} if the identity map from $(X,d_1)$ to $(X,d_2)$ has the corresponding property.
\end{definition}

\begin{theorem}[M.~Bonk \& D.~Meyer \cite{BM10, BM17}, P.~Ha\"issinsky \& K.~M.~Pilgrim \cite{HP09}]  \label{t_BM}
An expanding Thurston map is conjugate to a rational map if and only if the sphere $(S^2,d)$ equipped with a visual metric $d$ is quasisymmetrically equivalent to the Riemann sphere $\wh\C$ equipped with the chordal metric.
\end{theorem}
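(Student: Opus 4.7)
The plan is to prove both implications through distinct mechanisms, exploiting the interplay between the coarse geometry of visual metrics and classical quasiconformal analysis on the Riemann sphere. For the forward direction, suppose $h \: S^2 \to \wh\C$ is a topological conjugacy from $f$ to a rational map $g$. My strategy is to show that the chordal metric $\sigma$ is itself a visual metric for $g$; then, since the pullback $h^* \sigma$ is automatically a visual metric for $f$, and since any two visual metrics for the same expanding Thurston map are snowflake (hence quasisymmetrically) equivalent, the composition of the identity $(S^2, d) \to (S^2, h^* \sigma)$ with $h \: (S^2, h^* \sigma) \to (\wh\C, \sigma)$ gives the desired quasisymmetry. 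To show that $\sigma$ is visual for $g$, I would verify the converse half of Lemma~\ref{l_CellBoundsBM}: find $C \geq 1$ and $\Lambda > 1$ so that disjoint $n$-cells of $(g, \wt\CC)$ (for some Jordan curve $\wt\CC \supseteq \post g$) are $\sigma$-separated by at least $C^{-1} \Lambda^{-n}$ and each $n$-tile has $\sigma$-diameter at most $C \Lambda^{-n}$. These bounds I would extract from the conformal expansion of $g$ on the Julia set $\wh\C$ via Koebe distortion applied to univalent inverse branches of iterates of $g$ away from $\post g$, together with combinatorial control near postcritical points through the flower structure recalled in Remark~\ref{r_Flower}.

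For the backward direction, suppose $\phi \: (S^2, d) \to (\wh\C, \sigma)$ is a quasisymmetry, and set $g \= \phi \circ f \circ \phi^{-1}$, a branched covering of $\wh\C$ topologically conjugate to $f$. The plan is a quasiconformal straightening in the style of Sullivan. First, every iterate $g^n = \phi \circ f^n \circ \phi^{-1}$ has quasisymmetric distortion uniformly bounded by that of $\phi$ alone, since $f^n$ is a topological branched covering; this makes $\{g^n\}_{n \in \N}$ a uniformly quasiregular family in the analytic sense once one upgrades the metric quasisymmetry of $\phi$ to analytic quasiconformality using Ahlfors $2$-regularity and the Loewner property of $(S^2, d)$, both standard outputs of the Bonk--Meyer framework. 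Next, take a weak-$*$ subsequential limit of pullbacks of the standard complex structure along $g^n$ to obtain a $g$-invariant Beltrami coefficient $\mu$ on $\wh\C$ with $\Norm{\mu}_\infty < 1$, and apply the measurable Riemann mapping theorem to obtain a quasiconformal $\psi \: \wh\C \to \wh\C$ with complex dilatation $\mu$. By the $g$-invariance of $\mu$, the conjugate $\psi \circ g \circ \psi^{-1}$ is holomorphic, hence rational, and $\psi \circ \phi$ furnishes the required conjugacy of $f$ to a rational map.

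The principal obstacle is the backward direction, specifically the twin analytic inputs: promoting the metric quasisymmetry $\phi$ to an analytic quasiconformal map (which requires the Ahlfors $2$-regularity and the Loewner property of the visual sphere) and then running the invariant Beltrami construction with uniform dilatation bounds on all $g^n$. Both steps lean heavily on the metric-measure geometric theory of expanding Thurston maps developed in \cite{BM17} and \cite{HP09}, and indeed the original proofs of this theorem by these authors are the culmination of substantial foundational work on the visual sphere rather than a short calculation.
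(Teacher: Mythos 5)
The paper does not prove this theorem; it is quoted from the literature (the proof is \cite[Theorem~18.1~(ii)]{BM17}), so your proposal is being measured against the known argument rather than anything in this text. Your backward direction is essentially the standard route: conjugate $f$ by the quasisymmetry $\phi$ to get $g = \phi \circ f \circ \phi^{-1}$ on $\wh\C$, show the iterates $g^n$ are uniformly quasiregular, and invoke Sullivan-type straightening via an invariant Beltrami coefficient and the measurable Riemann mapping theorem. One gloss worth flagging there: the uniform distortion bound on $g^n$ does not come ``since $f^n$ is a topological branched covering'' --- a bare branched covering gives no metric control --- but from the fact that $f^n$ acts on $(n+k)$-tiles as a scaling similarity up to bounded multiplicative error in the visual metric (Lemma~\ref{l_MetricDistortion}), so that locally $g^n$ is a composition of three quasisymmetries with constants independent of $n$. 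With that input the rest of your outline is sound.

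The genuine gap is in your forward direction. You propose to show that the chordal metric $\sigma$ is itself a visual metric for the rational map $g$ by verifying the converse criterion in Lemma~\ref{l_CellBoundsBM}. This is false, and the paper says so explicitly at the start of Subsection~\ref{subsct_Lattes_maps}: for a rational expanding Thurston map the chordal metric is \emph{never} a visual metric (\cite[Lemma~8.12]{BM17}). The obstruction is exactly where your Koebe argument breaks down: near a critical point $c$ of local degree $k \geq 2$ the map behaves like $z \mapsto z^k$, so it contracts chordal distances of nearby points by an unbounded factor as one approaches $c$, which is incompatible with the uniform two-sided geometric decay $C^{-1}\Lambda^{-n} \leq \diam_\sigma(\tau) \leq C\Lambda^{-n}$ required of tiles in a visual metric (equivalently, with the uniform expansion in Lemma~\ref{l_MetricDistortion}). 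The correct and weaker statement, which is what the forward direction actually needs, is \cite[Lemma~18.10]{BM17}: the chordal metric is \emph{quasisymmetrically} equivalent to every visual metric for $g$. Quasisymmetry only constrains relative distortion (roundness of tiles and relative separation), not absolute diameter decay at a fixed rate $\Lambda^{-n}$, and that is what the Koebe distortion estimates for inverse branches together with the flower combinatorics near $\post g$ actually deliver. So your forward direction needs its target restated from ``$\sigma$ is visual'' to ``$\sigma$ is quasisymmetrically equivalent to a visual metric''; the snowflake equivalence of visual metrics then finishes the argument as you describe.
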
   

See \cite[Theorem~18.1~(ii)]{BM17} for a proof. The chordal metric is recalled below.

\begin{rem}   \label{r_ChordalVisualQSEquiv}
In fact, in \cite[Lemma~18.10]{BM17}, M.~Bonk and D.~Meyer showed that for a rational expanding Thurston map $f\: \wh\C \rightarrow \wh\C$, the chordal metric $\sigma$ on the Riemann sphere $\wh\C$ is quasisymmetrically equivalent to each visual metric $d$ for $f$. Here the \emph{chordal metric} $\sigma$ on $\wh\C$ is given by
$
\sigma(z,w) =\frac{2\abs{z-w}}{\sqrt{1+\abs{z}^2} \sqrt{1+\abs{w}^2}}
$
for $z, \, w\in\C$, and $\sigma(\infty,z)=\sigma(z,\infty)= \frac{2}{\sqrt{1+\abs{z}^2}}$ for $z\in \C$. We also note that the inverse of a quasisymmetric homeomorphism is quasisymmetric (see \cite[Proposition~10.6]{He01}), and that quasisymmetric embeddings of bounded connected metric spaces are H\"{o}lder continuous (see \cite[Section~11.1 and Corollary~11.5]{He01}). Consequently the identity map between $(\wh\C,\sigma)$ and $(\wh\C,d)$ is bi-H\"older, and the class of H\"{o}lder continuous functions on $\wh\C$ equipped with the chordal metric and that on $\wh\C$ equipped with any visual metric for $f$ are the same (up to a change of the H\"{o}lder exponent). 

Moreover, since the \emph{spherical metric} on $\wh\C$ given by the length element $\mathrm{d}\sigma = 2 \abs{ \mathrm{d}z} / \bigl( 1 + \abs{z}^2 \bigr)$ is bi-Lipschitz equivalent to the chordal metric, we can replace the chordal metric by the spherical metric in the discussion above.
\end{rem}

A Jordan curve $\CC\subseteq S^2$ is \defn{$f$-invariant} if $f(\CC)\subseteq \CC$. We are interested in $f$-invariant Jordan curves that contain $\post f$, since for such a Jordan curve $\CC$, we get a cellular Markov partition $(\DD^1(f,\CC),\DD^0(f,\CC))$ for $f$. According to Example~15.11 in \cite{BM17}, such $f$-invariant Jordan curves containing $\post{f}$ need not exist. However, M.~Bonk and D.~Meyer \cite[Theorem~15.1]{BM17} proved that there exists an $f^n$-invariant Jordan curve $\CC$ containing $\post{f}$ for each sufficiently large $n$ depending on $f$. 

\begin{lemma}[M.~Bonk \& D.~Meyer \cite{BM17}]  \label{l_CexistsBM}
Let $f\:S^2\rightarrow S^2$ be an expanding Thurston map, and $\wt{\CC}\subseteq S^2$ be a Jordan curve with $\post f\subseteq \wt{\CC}$. Then there exists an integer $N(f,\wt{\CC}) \in \N$ such that for each $n\geq N(f,\wt{\CC})$ there exists an $f^n$-invariant Jordan curve $\CC$ isotopic to $\wt{\CC}$ rel.\ $\post f$.
\end{lemma}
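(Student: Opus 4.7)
The plan is to construct the $f^n$-invariant Jordan curve $\CC$ via an iterative lifting procedure using the cell decompositions $\DD^k(f,\wt\CC)$, converting the invariance requirement $f^n(\CC)\subseteq\CC$ into the equivalent fixed-point condition $\CC\subseteq f^{-n}(\CC)$, i.e., that $\CC$ lies in the $1$-skeleton of $\DD^n(f,\CC)$.

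First, I would set up the lifting operation. For any Jordan curve $\CC'\subseteq S^2$ containing $\post f$ and isotopic to $\wt\CC$ rel.\ $\post f$, the preimage $f^{-n}(\CC')$ is the $1$-skeleton of the cell decomposition $\DD^n(f,\CC')$ by Proposition~\ref{p_CellDecomp}~(iii). I would show that, for every $n$, one can choose a Jordan curve $L_n(\CC')\subseteq f^{-n}(\CC')$ which is isotopic to $\CC'$ rel.\ $\post f$. The existence of such a combinatorial lift uses the local structure of the branched cover $f^n$ near each postcritical point, which is modeled on the power map $z\mapsto z^d$ on the flower $W^n(v)$ (Remark~\ref{r_Flower}); one selects at each $v\in\post f$ a pair of $n$-edges playing the role of the two $0$-edges of $\CC'$ meeting at $v$, and then connects these choices through the remainder of the graph $f^{-n}(\CC')$. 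Using that $f^n$ is cellular for $(\DD^n(f,\CC'),\DD^0(f,\CC'))$ and an Euler-characteristic/covering argument on $f^{-n}(\CC')$, one checks that a simple closed loop with the correct homotopy class exists inside this graph.

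Second, I would iterate: starting from $\CC_0=\wt\CC$, define $\CC_{k+1}=L_n(\CC_k)$. By construction each $\CC_{k+1}$ lies in the $1$-skeleton of $\DD^n(f,\CC_k)$. Using Lemma~\ref{l_CellBoundsBM}~(ii) applied to the cell decompositions at increasingly higher levels, the $n$-cells of $\DD^n(f,\CC_k)$ have diameter $O(\Lambda^{-n})$ in a visual metric, and hence the Hausdorff distance between $\CC_{k+1}$ and $\CC_k$ decreases geometrically as $k\to\infty$, provided $n$ is large enough (so that the ambient cell sizes dominate the lifting discrepancy). Consequently $\{\CC_k\}$ is a Cauchy sequence in the Hausdorff metric on closed subsets of $S^2$, converging to a closed set $\CC$.

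Third, I would verify that the limit $\CC$ has the desired properties. The isotopy class rel.\ $\post f$ is preserved under small Hausdorff perturbations when $\post f$ is fixed and the curves all stay in a sufficiently fine cell structure, so $\CC$ is isotopic to $\wt\CC$ rel.\ $\post f$ and in particular is a Jordan curve. For the invariance, since $\CC_{k+1}\subseteq f^{-n}(\CC_k)$ for all $k$ and $\CC_k\to\CC$, passing to the limit and using continuity of $f^n$ yields $\CC\subseteq f^{-n}(\CC)$, i.e., $f^n(\CC)\subseteq\CC$.

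The main obstacle is ensuring that the combinatorial lift $L_n(\CC')$ can be chosen to be simple (i.e., a Jordan curve) and simultaneously isotopic to $\CC'$ rel.\ $\post f$; this is subtle near critical postcritical points, where a single $0$-edge has $\deg_{f^n}(v)$-fold branching into $n$-edges at $v\in\post f\cap \crit f^n$, so the choice of which edges to concatenate is constrained. A secondary obstacle is upgrading Hausdorff convergence of the $\CC_k$ to a genuine equality $f^n(\CC)=f^n(\CC)\cap\CC$; here one exploits that $\CC$ is contained in the $1$-skeleton of $\DD^n(f,\CC')$ for every limit candidate $\CC'$ sufficiently close to $\CC$, so that the limit condition $\CC\subseteq f^{-n}(\CC)$ is automatic from the combinatorial lift definition rather than requiring a delicate limiting argument.
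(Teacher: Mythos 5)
The paper does not prove this lemma; it is quoted from \cite[Theorem~15.1]{BM17}, so I am comparing your proposal against the Bonk--Meyer argument. Your skeleton --- iterated lifts $\CC_{k+1}\subseteq f^{-n}(\CC_k)$ in the same isotopy class rel.\ $\post f$, followed by a Hausdorff limit --- is the right one, but the step that actually makes it work is missing. Choosing each $\CC_{k+1}$ by an independent combinatorial selection inside the graph $f^{-n}(\CC_k)$ gives no control on $\Hdist(\CC_{k+1},\CC_k)$: that graph is the $1$-skeleton of a cell decomposition of all of $S^2$, and a Jordan curve inside it that is merely isotopic to $\CC_k$ rel.\ the finite set $\post f$ can bulge arbitrarily far from $\CC_k$ between consecutive postcritical points; smallness of the individual $n$-cells does not make the selected curve close to $\CC_k$, so the claimed geometric decay of Hausdorff distances does not follow. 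The missing idea is \emph{isotopy lifting}: fix an isotopy $H^0$ rel.\ $\post f$ deforming $\CC_0$ into $\CC_1\subseteq f^{-n}(\CC_0)$, lift it under the branched cover $f^n$ (possible because the critical values of $f^n$ lie in $\post f$) to an isotopy $H^1$ rel.\ $f^{-n}(\post f)\supseteq \post f$ carrying $\CC_1$ to $\CC_2\subseteq f^{-n}(\CC_1)$, and iterate. The tracks of $H^{k}$ are then $f^{n}$-preimages of tracks of $H^{k-1}$, so their diameters contract geometrically in $k$ by the expansion (Lemma~\ref{l_MetricDistortion} / Lemma~\ref{l_CellBoundsBM}~(ii)); this is what yields both the Cauchy property and, via uniformly convergent parametrizations, the injectivity of the limit.

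Two further points. First, your claim that a lift $L_n(\CC')$ exists for \emph{every} $n$ is false in general: for small $n$ the graph $f^{-n}(\CC')$ may be too coarse to contain a Jordan curve through $\post f$ in the prescribed isotopy class. The threshold $N(f,\wt\CC)$ in the statement exists precisely to guarantee the \emph{first} lift (for large $n$ one can approximate $\wt\CC$ by an edge path in the fine graph $f^{-n}(\wt\CC)$); you locate the need for large $n$ only in the convergence step, which misplaces it. Second, a Hausdorff limit of Jordan curves need not be a Jordan curve --- self-intersections can appear in the limit --- so the assertion that ``the isotopy class is preserved under small Hausdorff perturbations, hence $\CC$ is a Jordan curve'' is not an argument; injectivity must be extracted from the equicontinuous, uniformly convergent parametrizations produced by the lifted isotopies. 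Your closing limit argument for $f^n(\CC)\subseteq\CC$ is fine once Hausdorff convergence is actually established.
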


We now discuss some metric estimates for the dynamics induced by expanding Thurston maps.

\begin{lemma}   \label{l_ETM_Lipschitz}
Let $f\:S^2 \rightarrow S^2$ be an expanding Thurston map, and $d$ be a visual metric on $S^2$ for $f$. Then $f$ is Lipschitz with respect to $d$ with Lipschitz constant $\LIP_d(f) > 1$.
\end{lemma}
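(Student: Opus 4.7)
The plan is to prove the two assertions separately: that $f$ is Lipschitz, and that its Lipschitz constant exceeds $1$.

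The first part rests on the combinatorial inclusion $f(U^n(x)) \subseteq U^{n-1}(f(x))$, valid for every $x \in S^2$ and $n \geq 1$. Indeed, if $Y^n$ is an $n$-tile intersecting some $n$-tile $X^n \ni x$, then Proposition~\ref{p_CellDecomp}~(i) makes $f(X^n)$ and $f(Y^n)$ into $(n-1)$-tiles, with $f(x) \in f(X^n)$ and $f(X^n) \cap f(Y^n) \supseteq f(X^n \cap Y^n) \neq \emptyset$, so $f(Y^n) \subseteq U^{n-1}(f(x))$. Given distinct $x, y \in S^2$ with $r \coloneqq d(x,y)$ sufficiently small that $n \coloneqq \lceil -\log r / \log \Lambda \rceil$ satisfies $n - n_0 \geq 1$, Lemma~\ref{l_CellBoundsBM}~(iv) yields $y \in B_d(x, r) \subseteq U^{n - n_0}(x)$, and hence $f(y) \in U^{n - n_0 - 1}(f(x)) \subseteq B_d(f(x), K \Lambda^{-(n - n_0 - 1)})$ by Lemma~\ref{l_CellBoundsBM}~(iii). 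Combined with $\Lambda^{-n} \leq r$, this gives $d(f(x), f(y)) \leq K \Lambda^{n_0 + 1} d(x,y)$. For the finitely many ``large'' scales (i.e.\ $r$ above the above threshold), the quotient $d(f(x), f(y)) / d(x,y)$ is dominated by $\diam_d(S^2) \Lambda^{n_0 + 1} < +\infty$, using that the visual metric induces the standard topology on the compact space $S^2$. Thus $f$ is Lipschitz.

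To show $\LIP_d(f) > 1$, I would exploit iteration. By Remark~\ref{r_Expanding} and Proposition~\ref{p_CellDecomp}~(vii), for each $m \in \N$, $f^m$ is an expanding Thurston map whose $k$-tiles coincide with the $(km)$-tiles of $f$; it follows that $d$ is a visual metric for $f^m$ with expansion factor $\Lambda^m$, and Lemma~\ref{l_CellBoundsBM} applies to $f^m$ with the same constants $C, K$. Fix any $m$-edge $e$ of $f$, i.e., any $1$-edge of $f^m$: by Proposition~\ref{p_CellDecomp}~(i), $f^m|_e$ is a homeomorphism onto a $0$-edge of $f^m$, which has diameter at least $C^{-1}$ by Lemma~\ref{l_CellBoundsBM}~(ii). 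Choose $u, v \in f^m(e)$ with $d(u, v) \geq (2C)^{-1}$, and let $x, y \in e$ be their respective preimages. Then $d(x,y) \leq \diam_d(e) \leq C \Lambda^{-m}$ by Lemma~\ref{l_CellBoundsBM}~(ii) again, while the chain-rule bound $\LIP_d(f^m) \leq \LIP_d(f)^m$ (available once $f$ is known to be Lipschitz) forces $(2C)^{-1} \leq d(f^m(x), f^m(y)) \leq \LIP_d(f)^m \cdot C \Lambda^{-m}$, i.e., $\LIP_d(f) \geq \Lambda \, (2C^2)^{-1/m}$. Letting $m \to \infty$ yields $\LIP_d(f) \geq \Lambda > 1$.

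The main technical care is just ensuring that the constants $C, K, n_0$ can be taken uniformly in $n$ (which is the content of Lemma~\ref{l_CellBoundsBM}) and that the small-distance bound and the compactness bound can be stitched into a single Lipschitz constant; neither step is a real obstacle.
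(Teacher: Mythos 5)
Your proposal is correct, but it is organized differently from the paper's proof, which is worth comparing. For the Lipschitz property the paper simply cites \cite[Lemma~3.12]{Li18}, whereas you give a self-contained argument from the easy inclusion $f(U^n(x)) \subseteq U^{n-1}(f(x))$ (this is exactly the ``easy half'' of the paper's own Lemma~\ref{l_Un}, proved later in Section~\ref{sct_ULEAFC}) combined with Lemma~\ref{l_CellBoundsBM}~(iii)--(iv); that argument is sound, modulo the cosmetic point that with $r \= d(x,y)$ the point $y$ lies only in the \emph{closed} ball, so one should take $r$ slightly larger (or absorb the discrepancy into $n_0$), which only perturbs the constant. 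For the inequality $\LIP_d(f) > 1$, the paper argues by contradiction: if $\LIP_d(f) \leq 1$ then $\diam_d(X^n) \geq \diam_d(f^n(X^n)) = \diam_d\bigl(X^0\bigr)$ for every $n$-tile, contradicting $\diam_d(X^n) \leq C\Lambda^{-n} \to 0$ from Lemma~\ref{l_CellBoundsBM}~(ii). You exploit exactly the same tension --- level-$n$ cells are exponentially small yet $f^n$ maps them onto level-$0$ cells of definite size (Proposition~\ref{p_CellDecomp}~(i) plus Lemma~\ref{l_CellBoundsBM}~(ii)) --- but run it quantitatively through the submultiplicativity $\LIP_d(f^m) \leq \LIP_d(f)^m$ and a limit in $m$. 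The paper's version is shorter; yours is fully self-contained and yields the strictly stronger conclusion $\LIP_d(f) \geq \Lambda$, which the contradiction argument does not give.
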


\begin{proof}
It is shown in \cite[Lemma~3.12]{Li18} that $f$ is Lipschitz with respect to $d$. In order to show $\LIP_d(f) > 1$, we argue by contradiction and suppose $\LIP_d(f) \leq 1$. Fix an arbitrary Jordan curve $\CC \subseteq S^2$ containing $\post f$. Then by Proposition~\ref{p_CellDecomp}~(i), $\diam_d(X^n) \geq \diam_d ( f^n ( X^n ) )$ for each $n$-tile $X^n \in \X^n(f, \CC)$. This contradicts with Lemma~\ref{l_CellBoundsBM}~(ii).
\end{proof}

The following lemma proved in \cite[Lemma~3.13]{Li18} generalizes \cite[Lemma~15.25]{BM17}.

\begin{lemma}[M.~Bonk \& D.~Meyer \cite{BM17}, Zhiqiang~Li \cite{Li18}]   \label{l_MetricDistortion}
Let $f\:S^2 \rightarrow S^2$ be an expanding Thurston map, and $\CC \subseteq S^2$ be a Jordan curve that satisfies $\post f \subseteq \CC$ and $f^{n_\CC}(\CC)\subseteq\CC$ for some $n_\CC\in\N$. Let $d$ be a visual metric on $S^2$ for $f$ with expansion factor $\Lambda>1$. Then there exists a constant $C_0 > 1$, depending only on $f$, $d$, $\CC$, and $n_\CC$, with the following property:

If $k, \, n\in\N_0$, $X^{n+k}\in\X^{n+k}(f,\CC)$, and $x, \, y\in X^{n+k}$, then 
\begin{equation}   \label{e_MetricDistortion}
\frac{1}{C_0} d(x,y) \leq \frac{d(f^n(x),f^n(y))}{\Lambda^n}  \leq C_0 d(x,y).
\end{equation}
\end{lemma}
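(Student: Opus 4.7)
The plan is to translate the metric estimate~\eqref{e_MetricDistortion} into a combinatorial statement about tile levels, and then handle that statement via the cellular structure of $f^n$. For distinct $p, q \in S^2$, let $M(p,q)$ denote the largest integer $M \ge 0$ for which there exist $M$-tiles $T_1 \ni p$ and $T_2 \ni q$ with $T_1 \cap T_2 \neq \emptyset$; equivalently, $M(p, q) = \max\{M \ge 0 : q \in U^M(p)\}$, which is finite by the expansion of $f$. Combining parts~(i) and~(iii) of Lemma~\ref{l_CellBoundsBM} yields the two-sided comparability $K^{-1} \Lambda^{-M(p, q) - 1} \le d(p, q) \le K \Lambda^{-M(p, q)}$. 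The whole lemma thus reduces to showing that $M(f^n(x), f^n(y))$ differs from $M(x, y) - n$ by an additive constant depending only on $n_\CC$.

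The easy direction $M(f^n(x), f^n(y)) \ge M(x, y) - n$, which produces the upper estimate $d(f^n(x), f^n(y)) \le C_0 \Lambda^n d(x, y)$, goes as follows: if $T_1, T_2$ are intersecting $M$-tiles witnessing $M(x, y) = M$, then Proposition~\ref{p_CellDecomp}~(i) shows that $f^n(T_1), f^n(T_2)$ are intersecting $(M - n)$-tiles containing $f^n(x), f^n(y)$, respectively. This direction does not use any invariance assumption on $\CC$.

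The lower estimate $d(f^n(x), f^n(y)) \ge C_0^{-1} \Lambda^n d(x, y)$, equivalently $M(f^n(x), f^n(y)) \le M(x, y) - n + C_1$ for some constant $C_1 = C_1(n_\CC)$, is where the $f^{n_\CC}$-invariance of $\CC$ enters and is the main obstacle. By Proposition~\ref{p_CellDecomp}~(i), $f^n|_{X^{n+k}}$ is a homeomorphism onto $X^k \= f^n(X^{n+k}) \in \X^k$, and its inverse $g$ bijectively sends each $M'$-sub-tile of $X^k$ onto an $(M' + n)$-sub-tile of $X^{n+k}$ for every $M' \ge k$. Any intersecting pair of $M'$-tiles entirely contained in $X^k$ and containing $f^n(x), f^n(y)$ therefore pulls back under $g$ to intersecting $(M' + n)$-tiles in $X^{n+k}$ containing $x, y$, giving $M(x, y) \ge M' + n$. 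It remains to produce such a pair inside $X^k$ with $M'$ close to $M(f^n(x), f^n(y))$; when the globally maximizing pair crosses the boundary $\partial X^k$, I plan to replace the offending tiles by intersecting sub-tiles lying inside $X^k$, losing only $O(n_\CC)$ levels by using that $f^{n_\CC}(\CC) \subseteq \CC$ makes $\partial X^k$ a union of $k$-edges whose flower neighborhoods (Remark~\ref{r_Flower}) refine compatibly at the levels $k + n_\CC j$ for $j \ge 0$.

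The main obstacle is precisely this boundary correction, which requires careful bookkeeping with the combinatorics of flowers around vertices and edges of $\partial X^k$ under the $n_\CC$-periodic nesting of the cell decompositions; the $f^{n_\CC}$-invariance is what furnishes this periodic nesting and keeps the correction uniformly bounded. Combining the resulting two-sided bound on $M(f^n(x), f^n(y)) - M(x, y) + n$ with the metric-combinatorial comparability then yields~\eqref{e_MetricDistortion} with a constant $C_0$ depending only on $f, d, \CC, n_\CC$, as claimed.
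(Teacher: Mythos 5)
The paper does not prove this lemma itself (it is quoted from \cite[Lemma~3.13]{Li18}, generalizing \cite[Lemma~15.25]{BM17}), so I can only assess your argument on its own terms. Your reduction to the combinatorial quantity $M(\cdot,\cdot)$ via Lemma~\ref{l_CellBoundsBM}~(iii), and your proof of the upper bound by pushing forward an intersecting pair of $M(x,y)$-tiles (using Proposition~\ref{p_CellDecomp}~(i) and $M(x,y)\geq n+k$), are correct. The gap is in the lower bound, where the decisive step --- replacing an arbitrary intersecting pair of $m'$-tiles at $f^n(x)$, $f^n(y)$ by an intersecting pair \emph{contained in} $X^k$ at the cost of $O(n_\CC)$ levels --- is announced rather than carried out, and it is not a bookkeeping matter. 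Concretely, write $u=f^n(x)$, $v=f^n(y)$ and suppose both lie on $\partial X^k$ (or more generally on the $1$-skeleton $f^{-M''}(\CC)$). The $M''$-tiles contained in $X^k$ that contain $u$ form a strict subcollection of all $M''$-tiles containing $u$, and likewise for $v$; a pair straddling $\partial X^k$ may intersect while no pair drawn from the two subcollections does. Lowering $M''$ by a bounded amount does not obviously repair this: the existence of intersecting small tiles \emph{inside} $X^k$ containing $u$ and $v$ is a quantitative assertion that $u$ and $v$ can be joined through $X^k$ at scale $d(u,v)$, i.e.\ a uniform internal-connectivity property of tiles, whose boundaries are in general fractal. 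The periodic nesting furnished by $f^{n_\CC}(\CC)\subseteq\CC$ only yields that $\DD^{M''}$ refines $\DD^{k}$ when $M''-k\in n_\CC\N_0$, so that each $M''$-tile sits inside \emph{some} $k$-tile; it says nothing about joining two boundary points of $X^k$ through $X^k$. This internal-connectivity statement is at least as strong as the inequality you are trying to prove, so your route risks circularity.

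The device that resolves this in the literature, and which this paper itself uses for the closely analogous Lemmas~\ref{l_Local_Fixed_Pt_Flower} and~\ref{l_Local_Closing_Lemma_l}, is to work with flowers rather than with tiles confined to one side of $f^{-m}(\CC)$: one absorbs $U^{m'}(u)$ into a flower $W^{m'-\kappa}(w)$ (Lemma~\ref{l_U_in_W}), uses that flowers are open, connected, simply connected neighborhoods of vertices straddling the curve and that $f^{-n}$ of a flower is a \emph{disjoint} union of flowers (Remark~\ref{r_Flower}), and then only has to check that $x$ and $y$ land in the same component --- which is where the hypothesis $x,y\in X^{n+k}$ enters. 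If you wish to keep your tile-based route, you must isolate and prove the internal-connectivity lemma above; as it stands, the hard half of the statement is unproved.
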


We recall the following key estimate, which also serves as a cornerstone for the analysis in the theory of thermodynamic formalism. See \cite[Lemma~5.1]{Li18} for a proof.

\begin{lemma} \label{l_key_estimate}
Let $f\:S^2 \rightarrow S^2$ be an expanding Thurston map, and $\CC \subseteq S^2$ be a Jordan curve that satisfies $\post f \subseteq \CC$ and $f^{n_\CC}(\CC)\subseteq\CC$ for some $n_\CC\in\N$. Let $d$ be a visual metric on $S^2$ for $f$ with expansion factor $\Lambda>1$. Let $\phi \in \Lip(S^2,d^{\alpha})$ be a real-valued H\"older continuous function with an exponent $\alpha \in (0,1]$. Then for all $n, \, m \in \N_0$ with $n\leq m$, $X^m\in\X^m(f,\CC)$, and $x,\,y\in X^m$, we have
\begin{equation*}
\abs{S_n \phi (x) - S_n \phi (y) } \leq \frac{C_0}{1- \Lambda^{-\alpha}}  \Hseminorm{d^\alpha}{\phi} d ( f^n(x), f^n(y) )^\alpha,
\end{equation*}
where $C_0 > 1$ is a constant from Lemma~\ref{l_MetricDistortion}.
\end{lemma}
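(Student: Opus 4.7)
The plan is to write $S_n \phi(x) - S_n \phi(y) = \sum_{j=0}^{n-1} \bigl(\phi(f^j(x)) - \phi(f^j(y))\bigr)$ and estimate each summand using the H\"older hypothesis together with the bounded distortion of the dynamics along the cell $X^m$. The geometric-sum structure $\sum_j \Lambda^{-\alpha(n-j)}$ will then produce the factor $1/(1-\Lambda^{-\alpha})$.

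Concretely, fix $n\leq m$, $X^m\in\X^m(f,\CC)$, and $x,y\in X^m$. For each $j\in\{0,1,\dots,n-1\}$, Proposition~\ref{p_CellDecomp}~(i) gives that $f^j(X^m)$ is an $(m-j)$-cell and that $f^j(x),f^j(y)\in f^j(X^m)$. Writing $m-j=(n-j)+(m-n)$, I would apply Lemma~\ref{l_MetricDistortion} to the cell $f^j(X^m)\in\X^{(n-j)+(m-n)}$ with iterate $f^{n-j}$ to obtain
\begin{equation*}
d(f^j(x),f^j(y)) \leq C_0 \Lambda^{-(n-j)} d(f^n(x),f^n(y)).
\end{equation*}
Since $\phi\in\Lip(S^2,d^\alpha)$, this yields
\begin{equation*}
\bigl|\phi(f^j(x))-\phi(f^j(y))\bigr|\leq \Hseminorm{d^\alpha}{\phi}\, d(f^j(x),f^j(y))^\alpha \leq C_0^\alpha \Lambda^{-\alpha(n-j)} \Hseminorm{d^\alpha}{\phi}\, d(f^n(x),f^n(y))^\alpha.
\end{equation*}

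Summing over $j=0,1,\dots,n-1$ and bounding the resulting geometric series by $\sum_{i=1}^{\infty}\Lambda^{-\alpha i}=\Lambda^{-\alpha}/(1-\Lambda^{-\alpha})\leq 1/(1-\Lambda^{-\alpha})$ gives
\begin{equation*}
\bigl|S_n\phi(x)-S_n\phi(y)\bigr| \leq \frac{C_0^\alpha}{1-\Lambda^{-\alpha}}\Hseminorm{d^\alpha}{\phi}\, d(f^n(x),f^n(y))^\alpha,
\end{equation*}
which is the claimed bound after absorbing $C_0^\alpha\leq C_0$ using $C_0\geq 1$ and $\alpha\in(0,1]$.

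No step is really an obstacle here; the whole argument is a direct telescoping combined with Lemma~\ref{l_MetricDistortion}, which is precisely the bounded-distortion input tailored for this purpose. The only mild subtlety is the accounting of indices in the application of Lemma~\ref{l_MetricDistortion}: one must use the cell at level $m-j$ (not $m$) and iterate $f^{n-j}$, rather than naively iterating $n$ times from $X^m$; this ensures that at each step one lands precisely at $(f^n(x),f^n(y))$ and produces the correct exponential decay $\Lambda^{-\alpha(n-j)}$ in the distance estimate.
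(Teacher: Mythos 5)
Your proof is correct and is the standard telescoping-plus-bounded-distortion argument; the paper defers to \cite[Lemma~5.1]{Li18}, whose proof proceeds in essentially the same way (apply Lemma~\ref{l_MetricDistortion} to the $(m-j)$-cell $f^j(X^m)$ with the iterate $f^{n-j}$, invoke the H\"older seminorm, and sum the geometric series). Your index bookkeeping and the final absorption of $C_0^\alpha \leq C_0$ are both accurate.
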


\subsection{Orbifolds}   \label{subsct_Orbifold}

We consider orbifolds associated to Thurston maps. An orbifold is a space that is locally represented as a quotient of a model space by a group action (see \cite[Chapter~13]{Th80}). For the purpose of this work, we restrict ourselves to orbifolds on $S^2$. In this context, only cyclic groups can occur, so a simpler definition (than that of W.~P.~Thurston) will be used. We follow closely the setup from \cite{BM17}.

An \defn{orbifold} is a pair $\mathcal{O} = (S, \alpha)$, where $S$ is a surface and $\alpha\: S \rightarrow \wh{\N} = \N \cup \{+\infty\}$ is a map such that the set of points $p\in S$ with $\alpha(p) \neq 1$ is a discrete set in $S$, i.e., it has no limit points in $S$. We call such a function $\alpha$ a \defn{ramification function} on $S$. The set 
\begin{equation}   \label{e_Def_supp}
\supp(\alpha) \coloneqq \{p\in S : \alpha(p)\geq 2 \}
\end{equation}
is the \defn{support} of $\alpha$. We will only consider orbifolds with $S=S^2$, an oriented $2$-sphere, in this paper.

The \defn{Euler characteristic} of an orbifold $\mathcal{O}= (S^2,\alpha)$ is defined as
\begin{equation*}
\chi(\mathcal{O}) \coloneqq 2 - \sum_{x\in S^2} \biggl(1- \frac{1}{\alpha(x)} \biggr),
\end{equation*}
where we use the convention $\frac{1}{+\infty} = 0$, and note that the terms in the summation are nonzero on a finite set of points. The orbifold $\mathcal{O}$ is \defn{parabolic} if $\chi(\mathcal{O}) = 0$ and \defn{hyperbolic} if $\chi(\mathcal{O}) < 0$.

Every Thurston map $f$ has an associated orbifold $\mathcal{O}_f = (S^2,\alpha_f)$, which plays an important role in this section.

\begin{definition}  \label{d_Ramification_Fn}
Let $f\: S^2\rightarrow S^2$ be a Thurston map. The \defn{ramification function} of $f$ is the map $\alpha_f\: S^2\rightarrow\wh{\N}$ defined as
\begin{equation} \label{e_Ramification_Fn}
\alpha_f(x) \coloneqq \lcm \bigl\{\deg_{f^n}(y) :    y\in S^2,  \, n\in \N, \text{ and } f^n(y)=x \bigr\}
\end{equation}
for $x\in S^2$.
\end{definition}
Here $\wh{\N} = \N\cup \{+\infty\}$ with the order relations $<$, $\leq$, $>$, $\geq$ extended in the obvious way, and $\lcm$ denotes the least common multiple on $\wh{\N}$ defined by $\lcm(A)=+\infty$ if $A\subseteq \wh{\N}$ is not a bounded set of natural numbers, and otherwise $\lcm(A)$ is calculated in the usual way. Note that different Thurston maps can share the same ramification function; in particular, we have the following fact from \cite[Proposition~2.16]{BM17}.

\begin{prop}   \label{p_Same_Ramification_Fn}
Let $f\: S^2\rightarrow S^2$ be a Thurston map. Then $\alpha_f = \alpha_{f^n}$ for each $n\in \N$.
\end{prop}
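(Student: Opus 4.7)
The plan is to prove the two divisibility relations $\alpha_{f^n}(x) \mid \alpha_f(x)$ and $\alpha_f(x) \mid \alpha_{f^n}(x)$ for each $x \in S^2$, which together (with the convention $\lcm(A) = +\infty$ for unbounded $A$) force $\alpha_f(x) = \alpha_{f^n}(x)$. The key tool will be the multiplicativity of local degree under composition, i.e., formula~(\ref{e_LocalDegreeProduct}).

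First I would rewrite the set defining $\alpha_{f^n}(x)$: since $(f^n)^k = f^{nk}$, one has
\begin{equation*}
\alpha_{f^n}(x) = \lcm \bigl\{ \deg_{f^{nk}}(y) : y \in S^2,\, k \in \N,\, f^{nk}(y) = x \bigr\}.
\end{equation*}
The indices $m = nk$ form a subset of $\N$, so the above set is a subset of $\bigl\{\deg_{f^m}(y) : y \in S^2,\, m\in\N,\, f^m(y) = x\bigr\}$, which yields the easy direction $\alpha_{f^n}(x) \mid \alpha_f(x)$ immediately from the monotonicity of $\lcm$ under inclusion (with the convention on $+\infty$).

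For the reverse divisibility, I would argue as follows. Fix any $y \in S^2$ and $k \in \N$ with $f^k(y) = x$; the goal is to show $\deg_{f^k}(y) \mid \alpha_{f^n}(x)$. Since $f$ (hence $f^{(n-1)k}$) is a surjective branched covering of $S^2$, there exists $z \in S^2$ with $f^{(n-1)k}(z) = y$. Then $f^{nk}(z) = f^k(y) = x$, so $z$ is a preimage of $x$ under the $k$-th iterate of $f^n$, and hence $\deg_{f^{nk}}(z)$ belongs to the set defining $\alpha_{f^n}(x)$. Applying~(\ref{e_LocalDegreeProduct}) (iterated, or equivalently the chain rule for local degrees) gives
\begin{equation*}
\deg_{f^{nk}}(z) = \deg_{f^{(n-1)k}}(z) \cdot \deg_{f^k}(y),
\end{equation*}
so $\deg_{f^k}(y) \mid \deg_{f^{nk}}(z) \mid \alpha_{f^n}(x)$. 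Taking the least common multiple over all admissible $(y,k)$ yields $\alpha_f(x) \mid \alpha_{f^n}(x)$.

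I don't expect any serious obstacle; the only mild care needed is the handling of the value $+\infty$ in $\wh\N$, which is covered by noting that the argument above shows that for every $y,k$ with $f^k(y) = x$ there is a $z$ with $\deg_{f^{nk}}(z) \geq \deg_{f^k}(y)$, so unboundedness of the degrees defining $\alpha_f(x)$ forces unboundedness of those defining $\alpha_{f^n}(x)$ and vice versa. The essential content of the proof is really a one-line application of~(\ref{e_LocalDegreeProduct}) together with surjectivity of iterates of $f$.
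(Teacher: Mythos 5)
Your argument is correct: both divisibility directions go through, the use of~(\ref{e_LocalDegreeProduct}) with the factorization $f^{nk}=f^{k}\circ f^{(n-1)k}$ is valid, and the $+\infty$ case is handled properly (surjectivity of the iterates is automatic for a branched covering of degree $\geq 2$). The paper does not prove this proposition itself but cites \cite[Proposition~2.16]{BM17}, and the proof there is essentially the same two-sided divisibility argument based on the multiplicativity of local degrees, so your proposal matches the standard approach.
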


\begin{definition}[Orbifolds associated to Thurston maps]  \label{d_Orbifold_Thurston_Maps}
Let $f\: S^2\rightarrow S^2$ be a Thurston map. The \defn{orbifold associated to $f$} is a pair $\mathcal{O}_f \coloneqq (S^2,\alpha_f)$, where $\alpha_f\: S^2\rightarrow \wh{\N}$ is the ramification function of $f$.
\end{definition}

Orbifolds associated to Thurston maps are either parabolic or hyperbolic (see \cite[Proposition~2.12]{BM17}).

\smallskip

For an orbifold $\mathcal{O} = (S^2,\alpha)$, we set
\begin{equation}  \label{e_Def_S0}
S_0^2 \coloneqq S^2 \setminus \bigl\{x\in S^2   :   \alpha(x) = +\infty \bigr\}.
\end{equation}

We record the following facts from \cite{BM17}, whose proofs can be found in \cite{BM17} and references therein (see Theorem~A.26 and Corollary~A.29 in \cite{BM17}).

\begin{theorem}  \label{t_Uni_Orb_Cover_BM}
Let $\mathcal{O}=(S^2,\alpha)$ be an orbifold that is parabolic or hyperbolic. Then the following statements are satisfied:
\begin{enumerate}
\smallskip
\item[(i)] There exists a simply connected surface $\XX$ and a branched covering map $\Theta\: \XX\rightarrow S_0^2$ such that $ \deg_\Theta(x) = \alpha(\Theta(x)) $ for each $x\in \XX$.

\smallskip
\item[(ii)] The branched covering map $\Theta$ in $\operatorname{(i)}$ is unique. More precisely, if $\wt \XX$ is a simply connected surface and $\wt\Theta \: \wt \XX \rightarrow S_0^2$ satisfies $\deg_{\wt\Theta}(y) = \alpha\bigl(\wt\Theta(x)\bigr)$ for each $y\in \wt \XX$, then for all points $x_0\in \XX$ and $\wt{x}_0 \in \wt \XX$ with $\Theta(x_0)=\wt\Theta(\wt{x}_0)$ there exists orientation-preserving homeomorphism $A\: \XX\rightarrow \wt \XX$ with $A(x_0)= \wt{x}_0$ and $\Theta=\wt\Theta\circ A$. Moreover, if $\alpha(\Theta(x_0)) = 1$, then $A$ is unique.
\end{enumerate}
\end{theorem}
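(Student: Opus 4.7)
The plan is to build $\Theta : \XX \to S_0^2$ as the orbifold universal cover of $\mathcal{O}$; this is a classical construction whose proof for $2$-orbifolds on $S^2$ combines ordinary covering space theory on a punctured surface with the Poincar\'e--Koebe uniformization theorem.

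For part (i), I would first let $B \coloneqq \{x \in S_0^2 : 2 \leq \alpha(x) < +\infty\}$, a discrete subset of $S_0^2$, and set $S^\circ \coloneqq S_0^2 \setminus B$, which is open and connected. Picking a basepoint $* \in S^\circ$ and letting $\gamma_p \in \pi_1(S^\circ, *)$ denote a small loop encircling $p \in B$, I would form the orbifold fundamental group
\[
 G \coloneqq \pi_1(S^\circ, *) \big/ \bigl\langle\!\bigl\langle \gamma_p^{\alpha(p)} : p \in B \bigr\rangle\!\bigr\rangle,
\]
and take the connected covering $\pi : \XX^\circ \to S^\circ$ classified by $\ker(\pi_1(S^\circ, *) \twoheadrightarrow G)$. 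By construction, each power $\gamma_p^{\alpha(p)}$ lifts to a loop and no proper divisor does, so the $\pi$-preimage of a small punctured disk around $p \in B$ is a disjoint union of punctured disks, each mapped onto its base via a model $z \mapsto z^{\alpha(p)}$. I would obtain $\XX$ by filling in one point in each such punctured disk; this turns $\pi$ into a branched cover $\Theta : \XX \to S_0^2$ with $\deg_\Theta(x) = \alpha(\Theta(x))$ everywhere.

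For part (ii), given another candidate $\wt\Theta : \wt\XX \to S_0^2$, I would observe that $\wt\Theta$ restricted to $\wt\XX \setminus \wt\Theta^{-1}(B)$ is an unbranched covering of $S^\circ$, and that the local ramification condition $\deg_{\wt\Theta}(y) = \alpha(\wt\Theta(y))$ forces each $\gamma_p^{\alpha(p)}$ to lift as a loop while no proper divisor does. Hence this cover is classified by exactly the same kernel as $\XX^\circ$, and the Galois correspondence for covering spaces furnishes an isomorphism $A^\circ : \XX^\circ \to \wt\XX \setminus \wt\Theta^{-1}(B)$ sending $x_0$ to $\wt{x}_0$ (using $\Theta(x_0) = \wt\Theta(\wt{x}_0)$ to align basepoints). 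Since each removed puncture has a unique one-point compactification inside $\XX$ and $\wt\XX$, the map $A^\circ$ extends to an orientation-preserving homeomorphism $A : \XX \to \wt\XX$ with $\Theta = \wt\Theta \circ A$. If in addition $\alpha(\Theta(x_0)) = 1$, then $\Theta$ and $\wt\Theta$ are local homeomorphisms near $x_0$ and $\wt{x}_0$, which pins down $A$ uniquely by eliminating the $\Z/\alpha(\Theta(x_0))$ rotational ambiguity at a branch fiber.

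The hard part will be establishing that $\XX$ is simply connected. This is exactly where the parabolic/hyperbolic hypothesis enters: in the spherical case $\chi(\mathcal{O}) > 0$ one would have $G$ finite and the construction would halt at a finite cover rather than at a simply connected surface. The orbifold Poincar\'e--Koebe uniformization rules this out whenever $\chi(\mathcal{O}) \leq 0$, yielding $\XX$ conformally equivalent to $\C$ or $\D$; only the topological simple-connectedness is needed here. A secondary point is that the infinite-ramification points of $S^2 \setminus S_0^2$ are simply removed from the base and require no extension of $\XX^\circ$ above them, which is what keeps $\XX$ planar rather than compact.
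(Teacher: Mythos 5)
First, a point of comparison: the paper does not prove this theorem at all — it is quoted from Bonk--Meyer (\cite[Theorem~A.26 and Corollary~A.29]{BM17}), so your proposal is being measured against a citation rather than an argument. Your sketch is the standard covering-space construction of the universal orbifold cover, and parts of it are sound: the passage to $S^\circ$, the definition of the orbifold fundamental group $G$, the Galois-correspondence argument for (ii) (where the key point, which you use implicitly, is that $\pi_1$ of a simply connected surface minus a discrete set is normally generated by puncture loops, so the image subgroup of $\wt\Theta_*$ is forced to be the normal closure $K$ of the $\gamma_p^{\alpha(p)}$), and the unique-lifting argument for the last assertion.

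The genuine gap is in (i), at the sentence ``By construction, each power $\gamma_p^{\alpha(p)}$ lifts to a loop and no proper divisor does.'' Only the first half is by construction. The local degree of $\pi$ over $p$ equals the order of the image of $\gamma_p$ in $G$, which a priori only \emph{divides} $\alpha(p)$; the claim that it equals $\alpha(p)$ is exactly the developability question for $\mathcal{O}$, and it genuinely fails for the bad spherical orbifolds (the teardrop, where $G$ is trivial and your construction returns $\Theta = \mathrm{id}_{S^2}$ with $\deg_\Theta(p)=1\neq\alpha(p)$, and the asymmetric spindle). Proving it for $\chi(\mathcal{O})\le 0$ requires exhibiting a quotient of $G$ (typically a discrete group of isometries of $\C$ or $\D$, e.g.\ a triangle or von Dyck group) in which $\gamma_p$ has order exactly $\alpha(p)$ — i.e.\ essentially the geometrization step you only invoke at the end. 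Moreover, you locate the need for the parabolic/hyperbolic hypothesis in the simple connectedness of $\XX$, but that is the part of your construction which is actually automatic: by van Kampen, $\pi_1(\XX)$ is $\pi_1(\XX^\circ)\cong K$ modulo the normal closure of the lifted puncture loops, and since those loops map onto all $\pi_1(S^\circ)$-conjugates of the $\gamma_p^{k_p}$ (with $k_p$ the true local degree, $k_p\mid\alpha(p)$), that normal closure is all of $K$; so $\XX$ is simply connected even for bad orbifolds. Relatedly, your aside that for $\chi(\mathcal{O})>0$ the construction ``halts at a finite cover rather than a simply connected surface'' is off: a finite branched cover of $S^2$ obtained this way is again $S^2$, hence simply connected; what fails in the bad spherical cases is the local degree. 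So the hypothesis must be fed into the argument precisely where you claim nothing needs to be checked.
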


\begin{definition}[Universal orbifold covering maps]  \label{d_Uni_Orb_Cover}
Let $\mathcal{O}=(S^2,\alpha)$ be an orbifold that is parabolic or hyperbolic. The map $\Theta\: \XX \rightarrow S_0^2$ from Theorem~\ref{t_Uni_Orb_Cover_BM} is called the \defn{universal orbifold covering map} of $\mathcal{O}$.
\end{definition}

\subsection{Latt\`es maps}   \label{subsct_Lattes_maps}

For a rational expanding Thurston map $f\: \wh\C  \rightarrow \wh\C$, the chordal metric $\sigma$ (see Remark~\ref{r_ChordalVisualQSEquiv} for the definition), which is bi-Lipschitz equivalent to the Euclidean metric away from the infinity, is never a visual metric for $f$ (see \cite[Lemma~8.12]{BM17}). In fact, for each visual metric $d$ for $f$, the visual sphere $(S^2,d)$ is snowflake equivalent to $\bigl(\wh\C, \sigma \bigr)$ if and only if $f$ is topologically conjugate to a Latt\`{e}s map (see \cite[Theorem~18.1~(iii)]{BM17} and Definition~\ref{d_Lattes_Map} below). 

Recall that we call two metric spaces $(X_1,d_1)$ and $(X_2,d_2)$ are \defn{bi-Lipschitz}, \defn{snowflake}, or \defn{quasisymmetrically equivalent} if there exists a homeomorphism from $(X_1,d_1)$ to $(X_2,d_2)$ with the corresponding property (see Definition~\ref{d_Quasi_Symmetry}).

We recall a version of the definition of Latt\`{e}s maps.

\begin{definition}   \label{d_Lattes_Map}
Let $f\: \wh\C  \rightarrow \wh\C$ be a rational Thurston map on the Riemann sphere $\wh\C$. If $f$ is expanding and the orbifold $\mathcal{O}_f = (S^2, \alpha_f)$ associated to $f$ is parabolic, then it is called a \defn{Latt\`{e}s map}.
\end{definition}

See \cite[Chapter~3]{BM17} and \cite{Mil06} for other definitions and more properties of Latt\`{e}s maps.

\begin{rem}  \label{r_Canonical_Orbifold_Metric}
For a Latt\`{e}s map $f\: \wh\C \rightarrow \wh\C$, the universal orbifold covering map $\Theta \: \C \rightarrow \wh\C$ of the orbifold $\mathcal{O}_f = \bigl( \wh\C, \alpha_f \bigr)$ associated to $f$ is holomorphic (see \cite[Theorem~A.26, Definition~A.27, and Corollary~A.29]{BM17}). Let $d_0$ be the Euclidean metric on $\C$. Then the \defn{canonical orbifold metric} $\omega_f$ of $f$ is the pushforward of $d_0$ by $\Theta$, more precisely,
\begin{equation*}
\omega_f(p,q)  \coloneqq \inf \bigl\{ d_0(z,w) :   z\in \Theta^{-1}(p), \, w\in \Theta^{-1}(q)  \bigr\} 
\end{equation*}
for $p, \, q\in\wh\C$ (see Section~2.5 and Appendices~A.9 and A.10 in \cite{BM17} for more details on the canonical orbifold metric). Let $\sigma$ be the chordal metric on $\wh\C$ as recalled in Remark~\ref{r_ChordalVisualQSEquiv}. By \cite[Proposition~8.5]{BM17}, $\omega_f$ is a visual metric for $f$. By \cite[Lemma~A.34]{BM17}, $\bigl( \wh\C, \omega_f \bigr)$ and $\bigl( \wh\C, \sigma \bigr)$ are bi-Lipschitz equivalent, i.e., there exists a bi-Lipschitz homeomorphism $h \:  \wh\C  \rightarrow  \wh\C$ from $\bigl( \wh\C, \omega_f \bigr)$ to $\bigl( \wh\C, \sigma \bigr)$. Moreover, by the discussion in \cite[Appendix~A.10]{BM17}, $h$ cannot be the identity map.
\end{rem}

\subsection{Symbolic dynamics for expanding Thurston maps}   \label{subsct_SFT}

In this subsection, we give a brief review of the dynamics of one-sided subshifts of finite type and discuss a symbolic model for sufficiently large iterations of expanding Thurston maps. We refer the reader to \cite{Ki98} for a beautiful introduction to symbolic dynamics. For a discussion on results on subshifts of finite type related to our context, see \cite{PP90, Ba00}.

Let $S$ be a finite nonempty set, and $A \: S\times S \rightarrow \{0, \, 1\}$ be a matrix whose entries are either $0$ or $1$. We denote the \defn{set of admissible sequences defined by $A$} by
\begin{equation*}
\Sigma_A^+ \= \{ \{x_i\}_{i\in\N_0} :  x_i \in S, \, A(x_i,x_{i+1})=1, \, \text{for each } i\in\N_0\}.
\end{equation*}
Fix a number $\theta\in(0,1)$. We equip the set $\Sigma_A^+$ with a metric $d_\theta$ given by $d_\theta(\{x_i\}_{i\in\N_0},\{y_i\}_{i\in\N_0})=\theta^N$ for $\{x_i\}_{i\in\N_0} \neq \{y_i\}_{i\in\N_0}$, where $N$ is the smallest integer with $x_N \neq y_N$. The topology on the metric space $\left(\Sigma_A^+,d_\theta \right)$ coincides with that induced from the product topology, and is therefore compact.

The \defn{left-shift operator} $\sigma_A \: \Sigma_A^+ \rightarrow \Sigma_A^+$ (defined by $A$) is given by
\begin{equation*}
\sigma_A ( \{x_i\}_{i\in\N_0} ) \= \{x_{i+1}\}_{i\in\N_0}  \qquad \text{for } \{x_i\}_{i\in\N_0} \in \Sigma_A^+.
\end{equation*}

The pair $\left(\Sigma_A^+, \sigma_A\right)$ is called the \defn{one-sided subshift of finite type} defined by $A$. The set $S$ is called the \defn{set of states} and the matrix $A\: S\times S \rightarrow \{0, \, 1\}$ is called the \defn{transition matrix}.

Let $X$ and $Y$ be topological spaces, and consider two maps $f\:X\rightarrow X$ and $g\:Y\rightarrow Y$. We say that the topological dynamical system $(X,f)$ is a \defn{factor} of the topological dynamical system $(Y,g)$ if there is a surjective continuous map $\pi\:Y\rightarrow X$ such that $\pi\circ g=f\circ\pi$. We call the map $\pi\: Y\rightarrow X$ a \emph{factor map}.

We will now consider a one-sided subshift of finite type associated to an expanding Thurston map and an invariant Jordan curve on $S^2$ containing $\post f$. Recall from the discussions in Subsection~\ref{subsct_ThurstonMap} that such an invariant Jordan curve may not exist for an expanding Thurston map, but does exist for each of its sufficiently high iterate. We will need the following technical lemma. 

\begin{lemma}   \label{l_CylinderIsTile}
Let $f\: S^2 \rightarrow S^2$ be an expanding Thurston map with a Jordan curve $\CC\subseteq S^2$ satisfying $f(\CC)\subseteq \CC$ and $\post f\subseteq \CC$. Let $\{X_i\}_{i\in\N_0}$ be a sequence of $1$-tiles in $\X^1(f,\CC)$ satisfying $f(X_i)\supseteq X_{i+1}$ for all $i\in\N_0$. Then for each $n\in\N$, we have
\begin{equation}  \label{e_CylinderIsTile}
\bigl( (f|_{X_0})^{-1} \circ (f|_{X_1})^{-1} \circ \cdots \circ (f|_{X_{n-2}})^{-1} \bigr) (X_{n-1}) =  \bigcap\limits_{i=0}^{n-1}  f^{-i} (X_i) \in \X^n(f,\CC). 
\end{equation}
Moreover, $\card \bigcap_{i\in\N_0} f^{-i}(X_i) = 1$.
\end{lemma}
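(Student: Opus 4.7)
My plan is to induct on $n \in \N$, establishing simultaneously that
\[
A_n \= \bigl((f|_{X_0})^{-1} \circ (f|_{X_1})^{-1} \circ \cdots \circ (f|_{X_{n-2}})^{-1}\bigr)(X_{n-1}) = \bigcap_{i=0}^{n-1} f^{-i}(X_i)
\]
and that $A_n \in \X^n(f,\CC)$. The base case $n = 1$ is immediate: the composition is empty, giving $A_1 = X_0 \in \X^1(f,\CC)$, and the intersection reduces to $f^{0}(X_0) = X_0$.

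For the induction step from $n$ to $n+1$, first apply the inductive hypothesis to the shifted sequence $\{X_{i+1}\}_{i \in \N_0}$ (which satisfies the same hypotheses) to obtain $B_n \= \bigl((f|_{X_1})^{-1} \circ \cdots \circ (f|_{X_{n-1}})^{-1}\bigr)(X_n) \in \X^n(f,\CC)$ together with the representation $B_n = \bigcap_{i=0}^{n-1} f^{-i}(X_{i+1})$. Since $B_n \subseteq X_1 \subseteq f(X_0)$, the expression $A_{n+1} = (f|_{X_0})^{-1}(B_n) = X_0 \cap f^{-1}(B_n)$ is well-defined, and a short telescoping calculation gives $A_{n+1} = X_0 \cap \bigcap_{i=0}^{n-1} f^{-(i+1)}(X_{i+1}) = \bigcap_{j=0}^{n} f^{-j}(X_j)$.

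The heart of the argument, and the anticipated main obstacle, is verifying that $A_{n+1}$ is actually a single $(n+1)$-tile rather than merely a union of cells. For this, I would invoke Proposition~\ref{p_CellDecomp}(ii) with $k = 1$, which writes $f^{-1}(B_n) = \bigcup \{c \in \DD^{n+1} : f(c) = B_n\}$; since $B_n$ has dimension two and each $f|_c$ is a homeomorphism by Proposition~\ref{p_CellDecomp}(i), every such $c$ is itself an $(n+1)$-tile. Intersecting with $X_0$ (and noting that $\DD^{n+1}$ refines $\DD^1$, so $X_0$ is tiled by the $(n+1)$-cells it contains) shows $A_{n+1}$ is the union of those $(n+1)$-tiles $c \subseteq X_0$ with $f(c) = B_n$. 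To pin down uniqueness, use that $f|_{X_0} \: X_0 \to f(X_0)$ is a cellular homeomorphism relative to the restricted decompositions (as follows from the recursive pull-back construction of $\DD^{n+1}$; see \cite[Lemma~5.12]{BM17}); consequently $f|_{X_0}$ induces a dimension-preserving bijection between the $(n+1)$-cells contained in $X_0$ and the $n$-cells contained in $f(X_0) \in \X^0$, so exactly one $(n+1)$-tile $c \subseteq X_0$ satisfies $f(c) = B_n$, and $A_{n+1} = c \in \X^{n+1}(f,\CC)$.

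For the moreover claim, the sets $\{A_n\}_{n \in \N}$ are nested (since $A_{n+1} = A_n \cap f^{-n}(X_n) \subseteq A_n$), nonempty, and compact, so $\bigcap_{i \in \N_0} f^{-i}(X_i) = \bigcap_{n \in \N} A_n$ is nonempty by compactness. Fixing any visual metric $d$ on $S^2$ for $f$ with expansion factor $\Lambda > 1$, Lemma~\ref{l_CellBoundsBM}(ii) yields $\diam_d(A_n) \leq C \Lambda^{-n} \to 0$, forcing the nested intersection to consist of a single point.
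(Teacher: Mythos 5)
Your proof is correct and follows essentially the same route as the paper's: induction on $n$, applying the inductive hypothesis to the shifted sequence $\{X_{i+1}\}_{i\in\N_0}$, and using the injectivity of $f$ on the $1$-tile $X_0$ together with Proposition~\ref{p_CellDecomp}~(i) and~(ii) to identify $X_0 \cap f^{-1}(B_n)$ as a single $(n+1)$-tile, followed by the same nested-compact-sets and shrinking-diameter argument for the final claim. You simply spell out in more detail (via the refinement of $\DD^1$ by $\DD^{n+1}$ and the induced bijection on cells) the step that the paper dispatches in one sentence.
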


\begin{proof}
Let $d$ be a visual metric on $S^2$ for $f$.

We call a sequence $\{c_i\}_{i\in\N_0}$ of subsets of $S^2$ admissible if $f(c_i)\supseteq c_{i+1}$ for all $i\in\N_0$.
  
We prove (\ref{e_CylinderIsTile}) by induction.

For $n=1$, (\ref{e_CylinderIsTile}) holds trivially for each admissible sequence of $1$-tiles $\{X_i\}_{i\in\N_0}$ in $\X^1$.

Assume that (\ref{e_CylinderIsTile}) holds for each admissible sequence of $1$-tiles $\{X_i\}_{i\in\N_0}$ in $\X^1$ and for $n=m$ for some $m\in\N$. We fix such a sequence $\{X_i\}_{i\in\N_0}$. Then $\{X_{i+1}\}_{i\in\N_0}$ is also admissible. By the induction hypothesis, we denote
\begin{equation*}
X^m \coloneqq \bigl( (f|_{X_1})^{-1} \circ (f|_{X_2})^{-1} \circ \cdots \circ (f|_{X_{m-1}})^{-1} \bigr) (X_m) = \bigcap\limits_{i=0}^{m-1} f^{-i}(X_{i+1}) \in \X^m. 
\end{equation*}
Since $f(X_0) \supseteq X_1$ and $X^m\subseteq X_1$, we get from Proposition~\ref{p_CellDecomp}~(i) and (ii) that $f$ is injective on $X_1$, and thus, $\bigcap\limits_{i=0}^{m}  f^{-i} (X_i) = X_0 \cap f^{-1}(X^m) \in \X^{m+1}$, and $(f|_{X_0})^{-1} (X^m) = X_0 \cap f^{-1}(X^m) \in \X^{m+1}$.

The induction is complete. We have established (\ref{e_CylinderIsTile}).

Note that $\bigcap\limits_{i=0}^{n-1}  f^{-i} (X_i)  \supseteq \bigcap\limits_{i=0}^{n}  f^{-i} (X_i) \in \X^{n+1}$ for each $n\in\N$. By Lemma~\ref{l_CellBoundsBM}~(ii), $\bigcap_{i\in\N_0} f^{-i}(X_i)$ is the intersection of a nested sequence of closed sets with radii convergent to zero. Thus, it contains exactly one point in $S^2$.
\end{proof}

\begin{prop}   \label{p_TileSFT}
Let $f\: S^2 \rightarrow S^2$ be an expanding Thurston map with a Jordan curve $\CC\subseteq S^2$ satisfying $f(\CC)\subseteq \CC$ and $\post f\subseteq \CC$. Let $d$ be a visual metric on $S^2$ for $f$ with expansion factor $\Lambda>1$. Fix $\theta\in(0,1)$. We set $S_{\ti} \coloneqq \X^1(f,\CC)$, and define a transition matrix $A_{\ti}\: S_{\ti}\times S_{\ti} \rightarrow \{0, \, 1\}$ by
\begin{equation*}
A_{\ti}(X,X') \= \begin{cases} 1 & \text{if } f(X)\supseteq X', \\ 0  & \text{otherwise}  \end{cases}
\end{equation*}
for $X, \, X'\in \X^1(f,\CC)$. Then $f$ is a factor of the one-sided subshift of finite type $\bigl(\Sigma_{A_{\ti}}^+, \sigma_{A_{\ti}}\bigr)$ defined by the transition matrix $A_{\ti}$, where the factor map $\pi_{\ti}\: \Sigma_{A_{\ti}}^+ \rightarrow S^2$ is a surjective H\"{o}lder continuous map defined by 
\begin{equation}   \label{e_DefTileSFTFactorMap}
\pi_{\ti} \left( \{X_i\}_{i\in\N_0} \right)= x,  \text{ where } \{x\} = \bigcap\limits_{i \in \N_0} f^{-i} (X_i).
\end{equation}
Here $\Sigma_{A_{\ti}}^+$ is equipped with the metric $d_\theta$ defined in Subsection~\ref{subsct_SFT}, and $S^2$ is equipped with the visual metric $d$.
\end{prop}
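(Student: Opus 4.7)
The plan is to verify four things in order: (1) the map $\pi_{\ti}$ is well-defined by the formula \eqref{e_DefTileSFTFactorMap}; (2) the semi-conjugacy identity $\pi_{\ti}\circ \sigma_{A_{\ti}} = f \circ \pi_{\ti}$; (3) surjectivity of $\pi_{\ti}$; and (4) H\"older continuity with the exponent $\alpha = -\log\Lambda/\log\theta$. The heart of the argument is already packaged in Lemma~\ref{l_CylinderIsTile}: for every admissible sequence $\{X_i\}_{i\in\N_0}\in\Sigma_{A_{\ti}}^+$ the finite intersections $\bigcap_{i=0}^{n-1} f^{-i}(X_i)$ are $n$-tiles, so the nested intersection $\bigcap_{i\in\N_0} f^{-i}(X_i)$ is a single point of $S^2$. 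This shows $\pi_{\ti}$ is well-defined and indeed lands in $S^2$.

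For the semi-conjugacy, given $\omega=\{X_i\}_{i\in\N_0}\in\Sigma_{A_{\ti}}^+$ and $\{x\}=\bigcap_{i\in\N_0} f^{-i}(X_i)$, the condition $f^i(x)\in X_i$ for $i\geq 1$ rewrites as $f^{i-1}(f(x))\in X_i$, so $f(x)$ lies in $\bigcap_{j\in\N_0} f^{-j}(X_{j+1})$; by the well-definedness just proved this intersection equals $\{\pi_{\ti}(\sigma_{A_{\ti}}(\omega))\}$. For surjectivity we use that the invariance $f(\CC)\subseteq\CC$ gives $\CC\subseteq f^{-1}(\CC)$, so $\DD^1(f,\CC)$ is a refinement of $\DD^0(f,\CC)$; in particular every $0$-tile is a union of $1$-tiles. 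Given any $x\in S^2$ we build the sequence inductively: pick any $X_0\in\X^1$ with $x\in X_0$, and having chosen $X_0,\dots,X_k\in\X^1$ admissibly with $f^i(x)\in X_i$, observe that $f^{k+1}(x)\in f(X_k)\in\X^0$; since $f(X_k)$ is a union of $1$-tiles we may pick $X_{k+1}\in\X^1$ with $X_{k+1}\subseteq f(X_k)$ and $f^{k+1}(x)\in X_{k+1}$. The resulting sequence is in $\Sigma_{A_{\ti}}^+$, and $x\in\bigcap_{i\in\N_0}f^{-i}(X_i)=\{\pi_{\ti}(\omega)\}$.

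For H\"older continuity, take two sequences $\omega=\{X_i\}, \omega'=\{X_i'\}$ in $\Sigma_{A_{\ti}}^+$ with $d_\theta(\omega,\omega')=\theta^N$, i.e.\ $X_i=X_i'$ for $i<N$ and $X_N\neq X_N'$. Then both $\pi_{\ti}(\omega)$ and $\pi_{\ti}(\omega')$ belong to the common $N$-tile $\bigcap_{i=0}^{N-1} f^{-i}(X_i)\in\X^N(f,\CC)$ produced by Lemma~\ref{l_CylinderIsTile}, whose $d$-diameter is at most $C\Lambda^{-N}$ by Lemma~\ref{l_CellBoundsBM}~(ii). Hence
\begin{equation*}
d(\pi_{\ti}(\omega),\pi_{\ti}(\omega'))\leq C\Lambda^{-N}= C\bigl(\theta^N\bigr)^{-\log\Lambda/\log\theta} = C\, d_\theta(\omega,\omega')^{\alpha},
\end{equation*}
with $\alpha\=-\log\Lambda/\log\theta>0$, proving H\"older continuity (which implies continuity and hence that $\pi_{\ti}$ is a factor map). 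None of these steps is really an obstacle; the only place where care is needed is the inductive choice in the surjectivity argument, where the refinement property derived from $f(\CC)\subseteq\CC$ must be invoked to guarantee that at each stage a $1$-tile inside $f(X_k)$ containing $f^{k+1}(x)$ actually exists.
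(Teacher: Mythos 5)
Your proposal is correct and follows essentially the same route as the paper: well-definedness and the H\"older estimate both come from Lemma~\ref{l_CylinderIsTile} combined with Lemma~\ref{l_CellBoundsBM}~(ii), and the semi-conjugacy is the same pointwise computation. The only (inessential) variation is in surjectivity, where you build the admissible $1$-tile sequence by forward induction using the refinement $\DD^1(f,\CC)\succ\DD^0(f,\CC)$, whereas the paper takes a nested sequence of $n$-tiles containing $x$ and pushes it forward by $f^i$; both rest on the same invariance $f(\CC)\subseteq\CC$ and are equally valid.
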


\begin{proof}
We denote by $\{X_i\}_{i\in\N_0} \in \Sigma_{A_{\ti}}^+$ an arbitrary admissible sequence.

Since $f(X_i)\supseteq X_{i+1}$ for each $i\in\N_0$, by Lemma~\ref{l_CylinderIsTile}, the map $\pi_{\ti}$ is well-defined.

Note that for each $m\in \N_0$ and each $\{X'_i\}_{i\in\N_0} \in \Sigma_{A_{\ti}}^+$ with $X_{m+1} \neq X'_{m+1}$ and $X_j=X'_j$ for each integer $j\in[0,m]$, we have $\{ \pi_{\ti}  (\{X_i\}_{i\in\N_0}  ),  \, \pi_{\ti} ( \{ X'_i \}_{i\in\N_0} ) \} \subseteq \bigcap\limits_{i=0}^{m} f^{-i}(X_i) \in \X^{m+1}$ by Lemma~\ref{l_CylinderIsTile}. Thus, it follows from Lemma~\ref{l_CellBoundsBM}~(ii) that $\pi_{\ti}$ is H\"{o}lder continuous.

To see that $\pi_{\ti}$ is surjective, we observe that for each $x\in S^2$, we can find a sequence $\bigl\{ X^j(x) \bigr\}_{j\in\N}$ of tiles such that $X^j(x) \in \X^j$, $x\in X^j(x)$, and $X^j(x) \supseteq X^{j+1}(x)$ for each $j\in\N$. Then it is clear that $\bigl\{  f^i\bigl( X^{i+1}(x) \bigr) \bigr\}_{i\in\N_0} \in \Sigma_{A_{\ti}}^+$ and $\pi_{\ti} \Bigl(  \bigl\{ f^i\bigl( X^{i+1}(x) \bigr) \bigr\}_{i\in\N_0} \Bigr) = x$.

We observe that
\begin{align*}
            \{ (f\circ \pi_{\ti}) (\{X_i\}_{i\in\N_0}) \}
  =       & f \biggl(  \bigcap\limits_{j\in\N_0} f^{-j} (X_j) \biggr)
\subseteq  \bigcap\limits_{j\in\N} f^{-(j-1)} (X_j) \\
  =       & \bigcap\limits_{i\in\N_0} f^{-i} (X_{i+1})  
  =         \{(\pi_{\ti} \circ \sigma_{A_{\ti}} ) (\{X_i\}_{i\in\N_0})  \}.
\end{align*}
Therefore, it follows that $\pi_{\ti} \circ \sigma_{A_{\ti}} = f \circ \pi_{\ti}$.
\end{proof}

\section{Little Lipschitz spaces and the locking property}   \label{sct_lock}
This section aims to show the following theorem in a general setting.

\begin{theorem}  \label{t_lock}
Let $T\: X \rightarrow X$ be a continuous map on a compact metric space $(X,d)$ and $\alpha \in (0,1)$. Then the set $\lock(X,d^\alpha)$ of functions with the locking property in $\lip(X,d^\alpha)$ is equal to the interior of $\sP(X) \cap \lip(X,d^\alpha)$ (in the induced topology of $\lip(X,d^\alpha)$ as a subspace of $\Lip(X,d^\alpha)$), and $\lock(X,d^\alpha)$ is dense in $\sP(X) \cap \lip(X,d^\alpha)$.
\end{theorem}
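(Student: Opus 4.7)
My plan is to split the theorem into three parts: (1) the easy inclusion $\lock(X, d^\alpha) \subseteq \inter(\sP(X) \cap \lip(X, d^\alpha))$, (2) density of $\lock$ in $\sP \cap \lip$, and (3) the reverse inclusion. Openness of $\lock$ in $\lip(X, d^\alpha)$ is built into the definition of the locking property. For (1), if $\phi \in \lock$ has a locking neighborhood $V$, then every $\psi \in V$ satisfies $\Mmax(T, \psi) = \Mmax(T, \phi) = \{\mu_\O\}$ for a periodic orbit $\O$, so $V \subseteq \sP(X) \cap \lip(X, d^\alpha)$ and hence $\phi$ lies in the $\lip$-interior of $\sP \cap \lip$.

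For density, given $\phi \in \sP \cap \lip$ with a periodic $\phi$-maximizing measure $\mu_{\O_\phi}$ supported on an orbit $\O_\phi$, I would perturb by the barrier function
\begin{equation*}
\psi_\epsilon \coloneqq \phi - \epsilon \, d(\cdot, \O_\phi)^\beta
\end{equation*}
for a fixed $\beta \in (\alpha, 1]$ and $\epsilon > 0$ to be tuned. The strict inequality $\beta > \alpha$ is precisely what places the barrier in $\lip(X, d^\alpha)$: the elementary estimate $|d(x, A)^\beta - d(y, A)^\beta| \leq d(x, y)^\beta = d(x, y)^{\beta - \alpha} d(x, y)^\alpha$ shows that $d(\cdot, \O_\phi)^\beta$ has Hölder seminorm of order $\alpha$ equal to $o(1)$ at small scales. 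For any $T$-invariant $\mu$,
\begin{equation*}
\int \psi_\epsilon \, \mathrm{d}\mu = \int \phi \, \mathrm{d}\mu - \epsilon \int d(\cdot, \O_\phi)^\beta \, \mathrm{d}\mu \leq Q(T, \phi),
\end{equation*}
with equality forcing both $\int \phi \, \mathrm{d}\mu = Q(T, \phi)$ and $\supp \mu \subseteq \O_\phi$, hence $\mu = \mu_{\O_\phi}$. So $\mu_{\O_\phi}$ is the unique $\psi_\epsilon$-maximizer. To upgrade $\psi_\epsilon$ to a member of $\lock$, I would show that for any further perturbation $\tilde\psi = \psi_\epsilon + \eta h$ with $\|h\|_{\lip} \leq 1$ and $\eta$ small relative to $\epsilon$, any $\tilde\psi$-maximizer $\mu$ must satisfy $\epsilon \int d(\cdot, \O_\phi)^\beta \, \mathrm{d}\mu \leq 2\eta \|h\|_{\CCC^0}$, so by taking $\eta \ll \epsilon$ the measure $\mu$ is pinned weak$^*$-close to $\mu_{\O_\phi}$. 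A continuity and weak$^*$-compactness argument on the set $\MMM(X, T)$ of invariant measures then yields $\mu = \mu_{\O_\phi}$ exactly.

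The main obstacle is the reverse inclusion $\inter(\sP(X) \cap \lip(X, d^\alpha)) \subseteq \lock(X, d^\alpha)$. My strategy is to combine density with a quantitative version of the construction above: for $\phi \in \inter(\sP \cap \lip)$ and an open $\lip$-ball $B_\delta(\phi) \subseteq \sP \cap \lip$, position some $\psi_\epsilon \in \lock$ inside $B_\delta(\phi)$ via the construction, chosen so that the locking radius of $\psi_\epsilon$ exceeds $\|\phi - \psi_\epsilon\|_{\lip}$. Then $\phi$ lies in the locking neighborhood of $\psi_\epsilon$, which forces $\Mmax(T, \phi) = \{\mu_{\O_\phi}\}$ together with the required stability, so $\phi \in \lock$. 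I expect this uniform matching of perturbation size against locking radius to be the delicate step: it relies essentially on the $o$-condition of the little Lipschitz space, which keeps the $\lip$-norm of the barrier small while still providing a robust dynamical obstruction, as opposed to the $O$-condition of the big Lipschitz space.
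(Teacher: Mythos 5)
Your overall architecture matches the paper's: the inclusion $\lock(X,d^\alpha)\subseteq\inter(\sP(X)\cap\lip(X,d^\alpha))$ is immediate from openness, and the real content is density, proved by suppressing $\phi$ with a barrier $-t\,d^\beta(\cdot,\O)$ for some $\beta\in(\alpha,1)$. However, your argument that the perturbed potential actually lands in $\lock$ has a genuine gap. You claim that a maximizer $\mu$ of $\psi_\epsilon+\eta h$ satisfies $\epsilon\langle d^\beta(\cdot,\O_\phi),\mu\rangle\leq 2\eta\norm{h}_{\CCC^0}$, hence is weak$^*$-close to $\mu_{\O_\phi}$, and that ``a continuity and weak$^*$-compactness argument then yields $\mu=\mu_{\O_\phi}$ exactly.'' This last step fails: upper semicontinuity of $\Mmax(T,\cdot)$ only gives that maximizers of nearby potentials converge to $\mu_{\O_\phi}$, not that they eventually equal it. Invariant measures supported on long periodic orbits shadowing $\O_\phi$ make $\langle d^\beta(\cdot,\O_\phi),\mu\rangle$ arbitrarily small and are weak$^*$-close to $\mu_{\O_\phi}$ without equaling it; ruling these out is precisely the content of the locking property. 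The paper's mechanism is the Wasserstein-type estimate of Proposition~\ref{p_Wasserstein} (sharpening \cite[Lemma~2]{BZ15}): $\langle h,\nu-\mu_\O\rangle\leq C_{\O,\beta}\Hseminorm{d^\beta}{h}\,\langle d^\beta(\cdot,\O),\nu\rangle$, so the damage a perturbation can do is controlled by the \emph{same} quantity $\langle d^\beta(\cdot,\O),\nu\rangle$ that the barrier suppresses; once $C_{\O,\beta}\Hseminorm{d^\beta}{h}<t$, any competitor is forced to have $\langle d^\beta(\cdot,\O),\nu\rangle=0$, i.e.\ $\supp\nu\subseteq\O$ and $\nu=\mu_\O$. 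Some quantitative transfer of this kind is indispensable; soft compactness does not supply it.

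Second, even granting that estimate, there is an exponent mismatch you do not address and which the paper flags as the main difficulty: the competitor $h$ is only small in the $\Lip(X,d^\alpha)$ norm, and smallness of $\Hseminorm{d^\alpha}{h}$ gives no control on $\Hseminorm{d^\beta}{h}$ for $\beta>\alpha$, so the estimate at exponent $\beta$ cannot be applied to $h$ directly. The paper resolves this by splitting $h=h_\beta+\tau$ via the Lipschitz extension theorem (Theorem~\ref{t_Lip_extension}) on a finite $\delta$-net: $h_\beta$ agrees with $h$ on the net with $\Hnorm{d^\beta}{h_\beta}{X}\lesssim\epsilon\bigl(1+\delta^{\alpha-\beta}\bigr)$, while $\norm{\tau}_{\CCC^0}=O(\epsilon\delta^\alpha)$, and then $\beta$ is chosen close to $\alpha$ depending on $\delta$ so that $\delta^{\alpha-\beta}\leq 2$. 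Without such a device your locking radius is not uniform over the unit ball of $\lip(X,d^\alpha)$. Finally, for the reverse inclusion $\inter(\sP(X)\cap\lip(X,d^\alpha))\subseteq\lock(X,d^\alpha)$, your plan of matching the locking radius of $\psi_\epsilon$ against $\norm{\phi-\psi_\epsilon}$ does not work with the constants the construction actually produces (both scale linearly in the perturbation parameter, and the locking radius carries the small factor $\lambda_\alpha/\diam_d(X)$); the paper instead reduces the entire statement to openness plus density, following the scheme of \cite[Proposition~1]{BZ15}.
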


The strategy of the proof is a two-step perturbation argument. For every $\phi \in \sP(X) \cap \lip(X,d^\alpha)$ with a maximizing measure supported on a periodic orbit $\O$, we first perturb it by suppressing it by a scalar multiple of the distance function $d^\beta(\cdot, \O)$, then show that the resulting function $\phi_t\= \phi - t d^\beta(\cdot, \O)$ stays in $\lock(X,d^\alpha)$ after another arbitrary small perturbation $\psi \in \lip(X,d^\alpha)$. The difficulty lies in the fact that if $\beta=\alpha$, then $d^\beta(\cdot, \O)$ is no longer in $\lip(X,d^\alpha)$. On the other hand, even though $d^\beta(\cdot, \O) \in \lip(X,d^\alpha)$ for all $\beta \in (\alpha, 1]$, the Wasserstein distance type of control of J.~Bochi and the second-named author \cite[Lemma~2]{BZ15} cannot be used directly as there is a mismatch in exponents between the $d^\beta(\cdot, \O)$ and $\psi$. Our approach is to split the small arbitrary perturbation $\psi \in \lip(X,d^\alpha)$ into two parts $\psi = \psi_\beta + \tau$, where $\psi_\beta \in \Lip(X,d^\beta)$ has small $\beta$-H\"older norm and $\tau$ has arbitrarily small uniform norm. Such a splitting is possible thanks to a Lipschitz extension theorem stated in Theorem~\ref{t_Lip_extension}.

We first recall the definition of little Lipschitz functions.

\begin{definition}
Let $X$ be a compact metric space, the \emph{little Lipschitz space} $\lip (X,d)$ is defined to be the subspace of $\Lip(X,d)$ (equipped with the Lipschitz norm $\Hnorm{d}{\cdot}{X}$) consisting of those functions $\phi$ with the property that for every $\epsilon > 0$ there exists $\delta>0$ such that 
\begin{equation*}
\frac{ \abs{ \phi(x) - \phi(y) } }{ d(x,y) }  \leq \epsilon
\end{equation*}
for all $x,\,y \in X$ with $0<d(x,y) \leq \delta$. The functions in $\lip(X,d)$ are called \emph{little Lipschitz functions} or \emph{locally flat Lipschitz functions}.
\end{definition}

It is clear that a function $\phi$ is in $\lip(X,d)$ (resp.\  $\Lip(X,d)$) if and only if $\sup\{ \abs{ \phi (x) - \phi (y) } : d(x,y) \leq r  \}= o ( r )$ (resp.\ $\sup\{ \abs{ \phi (x) - \phi (y) } : d(x,y) \leq r  \}= O ( r )$) as $r \to 0$.

We consider connections between little Lipschitz spaces and big Lipschitz spaces in the following two propositions. The first one is classical (see for example, \cite[Subsection~I.1]{Sh64}).

\begin{prop}   \label{p_lip_closed}
Let $(X,d)$ be a compact metric space. Then $\lip(X,d)$ is a closed subalgebra of $\Lip(X,d)$ which contains the constant functions.
\end{prop}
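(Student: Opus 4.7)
The plan is to verify each of the three claims separately, noting that all the work is essentially in the definition together with the compactness of $X$ (which gives boundedness of continuous functions). First I would observe that every constant function is trivially in $\lip(X,d)$, since the numerator $|\phi(x) - \phi(y)|$ vanishes identically, so the defining $o(r)$ condition holds. Next, the linearity of $\lip(X,d)$ is immediate from the triangle inequality and from pulling scalars through the defining ratio: if $\phi,\,\psi \in \lip(X,d)$ and $\epsilon > 0$, choose $\delta > 0$ smaller than the thresholds given by $\phi$ and by $\psi$ for $\epsilon/2$, and then $|(\phi + \psi)(x) - (\phi + \psi)(y)| / d(x,y) \leq \epsilon$ whenever $0 < d(x,y) \leq \delta$.

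The main step, though still routine, is closure under multiplication. Given $\phi,\,\psi \in \lip(X,d)$, I would use the identity
\begin{equation*}
(\phi\psi)(x) - (\phi\psi)(y) = \phi(x)\bigl(\psi(x) - \psi(y)\bigr) + \psi(y)\bigl(\phi(x) - \phi(y)\bigr),
\end{equation*}
together with the fact that $\phi$ and $\psi$ are continuous on the compact space $X$ and hence bounded in uniform norm. Dividing by $d(x,y)$ and letting $d(x,y) \to 0$, both terms go to zero because each of the Lipschitz differential quotients for $\phi$ and $\psi$ is $o(1)$ while the respective multipliers remain bounded; this gives $\phi\psi \in \lip(X,d)$.

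For the closedness claim, suppose $\{\phi_n\}_{n\in\N} \subseteq \lip(X,d)$ converges to some $\phi \in \Lip(X,d)$ in the Lipschitz norm $\Hnorm{d}{\cdot}{X} = \Hseminorm{d}{\cdot} + \Norm{\cdot}_{\CCC^0}$. Given $\epsilon > 0$, I would pick $n$ with $\Hseminorm{d}{\phi - \phi_n} < \epsilon/2$, then pick $\delta > 0$ from the little Lipschitz property of $\phi_n$ for the threshold $\epsilon/2$, and combine via the triangle inequality
\begin{equation*}
\frac{|\phi(x) - \phi(y)|}{d(x,y)} \leq \frac{|\phi_n(x) - \phi_n(y)|}{d(x,y)} + \Hseminorm{d}{\phi - \phi_n} \leq \epsilon
\end{equation*}
for all $x,\,y \in X$ with $0 < d(x,y) \leq \delta$. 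This shows $\phi \in \lip(X,d)$, so $\lip(X,d)$ is closed in $\Lip(X,d)$. I do not anticipate any genuine obstacle; the only point requiring care is making sure boundedness (via compactness of $X$) is invoked in the product step so that the two $o(1)$ factors are multiplied by bounded, rather than possibly unbounded, quantities.
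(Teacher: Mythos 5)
Your proof is correct, and it is the standard classical argument: the paper itself does not prove Proposition~\ref{p_lip_closed} but simply cites \cite[Subsection~I.1]{Sh64}, and your verification (constants trivially, linearity by the triangle inequality, products via the splitting $\phi(x)(\psi(x)-\psi(y))+\psi(y)(\phi(x)-\phi(y))$ with boundedness from compactness, and closedness by an $\epsilon/2$-argument against the seminorm $\Hseminorm{d}{\phi-\phi_n}$) is exactly what that reference supplies. Nothing is missing.
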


We will rely on the following important result. See \cite[Corollary~3.7]{BCD87}, \cite[Corollary~1.5]{We96}, and \cite[Corollary~8.28]{We18}. 

\begin{prop}  \label{p_Lip_dense_lip_alpha}
Let $(X,d)$ be a compact metric space and let $\alpha \in (0,1)$. Then $\Lip(X,d)$ is a dense subset of $\lip(X,d^\alpha)$ (equipped with the norm $\Hnorm{d^\alpha}{\cdot}{X}$).
\end{prop}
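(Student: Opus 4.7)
The plan is to approximate any $\phi \in \lip(X,d^\alpha)$ in the $\alpha$-H\"older norm by a function in $\Lip(X,d)$ via infimal convolution (Moreau--Yosida regularization). First observe the inclusion $\Lip(X,d) \subseteq \lip(X,d^\alpha)$: if $\phi \in \Lip(X,d)$ with Lipschitz constant $L$ and $0 < d(x,y) \leq r$, then $\abs{\phi(x)-\phi(y)} \leq L d(x,y) \leq L r^{1-\alpha} d(x,y)^\alpha$, so the $\alpha$-H\"older modulus of $\phi$ at scale $r$ is bounded by $L r^{1-\alpha}$, which tends to $0$ as $r \to 0$ since $\alpha < 1$.

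For density, fix $\phi \in \lip(X,d^\alpha)$ and $\epsilon > 0$. By the little Lipschitz property, choose $\delta > 0$ with $\abs{\phi(x)-\phi(y)} \leq \epsilon d(x,y)^\alpha$ whenever $d(x,y) \leq \delta$. For each $\lambda > 0$, define
\[
\psi_\lambda(x) \= \inf\{\phi(y) + \lambda d(x,y) : y \in X\}.
\]
Then $\psi_\lambda$ is $\lambda$-Lipschitz with respect to $d$, hence lies in $\Lip(X,d)$, and trivially $\psi_\lambda \leq \phi$. Writing $K \= \Hseminorm{d^\alpha}{\phi}$ and using $\phi(y) \geq \phi(x) - K d(x,y)^\alpha$, we obtain the uniform estimate $0 \leq \phi(x) - \psi_\lambda(x) \leq \sup_{r \geq 0}(K r^\alpha - \lambda r) = C(K,\alpha) \lambda^{-\alpha/(1-\alpha)}$, which tends to $0$ as $\lambda \to \infty$. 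Moreover, any minimizer $y_x$ in the definition of $\psi_\lambda(x)$ satisfies $d(x,y_x) \leq (K/\lambda)^{1/(1-\alpha)}$, tending to $0$ uniformly in $x$.

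For the $\alpha$-H\"older seminorm, we must show $\Hseminorm{d^\alpha}{\phi - \psi_\lambda} \to 0$ as $\lambda \to \infty$. Split the analysis into two regimes. For pairs $x,y$ with $d(x,y)$ bounded below by some threshold $r_\lambda$, the quotient $\abs{(\phi-\psi_\lambda)(x) - (\phi-\psi_\lambda)(y)} / d(x,y)^\alpha$ is controlled by $2 \Norm{\phi-\psi_\lambda}_{\CCC^0} / r_\lambda^\alpha$, which is small for an appropriate choice $r_\lambda = r_\lambda(\epsilon)$. For pairs with $d(x,y) \leq r_\lambda$, one takes $\lambda$ large enough that every minimizer $y_x$ lies within distance $\delta$ of $x$; this allows the refinement $0 \leq \phi(x) - \psi_\lambda(x) \leq \epsilon d(x,y_x)^\alpha$ coming from the little Lipschitz condition, and a careful analysis of how the location of $y_x$ varies with $x$ delivers a seminorm bound of order $O(\epsilon)$.

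The main obstacle is precisely this H\"older seminorm estimate in the small-distance regime: the naive bound via the triangle inequality and Lipschitzness of $\psi_\lambda$ yields only an $O(K)$ constant independent of $\lambda$, whereas what is needed is a bound that vanishes as $\lambda \to \infty$, quantitatively controlled by $\epsilon$. Exploiting the little Lipschitz condition of $\phi$ together with the fact that, for large $\lambda$, the minimizers defining $\psi_\lambda$ lie in arbitrarily small neighborhoods is what makes the argument go through. Alternatively, one can replace infimal convolution by a partition-of-unity construction, setting $\psi = \sum_i \phi(x_i) \rho_i$ for a sufficiently fine $\delta$-cover $\{x_i\}$ of $X$ together with a Lipschitz partition of unity $\{\rho_i\}$ subordinate to it; the technical core, namely matching the little Lipschitz scale $\delta$ of $\phi$ against the approximation, is the same.
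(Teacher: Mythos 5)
Your argument is correct, but it is genuinely different from what the paper does: the paper offers no proof at all for this proposition, simply citing Bade--Curtis--Dales, Weaver~\cite[Cor.~8.28]{We18}, etc., and the classical proofs in those references run along the lines of your \emph{alternative} at the end --- approximate $\phi$ by a controlled Lipschitz extension of its restriction to a finite $\delta$-net (exactly the mechanism of the paper's Theorem~\ref{t_Lip_extension}, which the paper uses elsewhere for a different purpose). Your primary route via infimal convolution is self-contained and does close, but the one step you describe vaguely --- ``a careful analysis of how the location of $y_x$ varies with $x$'' --- is both unnecessary and potentially a trap, since minimizers need not depend continuously on $x$. The small-distance regime closes without any such tracking: for $d(x,y)\leq r_\lambda$ with $r_\lambda\leq\delta$, write $|(\phi-\psi_\lambda)(x)-(\phi-\psi_\lambda)(y)|\leq|\phi(x)-\phi(y)|+\lambda d(x,y)\leq \epsilon\, d(x,y)^\alpha+\lambda r_\lambda^{1-\alpha}d(x,y)^\alpha$, and the choice $r_\lambda\=(\epsilon/\lambda)^{1/(1-\alpha)}$ makes the second coefficient equal to $\epsilon$. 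Where the minimizer location \emph{is} essential is the large-distance regime: the crude bound $\Norm{\phi-\psi_\lambda}_{\CCC^0}=O\bigl(K^{1/(1-\alpha)}\lambda^{-\alpha/(1-\alpha)}\bigr)$ with $K=\Hseminorm{d^\alpha}{\phi}$ is not enough once divided by $r_\lambda^\alpha$; you need the improved bound $\Norm{\phi-\psi_\lambda}_{\CCC^0}\leq\epsilon^{1/(1-\alpha)}\lambda^{-\alpha/(1-\alpha)}$, which follows because $d(x,y_x)\leq(K/\lambda)^{1/(1-\alpha)}\leq\delta$ for large $\lambda$ lets you replace $K$ by $\epsilon$ in the minimizer estimate. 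With that substitution both regimes give a seminorm contribution of order $\epsilon$, and the proof is complete. What your approach buys is an explicit, citation-free construction; what the finite-net approach buys is that it reuses machinery (Theorem~\ref{t_Lip_extension}) the paper already needs for Theorem~\ref{t_lock}.
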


Next, we record a basic covering theorem (see for example, \cite[Theorem~1.2]{He01}).

\begin{theorem}[Covering Theorem]  \label{t_covering_theorem}
Every family $\cF$ of balls of uniformly bounded diameter in a metric space $X$ contains a disjoint subfamily $\cG$ such that
\begin{equation}
\bigcup_{B \in \cF} B \subseteq \bigcup_{B \in \cG} 5B.
\end{equation}
Here $5B$ denotes the ball with the same center with a radius $5$ times that of $B$.
\end{theorem}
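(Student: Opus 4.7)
The plan is to carry out a classical greedy construction, grouping balls by a dyadic scale of radii. Let $R \= \tfrac{1}{2} \sup\{ \diam(B) : B \in \cF \}$, which is finite by the uniform bound, and partition $\cF$ into generations $\cF_n$ ($n \in \N_0$), where $\cF_n$ consists of those balls in $\cF$ whose radius lies in $(R \cdot 2^{-n-1}, R \cdot 2^{-n}]$. I would build the desired disjoint subfamily $\cG$ one generation at a time. Let $\cG_0$ be a maximal pairwise disjoint subfamily of $\cF_0$, whose existence follows from Zorn's Lemma applied to the poset of pairwise disjoint subfamilies of $\cF_0$ ordered by inclusion (the union of a chain of such subfamilies is again pairwise disjoint). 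Having chosen $\cG_0, \ldots, \cG_{n-1}$, let $\cG_n$ be a maximal pairwise disjoint subfamily of those balls in $\cF_n$ that are disjoint from every ball in $\cG_0 \cup \cdots \cup \cG_{n-1}$. Finally set $\cG \= \bigcup_{n \in \N_0} \cG_n$, which is pairwise disjoint by construction.

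Next I would verify the covering inclusion. Fix any $B \in \cF$ and let $n$ be the unique index with $B \in \cF_n$. By the maximality of $\cG_n$, either $B \in \cG_n$ (in which case $B \subseteq 5B$ trivially), or $B$ meets some $B' \in \cG_0 \cup \cdots \cup \cG_n$. Write $B = B_d(x, r)$ and $B' = B_d(x', r')$; since $B' \in \cF_m$ for some $m \leq n$, we have $r' > R \cdot 2^{-m-1} \ge R \cdot 2^{-n-1} \ge r / 2$. Choosing any $z \in B \cap B'$, for every $y \in B$ the triangle inequality yields
\begin{equation*}
d(y, x') \le d(y, x) + d(x, z) + d(z, x') < 2 r + r' \le 4 r' + r' = 5 r',
\end{equation*}
so $B \subseteq 5 B'$, proving the claimed inclusion.

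The main subtlety is the set-theoretic one: if $\cF$ is uncountable, each generation $\cF_n$ may still be uncountable, and the use of Zorn's Lemma (or, equivalently, transfinite recursion) is genuinely needed. I do not expect any analytic obstacle beyond the dyadic bookkeeping required to keep the inflation factor equal to $5$; for instance, the same argument with generations defined by ratios $(R \cdot c^{-n-1}, R \cdot c^{-n}]$ for $c > 1$ would yield an inflation factor $2c + 1$, and the choice $c = 2$ is what makes the constant $5$.
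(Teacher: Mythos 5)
The paper does not actually prove this statement; it records it with a citation to \cite[Theorem~1.2]{He01}, and your argument is the standard proof found there: greedy (Zorn's Lemma) selection within dyadic generations, with maximality of each generation producing an intersecting selected ball of comparable size. The construction of $\cG$, the verification that it is pairwise disjoint, the case analysis via maximality, and the closing triangle-inequality computation are all correct, as is your remark that ratio $c$ yields inflation factor $2c+1$.

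There is, however, one step that fails as written: you anchor the scales to $R \= \frac{1}{2}\sup\{\diam(B) : B\in\cF\}$ but sort balls into generations by their \emph{radii}. In a general metric space --- indeed already in $X=[0,1]\subseteq\R$ --- one can have $\diam B_d(x,r)$ strictly less than $2r$ (e.g.\ $B_d(1/2,1)=[0,1]$ has diameter $1$ and radius $1$), so a ball may have radius greater than $R$ and then lies in no $\cF_n$. Such balls are never processed, and nothing forces them to be covered: with $\cF=\{B_d(1/2,1)\}$ in $X=[0,1]$ your construction returns $\cG=\emptyset$ and the conclusion fails. Two routine repairs: (i) read the hypothesis as a uniform bound on the radii (balls being formal center--radius pairs, which is needed anyway for $5B$ to be well defined) and set $R\=\sup\{r(B):B\in\cF\}$, after which your argument goes through verbatim; or (ii) keep the hypothesis on diameters, define the generations by $\diam(B)\in \bigl(R\,2^{-n-1}, R\,2^{-n}\bigr]$ with $R\=\sup_{B\in\cF}\diam(B)$, and replace the final estimate by $d(y,x')\le \diam(B)+d(z,x')< 2\diam(B')+r'\le 4r'+r'=5r'$, using $\diam(B)<2\diam(B')\le 4r'$. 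The second route is the one taken in \cite{He01}. With either fix the proof is complete (modulo the degenerate case of balls of zero diameter in option (ii), which is usually excluded by convention).
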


The following extension theorem for Lipschitz functions dates back to a result of T.~Botts and E.~McShane recorded in \cite[Proposition~1.4]{Sh64}. We state here the version from \cite[Proposition~3]{Ha92}. See also \cite[Lemma~3.3]{BCD87}.

\begin{theorem}[Lipschitz extension theorem]  \label{t_Lip_extension}
Let $(X,d)$ be a metric space and $\beta \in (0,1)$. Then for every finite set $F \subseteq X$, every function $\varphi$ on $F$, and every number $C > 2$, there exists a function $\psi \in \Lip(X,d)$ with the properties $\psi|_F = \varphi$ and $\Hnorm{d^\beta}{\psi}{K} \leq C \Hnorm{d^\beta}{\varphi}{F}$.\footnote{Note that the difference in the definition of H\"older norm results in the difference in the lower bound on $C$.}
\end{theorem}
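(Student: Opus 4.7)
The plan is to construct $\psi$ in two stages: a Lipschitz regularization of the McShane--Whitney H\"older extension of $\varphi$, followed by a small localized correction that restores exact agreement on the finite set $F$. Throughout I write $M \coloneqq \Hseminorm{d^\beta}{\varphi}{F}$ and assume $M > 0$ (otherwise take $\psi$ to be a constant). Enumerate $F = \{y_1, \ldots, y_n\}$ and let $r_0 \coloneqq \min_{i \neq j} d(y_i, y_j) > 0$.

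First I would define the McShane extension
\begin{equation*}
g(x) \coloneqq \inf_{y \in F}\bigl(\varphi(y) + M d(x,y)^\beta\bigr),
\end{equation*}
which satisfies $g|_F = \varphi$ and $\Hseminorm{d^\beta}{g}{X} = M$ but is only H\"older, not Lipschitz. To obtain a Lipschitz object I then inf-convolve at scale $\lambda > 0$:
\begin{equation*}
g_\lambda(x) \coloneqq \inf_{y \in X}\bigl(g(y) + \lambda d(x,y)\bigr), \qquad \LIP_d(g_\lambda) \leq \lambda.
\end{equation*}
Maximizing $M r^\beta - \lambda r$ over $r \geq 0$ yields the uniform estimate $0 \leq g(x) - g_\lambda(x) \leq E_\lambda \coloneqq M(1-\beta)(M\beta/\lambda)^{\beta/(1-\beta)}$, which tends to $0$ as $\lambda \to \infty$. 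The crucial observation is a $\lambda$-independent H\"older bound on $g_\lambda$: combining the Lipschitz estimate $|g_\lambda(x) - g_\lambda(x')| \leq \lambda d(x,x')$ (sharp at small scales) with $|g_\lambda(x) - g_\lambda(x')| \leq M d(x,x')^\beta + E_\lambda$ (deduced from $g - E_\lambda \leq g_\lambda \leq g$ together with the H\"older bound on $g$), and taking the minimum, one finds
\begin{equation*}
\Hseminorm{d^\beta}{g_\lambda}{X} \;\leq\; \sup_{r>0} \min\bigl(\lambda r^{1-\beta},\; M + E_\lambda r^{-\beta}\bigr) \;=\; M\bigl(1 + (1-\beta)\beta^{\beta/(1-\beta)}\bigr) \;<\; 2M,
\end{equation*}
where the sup is realized at the crossover $r_\star \sim (M/\lambda)^{1/(1-\beta)}$.

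Since $g_\lambda$ need not agree with $\varphi$ on $F$, I would correct it locally. Set $\delta_i \coloneqq \varphi(y_i) - g_\lambda(y_i) \in [0, E_\lambda]$, fix $\epsilon \in (0, r_0/4)$ so that the balls $B_d(y_i,\epsilon)$ are pairwise disjoint, and introduce Lipschitz tent functions $\chi_i(x) \coloneqq \max\{0,\, 1 - d(x,y_i)/\epsilon\}$. Define
\begin{equation*}
\psi(x) \coloneqq g_\lambda(x) + \sum_{i=1}^{n} \delta_i\, \chi_i(x).
\end{equation*}
Then $\psi \in \Lip(X,d)$ and $\psi(y_i) = g_\lambda(y_i) + \delta_i = \varphi(y_i)$, so $\psi|_F = \varphi$. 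A short case analysis on the position of $x,x'$ relative to the balls (both inside the same $B_d(y_i,\epsilon)$; both outside; or straddling different balls) yields $\Hseminorm{d^\beta}{\sum_i \delta_i \chi_i}{X} \leq 2^{1-\beta}\, E_\lambda\, \epsilon^{-\beta}$. Given any $C > 2$, I would choose $\lambda$ large and then $\epsilon$ depending on $\lambda$ so that this correction term plus the $g_\lambda$-bound stays below $CM$, completing the proof.

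The main obstacle is the uniform H\"older control on $g_\lambda$: although its Lipschitz constant grows like $\lambda$, its $\beta$-H\"older seminorm must stay bounded by a constant strictly below $2M$ \emph{uniformly} in $\lambda$. This is precisely the crossover estimate above, and the slack $M\bigl(1 - (1-\beta)\beta^{\beta/(1-\beta)}\bigr) > 0$ between that bound and $2M$ is exactly what absorbs the correction term and permits any $C > 2$; no construction of this type can achieve a constant below $2$ since the limiting ratio approaches $2M$ as $\beta \to 0^{+}$.
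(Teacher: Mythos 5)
The paper does not prove this statement itself --- it cites Hanin \cite[Proposition~3]{Ha92} (going back to Botts--McShane and Sherbert) --- so I am judging your argument on its own. Your control of the \emph{seminorm} is essentially sound: the McShane extension in $d^\beta$ followed by inf-convolution in $d$, the error bound $E_\lambda$, the $\lambda$-independent crossover estimate (which is in fact a strict upper bound rather than an equality, since the crossover point $t_\star$ satisfies $t_\star>1$, but the inequality is all you need), and the disjointly supported tent-function correction all check out, and they deliver $\Hseminorm{d^\beta}{\psi} < 2\, \Hseminorm{d^\beta}{\varphi}$ for $\lambda$ large.

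The genuine gap is that the theorem's conclusion is a bound on the \emph{full} H\"older norm $\Hnorm{d^\beta}{\psi}{X} = \Hseminorm{d^\beta}{\psi} + \Norm{\psi}_{\CCC^0}$ (this is exactly what the footnote about the lower bound $C>2$ is flagging), and you never estimate $\Norm{\psi}_{\CCC^0}$. This is not cosmetic: the statement is for an arbitrary metric space $X$, and your $g(x) = \inf_{y\in F}\bigl(\varphi(y) + M d(x,y)^\beta\bigr)$ is unbounded above whenever $X$ is unbounded, so $\Norm{\psi}_{\CCC^0} = +\infty$ and the claimed inequality fails outright; even for bounded $X$ one picks up a term of order $M \diam_d(X)^\beta$ that need not be dominated by $C \Hnorm{d^\beta}{\varphi}{F}$. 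The repair is short: truncate, i.e.\ replace $\psi$ by $\max\bigl\{\min\{\psi, \max_F \varphi\}, \min_F\varphi\bigr\}$. Truncation by constants preserves the Lipschitz property, does not increase $\Hseminorm{d^\beta}{\cdot}$, leaves the values on $F$ unchanged, and forces $\Norm{\psi}_{\CCC^0} \leq \Norm{\varphi}_{\CCC^0(F)}$, after which $\Hnorm{d^\beta}{\psi}{X} < 2\Hseminorm{d^\beta}{\varphi} + \Norm{\varphi}_{\CCC^0(F)} \leq C \Hnorm{d^\beta}{\varphi}{F}$ for any $C>2$. Relatedly, your closing remark that the constant $2$ is forced by the crossover estimate misattributes its origin: the $2$ in the theorem comes from passing between the sum-type and max-type H\"older norms, not from the seminorm analysis, where Hanin in fact achieves any constant $>1$.
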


We write 
\begin{equation*}
\langle \phi, \mu \rangle \= \int\! \phi \, \mathrm{d} \mu 
\qquad \text{ for } \phi \in \CCC(X) \text{ and } \mu \in \MMM(X).
\end{equation*}

Next, we record the following strengthened version of \cite[Lemma~2]{BZ15}, which follows from Lemma~2 and its proof in \cite{BZ15}.

\begin{prop} \label{p_Wasserstein}
Let $T\: X \rightarrow X$ be a continuous map on a compact metric space $(X,d)$ and $\alpha \in (0,1]$. Let $\O$ be a periodic orbit of $T$. Write $\mu_\O \= \frac{1}{\card{\O}} \sum_{x\in\O} \delta_x \in \MMM(X,T)$. Then for each $\beta \in [\alpha,1]$ there exists a constant $C_{\O,\beta} \geq 1$ such that for every $\nu \in \MMM(X,T)$, we have
\begin{equation*} 
\sup\biggl\{ \frac{ \langle \phi, \nu-\mu \rangle}{ \Hseminorm{d^\beta }{\phi} } :  \phi \in \Lip (X, d^\beta ), \, \Hseminorm{d^\beta }{\phi} \neq 0 \biggr\}
\leq C_{\O,\beta} \langle d^\beta(\cdot, \O), \nu \rangle.
\end{equation*}
Moreover, $C_{\O,\beta}$ can be chosen as $C_{\O,\beta} \= ( \diam_d(X) / \lambda_\alpha )^\beta \leq \diam_d(X) / \lambda_\alpha$, where $\lambda_\alpha \in (0, \diam_d(X))$ is a constant satisfying the condition that for all $i \in \{0,\,1,\, \dots, \, \card\O -1 \}$ and $x,\,y\in X$ with $d(x,y) < \lambda_\alpha$, the following inequality holds:
\begin{equation*}
d \bigl( T^i(x), T^i(y) \bigr) < 2^{- 1/ \alpha } \min\{ d(w,z) : w,\, z \in \O, \, w \neq z \}.
\end{equation*}
\end{prop}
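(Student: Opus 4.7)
The plan is to recast the supremum on the left as a Wasserstein distance via Kantorovich--Rubinstein duality. Since $d^\beta$ is a metric on $X$ for $\beta \in (0,1]$, the supremum equals
\begin{equation*}
W_{d^\beta}(\nu, \mu_\O) = \inf_{\pi} \int d(x,y)^\beta \,\mathrm{d}\pi(x,y),
\end{equation*}
where $\pi$ runs over couplings of $\nu$ and $\mu_\O$. It will therefore suffice to exhibit a single coupling whose cost realises the claimed bound $C_{\O,\beta}\langle d^\beta(\cdot, \O), \nu\rangle$.

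The coupling I have in mind uses the cyclic structure of $\O$ together with the $T$-invariance of $\nu$. I would enumerate $\O = \{x_0, x_1, \ldots, x_{k-1}\}$ so that $T(x_i) = x_{(i+1)\bmod k}$, write $\delta \= \min\{d(x_i, x_j) : i\neq j\}$, and fix a measurable selection $j \: X \to \{0, 1, \ldots, k-1\}$ of nearest-orbit index. The coupling is
\begin{equation*}
\pi \= \int_X \delta_x \otimes \biggl(\frac{1}{k}\sum_{i=0}^{k-1}\delta_{x_{(j(T^i x)-i)\bmod k}}\biggr)\,\mathrm{d}\nu(x).
\end{equation*}
The first marginal is $\nu$ by construction; by $T$-invariance the mass placed on each $x_l \in \O$ equals $\frac{1}{k}\sum_{i}\nu(\{y : j(y) = (l+i)\bmod k\}) = 1/k$, so the second marginal is $\mu_\O$. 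The crucial technical lemma is an equivariance statement: if $d(x, \O) < \lambda_\alpha$, then $j(T^i x) = (j(x)+i)\bmod k$ for every $i \in \{0, 1, \ldots, k-1\}$. This follows from applying the hypothesis on $\lambda_\alpha$ to the pair $(x, x_{j(x)})$, which yields $d(T^i x, x_{(j(x)+i)\bmod k}) < 2^{-1/\alpha}\delta$, while the triangle inequality forces $d(T^i x, x_m) > \delta - 2^{-1/\alpha}\delta$ for any other orbit point $x_m$, leaving $x_{(j(x)+i)\bmod k}$ strictly nearest.

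With the equivariance in hand the cost is controlled termwise. On the near region $\{d(\cdot, \O) < \lambda_\alpha\}$ every one of the $k$ atoms of the coupling collapses to $\delta_{x_{j(x)}}$, contributing cost exactly $d(x, \O)^\beta$; on the complementary far region each atom contributes at most $\diam_d(X)^\beta$, which is bounded by $(\diam_d(X)/\lambda_\alpha)^\beta d(x, \O)^\beta$ since $d(x, \O) \geq \lambda_\alpha$ there. Combining the two regions and using $\diam_d(X)/\lambda_\alpha \geq 1$ gives the uniform pointwise estimate $d(x, x_{(j(T^i x)-i)\bmod k})^\beta \leq (\diam_d(X)/\lambda_\alpha)^\beta d(x, \O)^\beta$, which integrates to the stated bound. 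The main obstacle will be the equivariance step: any weakening of the strict inequality $2^{-1/\alpha}\delta < \delta/2$ would cost uniqueness of the nearest point, and with it the clean identification of the coupling's second marginal with $\mu_\O$.
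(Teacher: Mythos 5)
Your argument is correct and complete. A point of context first: the paper does not prove this proposition at all --- it records it as a ``strengthened version'' of Lemma~2 of \cite{BZ15} and defers to that reference --- so there is no in-paper proof to compare against; your writeup is a genuine self-contained substitute. Your coupling is well defined, its first marginal is $\nu$ by construction, and the computation of the second marginal is exactly where the $T$-invariance of $\nu$ enters: $\nu(\{x: j(T^i x)=m\})=\nu(\{y: j(y)=m\})$, so averaging over $i$ yields the uniform measure $\mu_\O$. The equivariance lemma is also sound, including at the endpoint $\alpha=1$: the hypothesis gives the \emph{strict} bound $d(T^ix,x_{(j(x)+i)\bmod k})<2^{-1/\alpha}\delta\leq\delta/2$, while the triangle inequality gives $d(T^ix,x_m)>(1-2^{-1/\alpha})\delta\geq\delta/2$ for every other orbit point, so the nearest point is strictly unique and the identity $j(T^ix)=(j(x)+i)\bmod k$ holds regardless of how the measurable selection breaks ties. (Your parenthetical worry about ``$2^{-1/\alpha}\delta<\delta/2$'' is the only imprecise phrase --- at $\alpha=1$ this is an equality --- but the strictness you actually need comes from the hypothesis on $\lambda_\alpha$, not from that comparison.) The two-region cost estimate then gives exactly $C_{\O,\beta}=(\diam_d(X)/\lambda_\alpha)^\beta$. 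One simplification worth noting: you only use the trivial direction of Kantorovich--Rubinstein, namely that for any coupling $\pi$ and any $\phi$ one has $\langle\phi,\nu-\mu_\O\rangle=\int(\phi(x)-\phi(y))\,\mathrm{d}\pi(x,y)\leq\Hseminorm{d^\beta}{\phi}\int d(x,y)^\beta\,\mathrm{d}\pi(x,y)$, so you need not invoke the duality theorem itself. Conceptually your proof is the primal (transport-plan) version of the Birkhoff-averaging argument in \cite{BZ15}; the key structural input --- that $\lambda_\alpha$ forces the nearest-index map to intertwine $T$ with the cyclic rotation on $\O$ over a full period --- is the same in both, and your observation that the transport cost is paid at $x$ itself rather than along the orbit of $x$ is precisely what makes the estimate close without any expansion control on $T$.
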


Note that the number $\lambda_\alpha$ in the statement above exists due to the uniform continuity of $T$.

We are now ready to prove Theorem~\ref{t_lock}.

\begin{proof}[Proof of Theorem~\ref{t_lock}]
By definition, the set $\lock(X,d^\alpha)$ is open in $\lip(X,d^\alpha)$ and is contained in $\sP(X)$. It suffices to show that $\lock(X,d^\alpha)$ is dense in $\sP(X) \cap \lip(X,d^\alpha)$.

Consider an arbitrary potential $\phi \in \sP(X) \cap \lip(X,d^\alpha)$. Let $\O$ be a periodic orbit of $T$ such that $\mu_\O \= \frac{1}{\card{\O}} \sum_{x\in\O} \delta_x$ is a maximizing measure for $\phi$.

Fix arbitrary $t , \, \delta, \, \epsilon \in (0, 1/5)$, and $\beta \in (\alpha, 1)$.

Consider arbitrary $\psi \in \lip(X,d^\alpha)$ with 
\begin{equation}  \label{e_Pf_p_Wasserstein_psi_norm}
\Hnorm{d^\alpha}{\psi}{ X } \leq \epsilon. 
\end{equation}
By the compactness of $X$ and the covering theorem (Theorem~\ref{t_covering_theorem}), there exists a finite set $F \subseteq X$ such that 
\begin{equation}  \label{e_Pf_p_Wasserstein_5delta_covering}
\bigcup_{x\in F} B_d( x, 5 \delta) = X
\end{equation}
and the balls $B_d(x, \delta)$, $x\in F$, are pairwise disjoint. By the Lipschitz extension theorem in Theorem~\ref{t_Lip_extension} (applied to $\beta$, $F$, and $\varphi \= \psi|_F$), there exists a function $\psi_\beta \in \Lip (X,d^\beta )$ such that 
\begin{equation}  \label{e_Pf_p_Wasserstein_psi_beta}
\psi_\beta|_F = \psi|_F \quad \text{and} \quad
\Hnorm{d^\beta}{\psi_\beta}{ X }  \leq 3 \Hnorm{d^\beta}{\psi|_F}{ F }
\end{equation}
Note that by (\ref{e_Pf_p_Wasserstein_psi_norm}) and (\ref{e_Pf_p_Wasserstein_5delta_covering}), we have
\begin{equation} \label{e_Pf_p_Wasserstein_psi_F_beta_seminorm}
 \Hnorm{d^\beta}{\psi|_F}{ F } 
 \leq \epsilon + \Hseminorm{d^\alpha}{\psi|_F} ( \min\{ d(x,y) : x, \, y \in F, \, x \neq y \} )^{\alpha - \beta} 
 \leq \epsilon + \epsilon \delta^{\alpha - \beta} .
\end{equation}

Define $\tau \= \psi - \psi_\beta \in \Lip (X,d^\alpha)$. Then by (\ref{e_Pf_p_Wasserstein_psi_beta}), (\ref{e_Pf_p_Wasserstein_psi_norm}), (\ref{e_Pf_p_Wasserstein_psi_F_beta_seminorm}), and the fact that $\delta \in (0,1/5)$, we have
\begin{equation}  \label{e_Pf_p_Wasserstein_tau_unif_norm}
\norm{\tau}_{\CCC^0(X)}  
\leq (5\delta)^\alpha \Hseminorm{d^\alpha}{\psi}   + (5\delta)^\beta \Hseminorm{d^\beta}{\psi_\beta}  
\leq \epsilon (5\delta)^\alpha + 3(\epsilon + \epsilon \delta^{\alpha - \beta} )  (5\delta)^\beta 
\leq 35 \epsilon \delta^\alpha.
\end{equation}

Set $\phi_t \= \phi - t d^\beta ( \cdot, \O)$. For every $\nu \in \MMM(X,T) \setminus \{ \mu_\O \}$, by (\ref{e_Pf_p_Wasserstein_tau_unif_norm}), the fact that $\mu_\O$ is a maximizing measure for $\phi$, Proposition~\ref{p_Wasserstein}, (\ref{e_Pf_p_Wasserstein_psi_beta}), (\ref{e_Pf_p_Wasserstein_psi_F_beta_seminorm}), and (\ref{e_Pf_p_Wasserstein_epsilon})  we have
\begin{align*}
\langle \phi_t + \psi, \nu \rangle
&=       \langle \phi , \nu \rangle - t \langle d^\beta ( \cdot, \O), \nu \rangle + \langle \psi_\beta, \nu \rangle +  \langle  \tau , \nu \rangle  \\
&\leq   \langle \phi , \mu_\O \rangle + \langle \psi_\beta, \nu \rangle - t \langle d^\beta ( \cdot, \O), \nu \rangle   + 35  \epsilon \delta^\alpha  \\
&\leq   \langle \phi , \mu_\O \rangle + \langle \psi_\beta, \mu_\O \rangle + \biggl( \frac{ \diam_d(X) }{ \lambda_\alpha } \Hseminorm{d^{\beta}}{ \psi_\beta } - t \biggr)  \langle d^\beta ( \cdot, \O), \nu \rangle  + 35  \epsilon \delta^\alpha  \\
&\leq  \langle \phi_t , \mu_\O \rangle + \langle \psi, \mu_\O \rangle -  \langle \tau, \mu_\O \rangle + \biggl(  \frac{ \diam_d(X) }{ \lambda_\alpha }  \Hseminorm{d^{\beta}}{ \psi_\beta }  - t \biggr)  \langle d^\beta ( \cdot, \O), \nu \rangle  + 35  \epsilon \delta^\alpha  \\
&\leq  \langle \phi_t + \psi , \mu_\O \rangle  +  \biggl(  3(\epsilon + \epsilon \delta^{\alpha - \beta} )  \frac{ \diam_d(X) }{ \lambda_\alpha }  - t \biggr)  \langle d^\beta ( \cdot, \O), \nu \rangle + 70  \epsilon \delta^\alpha,
\end{align*}
where $\lambda_\alpha  \in ( 0, \diam_d(X))$ is a constant depending only on $T$, $\alpha$, and $\O$ from Proposition~\ref{p_Wasserstein} (applied to $T$, $\alpha$, $\O$).

Since  $t , \, \delta, \, \epsilon \in (0, 1/5)$, and $\beta \in (\alpha, 1)$ are arbitrary, for each fixed $\delta$, we can choose $\beta$ sufficiently close to $\alpha$ such that
\begin{equation}  \label{e_Pf_p_Wasserstein_beta}
\delta^{\alpha - \beta} \leq 2.
\end{equation}
On the other hand, for each fixed $t$, we can choose
\begin{equation} \label{e_Pf_p_Wasserstein_epsilon}
\epsilon \= 10^{-1} t \lambda_\alpha / \diam_d(X).
\end{equation}
Hence, 
\begin{align*}
\langle \phi_t + \psi, \nu \rangle 
& \leq  \langle \phi_t + \psi , \mu_\O \rangle  +  \biggl(  9 \epsilon   \frac{ \diam_d(X) }{ \lambda_\alpha }  - t \biggr)  \langle d^\beta ( \cdot, \O), \nu \rangle + 70  \epsilon \delta^\alpha \\
& \leq  \langle \phi_t + \psi , \mu_\O \rangle  - 10^{-1} t  \langle d^\beta ( \cdot, \O), \nu \rangle + 70  \epsilon \delta^\alpha.
\end{align*}

Since $\delta \in (0,1/5)$ is arbitrary, we conclude that $\langle \phi_t + \psi, \nu \rangle <  \langle \phi_t + \psi , \mu_\O \rangle$.

Since $\nu \in \MMM(X,T)$ is arbitrary, the measure $\mu_\O$ is the unique maximizing measure for $\phi_t + \psi$ for all $\psi \in \lip(X,d^\alpha)$ with $\Hnorm{d^\alpha}{\psi}{ X } \leq \epsilon$. On the other hand, $d^\beta(\cdot, \O) \in \Lip ( X,d^\beta ) \subseteq \lip(X,d^\alpha)$ by Proposition~\ref{p_Lip_dense_lip_alpha}. Hence, $\phi_t = \phi - t d^\beta( \cdot, \O) \in \lock( X, d^\alpha)$. Recall that $t \in (0,1/5)$ is arbitrary. Therefore, $\lock(X,d^\alpha)$ is dense in $\sP(X) \cap \lip(X,d^\alpha)$.
\end{proof}

\section{The Assumptions}      \label{sct_Assumptions}
We state below the hypotheses under which we will develop our theory in most parts of this paper. We will repeatedly refer to such assumptions in the later sections. We emphasize again that not all assumptions are used in every statement in this paper.

\begin{assumptions}
\quad

\begin{enumerate}

\smallskip

\item $f\:S^2 \rightarrow S^2$ is an expanding Thurston map.

\smallskip

\item $\CC\subseteq S^2$ is a Jordan curve containing $\post f$ with the property that there exists $n_\CC\in\N$ such that $f^{n_\CC} (\CC)\subseteq \CC$ and $f^m(\CC)\nsubseteq \CC$ for each $m\in\{1, \, 2, \, \dots, \, n_\CC-1\}$.

\smallskip

\item $d$ is a visual metric on $S^2$ for $f$ with expansion factor $\Lambda > 1$  and a linear local connectivity constant $L\geq 1$.

\smallskip

\item $\alpha\in(0,1]$.

\smallskip

\item $\phi\in \Lip(S^2,d^{\alpha})$ is a real-valued $\alpha$-H\"{o}lder continuous function with respect to the visual metric $d$.



\end{enumerate}

\end{assumptions}

Observe that by Lemma~\ref{l_CexistsBM}, for each $f$ in~(1), there exists at least one Jordan curve $\CC$ that satisfies~(2). Since for a fixed $f$, the number $n_\CC$ is uniquely determined by $\CC$ in~(2), in the remaining part of the paper, we will say that a quantity depends on $\CC$ even if it also depends on $n_\CC$.

Recall that the expansion factor $\Lambda$ of a visual metric $d$ on $S^2$ for $f$ is uniquely determined by $d$ and $f$. We will say that a quantity depends on $f$ and $d$ if it depends on $\Lambda$.


Note it follows from Remark~\ref{r_Expanding} and Lemma~\ref{l_CellBoundsBM} that a metric $d$ on $S^2$ satisfies (3) if and only if $d$ is a visual metric for $f^n$ with expansion factor $\Lambda^n>1$ and a linear local connectivity constant $L\geq 1$ for some (or each) $n\in\N$. Even though the value of $L$ is not uniquely determined by the metric $d$, in the remainder of this paper, for each visual metric $d$ on $S^2$ for $f$, we will fix a choice of linear local connectivity constant $L$. We will say that a quantity depends on the visual metric $d$ without mentioning the dependence on $L$, even though if we had not fixed a choice of $L$, it would have depended on $L$ as well.

In the discussion below, depending on the conditions we will need, we will sometimes say ``Let $f$, $\CC$, $d$, $\phi$, $\alpha$ satisfy the Assumptions in Section~\ref{sct_Assumptions}.'', and sometimes say ``Let $f$ and $d$ satisfy the Assumptions in Section~\ref{sct_Assumptions}.'', etc.

\section{The Liv\v{s}ic theorem, the Ma\~{n}\'{e} lemma, and the bilateral Ma\~{n}\'{e} lemma}  \label{sct_BouschOp}
In this section, we give a definition of Bousch operators and discuss some of their basic properties for general dynamical systems before proving in Proposition~\ref{p_calibrated_sub-action_exists} the existence of an eigenfunction for the Bousch operator, also known as a calibrated sub-action, for an expanding Thurston map. Finally, we deduce the Liv\v{s}ic theorem, the Ma\~{n}\'{e} lemma, and the bilateral Ma\~{n}\'{e} lemma in our context.

Recall that a map $T \: X \rightarrow Y$ is \defn{finite-to-one} if $\card \bigl( T^{-1} (y) \bigr) < +\infty$ for all $y\in Y$.

Let $T\: X \rightarrow X$ be a finite-to-one surjective continuous map on a compact metric space $(X,d)$, and $\psi \: X \rightarrow \R$ a real-valued continuous function. Recall that $\R^X$ denotes the set of all functions from $X$ to $\R$. The \defn{Bousch operator} $\RR_\psi \: \R^X \rightarrow \R^X$ for $T$ and $\psi$ is given by
\begin{equation}   \label{e_Def_BouschOp}
\RR_\psi (u) (x) \= \max  \bigl\{ \psi (y) + u (y) : y \in T^{-1} (x)  \bigr\},
\end{equation}
for $u\in \R^X$ and $x\in X$. We define
\begin{equation}   \label{e_Def_overline_phi}
\overline\psi  \= \psi - Q (T,\psi),
\end{equation}
where $Q(T,\psi)$  is the maximal potential energy given by
\begin{equation}  \label{e_Def_beta}
Q(T,\psi)  \= \sup_{\mu \in \MMM(X,T)} \int \! \psi \, \mathrm{d} \mu = \max_{\mu \in \MMM(X,T)} \int \! \psi \, \mathrm{d} \mu .
\end{equation}
The last identity follows from the weak$^*$-compactness of $\MMM(X, T)$. We denote the (nonempty) \defn{set of $\psi$-maximizing measures} by
\begin{equation}  \label{e_Def_max_measures}
\Mmax (T, \, \psi) \= \bigg\{ \mu \in \MMM(X,T) : \int\! \psi \, \mathrm{d} \mu = Q (T,\psi) \biggr\}.
\end{equation}

\begin{lemma}  \label{l_Bousch_Op_iterate_max}
Let $T\: X \rightarrow X$ be a finite-to-one surjective continuous map on a compact metric space $(X,d)$. Fix some $c\in\R$, $n\in\N$, $\psi \in \CCC(X)$, $u \in \R^X$, and a set $\cA \subseteq \R^X$. Then the following statements are satisfied:
\begin{itemize}

\smallskip
\item[(i)] $\RR_\psi (u + c) = c + \RR_\psi (u)$.

\smallskip
\item[(ii)] $\RR_{\overline\psi}^n (u) (x) + n Q(T,\psi)  = \RR_\psi^n (u) (x) =  \max  \{ S_n \psi (y) + u (y) : y \in T^{-n} (x)  \}$ for each $x \in X$.

\smallskip
\item[(iii)] $\RR_\psi ( \sup \{  v (\cdot) : v \in \cA \} ) (x) = \sup \{ \RR_\psi (v)(x) : v \in \cA \}$ for each $x \in X$.

\smallskip
\item[(iv)] $\lim\limits_{i\to +\infty} \RR_\psi ( u_i ) (x) = \RR_\psi \bigl( \lim\limits_{i\to+\infty}  u_i (\cdot)  \bigr) (x)$ for each $x \in X$ and each pointwise convergent sequence $\{ u_i \}_{i\in\N}$ of functions in $\R^X$.
\end{itemize}
\end{lemma}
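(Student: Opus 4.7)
The proof plan is straightforward: all four statements follow from the defining formula $\RR_\psi(u)(x) = \max\{\psi(y) + u(y) : y \in T^{-1}(x)\}$ together with the key combinatorial fact that $T$ is finite-to-one, so the maximum in the definition is taken over a finite set. The role of finiteness is to let us freely exchange the max with suprema and with pointwise limits, which is the only non-cosmetic issue in the argument.

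For (i), I would simply observe that adding the constant $c$ to $u$ adds $c$ to every quantity under the max, so it factors out. For (ii), the first equality reduces to (i): iterating (i) $n$ times gives $\RR_\psi^n(u) = \RR_{\overline\psi + Q(T,\psi)}^n(u)$, and pulling out the $n$ additive constants yields $\RR_{\overline\psi}^n(u) + n Q(T,\psi)$. The second equality is a straightforward induction on $n$. The base case $n=1$ is the definition. Assuming the formula holds at level $n$, I would expand
\begin{align*}
\RR_\psi^{n+1}(u)(x) &= \max\bigl\{\psi(y) + \RR_\psi^n(u)(y) : y \in T^{-1}(x)\bigr\} \\
&= \max\bigl\{\psi(y) + S_n\psi(z) + u(z) : y \in T^{-1}(x),\ z\in T^{-n}(y)\bigr\},
\end{align*}
and then note that for $z \in T^{-n}(y)$ with $y \in T^{-1}(x)$, we have $z \in T^{-(n+1)}(x)$, $y = T^n(z)$, and $\psi(T^n(z)) + S_n\psi(z) = S_{n+1}\psi(z)$. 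Since every $z \in T^{-(n+1)}(x)$ arises this way with $y \= T^n(z)$, the double max collapses to $\max\{S_{n+1}\psi(z) + u(z) : z \in T^{-(n+1)}(x)\}$.

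For (iii), I would unfold
\begin{equation*}
\RR_\psi\bigl(\sup\{v : v \in \cA\}\bigr)(x) = \max_{y \in T^{-1}(x)} \Bigl(\psi(y) + \sup_{v \in \cA} v(y)\Bigr) = \max_{y \in T^{-1}(x)} \sup_{v \in \cA}\bigl(\psi(y) + v(y)\bigr).
\end{equation*}
The inequality $\max_{y} \sup_v \geq \sup_v \max_y$ is immediate; for the reverse, pick any $y^* \in T^{-1}(x)$ attaining the outer max. For every $\ve > 0$ choose $v^* \in \cA$ with $\psi(y^*) + v^*(y^*) > \max_y \sup_v (\psi(y)+v(y)) - \ve$; then $\RR_\psi(v^*)(x) \geq \psi(y^*) + v^*(y^*)$, and letting $\ve \to 0$ gives the matching lower bound for $\sup_v \RR_\psi(v)(x)$. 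The case in which the supremum is $+\infty$ at some $y$ is handled in the same way by taking $v^*$ with $v^*(y^*)$ arbitrarily large. Statement (iv) is the corresponding statement for limits: for a pointwise convergent sequence $\{u_i\}$ with limit $u$, the finite-max inequality $|\max_y(\psi(y)+u_i(y)) - \max_y(\psi(y)+u(y))| \leq \max_{y \in T^{-1}(x)}|u_i(y) - u(y)|$ holds, and the right side tends to $0$ because it is the maximum of a finite collection of sequences tending to $0$.

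The main obstacle, such as it is, is a purely notational one in (iii): being careful that the swap of $\max$ and $\sup$ is not circular and is valid even when the pointwise supremum is infinite at some preimage point. Everything else is bookkeeping with the definition of $\RR_\psi$ and finiteness of $T^{-1}(x)$, which in our setting holds because $T$ is finite-to-one.
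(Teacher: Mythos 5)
Your proposal is correct and follows essentially the same route as the paper: (i) directly from the definition, (ii) by pulling out the constant $Q(T,\psi)$ and an induction that collapses the double maximum over $y\in T^{-1}(x)$, $z\in T^{-n}(y)$ using $S_{n+1}\psi(z)=S_n\psi(z)+\psi(T^n(z))$, and (iii)--(iv) by exchanging the finite maximum with suprema and pointwise limits. Your extra care in (iii) about the possibly infinite supremum and your packaging of (iv) via the inequality $\abs{\max_y a_y - \max_y b_y}\leq \max_y\abs{a_y-b_y}$ are harmless refinements of the paper's epsilon argument.
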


\begin{proof}
Statement~(i) follows immediately from (\ref{e_Def_BouschOp}).

The first identity in statement~(ii) follows immediately from (\ref{e_Def_overline_phi}). We use induction to establish the second identity. The case $n=1$ follows from (\ref{e_Def_BouschOp}). Assume that statement~(ii) is verified for some $n=m \in \N$. Then by (\ref{e_Def_BouschOp}),
\begin{align*}
\RR_\psi^{m+1} (u) (x) & =  \max_{y \in T^{-1}(x)} \bigl\{ \psi (y)  + \max_{ z \in T^{-m}(y) } \{ S_m \psi(z) + u (z)  \}  \bigr\} \\
                                      & =  \max_{y \in T^{-1}(x)} \bigl\{ \max_{ z \in T^{-m}(y) } \{ S_{m+1} \psi(z) + u (z)  \}  \bigr\} \\
                                      & =  \max_{ z \in T^{-(m+1)}(x) } \{ S_{m+1} \psi(z) + u (z)  \} .
\end{align*}

Next, statement~(iii) follows from the following simple observation:
\begin{align*}
\RR_\psi ( \sup \{ v(\cdot) : v \in \cA \} ) (x) & = \max_{y \in T^{-1}(x) } \{ \psi (y) + \sup \{ v(y) : v \in \cA \} \} \\
                                                  & = \max_{y \in T^{-1}(x) } \{ \sup \{ \psi (y) +  v(y)  :  v \in \cA \} \} \\
                                                  & = \sup \bigl\{ \max_{y \in T^{-1}(x) } \{ \psi (y) +  v(y)    : v\in\cA \} \bigr\} \\
                                                  & = \sup \{ \RR_\psi ( v )(x) : v \in \cA \}.
\end{align*}

Finally, we verify statement~(iv). Let $v \: X \rightarrow \R$ be the pointwise limit of $u_i$ as $i$ tends to $+\infty$. Fix arbitrary $x\in X$ and $\epsilon > 0$. Since $T$ is finite-to-one, we can find $N \in \N$ such that for each integer $n\geq N$ and each $y \in T^{-1} (x)$, $\abs{ u_n (y) - v (y) } < \epsilon$. Fix arbitrary integer $n \geq N$. We choose $z_1, \, z_2 \in T^{-1} (x)$ satisfying $\RR_\psi (u_n) (x) = \psi (z_1) + u_n (z_1)$ and $\RR_\psi (v) (x) = \psi (z_2) + v (z_2)$. Then by (\ref{e_Def_BouschOp}),
\begin{align*}
\RR_\psi ( u_n ) (x) - \RR_\psi (v) (x)
& \leq \psi (z_1) + u_n (z_1) - \psi (z_1) - v (z_1)
    =     u_n (z_1) - v(z_1) < \epsilon  \quad\text{and} \\
\RR_\psi ( u_n ) (x) - \RR_\psi (v) (x)
& \geq \psi (z_2) + u_n (z_2) - \psi (z_2) - v (z_2)  
   =     u_n (z_2) - v(z_2) >  - \epsilon  .
\end{align*}
Statement~(iv) now follows.
\end{proof}

The statements in the following lemma are well-known, see for example, \cite[Theorem~1]{Bou01}, \cite[Theorem~4.7]{Je06}, and \cite[Lemma~2.1]{Co16}. We include a proof for the convenience of the reader.

\begin{lemma}  \label{l_Bousch_Op_normalizing_potential}
Let $T\: X \rightarrow X$ be a finite-to-one surjective continuous map on a compact metric space $(X,d)$. Fix arbitrary continuous functions $\varphi,\, u \in \CCC(X)$. Then the following statements are satisfied:
\begin{itemize}

\smallskip
\item[(i)] $\Mmax (T, \, \varphi) = \Mmax (T, \, \varphi + c + u - u \circ T)$ for each constant $c\in\R$.

\smallskip
\item[(ii)] If $\RR_{\overline\varphi} (u) = u$, then the function $\wt\varphi \= \overline\varphi  + u - u \circ T$ satisfies the following properties:

\begin{itemize}

\smallskip
\item[(a)] $Q (T, \wt \varphi ) = \max \bigl\{ \int\! \wt\varphi \, \mathrm{d} \mu : \mu \in \MMM(X, T) \bigr\} = 0$,

\smallskip
\item[(b)] $\wt\varphi (x) \leq 0$ for each $x \in X$,

\smallskip
\item[(c)] the set $\cK \= \bigcap_{j=0}^{+\infty} T^{-j} \bigl( \wt\varphi^{-1} ( 0 ) \bigr)$ is a nonempty compact $T$-forward-invariant set, and

\smallskip
\item[(d)] $\Mmax (T, \, \varphi) = \Mmax (T, \, \wt\varphi ) = \{ \mu \in \MMM( X, T ) : \supp \mu \subseteq \cK \}$.

\end{itemize}


\end{itemize}
\end{lemma}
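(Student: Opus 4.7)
The plan is to deduce everything from two very elementary observations: that $u - u \circ T$ integrates to zero against every $T$-invariant Borel probability measure $\mu$ (since $\int u \,\mathrm{d}\mu = \int u \circ T \,\mathrm{d}\mu$), and that the fixed-point equation $\RR_{\overline\varphi}(u) = u$ forces the pointwise inequality $\wt\varphi \leq 0$. Part~(i) is then immediate: for each $\mu \in \MMM(X,T)$ and each $c\in\R$ we have $\int (\varphi + c + u - u \circ T)\,\mathrm{d}\mu = \int \varphi\,\mathrm{d}\mu + c$, so the two affine functionals on $\MMM(X,T)$ differ by a constant and therefore share the same maximizers.

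For (ii)(b), I would substitute $x = T(z)$ into the fixed-point equation
\begin{equation*}
u(T(z)) = \RR_{\overline\varphi}(u)(T(z)) = \max\{ \overline\varphi(y) + u(y) : y \in T^{-1}(T(z))\}
\end{equation*}
and note that $z\in T^{-1}(T(z))$ gives $u(T(z)) \geq \overline\varphi(z) + u(z)$, which rearranges to $\wt\varphi(z) \leq 0$. For (ii)(a), the coboundary observation combined with $\int\wt\varphi\,\mathrm{d}\mu = \int\overline\varphi\,\mathrm{d}\mu = \int\varphi\,\mathrm{d}\mu - Q(T,\varphi)$ yields $Q(T,\wt\varphi) = 0$, with the supremum attained by weak$^*$-compactness of $\MMM(X,T)$. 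The same identity already shows $\Mmax(T,\varphi) = \Mmax(T,\wt\varphi)$, providing the first equality in~(ii)(d).

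Part~(ii)(c) splits into three ingredients. The preimage $\wt\varphi^{-1}(0)$ is closed by continuity of $\wt\varphi$, so $\cK$ is closed (hence compact) as a countable intersection of closed sets. Forward-invariance is immediate: $x\in\cK$ means $\wt\varphi(T^j(x)) = 0$ for every $j\geq 0$, which of course includes every $j\geq 1$, so $T(x)\in\cK$. Non-emptiness is the subtlest point, and here I would invoke any $\mu\in\Mmax(T,\wt\varphi)$, which is non-empty by~(ii)(a). Since $\wt\varphi\leq 0$ by~(ii)(b) and $\int\wt\varphi\,\mathrm{d}\mu = 0$, we get $\wt\varphi = 0$ $\mu$-almost everywhere, i.e.\ $\mu(\wt\varphi^{-1}(0)) = 1$. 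By $T$-invariance, $\mu(T^{-j}(\wt\varphi^{-1}(0))) = 1$ for every $j\geq 0$, and the countable intersection of full-measure sets still has full measure, so $\mu(\cK) = 1$; in particular $\cK\neq\emptyset$.

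Finally, the remaining equality in~(ii)(d) follows from the same dichotomy: if $\mu\in\Mmax(T,\wt\varphi)$ then the argument above gives $\mu(\cK) = 1$, and closedness of $\cK$ upgrades this to $\supp\mu\subseteq\cK$; conversely, if $\supp\mu\subseteq\cK$ then $\wt\varphi$ vanishes on $\supp\mu$, so $\int\wt\varphi\,\mathrm{d}\mu = 0 = Q(T,\wt\varphi)$ and $\mu\in\Mmax(T,\wt\varphi)$. The only step I would flag as genuinely requiring care is the non-emptiness in~(ii)(c): one must notice that $T$-invariance of a maximizing measure $\mu$ promotes a single $\mu$-full-measure set to the countable family $\{T^{-j}(\wt\varphi^{-1}(0))\}_{j\geq 0}$ of $\mu$-full-measure sets whose intersection is $\cK$; everything else is a routine chain of implications.
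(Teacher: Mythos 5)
Your proof is correct and follows essentially the same route as the paper's: the coboundary identity yields (i) and (ii)(a), evaluating the fixed-point equation at $T(z)$ with $z \in T^{-1}(T(z))$ yields (ii)(b), and a $\wt\varphi$-maximizing measure gives both the non-emptiness of $\cK$ and the support characterization in (ii)(d). The only cosmetic difference is that you prove non-emptiness directly inside (ii)(c) via the full-measure argument $\mu(\cK)=1$, whereas the paper defers it to (ii)(d); the underlying observation (that $T$-invariance promotes $\mu\bigl(\wt\varphi^{-1}(0)\bigr)=1$ to $\supp\mu\subseteq\cK$) is the same.
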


\begin{proof}
(i) Fix a constant $c\in\R$. Denote $\psi \= \varphi + c + u - u \circ T$. For each $\mu \in \MMM( X, T )$, we have $\int \! \psi \,\mathrm{d}\mu = c + \int \! \varphi \,\mathrm{d} \mu$. Thus, $Q (T, \psi ) = c + Q (T,\varphi )$ (see (\ref{e_Def_beta})). Consequently, by (\ref{e_Def_max_measures}), for each $\mu \in \Mmax (T,\, \varphi)$,
\begin{equation*}
\int \! \psi \,\mathrm{d}\mu = c + \int \! \varphi \,\mathrm{d} \mu = c + Q (T,\varphi) = Q (T,\psi),
\end{equation*}
i.e., $\mu \in \Mmax(T, \, \psi)$. Similarly, $\Mmax(T, \, \psi) \subseteq \Mmax (T, \, \varphi)$. Statement~(i) follows.

\smallskip

(ii) Assume $u \in \CCC(X)$ satisfies $\RR_{\overline\varphi} (u) = u$.

\smallskip

(a) Since $\mu$ is $T$-invariant, 
$Q (T, \wt\varphi ) = \max\limits_{ \mu \in \MMM( X , T ) } \int \! ( \overline\varphi + u - u \circ T) \,\mathrm{d} \mu
                                         =  \max\limits_{ \mu \in \MMM( X, T ) } \int \!  \overline\varphi   \,\mathrm{d} \mu
                                         = - Q (T,\varphi) + Q (T,\varphi) = 0$.
                                         
\smallskip

(b) Since $\RR_{\overline\varphi} (u) = u$, we get from (\ref{e_Def_BouschOp}) that for each $x \in X$, $u(T(x)) = \RR_{\overline\varphi} (u) (T(x)) \geq  \overline\varphi(x) + u(x)$. Thus, by the definition of $\wt \varphi$, we have $\wt \varphi (x) \leq 0$ for each $x\in X$.

\smallskip

(c) By the definition of $\cK$, it follows immediately from the continuity of $T$ and $\wt\varphi$ that $\cK$ is compact. By the definition of $\cK$, it is also clear that $\cK$ is $T$-forward invariant. The fact that $\cK$ is nonempty will follow directly from statement~(ii)(d) below and the fact that $\Mmax(T,\varphi)$ is nonempty.

\smallskip

(d) The first identity follows from statement~(i). To establish the second identity, we first note that by (\ref{e_Def_max_measures}) and statements~(ii)(a) and~(b), every $\mu \in \MMM(X, T)$ with $\supp \mu \subseteq \cK \subseteq \wt\varphi^{-1} (0)$ is in $\Mmax  ( T, \, \wt\varphi  )$. Conversely, by statement~(ii)(a), every $\mu \in \Mmax ( T, \, \wt\varphi )$ satisfies $\int\! \wt\varphi \, \mathrm{d} \mu = 0$. By statement~(ii)(b), $\supp \mu$ is a subset of the compact set $\wt\varphi^{-1} (0)$. It now follows from the $T$-invariance of $\mu$ that $\supp \mu \subseteq \bigcap_{j=0}^{+\infty} T^{-j}  \bigl( \wt\varphi^{-1} ( 0 ) \bigr) = \cK$.
\end{proof}

\begin{lemma}  \label{l_Bousch_Op_preserve_space}
Let $f$, $\CC$, $d$, $L$, $\alpha$, $\phi$ satisfy the Assumptions in Section~\ref{sct_Assumptions}. Then there exist a constant $C_1 > 1$ depending only on $f$, $d$, $\CC$, and $\alpha$ such that for each $u \in \Lip (S^2,d^{\alpha})$ and each $n\in\N$, we have $\RR_\phi^n (u) \in \Lip (S^2, d^{\alpha})$ and 
\begin{equation}  \label{e_Bousch_Op_Holder_seminorm_bound}
 \Hseminormbig{d^\alpha }{ \RR_\phi^n (u) } 
 \leq C_1 \bigl( \Hseminorm{d^\alpha }{ \phi}  +  \Hseminorm{d^\alpha }{u} \big).
\end{equation}
\end{lemma}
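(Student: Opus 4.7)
The plan is to exploit the formula $\RR_\phi^n(u)(x) = \max\{S_n\phi(y) + u(y) : y \in f^{-n}(x)\}$ from Lemma~\ref{l_Bousch_Op_iterate_max}(ii), and match preimages of nearby points via local inverse branches of $f^n$, combining the metric distortion estimate of Lemma~\ref{l_MetricDistortion} with the bounded-distortion estimate for Birkhoff sums of Lemma~\ref{l_key_estimate}.

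First, I would establish a local estimate of the following form: there exist constants $r_0 \in (0,1)$ and $\tilde{C} > 0$ depending only on $f$, $d$, $\CC$, and $\alpha$ such that for every pair $x, x' \in S^2 \setminus \CC$ lying in the interior of a common $0$-tile in $\X^0(f,\CC)$ with $d(x, x') \leq r_0$,
\begin{equation}  \label{e_Pf_local_estimate}
\abs{\RR_\phi^n(u)(x) - \RR_\phi^n(u)(x')} \leq \tilde{C} \bigl(\Hseminorm{d^\alpha}{\phi} + \Hseminorm{d^\alpha}{u}\bigr) d(x, x')^\alpha.
\end{equation}
To prove this, let $y \in f^{-n}(x)$ realize the maximum $\RR_\phi^n(u)(x) = S_n\phi(y) + u(y)$. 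Since $x$ is in the interior of its $0$-tile $X^0$, Proposition~\ref{p_CellDecomp}(i) shows that $y$ is in the interior of a unique $n$-tile $X^n$ with $f^n(X^n) = X^0$; since $x' \in X^0 = f^n(X^n)$, we may set $y' \= (f^n|_{X^n})^{-1}(x') \in X^n$. Lemma~\ref{l_MetricDistortion} (applied with $k = 0$) gives $d(y, y') \leq C_0 \Lambda^{-n} d(x, x')$, and hence (using $\Lambda > 1$ and $\alpha \leq 1$)
\begin{equation*}
\abs{u(y) - u(y')} \leq \Hseminorm{d^\alpha}{u}\, d(y, y')^\alpha \leq C_0^\alpha \Hseminorm{d^\alpha}{u}\, d(x, x')^\alpha,
\end{equation*}
while Lemma~\ref{l_key_estimate} applied with $m = n$ yields
\begin{equation*}
\abs{S_n\phi(y) - S_n\phi(y')} \leq \frac{C_0}{1 - \Lambda^{-\alpha}} \Hseminorm{d^\alpha}{\phi}\, d(x, x')^\alpha.
\end{equation*}
Combining these, $\RR_\phi^n(u)(x) - \RR_\phi^n(u)(x') \leq \tilde{C}(\Hseminorm{d^\alpha}{\phi} + \Hseminorm{d^\alpha}{u}) d(x,x')^\alpha$; swapping $x$ and $x'$ gives the reverse inequality and establishes (\ref{e_Pf_local_estimate}).

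To extend the estimate to arbitrary $x, x' \in S^2$, I would first observe that $\RR_\phi^n(u)$ is continuous on $S^2$ (as a max of finitely-many continuous branches, using that $f^n$ is finite-to-one), so it suffices to prove the global bound on the dense subset $S^2 \setminus \CC$. For pairs $x, x' \in S^2 \setminus \CC$ lying on opposite sides of $\CC$ but with $d(x, x') \leq r_0$, I would insert an intermediate point $z \in \CC$ with $d(x, z) + d(z, x') \leq C' d(x, x')$ (available from the linear local connectivity of $d$) and apply (\ref{e_Pf_local_estimate}) to the pairs $(x, z_i)$ and $(z_i, x')$ for sequences $z_i \to z$ taken from the interiors of the respective $0$-tiles, passing to the limit by continuity of $\RR_\phi^n(u)$ and using the subadditivity of $d^\alpha$ (which holds because $(a+b)^\alpha \leq a^\alpha + b^\alpha$ for $\alpha \in (0,1]$). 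For pairs with $d(x, x') > r_0$, I would chain by a sequence $x = x_0, x_1, \dots, x_N = x'$ with consecutive distances at most $r_0/2$ and $N \leq N_0 d(x, x') / r_0$; such a chain exists with $N_0$ depending only on $f$, $d$, and $\CC$ because $(S^2, d)$ is compact, connected, and linearly locally connected. Summing the enhanced local estimate along the chain, and comparing $\sum_i d(x_i, x_{i+1})^\alpha$ to $d(x, x')^\alpha$ (using $d(x, x') \geq r_0$ and $\diam_d(S^2) < +\infty$), yields (\ref{e_Bousch_Op_Holder_seminorm_bound}) with $C_1$ independent of $n$.

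I expect the main obstacle to be the chaining argument at scales above $r_0$: keeping track of the factor $N$ in terms of $d(x, x')/r_0$, and controlling the number of times the chain crosses $\CC$. Both are manageable via the linear local connectivity of $d$ combined with the fact that $\X^0(f, \CC)$ has only two elements, but they require careful bookkeeping to produce a constant $C_1$ that is independent of $n$.
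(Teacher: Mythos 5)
Your core estimate is exactly the paper's: use $\RR_\phi^n(u)(x) = \max\{S_n\phi(y) + u(y) : y\in f^{-n}(x)\}$, match the maximizing preimage of one point with the preimage of the other under the same inverse branch of $f^n$, and control the two differences by Lemma~\ref{l_MetricDistortion} and Lemma~\ref{l_key_estimate}. Where you diverge is in the globalization, and there you have built machinery that is not needed. By Proposition~\ref{p_CellDecomp}~(i), $f^n|_{X^n}$ is a homeomorphism of the \emph{closed} $n$-tile $X^n$ onto the closed $0$-tile $f^n(X^n)$, and Lemmas~\ref{l_MetricDistortion} and~\ref{l_key_estimate} are stated for arbitrary pairs of points in a common tile, with no smallness restriction on $d(x,x')$. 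Hence your local estimate holds for all $x,x'$ in the same closed $0$-tile, at every scale; the restriction to interior points and to $d(x,x')\leq r_0$ is an artifact. Once that restriction is dropped, the continuity-plus-density argument and the entire chaining construction at scales above $r_0$ disappear: the only remaining case is $x\in X^0_\w$, $y\in X^0_\b$, which the paper handles by using linear local connectivity to produce a continuum $E\ni x,y$ inside $B_d(x, L\,d(x,y))$, picking $z\in\CC\cap E$ (so $z$ lies in both closed $0$-tiles), and applying the same-tile estimate to $(x,z)$ and $(z,y)$, at the cost of a factor $8L^\alpha$.

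Two of your auxiliary claims are also shakier than you acknowledge. The bound $N\leq N_0\, d(x,x')/r_0$ on the chain length does not follow from linear local connectivity alone (that would be a quasiconvexity-type statement); what you actually need, and what compactness and connectedness do give, is merely a uniform bound $N\leq N_1$ for the fixed scale $r_0$, which still yields a constant independent of $n$ since $d(x,x')\geq r_0$. And the continuity of $\RR_\phi^n(u)$ "as a max of finitely-many continuous branches" needs care near $\post f$, where inverse branches merge and their number changes; in the paper's route this issue never arises because the tile-by-tile estimate already covers boundary points. None of this is fatal --- your proof can be repaired --- but the paper's two-case argument reaches (\ref{e_Bousch_Op_Holder_seminorm_bound}) directly without any of it.
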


\begin{proof}
Fix an arbitrary function $u\in \Lip( S^2,d^{\alpha})$ and $n\in\N$. Let $X^0$ be either the black $0$-tile $X^0_\b$ or the white $0$-tile $X^0_\w$ in $\X^0$. For each $X^n \in \X^n$ with $f^n(X^n) = X^0$, by Proposition~\ref{p_CellDecomp}~(i), $(f^n)|_{X^n}$ is a homeomorphism of $X^n$ onto $X^0$. So for $x,\,y \in X^0$, there exist unique points $x',\,y' \in X^n$ with $x' \in f^{-n} (x)$ and $y' \in f^{-n} (y)$. Recall Lemma~\ref{l_Bousch_Op_iterate_max}~(ii). By assuming that $x'$ is a preimage of $x$ under $f^n$ with the property that 
\begin{equation*}
\RR_\phi^n (u ) (x) = S_n \phi (x') + u(x'),
\end{equation*}
we get from Lemma~\ref{l_Bousch_Op_iterate_max}~(ii), Lemma~\ref{l_key_estimate}, and Lemma~\ref{l_MetricDistortion} that
\begin{align*}
\RR_\phi^n (u) (x) - \RR_\phi^n (u) (y) 
& \leq S_n \phi (x') + u (x') - S_n \phi (y') - u (y') \\
& \leq C_0 ( 1 - \Lambda ^{-\alpha} )^{-1} \Hseminorm{d^\alpha }{\phi}  d(x,y)^\alpha + \Hseminorm{d^\alpha }{u}  d(x',y')^\alpha \\
& \leq C_0 ( 1 - \Lambda ^{-\alpha} )^{-1} \Hseminorm{d^\alpha }{\phi}  d(x,y)^\alpha + \Hseminorm{d^\alpha }{u}  C_0^{\alpha} \Lambda^{- n \alpha}  d(x,y)^\alpha \\
& \leq 2 C_0 ( 1 - \Lambda ^{-\alpha} )^{-1} \bigl(  \Hseminorm{d^\alpha }{\phi} + \Hseminorm{d^\alpha }{u} \bigr)  d(x,y)^\alpha,
\end{align*}
where $\Lambda > 1$ is the expansion factor of $d$ under $f$, and $C_0 > 1$ is a constant depending only on $f$, $\CC$, and $d$ from Lemma~\ref{l_MetricDistortion}. Similarly, by assuming that $y'$ is a preimage of $y$ under $f^n$ with the property that
$
\RR_\phi^n (u ) (y) =  S_n \psi (y') + u(y')$,
we get 
$\RR_\phi^n (u) (x) - \RR_\phi^n (u) (y)  \geq -  \bigl( \Hseminorm{d^\alpha }{ S_n \phi } + \Hseminorm{d^\alpha }{u} \bigr) C_0^{\alpha} \Lambda^{ - n \alpha }  d(x,y)^\alpha$. Hence, 
\begin{equation} \label{e_Pf_l_Bousch_Op_preserve_space_1}
\Absbig{ \RR_\phi^n (u) (x) - \RR_\phi^n (u) (y) } 
\leq 2 C_0 ( 1 - \Lambda ^{-\alpha} )^{-1}  \bigl( \Hseminorm{d^\alpha }{ \phi } + \Hseminorm{d^\alpha }{u} \bigr) d(x,y)^\alpha.
\end{equation}

Next, we consider arbitrary $x \in X^0_\w$ and $y \in X^0_\b$. Since the metric space $(S^2, d)$ is linearly locally connected with a linear local connectivity constant $L \geq 1$, there exists a continuum $E \subseteq S^2$ with $x,\,y \in E$ and $E \subseteq B_d ( x, L d(x,y) )$ (see Subsection~\ref{subsct_ThurstonMap}). We can then fix a point $z \in \CC \cap E$. Then $x,\, z \in X^0_\w$ and $y,\, z \in X^0_\b$. Thus, we have
\begin{align}    \label{e_Pf_l_Bousch_Op_preserve_space_2}
\Absbig{ \RR_\phi^n (u) (x) - \RR_\phi^n (u) (y) }
& \leq \Absbig{ \RR_\phi^n (u) (x) - \RR_\phi^n (u) (z) }  +  \Absbig{ \RR_\phi^n (u) (y) - \RR_\phi^n (u) (z) }   \notag  \\
& \leq ( d(x, z)^\alpha  +  d(y,z)^\alpha )  2 C_0 ( 1 - \Lambda ^{-\alpha} )^{-1}  \bigl( \Hseminorm{d^\alpha }{ \phi } + \Hseminorm{d^\alpha }{u} \bigr) \\
& \leq 2 ( \diam_d(E) )^\alpha 2 C_0 ( 1 - \Lambda ^{-\alpha} )^{-1}  \bigl( \Hseminorm{d^\alpha }{ \phi } + \Hseminorm{d^\alpha }{u} \bigr)   \notag \\
& \leq 8 L^\alpha  C_0 ( 1 - \Lambda ^{-\alpha} )^{-1} \bigl( \Hseminorm{d^\alpha }{ \phi } + \Hseminorm{d^\alpha }{u} \bigr) d(x,y)^\alpha \notag .
\end{align}  

Finally $\RR_\phi^n (u) \in \Lip (S^2, d^{\alpha})$ follows from (\ref{e_Pf_l_Bousch_Op_preserve_space_1}) and (\ref{e_Pf_l_Bousch_Op_preserve_space_2}). On the other hand, by choosing $C_1 \= 8 L^\alpha C_0 ( 1 - \Lambda ^{-\alpha} )^{-1} >1$, we get (\ref{e_Bousch_Op_Holder_seminorm_bound}).
\end{proof}

Inspired by the construction of eigenfunctions of the Ruelle--Perron--Frobenius operators (c.f.\ \cite[Theorem~5.16]{Li18}), we find a fixed point $u_\phi$ of the Bousch operator $\RR_{\overline\phi}$ (also known as a \defn{calibrated sub-action} for $f$ and $\phi$).

\begin{prop}  \label{p_calibrated_sub-action_exists}
Let $f$, $\CC$, $d$, $\alpha$, $\phi$ satisfy the Assumptions in Section~\ref{sct_Assumptions}. Then the function $u_\phi \: S^2 \rightarrow \R$ given by
\begin{equation}  \label{e_calibrated_sub-action_exists}
u_\phi (x) \= \limsup_{n\to+\infty} \RR_{\overline\phi}^n (\mathbbold{0}) (x), \qquad x \in S^2,
\end{equation}
satisfies the following properties:
\begin{enumerate}
\smallskip
\item[(i)] $\abs{ u_\phi (x) } \leq 2  C_1 \Hseminorm{d^\alpha }{ \phi } \diam_d(S^2)^\alpha$ for each $x \in S^2$,

\smallskip
\item[(ii)] $u_\phi \in \Lip (S^2,d^{\alpha})$ with $\Hseminorm{d^\alpha }{u_\phi} \leq C_1 \Hseminorm{d^\alpha }{\phi}$,

\smallskip
\item[(iii)] $\RR_{\overline\phi} (u_\phi) = u_\phi$.
\end{enumerate}
Here $C_1 >1$ is a constant depending only on $f$, $d$, $\CC$, and $\alpha$ from Lemma~\ref{l_Bousch_Op_preserve_space}.
\end{prop}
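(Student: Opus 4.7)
The plan is to verify the three properties of $u_\phi(x) \coloneqq \limsup_{n\to+\infty} g_n(x)$, where $g_n \coloneqq \RR_{\overline\phi}^n(\mathbbold{0})$, by first obtaining uniform H\"older and sup-norm bounds on $\{g_n\}_{n\in\N}$ and then passing to the limsup. Since $\Hseminorm{d^\alpha}{\overline\phi} = \Hseminorm{d^\alpha}{\phi}$, applying Lemma~\ref{l_Bousch_Op_preserve_space} with $u \coloneqq \mathbbold{0}$ (so $\Hseminorm{d^\alpha}{\mathbbold{0}} = 0$) and $\overline\phi$ in place of $\phi$ gives the uniform estimate $\Hseminorm{d^\alpha}{g_n} \leq C_1 \Hseminorm{d^\alpha}{\phi}$ for every $n \in \N$; setting $M \coloneqq C_1 \Hseminorm{d^\alpha}{\phi} \diam_d(S^2)^\alpha$, this yields the oscillation bound $\sup g_n - \inf g_n \leq M$. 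Moreover, by Lemma~\ref{l_Bousch_Op_iterate_max}~(ii), I have the convenient representation $g_n(x) = \max_{y \in f^{-n}(x)} S_n \overline\phi(y)$ for all $x \in S^2$.

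The crux of the plan is the uniform sup-norm bound $\norm{g_n}_{\CCC^0} \leq 2M$. For the lower part, pick a measure $\mu^* \in \Mmax(f, \phi)$, so that $\int \overline\phi \,\mathrm{d}\mu^* = 0$; $f$-invariance gives $\int S_n\overline\phi \,\mathrm{d}\mu^* = 0$, hence $\sup g_n = \sup_y S_n\overline\phi(y) \geq 0$, and the oscillation bound then yields $\inf g_n \geq -M$. For the upper part, I argue by contradiction: if $\sup g_N > 2M$ for some $N \in \N$, then the oscillation bound forces $g_N(x) > M$ for every $x \in S^2$, so for each $x$ there exists $y \in f^{-N}(x)$ with $S_N\overline\phi(y) > M$. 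Starting from an arbitrary $x_0 \in S^2$, I iterate this selection to produce a backward $f^N$-orbit $\{y_k\}_{k\in\N}$ with $f^N(y_1) = x_0$, $f^N(y_{k+1}) = y_k$, and $S_N\overline\phi(y_k) > M$ for all $k$; splitting $S_{Nk}\overline\phi(y_k)$ into the $k$ blocks of length $N$ yields $S_{Nk}\overline\phi(y_k) > kM$, so the empirical measure $\nu_k \coloneqq \frac{1}{Nk}\sum_{j=0}^{Nk-1}\delta_{f^j(y_k)}$ satisfies $\int \overline\phi \, \mathrm{d}\nu_k > M/N$. By the standard Krylov--Bogolyubov argument (since $\frac{1}{Nk}(\delta_{y_k} - \delta_{f^{Nk}(y_k)}) \to 0$ in the weak$^*$ topology), any weak$^*$-accumulation point of $\{\nu_k\}_{k\in\N}$ lies in $\MMM(S^2, f)$ and has $\overline\phi$-integral at least $M/N > 0$, contradicting $\sup_{\mu \in \MMM(S^2, f)} \int \overline\phi \,\mathrm{d}\mu = 0$. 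Hence $\norm{g_n}_{\CCC^0} \leq 2M$ for every $n$, and taking the limsup immediately gives~(i).

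Property~(ii) then follows because the pointwise limsup of a uniformly $\alpha$-H\"older family with seminorm bound $A$ is itself $\alpha$-H\"older with seminorm bound $A$ on the set where it is finite, which is all of $S^2$ by~(i). For the fixed-point property~(iii), I write $u_\phi = \lim_{k\to+\infty} h_k$, where $h_k(x) \coloneqq \sup_{n \geq k} g_n(x)$ is a bounded function by~(i) and $\{h_k\}_{k\in\N}$ decreases monotonically to $u_\phi$ pointwise. Lemma~\ref{l_Bousch_Op_iterate_max}~(iii), applied to $\cA \coloneqq \{g_n : n \geq k\}$, gives $\RR_{\overline\phi}(h_k) = \sup_{n \geq k} \RR_{\overline\phi}(g_n) = \sup_{n \geq k} g_{n+1} = h_{k+1}$; then Lemma~\ref{l_Bousch_Op_iterate_max}~(iv) allows me to pass $k \to +\infty$ to conclude $\RR_{\overline\phi}(u_\phi) = \lim_k h_{k+1} = u_\phi$. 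The main obstacle in this plan is the uniform $\CCC^0$ bound: the backward-orbit iteration combined with the Krylov--Bogolyubov compactness is what upgrades the a priori subadditive $o(n)$ control on $\sup g_n$ into the $O(1)$ bound needed for the limsup construction to be nondegenerate.
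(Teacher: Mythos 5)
Your proof is correct and follows essentially the same route as the paper's: the same uniform H\"older bound from Lemma~\ref{l_Bousch_Op_preserve_space}, the same maximizing-measure argument for the lower bound, the same backward-orbit plus Krylov--Bogolyubov contradiction for the upper bound, and the same use of Lemma~\ref{l_Bousch_Op_iterate_max}~(iii)--(iv) for the fixed-point property. The one (welcome) streamlining is in the upper bound: by fixing a single $N$ with $\sup g_N > 2M$ and using the oscillation bound to force $g_N > M$ everywhere, you run the backward-orbit construction with constant block length $N$, whereas the paper contradicts positivity of the limsup $u_\phi$ itself and therefore needs a continuity/compactness step to extract uniform constants before building a backward orbit with variable block lengths.
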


\begin{proof}
We write $D \=C_1 \Hseminorm{d^\alpha }{ \phi } \diam_d(S^2)^\alpha$ in this proof.

To establish statement~(i), we fix a point $x\in S^2$ and a measure $\mu \in \Mmax( f , \, \phi )$. Recall from Lemma~\ref{l_Bousch_Op_iterate_max}~(ii), for each $n\in\N$,
\begin{equation}  \label{e_Pf_p_calibrated_sub-action_exists_Ln0}
\RR_{\overline\phi}^n (\mathbbold{0}) (x) =  \max \bigl\{ S_n \overline\phi (y) : y \in f^{-n} (x) \bigr\}.
\end{equation}
To show that $u_\phi(x) \neq - \infty$, we choose, for each $n\in\N$, a point $y_n \in S^2$ on which $S_n \overline\phi$ attains its maximum value. Then for each $n\in \N$, by Lemma~\ref{l_Bousch_Op_preserve_space}, (\ref{e_Def_overline_phi}), and (\ref{e_Pf_p_calibrated_sub-action_exists_Ln0}),
\begin{align*}
&\sup \bigl\{ \RR_{\overline\phi}^m( \mathbbold{0} ) (x) : m\in\N , \, m\geq n \bigr\} \\
&\qquad \geq  \RR_{\overline\phi}^n( \mathbbold{0} ) (x) \\
&\qquad \geq  \RR_{\overline\phi}^n( \mathbbold{0} ) ( f^n (y_n) ) - \Hseminormbig{d^\alpha}{\RR_{\overline\phi}^n( \mathbbold{0} )} d(x, f^n(y_n) )^\alpha \\
&\qquad \geq \max \bigl\{ S_n \overline\phi (z) : z \in f^{-n} ( f^n(y_n) ) \bigr\} - C_1 \Hseminorm{d^\alpha }{ \phi } \diam_d(S^2)^\alpha \\
&\qquad \geq S_n \overline\phi (y_n) - D \\
&\qquad \geq \int \! S_n \overline\phi \,\mathrm{d}\mu  - D \\
&\qquad = \int \! S_n  \phi \,\mathrm{d}\mu - n Q(f, \phi ) - D \\
&\qquad =  - D 
\end{align*}
Hence, $u_\phi (x) = \limsup_{n\to+\infty} \RR_{\overline\phi}^n ( \mathbbold{0} ) (x) \geq - D$.

Next, we will show that $u_\phi (x) \leq 2D$. If $\phi$ is a constant, then by (\ref{e_Pf_p_calibrated_sub-action_exists_Ln0}) and (\ref{e_calibrated_sub-action_exists}), we get that $u_\phi = \mathbbold{0}$ and statement~(i) holds. On the other hand, if $\phi$ is not a constant, then $D > 0$. 

We will establish the following claim.

\smallskip
\emph{Claim.} There exists a point $x_0 \in S^2$ such that $u_\phi (x_0 ) \leq 0$.

\smallskip

Assuming the claim and that $\phi$ is not a constant, by (\ref{e_calibrated_sub-action_exists}) we can fix an integer $N \in \N$ with the property that for each integer $n \geq N$, $\sup \bigl\{ \RR_{\overline\phi}^m( \mathbbold{0} ) (x_0) : m\in \N , \, m\geq n \bigr\} \leq D$. Then by Lemma~\ref{l_Bousch_Op_preserve_space},
\begin{align*}
&\sup \bigl\{ \RR_{\overline\phi}^m( \mathbbold{0} ) (x) : m\in \N , \, m\geq n \bigr\} \\
&\qquad \leq  \sup \bigl\{ \RR_{\overline\phi}^m( \mathbbold{0} ) (x_0) + \Hseminormbig{d^\alpha }{\RR_{\overline\phi}^m( \mathbbold{0} )} d(x, x_0 )^\alpha : m\in \N , \, m\geq n \bigr\}  \\
&\qquad \leq  \sup \bigl\{ \RR_{\overline\phi}^m( \mathbbold{0} ) (x_0) + C_1 \Hseminorm{d^\alpha }{\phi} \diam_d( S^2 )^\alpha : m\in \N , \, m\geq n \bigr\} \\  
&\qquad \leq  \sup \bigl\{ \RR_{\overline\phi}^m( \mathbbold{0} ) (x_0) : m\in \N , \, m\geq n \bigr\} + D \\
&\qquad \leq 2D.
\end{align*}
Thus, $u_\phi (x) = \limsup_{n\to+\infty} \RR_{\overline\phi}^n ( \mathbbold{0} ) (x) \leq 2D$. Since $x\in S^2$ is arbitrary, statement~(i) is verified.

\smallskip
To establish the claim above, we argue by contradiction and suppose that $u_\phi(w) > 0$ for all $w\in S^2$. Thus, for each $w \in S^2$, there exists an integer $n_w \in \N$ with the property that
\begin{equation*}
 \max \bigl\{ S_{n_w} \overline\phi (z) : z \in f^{-n_w} (w) \bigr\} \geq u_\phi (w) / 2.
\end{equation*}
It follows from the continuity of $\phi$ and $f$, Proposition~\ref{p_CellDecomp}~(i), and Lemma~\ref{l_CellBoundsBM}~(ii) that for each $w\in S^2$, there exists a number $\delta_w > 0$ such that for each $y \in B_d(w, \delta_w)$,
\begin{equation*}
 \max \bigl\{ S_{n_w} \overline\phi (z) : z \in f^{-n_w} (y) \bigr\} \geq u_\phi (w) / 3.
\end{equation*}
By compactness, we choose finitely-many points $x_1,\,x_2,\,\cdots,\,x_k \in S^2$ such that $\bigcup_{i=1}^k B(x_i, \delta_{x_i} ) = S^2$. Denote
\begin{equation}   \label{e_Pf_p_calibrated_sub-action_exists_c}
c \= \min\{ u(x_i) / (3 n_{x_i} ) : i \in \{ 1, \, 2 , \, \cdots, \, k \} \} > 0.
\end{equation}
For each $y \in S^2 \setminus \{ x_1, \, x_2, \, \cdots, \, x_k \}$, set $n_y \= n_{x_i}$, where $x_i$ is an arbitrarily chosen point from $\{x_1, \, x_2, \, \cdots, \, x_k\}$ satisfying $y \in B_d(x_i, \delta_{x_i} )$. Then for each $y\in S^2$, 
\begin{equation} \label{e_Pf_p_calibrated_sub-action_exists_ny}
 \max \bigl\{ S_{n_y} \overline\phi (z) : z \in f^{-n_y} (y) \bigr\} \geq n_y c .
\end{equation}

Fix an arbitrary $z_0 \in S^2$. By (\ref{e_Pf_p_calibrated_sub-action_exists_ny}), we can recursively choose $z_i \in S^2$, $i\in\N$, with the following two properties: (a) $z_i \in f^{ - n_{z_{i-1}} } ( z_{i-1} )$ and (b) $S_{n_{z_{i-1}} } \overline\phi (z_i) \geq n_{z_{i-1}} c$.

On the other hand, consider a sequence $\{\nu_i\}_{i\in\N}$ of probability measures given by 
\begin{equation*}
\nu_i \= \frac{1}{m_i} \sum_{j=0}^{m_i - 1} \delta_{f^j ( z_i )},
\end{equation*}
where $m_i \= n_{z_0} + n_{z_1} +\cdots + n_{z_{i-1}}$ and $\delta_{f^j ( z_i )}$ is the Dirac delta measure at $f^j ( z_i )$. By Alaoglu's theorem, there exists a subsequence $\nu_{i_1},\,\nu_{i_2},\,\cdots,\,\nu_{i_l},\,\cdots$ of $\{ \nu_i\}_{i\in\N}$ that converges in the weak$^*$ topology to a probability measure $\nu \in \PPP(S^2)$. We then deduce the $f$-invariance of $\nu$  from the observation that for each continuous function $w \in \CCC(S^2)$ and each $i\in\N$,
\begin{equation*}
\Absbigg{ \int\! w \,\mathrm{d} \nu_i  - \int\! w \,\mathrm{d} f_*(\nu_i) }  \leq \frac{2}{m_i} \norm{w}_{\CCC^0}
\end{equation*}
Hence, by (\ref{e_Def_overline_phi}), (\ref{e_Def_beta}), property~(b) above, and (\ref{e_Pf_p_calibrated_sub-action_exists_c}),
\begin{align*}
0 & \geq \int\!  \overline\phi \,\mathrm{d} \nu 
   =     \lim_{l\to+\infty} \frac{1}{ m_{i_l} } \sum_{j = 0}^{m_{i_l} - 1} \overline\phi \bigl( f^j ( z_{i_l}  ) \bigr) 
   =     \lim_{l\to+\infty} \frac{1}{ m_{i_l} } \sum_{j = 0}^{i_l - 1} S_{n_{z_j}} \overline\phi (  z_{j+1}  ) \\
& \geq \lim_{l\to+\infty} \frac{1}{ m_{i_l} } \sum_{j = 0}^{i_l - 1}  n_{z_j} c
 =  c > 0.
\end{align*}
This is a contradiction. The claim is, therefore, established.

\smallskip

Next, we verify statement~(ii). Fix an arbitrary pair of distinct points $x, \, y \in S^2$ and an arbitrary number $\epsilon>0$. By (\ref{e_calibrated_sub-action_exists}), we can find an integer $N\in\N$ such that the following inequalities hold:
\begin{align*}
\Absbig{ \RR_{\overline\phi}^N ( \mathbbold{0} ) (x) - u_\phi (x) } & < \epsilon \qquad \text{and} \\
\sup\bigl\{ \RR_{\overline\phi}^n ( \mathbbold{0} ) (y) : n\in\N, \, n \geq N \bigr\} - u_\phi(y)& < \epsilon.
\end{align*}
Then by Lemma~\ref{l_Bousch_Op_preserve_space},
\begin{align*}
u_\phi (x) - u_\phi (y) 
&\leq \RR_{\overline\phi}^N ( \mathbbold{0} ) (x) + \epsilon + \epsilon - \sup\bigl\{ \RR_{\overline\phi}^n ( \mathbbold{0} ) (y) : n\in\N, \, n \geq N \bigr\}  \\
&\leq \RR_{\overline\phi}^N ( \mathbbold{0} ) (x) - \RR_{\overline\phi}^N ( \mathbbold{0} ) (y) + 2\epsilon   \\
&\leq  \Hseminormbig{d^\alpha }{\RR_{\overline\phi}^N( \mathbbold{0} )} d(x, y )^\alpha + 2\epsilon   \\
&\leq C_1 \Hseminorm{d^\alpha }{\phi} d(x, y )^\alpha + 2\epsilon.
\end{align*}
Similarly, we can find an integer $M\in\N$ such that the following inequalities hold:
\begin{align*}
\Absbig{ \RR_{\overline\phi}^M ( \mathbbold{0} ) (y) - u_\phi (y) } & < \epsilon \qquad \text{and} \\
\sup\bigl\{ \RR_{\overline\phi}^m ( \mathbbold{0} ) (x) : m\in\N, \, m \geq M \bigr\} - u_\phi(x)& < \epsilon.
\end{align*}
Then by Lemma~\ref{l_Bousch_Op_preserve_space},
\begin{align*}
u_\phi (x) - u_\phi (y) 
&\geq \sup\bigl\{ \RR_{\overline\phi}^m ( \mathbbold{0} ) (x) : m\in\N, \, m \geq M \bigr\}  - \epsilon - \RR_{\overline\phi}^M ( \mathbbold{0} ) (y) - \epsilon   \\
&\geq \RR_{\overline\phi}^M ( \mathbbold{0} ) (x) - \RR_{\overline\phi}^M ( \mathbbold{0} ) (y) - 2\epsilon   \\
&\geq - \Hseminormbig{d^\alpha }{\RR_{\overline\phi}^M( \mathbbold{0} )} d(x, y )^\alpha - 2\epsilon   \\
&\geq - C_1 \Hseminorm{d^\alpha }{\phi} d(x, y )^\alpha - 2\epsilon.
\end{align*}
Since $\epsilon>0$ is arbitrary, statement~(ii) now follows.

\smallskip

Finally, by Lemma~\ref{l_Bousch_Op_iterate_max}, for each $x\in S^2$,
\begin{align*}
\RR_{\overline\phi} (u_\phi) (x) 
& = \RR_{\overline\phi} \bigl( \lim_{n\to+\infty} \sup\bigl\{ \RR_{\overline\phi}^m ( \mathbbold{0} )(x) : m\in\N, \, m\geq n \bigr\} \bigr) \\
& =  \lim_{n\to+\infty} \sup\bigl\{ \RR_{\overline\phi}^{m+1} ( \mathbbold{0} )(x) : m\in\N, \, m\geq n \bigr\}  \\                      
& = u_\phi (x).
\end{align*}
Statement~(iii) is, therefore, verified.
\end{proof}

We now establish the Ma\~{n}\'{e} and bilateral Ma\~{n}\'{e} lemmas for expanding Thurston maps.

\begin{proof}[Proof of Theorem~\ref{t_mane}]
We observe that it suffices to investigate the case when $f$ is an expanding Thurston map on the $S^2$ equipped with a visual metric $d$ since the other case when $f$ is a postcritically-finite rational map with no periodic critical points on the Riemann sphere equipped with the chordal or spherical metric follows from the former case and the bi-H\"olderness of a quasisymmetry (see Theorem~\ref{t_BM} and Remark~\ref{r_ChordalVisualQSEquiv}). So from now on, we consider $f$ in the former case.

We first verify the Ma\~{n}\'{e} lemma and then use it to deduce the bilateral Ma\~{n}\'{e} lemma using a similar argument as in \cite{Bou02}. 

Assume that $\phi \in \Lip(S^2,d^\alpha)$ for some $\alpha \in (0,1]$. Then it follows immediately from Proposition~\ref{p_calibrated_sub-action_exists}, (\ref{e_Def_overline_phi}), and Lemma~\ref{l_Bousch_Op_normalizing_potential}~(ii) that exists a function $w \in \Lip(S^2,d^\alpha)$ such that $\phi (x) - w(x) + ( w \circ f )(x) \leq Q (f,\phi)$ for all $x\in X$. The Ma\~{n}\'{e} lemma follows.

We now proceed to verify the bilateral Ma\~{n}\'{e} lemma. By the Ma\~{n}\'{e} lemma applied to $-\phi$ and to $\phi$, there exist $w_1,\, w_2 \in \Lip(S^2,d^\alpha)$ such that 
\begin{align*}
-\phi (x) - w_1(x) + ( w_1 \circ f )(x) & \leq Q (f,-\phi) \qquad \text{and} \\
\phi (x) - w_2(x) + ( w_2 \circ f )(x) & \leq Q (f,\phi)
\end{align*}
for all $x\in X$. We set $\psi \= \phi - w_1 + w_1 \circ f$ and $w_3 \= w_1 + w+2$. Then it is easy to check that $Q(f,\psi) = Q(f,\phi)$ and $Q(f,-\psi) = Q(f,- \phi)$. Thus, we have that $-Q(f,-\psi) \leq \psi (x)$ and $\psi(x) - w_3(x) + ( w_3 \circ f) (x) \leq Q(f,\psi)$ for all $x\in S^2$. By \cite[Theorem~1]{Bou02}, this guarantees that there exists a function $v \in \Lip(S^2,d^\alpha)$ such that $-Q(f,-\psi) \leq \psi(x) - v(x) + (v \circ f) (x) \leq Q(f,\psi)$ for all $x\in S^2$. Finally, by setting $u\= v + w_1 \in \Lip(S^2,d^\alpha)$, we get that 
\begin{equation*}
-Q(f,-\phi) = -Q(f,-\psi) \leq \phi(x) - u(x) + (u \circ f) (x) \leq Q(f,\psi) = Q(f,\phi)
\end{equation*}
for all $x\in S^2$, establishing the 
bilateral Ma\~{n}\'{e} lemma.
\end{proof}

To see the Liv\v{s}ic theorem for expanding Thurston maps as a consequence of the bilateral Ma\~{n}\'{e} lemma for these maps, we first need to verify the following lemma.

\begin{lemma}   \label{l_periodic_measure_dense}
Let $f \: S^2 \rightarrow S^2$ be an expanding Thurston map. Then the set 
\begin{equation*}
\{ \mu \in \MMM(S^2, f) : \mu \text{ is supported on a periodic orbit of } f\}
\end{equation*}
is dense in $\MMM(S^2, f)$ (in the weak$^*$ topology).
\end{lemma}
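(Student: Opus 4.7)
The plan is to use the symbolic model from Proposition~\ref{p_TileSFT} and reduce the density of periodic measures for $f$ to the well-known density of periodic measures for one-sided subshifts of finite type. First, by Lemma~\ref{l_CexistsBM}, I would choose $n \in \N$ large enough that there exists an $f^n$-invariant Jordan curve $\CC \subseteq S^2$ containing $\post f$. Setting $F \= f^n$, Proposition~\ref{p_TileSFT} provides a one-sided subshift of finite type $(\Sigma^+_{A_{\ti}}, \sigma_{A_{\ti}})$ together with a surjective H\"older continuous factor map $\pi_{\ti} \: \Sigma^+_{A_{\ti}} \to S^2$ satisfying $\pi_{\ti} \circ \sigma_{A_{\ti}} = F \circ \pi_{\ti}$.

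Given an arbitrary measure $\mu \in \MMM(S^2, f) \subseteq \MMM(S^2, F)$, the standard lifting result for factor maps between compact metric spaces (applied to $\pi_{\ti}$) yields a measure $\wt\mu \in \MMM(\Sigma^+_{A_{\ti}}, \sigma_{A_{\ti}})$ with $(\pi_{\ti})_* \wt\mu = \mu$. Periodic measures are classically dense in the space of invariant Borel probability measures of a one-sided subshift of finite type (see, e.g., \cite{PP90}), so there exists a sequence $\{\wt\nu_k\}_{k \in \N}$ of $\sigma_{A_{\ti}}$-periodic measures converging to $\wt\mu$ in the weak$^*$ topology. By the continuity of pushforward, $(\pi_{\ti})_* \wt\nu_k \to \mu$ in the weak$^*$ topology, and each $(\pi_{\ti})_* \wt\nu_k$ is supported on an $F$-periodic orbit.

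The only subtlety is that an $F$-periodic measure need not be an $f$-periodic measure, so I would symmetrize: define
\begin{equation*}
\nu_k \= \frac{1}{n} \sum_{i=0}^{n-1} f_*^i \bigl( (\pi_{\ti})_* \wt\nu_k \bigr)
\end{equation*}
for each $k \in \N$. If $(\pi_{\ti})_* \wt\nu_k$ is the uniform measure on an $F$-periodic orbit $\bigl\{ y_k, F(y_k), \dots, F^{p_k-1}(y_k) \bigr\}$, then $\nu_k$ is the uniform measure on the $f$-orbit of $y_k$ and is thus supported on an $f$-periodic orbit. Since $\mu$ is $f$-invariant, $f_*^i \mu = \mu$ for all $i$, and hence by continuity of $f_*^i$ in the weak$^*$ topology we have $f_*^i \bigl( (\pi_{\ti})_* \wt\nu_k \bigr) \to f_*^i \mu = \mu$ for each $i \in \{0, 1, \dots, n-1\}$. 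Averaging, $\nu_k \to \mu$ in the weak$^*$ topology, completing the proof.

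The argument has no real obstacle; the main structural ingredients (the symbolic model in Proposition~\ref{p_TileSFT}, the Bonk--Meyer invariant curve in Lemma~\ref{l_CexistsBM}, the standard lifting of invariant measures through a factor map, and the density of periodic measures for SFTs) are all either established in the paper or classical. The only bookkeeping step is the symmetrization needed to convert $F$-periodic measures into $f$-periodic measures, which is straightforward.
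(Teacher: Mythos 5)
Your overall route (lift the invariant measure to the symbolic model of Proposition~\ref{p_TileSFT}, approximate the lift by periodic measures, push forward, and symmetrize over the iterate) is coherent, and the lifting and symmetrization steps are correct as you describe them. The genuine gap is in the middle step: the assertion that periodic measures are ``classically dense'' in the invariant measures of a one-sided subshift of finite type is false for a general transition matrix. If $\Sigma_{A_{\ti}}^+$ were reducible --- say it split into two irreducible components with no admissible transitions between them --- then a nontrivial convex combination of ergodic measures living on different components could not be approximated by measures supported on a single periodic orbit (the two-point SFT consisting of two fixed points is the simplest counterexample). The density you need holds for topologically transitive SFTs (via specification or Sigmund's theorem), and nothing in Proposition~\ref{p_TileSFT} or elsewhere in the paper establishes that the tile SFT is irreducible, so the key approximation step is unjustified as written.

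The gap is repairable in two ways. One is to verify that $A_{\ti}$ is primitive: the set of $1$-tiles reachable from $X \in \X^1(F,\CC)$ in $k$ steps is precisely the set of $1$-tiles contained in $F^k(X)$, so primitivity follows from the topological exactness of expanding Thurston maps (every nonempty open set is eventually mapped onto $S^2$), which is proved in \cite{BM17} but not quoted here; you would need to add that input. The other is to replace the tile SFT by the full one-sided shift on $\deg f$ symbols, of which $f$ itself (not merely an iterate) is a factor by \cite[Theorem~9.1]{BM17}; the full shift is mixing, so the upstairs density is unproblematic, and the symmetrization step becomes unnecessary. For comparison, the paper's own proof avoids lifting measures entirely: it uses exactly this full-shift factor, notes that Sigmund's specification property passes to factors, and invokes Sigmund's theorem that specification implies density of periodic measures. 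Once patched, your argument is a legitimate alternative, but the transitivity of the chosen symbolic model is the missing ingredient you must supply.
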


\begin{proof}
It follows from a theorem of K.~Sigmund (\cite[Theorem~1]{Sig74}) that it suffices to verify that $f$ has the specification property in the sense of K.~Sigmund (see the definition in \cite[Section~2]{Sig74}). Such specification property is closed under factors (\cite[Proposition~1]{Sig74}), and the full shift on the space of bi-infinite sequences of finitely-many symbols has this property (see \cite[Section~2]{Sig74}). Thus, it suffices to see that $f$ is a factor of such a full shift. By Theorem~9.1 in \cite{BM17}, $f$ is a factor of the full shift on the space of one-sided infinite sequences of $\deg f$ symbols, which in turn is a factor of the full shift on the space of bi-infinite sequences. The lemma now follows.
\end{proof}

\begin{proof}[Proof of Theorem~\ref{t_livsic}]
It is trivially true that (ii) implies (i). To see that (i) implies (ii), we assume that $S_n \phi (x) = 0$ for each periodic point $x \in S^2$ of period $n \in \N$. Then by Lemma~\ref{l_periodic_measure_dense} and (\ref{e_Def_beta}), we get $Q(f,\phi) = 0$. Statement~(ii) follows now from the bilateral Ma\~{n}\'{e} lemma (Theorem~\ref{t_mane}~(ii)).
\end{proof}

\begin{rem}   \label{r_livsic}
Note that as can be seen in the proofs of Theorems~\ref{t_mane} and~\ref{t_livsic}, if $\phi \in \Lip( X, d^\alpha)$ for some visual metric $d$ and exponent $\alpha \in (0,1]$, then the corresponding function $u$ can be chosen from $\Lip( X, d^\alpha)$.
\end{rem}

\section{Uniform local expansion away from critical points}  \label{sct_ULEAFC}
In this section, we formulate and establish in Lemma~\ref{l_Loc_Inject_Expansion_Away_from_Crit} the \emph{uniform local expansion} property of expanding Thurston maps away from critical points, which is crucial in the quantitative analysis in Sections~\ref{sct_Closing_Lemmas} and~\ref{sct_Holder_Density}. The proof relies on an interplay between the combinatorial objects and the visual metrics. Instead of tiles or flowers, we first link the dynamics and metric geometry using ``quasi-round'' bouquets.

Let $f \: S^2 \rightarrow S^2$ be an expanding Thurston map and $\CC\subseteq S^2$ be a Jordan curve with $\post f \subseteq \CC$. Then it follows from Remark~\ref{r_Flower} and Proposition~\ref{p_CellDecomp} that flowers iterate nicely under $f$, or more precisely, 
\begin{equation}  \label{e_Flower_iterate}
 f ( W^n (x) ) = W^{n-1} ( f(x) )
\end{equation}
for each $n\in\N$ and each $x \in \V^n(f,\CC)$. Compared with flowers, bouquets $U^n(x)$ defined in (\ref{e_Def_U^n}) serve a better role in linking the combinatorial structures induced by $f$ and $\CC$ to the geometry of visual metrics. We therefore establish a similar result to (\ref{e_Flower_iterate}) for $U^n(x)$ in the following lemma.

\begin{lemma}  \label{l_Un}
Let $f \: S^2 \rightarrow S^2$ be an expanding Thurston map and $\CC\subseteq S^2$ be a Jordan curve containing $\post f$. For all $x\in S^2$ and $n\in \Z$, we have
\begin{equation}  \label{e_l_Un}
f(U^n(x)) = U^{n-1} (f(x)).
\end{equation}
Here $U^m (x)$ is defined in (\ref{e_Def_U^n}) using $m$-tiles in the cell decompositions $\DD^m(f,\CC)$.
\end{lemma}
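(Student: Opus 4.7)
\textbf{Proof plan for Lemma~\ref{l_Un}.}

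First I would dispose of the trivial cases $n \leq 0$: when $n < 0$ both sides equal $S^2$ by definition, and when $n = 0$ the two $0$-tiles $X^0_\b, X^0_\w$ intersect along $\CC$, so $U^0(x) = S^2$ and $U^{-1}(f(x)) = S^2$. The substantive case is $n \geq 1$, for which the plan is to establish the two inclusions separately using the cellular behavior of $f$ recorded in Proposition~\ref{p_CellDecomp}.

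For the inclusion $f(U^n(x)) \subseteq U^{n-1}(f(x))$, I would take $y \in U^n(x)$, so $y$ lies in some $n$-tile $Y^n$ meeting an $n$-tile $X^n$ with $x \in X^n$. Applying $f$ and invoking Proposition~\ref{p_CellDecomp}~(i), both $f(Y^n)$ and $f(X^n)$ are $(n-1)$-tiles; clearly $f(x) \in f(X^n)$ and $f(Y^n) \cap f(X^n) \supseteq f(Y^n \cap X^n) \neq \emptyset$. This exhibits $f(y)$ as a point of an $(n-1)$-tile witnessing membership in $U^{n-1}(f(x))$.

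The reverse inclusion $U^{n-1}(f(x)) \subseteq f(U^n(x))$ is the delicate step and where I expect the only real work. Given $y \in U^{n-1}(f(x))$, choose $(n-1)$-tiles $X^{n-1} \ni f(x)$ and $Y^{n-1} \ni y$ with $X^{n-1} \cap Y^{n-1} \neq \emptyset$. By Proposition~\ref{p_CellDecomp}~(ii), $f^{-1}(X^{n-1})$ is the union of the $n$-tiles mapping onto it, so I can pick an $n$-tile $X^n$ with $x \in X^n$ and $f(X^n) = X^{n-1}$. Next, pick any $p \in X^{n-1} \cap Y^{n-1}$; since $f|_{X^n}$ is a homeomorphism onto $X^{n-1}$ (Proposition~\ref{p_CellDecomp}~(i)), it has a unique preimage $q \in X^n$. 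Applying Proposition~\ref{p_CellDecomp}~(ii) once more to $Y^{n-1}$, the point $q \in f^{-1}(Y^{n-1})$ must lie in some $n$-tile $Y^n$ with $f(Y^n) = Y^{n-1}$. Then $q \in X^n \cap Y^n \neq \emptyset$, so by definition $Y^n \subseteq U^n(x)$; since $y \in Y^{n-1} = f(Y^n)$, we conclude $y \in f(U^n(x))$.

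The only obstacle is ensuring that the $n$-tile $Y^n$ lifting $Y^{n-1}$ can be chosen to actually touch the lift $X^n$ of $X^{n-1}$; the key idea is to pick both $n$-tiles so that they contain a common preimage $q$ of a point $p$ in the intersection $X^{n-1} \cap Y^{n-1}$. This forces $X^n \cap Y^n$ to be nonempty and closes the argument without any metric input.
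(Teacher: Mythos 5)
Your argument is correct, and it is a genuinely different — and more elementary — route than the one in the paper. For the nontrivial inclusion $U^{n-1}(f(x)) \subseteq f(U^n(x))$, the paper introduces the auxiliary sets $Y^m(y) = \bigcup\{X^m \in \X^m : y \in X^m\}$, proves $Y^{n-1}(f(x)) = f(Y^n(x))$ by a three-way case analysis on whether $x$ lies in the interior of a tile, the interior of an edge, or is a vertex (the last case routed through flowers and Remark~\ref{r_Flower}), then invokes the openness of branched covering maps to place $Y^{n-1}(f(x))$ in the topological interior of $f(U^n(x))$, and finally uses the disjointness of cell interiors to upgrade "an $(n-1)$-tile meets $\inter(f(U^n(x)))$" to "that tile is contained in $f(U^n(x))$". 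Your proof replaces all of this with a single point-lifting step: choose $p \in X^{n-1}\cap Y^{n-1}$, lift it via the homeomorphism $f|_{X^n}$ to $q \in X^n$, and use Proposition~\ref{p_CellDecomp}~(ii) to find a lift $Y^n$ of $Y^{n-1}$ containing $q$, so that $X^n \cap Y^n \ni q$ forces $Y^n \subseteq U^n(x)$ and $y \in f(Y^n)$. You correctly identify the only potential pitfall — that an arbitrary lift of $Y^{n-1}$ need not touch the chosen lift of $X^{n-1}$ — and the common preimage $q$ resolves it. Your version needs only Proposition~\ref{p_CellDecomp}~(i) and~(ii) and no topological input (openness of $f$) or flower combinatorics; the paper's version yields the identity $Y^{n-1}(f(x)) = f(Y^n(x))$ as a by-product, but that identity is not used elsewhere, so nothing is lost by your shortcut.
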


\begin{proof}
Fix arbitrary $x \in S^2$ and $n\in\Z$. If $n<0$, then $U^n(x) = S^2$ by definition. It is also clear that $U^0(x) = S^2$. Thus, we can assume, without loss of generality, that $n \geq 1$.

It follows quickly from (\ref{e_Def_U^n}), Proposition~\ref{p_CellDecomp}~(i), Definition~\ref{d_cellular}, and Definition~\ref{d_celldecomp} that $f( U^n(x)) \subseteq U^{n-1}(x)$. So it suffices to show that $f( U^n(x)) \supseteq U^{n-1}(x)$.

We define, for all $y\in S^2$ and $m\in\N_0$, 
\begin{equation}  \label{e_Pf_l_Un_Yn}
Y^m(y) \= \bigcup \{ X^m \in \X^m : y \in X^m \}.
\end{equation}

\smallskip

We claim that $Y^{n-1} (f(x))  = \bigcup \{ f(X^n) : X^n \in \X^n, \, x \in X^n \} = f ( Y^n(x) )$.

\smallskip

Indeed, we establish the claim by observing that it suffices to verify the first identity by discussing the following three cases.

\smallskip

\emph{Case~1.} $x \in \inte(X^n)$ for some $n$-tile $X^n \in \X^n$. Then the first identity in this case follows from Proposition~\ref{p_CellDecomp}~(i) and Definition~\ref{d_celldecomp}~(iii).

\smallskip

\emph{Case~2.} $x \in \inte(e^n)$ for some $n$-edge $e^n \in \E^n$. Then the first identity in this case follows from Proposition~\ref{p_CellDecomp}~(i) and Definition~\ref{d_celldecomp}~(iii).

\smallskip

\emph{Case~3.} $x \in \V^n$, i.e., $x$ is some $n$-vertex. Then by Remark~\ref{r_Flower}, $Y^n(x) = \overline{W}^n(x)$, where $\overline{W}^n(x)$ is the closure of the $n$-flower $W^n(x)$ defined in (\ref{e_Def_Flower}). It also follows from Remark~\ref{r_Flower} that $Y^{n-1} ( f(x) ) = \overline{W}^{n-1} ( f(x) ) = \bigcup \{ f(X^n) : X^n \in \X^n, \, x \in X^n \}$.

\smallskip

The claim is now established. 

Note that it follows from (\ref{e_Def_U^n}) that 
\begin{equation}    \label{e_Pf_l_Un_UY}
U^m (x) = \{ X^m \in \X^m : X^m \cap Y^m(x) \neq \emptyset \}
\end{equation}
for each $m\in \N_0$. So for each $y \in Y^n(x)$, we know that $y$ is in the (topological) interior $\inter(U^n(x))$ of $U^n(x)$. Since $f$ is a branched covering map, it is open, i.e., it sends open sets to open sets (see for example, \cite[Appendix~A.6]{BM17} and \cite[Lemma~2.1.2]{HP09}). It follows that $f(y) \in \inter( f( U^n(x) ) )$. Combined with the claim above, we get
\begin{equation*}
Y^{n-1} ( f(x) ) = f ( Y^n(x) ) \subseteq \inter ( f ( U^n(x) ) ).
\end{equation*}

Thus, for each $X^{n-1}_0 \in \X^{n-1}$ with $X^{n-1}_0 \cap Y^{n-1} ( f(x) ) \neq \emptyset$, we have $\inte \bigl( X^{n-1}_0 \bigr) \cap f (U^n(x) ) \neq \emptyset$. Since 
$
f( U^n (x) ) = \bigcup \{ f(X^n) : X^n \in \X^n, \, X^n \subseteq U^n(x) \}
$
and by Proposition~\ref{p_CellDecomp}~(i), Definition~\ref{d_cellular}, and Definition~\ref{d_celldecomp}, $f(X^n) \in \X^{n-1}$ for each $X^n \in \X^n$, we conclude that $\inte \bigl( X^{n-1}_0 \bigr) \cap f \bigl( X^n_0 \bigr) \neq \emptyset$ for some $n$-tile $X^n_0 \subseteq U^n(x)$. This would contradict with Definition~\ref{d_celldecomp} unless $X^{n-1}_0 = f \bigl( X^n_0 \bigr)$. Hence, $X^{n-1}_0 \subseteq f ( U^n(x) )$. 

Therefore, it follows from (\ref{e_Pf_l_Un_UY}) that $U^{n-1}(x) \subseteq f ( U^n(x) )$, and the proof is complete.
\end{proof}

Deducing from Lemma~\ref{l_Un}, we are able to strengthen the uniform local injectivity property of expanding Thurston maps proved in \cite[Lemma~5.5]{Li15} to the uniform local expansion property.

\begin{lemma}[Uniform local injectivity and expansion away from the critical points]   \label{l_Loc_Inject_Expansion_Away_from_Crit}
Let $f$, $d$, $\Lambda$ satisfy the Assumptions in Section~\ref{sct_Assumptions}. Let $\CC\subseteq S^2$ be a Jordan curve with $\post f \subseteq \CC$. Then there exist numbers $C_2 \in (0,1)$, $\delta_1 \in (0,1]$, and a function $\iota \: (0,\delta_1]\rightarrow (0,+\infty)$ with the following properties:
\begin{enumerate}
\smallskip
\item[(i)] $\lim\limits_{\delta \to 0} \iota (\delta) = 0$.

\smallskip
\item[(ii)] For each $\delta \leq \delta_1$, the map $f$ restricted to any open ball of radius $\delta$ centered outside the $\iota(\delta)$-neighborhood of $\crit f$ is injective, i.e., $f|_{B_d(x,\delta)}$ is injective for each $x \in S^2 \setminus N_d^{\iota(\delta)}(\crit f)$.

\smallskip
\item[(iii)] For all $\delta \in (0, \delta_1]$, $x,\,y \in S^2$, and $n\in\N$ the following statement holds:

\smallskip

If for each $j \in \{ 0, \, 1, \, \dots, \, n-1 \}$, $f^j(x) \in S^2 \setminus N_d^{\iota(\delta)} ( \crit f )$ and $d \bigl( f^j(x) , f^j (y) \bigr) < C_2 \delta$, then 
\begin{equation} \label{e_Unif_local_expansion}
C_2 \Lambda^n d(x, y) 
\leq  d( f^n (x), f^n(y) )
\leq C_2^{-1} \Lambda^n d(x, y).
\end{equation}
\end{enumerate}
\end{lemma}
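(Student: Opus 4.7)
The plan is to use the bouquet iteration identity of Lemma~\ref{l_Un} together with the metric control of bouquets in Lemma~\ref{l_CellBoundsBM}(iii)(iv), to convert assertions about small metric balls into assertions about tiles, where the branched covering $f$ is rigidly controlled by Proposition~\ref{p_CellDecomp} and the tile-wise bi-Lipschitz estimate of Lemma~\ref{l_MetricDistortion}.

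For parts~(i) and~(ii), I would associate to each scale $\delta>0$ the integer $n_\delta \coloneqq \lceil -\log\delta / \log\Lambda \rceil - n_0$ and define $\iota(\delta) \coloneqq 2K\Lambda^{-n_\delta}$, with $K$ and $n_0$ from Lemma~\ref{l_CellBoundsBM}; then $\iota(\delta) \to 0$ as $\delta \to 0$, giving~(i). By Lemma~\ref{l_CellBoundsBM}(iii)(iv), $B_d(x,\delta) \subseteq U^{n_\delta}(x) \subseteq B_d(x, K\Lambda^{-n_\delta})$. If $x \notin N^{\iota(\delta)}_d(\crit f)$, then the closed bouquet $\overline{U^{n_\delta}(x)}$ is disjoint from $\crit f$. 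The crucial step is then to verify that $f$ is injective on $\overline{U^{n_\delta}(x)}$: by Lemma~\ref{l_Un} this set is mapped onto $\overline{U^{n_\delta-1}(f(x))}$, and by Proposition~\ref{p_CellDecomp}(i), $f$ is a homeomorphism on each constituent $n_\delta$-tile. The absence of critical points forces $\deg_f(v) = 1$ at every $n_\delta$-vertex $v$ in the bouquet, so the local flower correspondence from Remark~\ref{r_Flower} gives a bijection between the tiles around $v$ and the tiles around $f(v)$. A combinatorial argument propagating these vertex-wise bijections across adjacencies within the bouquet yields global injectivity of $f$ on $\overline{U^{n_\delta}(x)}$, whence $f$ is injective on $B_d(x,\delta)$, giving~(ii).

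For part~(iii), take $C_2 \coloneqq 1/C_0$ where $C_0 > 1$ is the constant from Lemma~\ref{l_MetricDistortion}. The bounds (\ref{e_Unif_local_expansion}) will follow from Lemma~\ref{l_MetricDistortion} applied with $k = 0$ once I locate a common $n$-tile containing both $x$ and $y$. To produce such a tile, I would argue by induction on $j \in \{0,1,\ldots,n\}$ that there exists a $j$-tile $X^j \in \X^j$ containing both $f^{n-j}(x)$ and $f^{n-j}(y)$; then $j = n$ delivers the desired common $n$-tile. For the base case $j = 0$, after shrinking $\delta_1$ so that $n_\delta$ is large, the bouquet image $f(B_d(f^{n-1}(x),\delta)) \supseteq U^{n_\delta - 1}(f^n(x))$ has diameter of order $\delta$, and by taking $C_2$ sufficiently small and using the injectivity of~(ii) at $f^{n-1}(x)$, the image bouquet sits inside a single $0$-tile that must contain both $f^n(x)$ and $f^n(y)$. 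For the inductive step, given $X^j$ containing $f^{n-j}(x)$ and $f^{n-j}(y)$, Proposition~\ref{p_CellDecomp}(i)(ii) supplies the unique $(j+1)$-tile $X^{j+1}$ containing $f^{n-j-1}(x)$ with $f(X^{j+1}) = X^j$, and the injectivity of $f$ on $B_d(f^{n-j-1}(x),\delta)$ from~(ii)---valid because $f^{n-j-1}(x) \notin N^{\iota(\delta)}_d(\crit f)$---together with $d(f^{n-j-1}(x), f^{n-j-1}(y)) < C_2\delta$ forces $f^{n-j-1}(y) \in X^{j+1}$ as well.

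The main obstacle is the injectivity argument in~(ii): although the absence of critical points intuitively precludes folding, rigorously ruling out that two distinct $n_\delta$-tiles in the bouquet share a common $f$-image requires combining the local flower identity (Remark~\ref{r_Flower}) at each vertex with the bouquet-to-bouquet identity (Lemma~\ref{l_Un}) and a global combinatorial tile-counting step, of which the connectedness and local simple-connectedness of bouquets are crucial. A secondary technicality is the base case of the induction in~(iii), where ensuring the image bouquet fits inside a single $0$-tile requires careful tuning of $C_2$ and $\delta_1$; if necessary, one may instead initialize the induction at a level $j_0 \geq 1$ chosen so that a common $j_0$-tile can be produced directly from the geometric size estimates, and then apply Lemma~\ref{l_MetricDistortion} with $k = j_0$ at the conclusion (which preserves the same constants).
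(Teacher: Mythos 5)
There is a genuine gap in your argument for part~(iii). Your strategy is to locate a common $n$-tile containing both $x$ and $y$ and then invoke Lemma~\ref{l_MetricDistortion} with $k=0$. But such a tile need not exist under the hypotheses: two points can satisfy $d\bigl(f^j(x),f^j(y)\bigr)<C_2\delta$ for all $j$ and still lie on opposite sides of the $n$-skeleton $f^{-n}(\CC)$, in which case they belong to adjacent but distinct $n$-tiles and to no common one (Lemma~\ref{l_CellBoundsBM}~(i) only separates \emph{disjoint} cells; tiles sharing an edge contain arbitrarily close points without containing each other's points). Your base case already fails for the same reason: a bouquet around a point of $\CC$ meets both $0$-tiles, so it cannot "sit inside a single $0$-tile," and your fallback of initializing at a level $j_0\geq 1$ does not help, since the obstruction is combinatorial rather than a matter of scale. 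This is precisely why the paper's proof of (iii) never uses tiles: it tracks, for each $j$, the largest $m_j$ with $f^j(y)\in U^{m_j}\bigl(f^j(x)\bigr)$, shows via Lemma~\ref{l_Un} together with the injectivity from part~(ii) that $m_{j+1}=m_j-1$ exactly (both the membership $f^{j+1}(y)\in U^{m_j-1}$ and the non-membership in $U^{m_j}$ require the injectivity), and then reads off both sides of (\ref{e_Unif_local_expansion}) from the two-sided sandwich $B_d\bigl(z,K^{-1}\Lambda^{-m}\bigr)\subseteq U^m(z)\subseteq B_d(z,K\Lambda^{-m})$ of Lemma~\ref{l_CellBoundsBM}~(iii), with $C_2=\min\bigl\{K^{-2}\Lambda^{-1},K^{-1}\Lambda^{-2}\delta_1^{-1}\bigr\}$. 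Bouquets work where tiles do not because $U^m(z)$ contains a full metric ball around $z$ regardless of where $z$ sits relative to the skeleton.

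Separately, for parts (i) and (ii) the paper does not give a proof at all: it quotes them from \cite[Lemma~5.5]{Li15}. Your attempt to reprove (ii) from scratch is left incomplete at exactly the hard step you flag yourself: knowing that $f$ is a homeomorphism on each tile and unramified at each vertex of the bouquet does not by itself yield global injectivity on the bouquet, and the "propagation of vertex-wise bijections" would need a genuine monodromy-type argument (note the paper warns, in the proof of Lemma~\ref{l_Local_Fixed_Pt_Flower}, that closures of flowers need not be simply connected, and bouquets are even less well behaved). If you do not want to supply that argument, the honest route is to cite the uniform local injectivity result as the paper does and spend your effort on (iii), where the real content of this lemma lies.
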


\begin{proof}
By \cite[Lemma~5.5]{Li15}, there exists a number $\delta_1 \in (0,1]$ and a function $\iota\:(0,\delta_1] \rightarrow (0,+\infty)$ such that statements~(i) and~(ii) hold. It remains to establish statement~(iii).

Define
\begin{equation}  \label{e_C2}
C_2  \= \min \bigl\{ K^{-2} \Lambda^{-1}, \, K^{-1} \Lambda^{-2} \delta_1^{-1} \bigr\} \in (0,1), 
\end{equation}
where $K \geq 1$ is a constant depending only on $f$, $\CC$, and $d$ from Lemma~\ref{l_CellBoundsBM} (applied to $f$, $\CC$, and $d$).

Fix arbitrary $\delta \in (0, \delta_1]$, $x,\,y\in S^2$, and $n\in\N$. Assume that $f^j(x) \in S^2 \setminus N_d^{\iota(\delta)} ( \crit f )$ and $d \bigl( f^j(x) , f^j (y) \bigr) < C_2 \delta$ for each $j \in \{0,\, 1, \, \dots, \, n-1\}$.  

Fix an arbitrary $j \in \{0,\, 1, \, \dots, \, n-1\}$. Let $m_j \in \Z$ be the largest integer with $f^j (y) \in U^{m_j} \bigl( f^j(x) \bigr)$. Thus, $f^j(y) \notin U^{m_j + 1} \bigl( f^j(x) \bigr)$. By Lemma~\ref{l_CellBoundsBM}~(iii), such an integer $m_j$ exists and
\begin{equation}   \label{e_Pf_l_Loc_Inject_Expansion_Away_from_Crit_1}
K^{-1} \Lambda^{ - m_j  - 1} 
\leq d \bigl( f^j(x), f^j(y) \bigr)
<    K \Lambda^{ - m_j }.
\end{equation}
Thus,
\begin{equation}   \label{e_Pf_l_Loc_Inject_Expansion_Away_from_Crit_2}
K^{-1} \Lambda^{ - m_j  - 1}  \leq    d \bigl( f^j(x), f^j(y) \bigr)    < C_2 \delta \leq C_2 \delta_1 \leq K^{-1} \Lambda^{-2},
\end{equation}
the last inequality follows from (\ref{e_C2}). So $- m_j - 1 < - 2$, i.e., $m_j \geq 2$. 

It also follows from (\ref{e_Pf_l_Loc_Inject_Expansion_Away_from_Crit_2}) and (\ref{e_C2}) that 
\begin{equation}   \label{e_Pf_l_Loc_Inject_Expansion_Away_from_Crit_3}
\delta \geq C_2^{-1} K^{-1} \Lambda^{ - m_j - 1} \geq K \Lambda^{- m_j}.
\end{equation}
Thus, by (\ref{e_Pf_l_Loc_Inject_Expansion_Away_from_Crit_3}) and Lemma~\ref{l_CellBoundsBM}~(iii), $U^m \bigl( f^j(x) \bigr) \subseteq B_d \bigl( f^j(x), K \Lambda^{ - m } \bigr) \subseteq B_d \bigl( f^j(x), \delta \bigr)$ for each integer $m \geq m_j$.

By statement~(ii), $f$ is injective on $B_d \bigl( f^j(x), \delta \bigr)$. Thus, we get from Lemma~\ref{l_Un} and our choice of $m_j$ that 
\begin{align}
f^{j+1}(y) &\in      f \bigl( U^{m_j}       \bigl( f^j(x) \bigr) \bigr) = U^{m_j -1} \bigl( f^{j+1}(x) \bigr)   \qquad \text{ and} \label{e_Pf_l_Loc_Inject_Expansion_Away_from_Crit_4} \\
f^{j+1}(y) &\notin f \bigl( U^{m + 1} \bigl( f^j(x) \bigr) \bigr) = U^m \bigl( f^{j+1}(x) \bigr) \qquad \text{ for each integer } m \geq m_j.                                   \label{e_Pf_l_Loc_Inject_Expansion_Away_from_Crit_5} 
\end{align}
Hence, $m_{j+1} = m_j - 1$ for all $j\in \{ 0, \, 1, \, \dots, \, n-2 \}$ and consequently $m_{n-1} = m_0 - n + 1$.

By (\ref{e_Pf_l_Loc_Inject_Expansion_Away_from_Crit_4}), (\ref{e_Pf_l_Loc_Inject_Expansion_Away_from_Crit_5}) (both with $j \= n-1$ and $m \= m_j$), and Lemma~\ref{l_CellBoundsBM}~(iii),
\begin{equation}     \label{e_Pf_l_Loc_Inject_Expansion_Away_from_Crit_6} 
K^{-1} \Lambda^{ - (m_0 - n + 1) } 
\leq d( f^n(x), f^n(y) )
\leq K \Lambda^{ - (m_0 - n ) }.
\end{equation}
By (\ref{e_Pf_l_Loc_Inject_Expansion_Away_from_Crit_1}) (with $j\=0$), we get 
\begin{equation}     \label{e_Pf_l_Loc_Inject_Expansion_Away_from_Crit_7} 
K^{-1} \Lambda^{ - m_0 - 1 } 
\leq d( x, y )
\leq K \Lambda^{ - m_0}.
\end{equation}
Therefore, it follows from (\ref{e_C2}), (\ref{e_Pf_l_Loc_Inject_Expansion_Away_from_Crit_6}), and (\ref{e_Pf_l_Loc_Inject_Expansion_Away_from_Crit_7}) that 
\begin{equation*}
C_2 \Lambda^n \leq K^{-2}\Lambda^{n-1}
\leq \frac{ d( f^n(x), f^n(y) ) }{ d(x,y) }
\leq K^2 \Lambda^{n+1} \leq C_2^{-1} \Lambda^n.
\end{equation*}
This completes the proof.
\end{proof}

\section{Fine closing lemmas}  \label{sct_Closing_Lemmas}

The main goal of this section is to establish in Lemma~\ref{l_bound_by_gap} a local closing lemma that produces, for a nonempty compact forward-invariant set $\cK$ disjoint from critical points, a periodic orbit $\O$ close to $\cK$ in terms of its $(r,\theta)$-gap defined below. This result relies on two other forms of closing lemmas, namely, a local Anosov closing lemma (Subsection~\ref{subsct_Local_Anosov_Closing_Lemma}) and a (global) Bressaud--Quas closing lemma (Subsection~\ref{subsct_BQ}). Even though a global version of the Anosov closing lemma for expanding Thurston maps is available in \cite[Lemma~8.6]{Li18}, it only holds for sufficiently high iterations of the map and sufficiently long periodic pseudo-orbits. It is crucial in our proof of Lemma~\ref{l_bound_by_gap} to be able to close periodic pseudo-orbits of arbitrary length for the map itself. Therefore, we formulate and prove a local version of the Anosov closing lemma (Lemma~\ref{l_Local_Closing_Lemma}) from scratch. It closes periodic pseudo-orbits away from critical points to avoid the more complicated combinatorics near critical points. The proof relies on considerations over combinatorial structures like tiles, flowers, and bouquets. In Subsection~\ref{subsct_BQ}, we define the Bressaud--Quas shadowing property for general dynamical systems and prove that it can be passed on to related systems via the factor relation and iterations. We establish this property for expanding Thurston maps in Theorem~\ref{t_BQ_for_ETM}. Lemma~\ref{l_bound_by_gap} is then proved in Subsection~\ref{subsct_Local_Closing_Lemma_ETM_proof}.

\subsection{Local closing lemma away from critical points}   \label{subsct_Local_Closing_Lemma_ETM}

Let $T\: X\rightarrow X$ be a map on a compact metric space $(X,d)$. For each periodic orbit $\O$ of $T$, its \defn{gap} is defined as
\begin{equation}  \label{e_Def_gap}
\Delta (\O) = \Delta^d (\O) \= \min\{ d(x, y) : x,y \in \O, \, x \neq y \}.
\end{equation}
Here we adopt the convention that $\min \emptyset = +\infty$. For positive numbers $r, \, \theta \in \R$, the \defn{$(r,\theta)$-gap} of $\O$ is
\begin{equation}  \label{e_Def_gap_r_theta}
\Delta_{r,\,\theta} (\O) = \Delta^d_{r,\,\theta} (\O) \= \min\{ r, \, \theta \cdot \Delta (\O) \}.
\end{equation}
We often omit the superscript $d$ if it does not cause confusion.

\begin{lemma}  \label{l_bound_by_gap}
Let $f$, $\CC$, $d$ satisfy the Assumptions in Section~\ref{sct_Assumptions}. Let $\cK \subseteq S^2$ be a nonempty compact $f$-forward-invariant set disjoint from $\crit f$. Fix some real numbers $r>0$, $\theta>0$, $\alpha \in (0,1]$, and $\tau > 0$. Then there exists a periodic orbit $\O$ of $f$ with
\begin{equation} \label{e_l_bound_by_gap}
\sum_{x\in\O} d( x, \cK )^\alpha \leq \tau \cdot ( \Delta_{r,\,\theta} ( \O ) )^{\alpha}.
\end{equation}
\end{lemma}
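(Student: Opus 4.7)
The argument combines the local Anosov closing lemma (Lemma~\ref{l_Local_Closing_Lemma}, from Subsection~\ref{subsct_Local_Anosov_Closing_Lemma}) with the Bressaud--Quas closing lemma for expanding Thurston maps (Theorem~\ref{t_BQ_for_ETM}, from Subsection~\ref{subsct_BQ}). Since $\cK$ is compact and disjoint from the finite set $\crit f$, we have $\rho \= d(\cK, \crit f) > 0$, and by Lemma~\ref{l_Loc_Inject_Expansion_Away_from_Crit} the uniform local expansion property holds on a fixed neighborhood $U$ of $\cK$ that avoids an $\iota(\delta)$-neighborhood of $\crit f$; in particular, local inverse branches of $f$ on $U$ contract at rate $\Lambda^{-1}$, which is the contraction powering the Anosov closing construction on $U$.

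First I would apply the Bressaud--Quas closing lemma to $f|_{\cK}$: for each sufficiently small $\epsilon>0$, it produces a periodic $\epsilon$-pseudo-orbit $\{y_0, y_1, \dots, y_{N-1}\} \subseteq \cK$ of some period $N$ whose points are pairwise separated by at least $c_0 \epsilon$, where $c_0>0$ depends only on $(f, d, \cK)$. Applying Lemma~\ref{l_Local_Closing_Lemma} to this pseudo-orbit closes it to a genuine periodic point $p$ of period $N$ for $f$ satisfying
\[
d\bigl(f^i(p), y_i\bigr) \,\leq\, C_3 \Lambda^{-\min\{i, N-i\}} \epsilon, \qquad 0 \leq i < N,
\]
with $C_3 > 0$ depending only on $(f, d, \CC, U)$. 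Writing $\O \= \bigl\{ f^i(p) : 0 \leq i < N \bigr\}$ and using that each $y_i \in \cK$, a geometric series bound yields
\[
\sum_{x \in \O} d(x, \cK)^\alpha \,\leq\, \sum_{i=0}^{N-1} d\bigl(f^i(p), y_i\bigr)^\alpha \,\leq\, \frac{2 C_3^\alpha}{1 - \Lambda^{-\alpha}} \epsilon^\alpha,
\]
uniformly in $N$, while $\O$ inherits the Bressaud--Quas separation in the form $\Delta(\O) \geq c_0 \epsilon - 2 C_3 \epsilon \geq (c_0/2)\epsilon$ once $\epsilon$ is small.

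The main obstacle, and the most delicate step, is to arrange that the ratio of the sum bound to $\Delta_{r,\theta}(\O)^\alpha$ falls below the prescribed $\tau$: direct shrinking of $\epsilon$ is useless in the small-gap regime, since the sum and $(\theta \Delta(\O))^\alpha$ both scale like $\epsilon^\alpha$. The remedy is to first replace $f$ by a sufficiently high iterate $F \= f^k$ and apply the two closing lemmas to $F$: a periodic $\epsilon$-pseudo-orbit of $F$ remains one of $f$ with unchanged gap, but the effective contraction in the Anosov closing improves to $\Lambda^k$, sharpening the closeness estimate (and thus the sum) by a factor that can be forced as small as desired relative to the fixed Bressaud--Quas separation. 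Once $k = k(\tau, \theta, \alpha, c_0)$ is fixed large enough that the ratio of system constants satisfies $2 C_3^\alpha / (1-\Lambda^{-k\alpha}) \leq \tau (\theta c_0 / 2)^\alpha$, the small-gap regime is under control; in the complementary large-gap regime $\theta \Delta(\O) \geq r$, it then suffices to take $\epsilon$ small enough that the sum is at most $\tau r^\alpha$. Putting the two adjustments together, and checking that the chosen $\O$ still lies in $U$ (so that the Anosov closing was legitimate), yields a periodic orbit of $f$ satisfying (\ref{e_l_bound_by_gap}).
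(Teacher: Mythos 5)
Your proposal has a genuine gap, and it sits exactly where the real difficulty of the lemma lies. The Bressaud--Quas closing lemma (Definition~\ref{d_BQ}, Theorem~\ref{t_BQ_for_ETM}) produces a \emph{genuine} periodic orbit of period $p < (1/\epsilon)^{\kappa}$ contained in $N_d^{\epsilon}(\cK)$ --- it gives no lower bound whatsoever on the mutual separation of the points of that orbit. Your claim that the points are ``pairwise separated by at least $c_0\epsilon$'' is not part of the statement and is false in general: the orbit may contain two points at distance $10^{-100}\epsilon$, in which case $\tau(\theta\,\Delta(\O))^{\alpha}$ is far smaller than $\sum_{x\in\O} d(x,\cK)^{\alpha}$ no matter how small $\epsilon$ is. Your proposed remedy --- passing to a high iterate $f^k$ to sharpen the contraction rate in the Anosov shadowing estimate --- attacks the wrong quantity: it improves how closely a pseudo-orbit is shadowed, but does nothing to bound $\Delta(\O)$ from below, which is the actual obstruction. (Separately, your claim $c_0\epsilon - 2C_3\epsilon \geq (c_0/2)\epsilon$ ``once $\epsilon$ is small'' cannot be achieved by shrinking $\epsilon$, since both sides scale linearly in $\epsilon$.)

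The paper's proof handles the small-gap case by a recursive \emph{period-halving} argument, which is the idea missing from your plan. If the Bressaud--Quas orbit $\O_0$ fails \eqref{e_l_bound_by_gap}, then its gap is necessarily small; one takes two closest points $x, x' = f^n(x)$ of $\O_{k-1}$ with $n \leq p_{k-1}/2$ and applies the local Anosov closing lemma (Lemma~\ref{l_Local_Closing_Lemma}) to the pseudo-orbit $x, f(x), \dots, f^{n-1}(x)$ to produce a new periodic orbit $\O_k$ of period at most $p_{k-1}/2$. Each step multiplies the sum $\Sigma_k = \sum_{x\in\O_k} d(x,\cK)^{\alpha}$ by at most a fixed constant $D$, but the period halves, so the recursion runs at most $\log_2 p_0$ times; the polynomial bound $p_0 < \epsilon^{-c\alpha}$ from Bressaud--Quas (which your argument never uses) is exactly what keeps the accumulated factor $D^{\log_2 p_0} = p_0^{\log_2 D}$ under control relative to $\Sigma_0 \leq p_0\epsilon^{\alpha}$. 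The recursion terminates either at an orbit satisfying \eqref{e_l_bound_by_gap} or at a fixed point, where $\Delta_{r,\theta}(\O) = r$ by the convention $\min\emptyset = +\infty$ and the smallness of $\epsilon$ finishes the proof. Without this halving mechanism your argument cannot close.
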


\subsection{Local Anosov closing lemma}    \label{subsct_Local_Anosov_Closing_Lemma}

In order to establish our local Anosov closing lemma for expanding Thurston maps, we first provide a mechanism to locate periodic points using flowers in Lemma~\ref{l_Local_Fixed_Pt_Flower}, then relate flowers to bouquets in Lemma~\ref{l_U_in_W} before establishing a version of local Anosov closing lemma for sufficiently long periodic pseudo-orbits in Lemma~\ref{l_Local_Closing_Lemma_l} using tiles, flowers, and bouquets. Finally, in Lemma~\ref{l_Local_Closing_Lemma}, our local Anosov closing lemma is proved for periodic pseudo-orbits of all lengths.

We first demonstrate in the following lemma a mechanism to produce a periodic point. The proof relies on the expansion property of expanding Thurston maps. Note that the closure of a flower may not necessarily be simply connected. As a result, it takes extra care to locate periodic points from the combinatorial structures.

\begin{lemma}     \label{l_Local_Fixed_Pt_Flower}
Let $f\:S^2 \rightarrow S^2$ be an expanding Thurston map and $\CC \subseteq S^2$ be a Jordan curve containing $\post f$. Then for all $m,\, n \in \N$ with $m \geq n$ and each $m$-vertex $v^m \in \V^m (f, \CC)$ with $\oW^m ( v^m ) \subseteq W^{m-n} ( f^n (v^m) )$, if $f^n$ restricted to $W^m(v^m)$ is injective, then there exists $x \in \oW^m (v^m)$ such that $f^n(x) = x$.
\end{lemma}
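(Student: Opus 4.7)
The plan is to define an inverse branch $g$ of $f^n$ on $W^{m-n}(f^n(v^m))$, then iterate $g$ on $\oW^m(v^m)$ to obtain a nested sequence of compact sets whose diameters decay exponentially to a single fixed point of $f^n$.

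First, I will observe that by Remark~\ref{r_Flower}, $f^n$ sends each flower surjectively onto the corresponding flower of lower level (since the local model is $z \mapsto z^{\deg_{f^n}(v^m)}$ in flower charts); combined with the assumed injectivity this yields that $f^n|_{W^m(v^m)} \: W^m(v^m) \to W^{m-n}(f^n(v^m))$ is a homeomorphism. Let $g$ denote its continuous inverse. The hypothesis $\oW^m(v^m) \subseteq W^{m-n}(f^n(v^m))$ ensures $g$ is defined on $\oW^m(v^m)$, with $g(\oW^m(v^m)) \subseteq W^m(v^m) \subseteq \oW^m(v^m)$.

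The main technical step will be to refine this inclusion using the cellular structure. By Proposition~\ref{p_CellDecomp}~(i) the map $f^n$ is cellular, so for each $m$-tile $X \subseteq \oW^m(v^m)$ the preimage $(f^n)^{-1}(X)$ is a union of $(m+n)$-tiles, and $g(X)$ is the unique such tile lying in $W^m(v^m)$. Since $v^m \in X$, the image $g(X)$ contains $v' \coloneqq g(v^m)$, and $f^{m+n}(v') = f^m(v^m) \in \post f$ forces $v' \in \V^{m+n}$. Hence $g(\oW^m(v^m)) \subseteq \oW^{m+n}(v')$. Iterating this observation (each $(m+kn)$-tile produced still lies in $W^m(v^m) \subseteq W^{m-n}(f^n(v^m))$, keeping us inside the domain of $g$), induction on $k$ will yield $(m+kn)$-vertices $v^{(k)}$ with $v^{(0)} = v^m$, $v^{(k+1)} = g(v^{(k)})$, and
\[
g^k(\oW^m(v^m)) \subseteq \oW^{m+kn}(v^{(k)}) \qquad \text{for all } k \in \N_0.
\]

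Finally, by Remark~\ref{r_Flower} and Lemma~\ref{l_CellBoundsBM}~(ii) every $(m+kn)$-flower satisfies $\diam_d(\oW^{m+kn}(v^{(k)})) \leq 2C \Lambda^{-(m+kn)} \to 0$, since any two of its points are connected through the central vertex within two constituent tiles. Thus $\{g^k(\oW^m(v^m))\}_{k\in\N_0}$ is a decreasing sequence of nonempty compact sets of vanishing diameter, so their intersection is a single point $x \in \oW^m(v^m)$; continuity of $g$ then forces $g(x) = x$, equivalently $f^n(x) = x$. The subtlest point will be the inductive cellular identification $g^k(\oW^m(v^m)) \subseteq \oW^{m+kn}(v^{(k)})$, which requires transferring the homeomorphism $g$ from a statement about points into one about whole tiles using the single consistent inverse branch determined by $W^m(v^m)$.
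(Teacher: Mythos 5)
Your proposal is correct and follows essentially the same route as the paper: both construct the inverse branch $g$ of $f^n|_{W^m(v^m)}$ (using the hypothesis $\oW^m(v^m)\subseteq W^{m-n}(f^n(v^m))$ to make iteration possible), produce a nested sequence of closed flowers $\oW^{m+kn}(v^{(k)})$ around the backward orbit of $v^m$, and extract the fixed point from the shrinking intersection via Lemma~\ref{l_CellBoundsBM}~(ii). The only cosmetic difference is that the paper justifies the inductive inclusion through the identity $f^{-l}(W^k(v))=\bigcup_{v'\in f^{-l}(v)}W^{k+l}(v')$ together with disjointness of the preimage flowers, whereas you argue tile by tile; both are valid.
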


Recall that $\oW^m( v^m )$ denotes the closure of $W^m ( v^m )$.

\begin{proof}
Fix $m,\,n \in \N$ and $v^m \in \V^m$ as in the statement of this lemma. Assume that $f^n$ restricted to $W^m (v^m)$ is injective. By Proposition~\ref{p_CellDecomp}~(iii), $v^{m-j} \= f^j (v^m) \in \V^{m-j}$ for each $j\in \{1,\,2,\,\dots,\, n \}$. 

Fix an arbitrary integer $i \in \{ 1, \, 2, \, \dots, \, n \}$. Recall from Remark~\ref{r_Flower} that $f^i (W^m (v^m) ) = W^{ m - i } \bigl( v^{m-i} \bigr)$. Then $f$ maps $W^{m-i+1} \bigl( v^{m-i+1} \bigr)$ injectively to $W^{m-i} \bigl( v^{m-i} \bigr)$. It follows from Remark~\ref{r_Flower} again that $f$ restricted to $W^{m-i+1} \bigl( v^{m-i+1} \bigr)$ has an continuous inverse denoted as $g_i \: W^{m-i} \bigl( v^{m-i} \bigr) \rightarrow W^{m-i+1} \bigl( v^{m-i+1} \bigr)$. Define a map $g \: W^{m-n} ( v^{m-n} ) \rightarrow W^m ( v^m )$ to be $g \= g_n \circ g_{n-1} \circ \cdots \circ g_2 \circ g_1$. Then $g$ is the inverse of $f^n|_{W^m ( v^m )}$ and $g$ is a homeomorphism.

Next, we recursively construct $v_j \in \V^{m+jn}$ satisfying that for each $j \in \N_0$,
\begin{align}
\oW^{m+jn} ( v_j ) & \subseteq  W^{m+(j-1)n} ( v_{j-1} )   \subseteq W^{m-n} ( v^{m-n} )  \quad \text{and} \\
W^{m+jn} ( v_j ) & = g \bigl( W^{m+(j-1)n} ( v_{j-1} ) \bigr).     \label{e_Pf_l_Local_Fixed_Pt_Flower_gW}
\end{align}

\smallskip

We set $v_{-1} \= v^{m-n}$ and $v_0 \= v^m$. Then the base step (i.e., $j=0$) has already been verified above.

For the recursive step, we assume that such $v_j \in \V^{m+jn}$ has been constructed for some $j\in\N_0$. Then by Proposition~\ref{p_CellDecomp}~(ii), $v_{j+1} \= g (v_j) \in \V^{m+(j+1)n}$. In particular, $v_{j+1} \in W^{m+jn} ( v_j )$. We observe that it follows from (\ref{e_Def_Flower}), Remark~\ref{r_Flower}, Proposition~\ref{p_CellDecomp}~(i),~(ii), and Definition~\ref{d_celldecomp}~(iii) that for all $l,\,k\in \N_0$ and $v\in \V^k$, $f^{-l} \bigl( W^k (v) \bigr) = \bigcup_{v' \in f^{-l} (v) } W^{k+l} (v')$, and that for distinct vertices $v_1,\, v_2 \in f^{-l} (v)$ we have $W^{k+l} ( v_1 ) \cap W^{k+l} ( v_2 ) = \emptyset$. Hence, $g \bigl( W^{m+jn} ( v_j ) \bigr) = W^{m+(j+1)n} ( v_{j+1} )$ and $W^{m+(j+1)n} ( v_{j+1} ) \subseteq W^{m+jn} ( v_j )$. Moreover, since $g$ is a homeomorphism and $\oW^{m+jn}(v_j) \subseteq W^{m+(j-1)n}(v_{j-1}) \subseteq W^{m-n}(v^{m-n})$, we get $\oW^{m+(j+1)n} ( v_{j+1} )  \subseteq W^{m+jn} ( v_j )\subseteq W^{m-n}(v^{m-n})$. 

The recursive construction is complete.

\smallskip

We have constructed a nested sequence $\bigl\{ \oW^{m+jn} ( v_j )  \bigr\}_{ j \in \N_0}$ of closed sets. By Lemma~\ref{l_CellBoundsBM}~(ii), the intersection $\bigcap_{ j \in \N_0} \oW^{m+jn} ( v_j )$ contains exactly one point, say $x$. It follows from (\ref{e_Pf_l_Local_Fixed_Pt_Flower_gW}) that $g(x) = x$. Therefore, $f^n(x) = x$.
\end{proof}

We relate flowers and bouquets of similar levels in the following lemma from \cite[Lemma~8.5]{Li18}, which will be crucial in the proof of Lemma~\ref{l_Local_Closing_Lemma_l} below.

\begin{lemma}     \label{l_U_in_W}
Let $f\:S^2 \rightarrow S^2$ be an expanding Thurston map and $\CC \subseteq S^2$ be a Jordan curve containing $\post f$. Then there exists a number $\kappa \in \N_0$ such that the following statement holds:

For each $x \in S^2$, each $n \in \N_0$, and each $n$-tile $X^n \in \X^n(f,\CC)$, if $x\in X^n$, then there exists an $n$-vertex $v^n \in \V^n (f, \CC) \cap X^n$ with $U^{ n + \kappa } (x) \subseteq W^n (v^n)$.
\end{lemma}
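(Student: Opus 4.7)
The plan is to combine the metric estimates from Lemma~\ref{l_CellBoundsBM} with a combinatorial selection of $v^n$ tailored to the position of $x$ within $X^n$. Fix constants $C \geq 1$ and $K \geq 1$ from Lemma~\ref{l_CellBoundsBM}, and take $\kappa \in \N_0$ large enough that $K\Lambda^{-\kappa} < C^{-1}$. Then by Lemma~\ref{l_CellBoundsBM}~(iii), $U^{n+\kappa}(x) \subseteq B_d(x, K\Lambda^{-(n+\kappa)})$ has diameter strictly less than $C^{-1}\Lambda^{-n}$, and for every $y \in U^{n+\kappa}(x)$, Lemma~\ref{l_CellBoundsBM}~(i) forces the unique $n$-cells $c_x$ and $c_y$ satisfying $x \in \inte(c_x)$ and $y \in \inte(c_y)$ to meet, that is $c_x \cap c_y \neq \emptyset$.

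First I would choose $v^n \in c_x \cap \V^n$ based on the dimension of $c_x$, noting that $c_x \subseteq X^n$ by Definition~\ref{d_celldecomp}~(iii) so that $c_x \cap \V^n \subseteq X^n \cap \V^n$. When $\dim c_x = 0$, take $v^n = c_x$; when $\dim c_x = 1$, take $v^n$ to be the endpoint of $c_x$ nearest to $x$ in the metric $d$; and when $c_x = X^n$, take $v^n$ to be any vertex of $X^n$ nearest to $x$. The key reformulation is that $y \in W^n(v^n)$ is equivalent to $v^n \in c_y$, so the lemma reduces to establishing $v^n \in c_y$ for every $y \in U^{n+\kappa}(x)$.

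The verification will proceed by case analysis on $\dim c_x$. When $c_x$ is a vertex, $c_x \cap c_y \neq \emptyset$ immediately gives $v^n \in c_y$. In the tile and edge cases, if some $c_y$ meets $c_x$ only at a vertex $v'$ other than $v^n$, then Lemma~\ref{l_CellBoundsBM}~(i) applied to the disjoint cells $\{v^n\}$ and $c_y$ yields $d(v^n, y) \geq C^{-1}\Lambda^{-n}$, whence by the triangle inequality $d(v^n, x) \geq C^{-1}\Lambda^{-n} - K\Lambda^{-(n+\kappa)}$. Combining this with the nearest-vertex choice $d(v^n, x) \leq d(v', x)$ and the diameter bound $d(v^n, v') \leq \diam(c_x) \leq C\Lambda^{-n}$ restricts where such a $y$ can lie.

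The hard part will be the edge case where $c_x = e^n$ has two endpoints $v_1$ and $v_2$: if the bouquet $U^{n+\kappa}(x)$ simultaneously visits cells meeting $c_x$ only at $v_1$ and cells meeting $c_x$ only at $v_2$, the nearest-vertex property alone does not pick a single $v^n$ that works for both configurations. I expect to resolve this by exploiting the chain structure of $U^{n+\kappa}(x)$, namely that every point of the bouquet lies in an $(n+\kappa)$-tile sharing a point with an $(n+\kappa)$-tile containing $x$, which bounds its diameter by at most $2C\Lambda^{-(n+\kappa)}$; combined with a refined choice of $\kappa$ forcing $2C\Lambda^{-\kappa}$ to be substantially smaller than $C^{-1}$, this shows that visiting the ``$v_2$-side'' of the bouquet pins $x$ close enough to $v_2$ that the nearest-vertex rule itself selects $v^n = v_2$, precluding the ambiguity. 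A quantitative tracking of the constants $C$, $K$, $\Lambda$ together with the detailed combinatorial argument completes the proof, which is the content of Lemma~8.5 in~\cite{Li18}.
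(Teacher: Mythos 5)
Your reduction is sound up to a point: with $\kappa$ chosen so that $K\Lambda^{-\kappa} < C^{-1}$, every $y \in U^{n+\kappa}(x)$ satisfies $d(x,y) < C^{-1}\Lambda^{-n}$ and hence $c_x \cap c_y \neq \emptyset$ by Lemma~\ref{l_CellBoundsBM}~(i); the equivalence $y \in W^n(v^n) \Leftrightarrow v^n \in c_y$ is correct, the containment $c_x \subseteq X^n$ holds, and the vertex case is complete. The genuine gap is in your selection rule for $v^n$ and in the two closing steps you leave vague. Lemma~\ref{l_CellBoundsBM}~(i) separates only \emph{disjoint} cells, so it gives no lower bound on how closely a cell that shares a vertex with $c_x$ (without containing your chosen $v^n$) can approach $x$. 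Concretely, in the tile case take $x \in \inte(X^n)$ at distance $O\bigl(\Lambda^{-(n+\kappa)}\bigr)$ from the interior of an edge $e''$ of $X^n$ but at distance at least $C^{-1}\Lambda^{-n} - O\bigl(\Lambda^{-(n+\kappa)}\bigr)$ from both endpoints of $e''$ (consistent with all the estimates, since $\diam_d(e'')$ may be as large as $C\Lambda^{-n}$): the bouquet then meets the interior of the tile across $e''$, so only the endpoints of $e''$ are admissible choices of $v^n$, yet the metrically nearest vertex of $X^n$ to $x$ may be a vertex $w \notin e''$, which satisfies only $d(x,w) \geq C^{-1}\Lambda^{-n} - O\bigl(\Lambda^{-(n+\kappa)}\bigr)$ and can well be closer to $x$ than either endpoint. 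Your rule then picks $w$, which does not lie in the tile across $e''$, and the inclusion fails. The same defect undermines the edge case: from ``some $c_y$ meets $e$ only at $v_2$'' you can conclude only $d(x,v_2) \leq K\Lambda^{-(n+\kappa)} + \diam_d(c_y) \leq K\Lambda^{-(n+\kappa)} + C\Lambda^{-n}$, which does not pin $x$ close enough to $v_2$ to make $v_2$ the nearer endpoint; and you never exclude the configuration in which cells touching $e$ only at $v_1$ and cells touching $e$ only at $v_2$ both enter the bouquet, nor the configuration where the combinatorially forced vertex (the one common to all cells the bouquet meets) is the endpoint \emph{farther} from $x$.

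The missing idea is that the correct choice of $v^n$ must be combinatorial rather than metric, and that one needs a quantitative input beyond Lemma~\ref{l_CellBoundsBM}: in effect a uniform (in $n$) Lebesgue number of order $\Lambda^{-n}$ for the flower cover $\{W^n(v)\}_{v\in\V^n}$, equivalently a lower bound on how close the interior of an $n$-cell can come to the part of $\inte(c_x)$ away from their common vertices. That is precisely the content of \cite[Lemma~8.5]{Li18}, which the paper itself cites without reproving; deferring to that citation is legitimate, but the argument you substitute for it does not close.
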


We establish below a form of \emph{local Anosov closing lemma away from critical points}. Due to the combinatorial structures used in the proof, we are only able to close sufficiently long pseudo-orbits.

\begin{lemma}     \label{l_Local_Closing_Lemma_l}
Let $f$, $d$, $\Lambda$ satisfy the Assumptions in Section~\ref{sct_Assumptions}. Let $\CC\subseteq S^2$ be a Jordan curve with $\post f \subseteq \CC$. Then there exist numbers $M_0 \in \N$ and $\beta_0 > 1$ such that for each $\eta \in (0,1)$ there exists a number $\delta_2 \in (0,1)$ with the following property:

For each $\delta \in  (0, \delta_2 ]$, if $x\in S^2$ and $l\in\N$ satisfy $l \geq M_0$, $d \bigl( x,f^l(x) \bigr) \leq \delta$, and $d \bigl( f^i (x) , \crit f \bigr) \geq \eta$ for all $i\in\{ 0, \, 1, \, \dots, \, l \}$, then there exists $y\in S^2$ such that $f^l(y)=y$ and $d(f^i(x),f^i(y))\leq \beta_0\delta\Lambda^{-(l-i)} \leq \eta / 2$ for each $i\in\{0,\,1,\,\dots,\,l\}$.
\end{lemma}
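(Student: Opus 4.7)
The approach is to apply the fixed-point criterion of Lemma~\ref{l_Local_Fixed_Pt_Flower} to an $m$-vertex $v^m$ near $x$ with $m = l + k$, where $k$ is a positive integer chosen so that $\Lambda^{-k}$ is comparable to $\delta$. The constants $\beta_0$ and $M_0$ will depend only on $f$, $\CC$, and $d$, while $\delta_2$ will be chosen small in terms of $\eta$. Specifically, with $\kappa$ from Lemma~\ref{l_U_in_W} and $C, K \geq 1$ from Lemma~\ref{l_CellBoundsBM}, I would set $\beta_0 \coloneqq 4CK\Lambda^{\kappa+1}$, take $M_0$ to be the least integer with $\Lambda^{M_0} \geq 4CK\Lambda^{\kappa}$, choose $\delta' \in (0, \delta_1]$ with $\iota(\delta') \leq \eta/2$ (possible by Lemma~\ref{l_Loc_Inject_Expansion_Away_from_Crit}~(i)), and take $\delta_2 \in (0, 1)$ small enough that $\beta_0 \delta_2 < \min\{\eta/2, \delta'\}$, which in particular forces the integer $k$ defined below to satisfy $k \geq 1$.

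Given $\delta \in (0, \delta_2]$, $x \in S^2$, and $l \geq M_0$ satisfying the hypotheses, I would let $k$ be the unique positive integer with $\Lambda^{-k} \in (2K\Lambda^\kappa \delta, \, 2K\Lambda^{\kappa+1}\delta]$, set $m \coloneqq l + k$, pick any $m$-tile $X^m$ containing $x$, and invoke Lemma~\ref{l_U_in_W} to produce a vertex $v^m \in \V^m(f,\CC) \cap X^m$ with $U^{m+\kappa}(x) \subseteq W^m(v^m)$; in particular $x \in W^m(v^m)$. By Remark~\ref{r_Flower}, $f^i(W^m(v^m)) = W^{m-i}(f^i(v^m))$, and since this flower is a union of $(m-i)$-tiles sharing the vertex $f^i(v^m)$, each of diameter at most $C\Lambda^{-(m-i)}$, its diameter does not exceed $2C\Lambda^{-(m-i)}$. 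For $i \in \{0, 1, \dots, l-1\}$ this is at most $2C\Lambda^{-(k+1)} < \beta_0 \delta < \delta'$, and as $f^i(x)$ lies in the flower with $d(f^i(x), \crit f) \geq \eta \geq \iota(\delta')$, Lemma~\ref{l_Loc_Inject_Expansion_Away_from_Crit}~(ii) shows that $f$ is injective on $W^{m-i}(f^i(v^m)) \subseteq B_d(f^i(x), \delta')$. Composing, $f^l$ is injective on $W^m(v^m)$.

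To verify the remaining hypothesis $\oW^m(v^m) \subseteq W^{m-l}(f^l(v^m)) = W^k(f^l(v^m))$ of Lemma~\ref{l_Local_Fixed_Pt_Flower}, I would argue via the chain
\[
\oW^m(v^m) \; \subseteq \; \overline{B}_d\bigl(x, 2C\Lambda^{-m}\bigr) \; \subseteq \; \overline{B}_d\bigl(f^l(x), \, \delta + 2C\Lambda^{-m}\bigr) \; \subseteq \; B_d\bigl(f^l(x), K^{-1}\Lambda^{-(k+\kappa)}\bigr) \; \subseteq \; U^{k+\kappa}\bigl(f^l(x)\bigr) \; \subseteq \; W^k\bigl(f^l(v^m)\bigr).
\]
The first inclusion uses $x \in \oW^m(v^m)$ together with the diameter bound; the second uses $d(x, f^l(x)) \leq \delta$; the third combines our choice of $k$ (which gives $\delta < \Lambda^{-k}/(2K\Lambda^\kappa) = K^{-1}\Lambda^{-(k+\kappa)}/2$) with the inequality $2C\Lambda^{-m} \leq K^{-1}\Lambda^{-(k+\kappa)}/2$, equivalent to $\Lambda^l \geq 4CK\Lambda^\kappa$, which holds since $l \geq M_0$; the fourth is Lemma~\ref{l_CellBoundsBM}~(iii); and the last is obtained by applying $f^l$ to the inclusion $U^{m+\kappa}(x) \subseteq W^m(v^m)$ and invoking Lemma~\ref{l_Un} together with Remark~\ref{r_Flower}. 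Lemma~\ref{l_Local_Fixed_Pt_Flower} then produces a point $y \in \oW^m(v^m)$ with $f^l(y) = y$.

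Finally, for $i \in \{0, 1, \dots, l-1\}$ both $f^i(x)$ and $f^i(y)$ lie in $\oW^{m-i}(f^i(v^m))$, yielding
\[
d(f^i(x), f^i(y)) \; \leq \; 2C\Lambda^{-(m-i)} \; = \; 2C\Lambda^{-k}\Lambda^{-(l-i)} \; \leq \; 4CK\Lambda^{\kappa+1}\delta\,\Lambda^{-(l-i)} \; = \; \beta_0\delta\Lambda^{-(l-i)};
\]
for $i = l$, both $f^l(x)$ and $y = f^l(y)$ lie in $\oW^k(f^l(v^m))$ (the former since $f^l(x) \in W^k(f^l(v^m))$, the latter by the containment just proved), so $d(f^l(x), y) \leq 2C\Lambda^{-k} \leq \beta_0\delta = \beta_0\delta\Lambda^{-(l-l)}$. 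The bound $\beta_0\delta\Lambda^{-(l-i)} \leq \beta_0 \delta_2 \leq \eta/2$ is built into the choice of $\delta_2$. The main obstacle is the flower-containment step: flowers are not round in the visual metric and the local degree at $v^m$ can be large (e.g.\ if $v^m$ lies near a periodic critical point), so one cannot compare $\oW^m(v^m)$ and $W^k(f^l(v^m))$ by a direct inner-radius estimate. Routing the inclusion through the bouquet $U^{k+\kappa}(f^l(x))$ via Lemmas~\ref{l_Un} and~\ref{l_U_in_W} is the key device, and the choice of $k$ must simultaneously support this inclusion, the injectivity of $f^l$ on $W^m(v^m)$, and the terminal distance bound $\beta_0\delta\Lambda^{-(l-i)}$.
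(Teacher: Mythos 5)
Your proof is correct and follows essentially the same route as the paper's: locate a flower $W^{m}(v^{m})$ containing $x$ whose closure is mapped into itself by $f^{l}$, verify injectivity of $f^{l}$ there via the uniform local injectivity away from $\crit f$, apply Lemma~\ref{l_Local_Fixed_Pt_Flower} to produce the periodic point, and control $d(f^i(x),f^i(y))$ through the diameters of the intermediate flowers. The only (harmless) divergence is in bookkeeping: you apply Lemma~\ref{l_U_in_W} at the source $x$ at level $m=l+k$ and push the inclusion $U^{m+\kappa}(x)\subseteq W^m(v^m)$ forward by $f^l$ via Lemma~\ref{l_Un} and (\ref{e_Flower_iterate}), whereas the paper applies Lemma~\ref{l_U_in_W} at the target $f^l(x)$ at level $N$ and lifts the resulting vertex back through $f^l$; your variant cleanly yields $x\in W^m(v^m)$ directly and is otherwise equivalent.
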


\begin{proof}
Fix $f$, $\CC$, $d$, and $\eta$ as in the statement of this lemma.

By Lemma~\ref{l_Loc_Inject_Expansion_Away_from_Crit}, we can fix a positive number $\delta_2$ satisfying
\begin{equation}   \label{e_Pf_l_Local_Closing_Lemma_l_delta}
\iota \bigl( 4K^2 \Lambda^{\kappa +1} \delta_2 \bigr) < \eta / 2
\quad \text{and} \quad
\delta_2 < \min \{ \delta_1, \,  \eta   \}\big/  \bigl( 8 K^2 \Lambda^{\kappa +1} \bigr) .
\end{equation}
In particular,
\begin{equation}   \label{e_Pf_l_Local_Closing_Lemma_I_delta2}
4K^2 \Lambda^{\kappa +1} \delta_2 < \delta_1.
\end{equation}
Here $K\geq 1$, $\kappa \in \N_0$, and $\delta_1 \in (0,1]$ are constants depending only on $f$, $\CC$, and $d$ from Lemma~\ref{l_CellBoundsBM}, Lemma~\ref{l_U_in_W}, and Lemma~\ref{l_Loc_Inject_Expansion_Away_from_Crit}, respectively, and $\iota \: [0, \delta_1] \rightarrow (0, +\infty)$ is a function depending only on $f$, $\CC$, and $d$ from Lemma~\ref{l_Loc_Inject_Expansion_Away_from_Crit}. Define
\begin{equation}  \label{e_Pf_l_Local_Closing_Lemma_l_M}
M_0 \= \bigl\lfloor \log_{\Lambda} \bigl( 4 K^2 \bigr) \bigr\rfloor + \kappa \in \N_0.
\end{equation}

Recall from Lemma~\ref{l_CellBoundsBM}~(iii) that for each $z\in S^2$ and each $n\in \N_0$, we have
\begin{equation} \label{e_Pf_l_Local_Closing_Lemma_l_U}
B_d \bigl( z, K^{-1} \Lambda^{-n} \bigr)  \subseteq U^n(z) \subseteq B_d ( z, K \Lambda^{-n} ).
\end{equation}

Fix arbitrary $\delta \in (0, \delta_2]$, $x\in S^2$, and $l\in\N$ satisfying $l\geq M_0$, $d \bigl( x, f^l(x) \bigr) \leq \delta$, and $d \bigl( f^i (x) , \crit f \bigr) \geq \eta$ for all $i\in\{ 0, \, 1, \, \dots, \, l \}$. Set
\begin{equation}  \label{e_Pf_l_Local_Closing_Lemma_l_N}
N \= \lfloor  - \log_{\Lambda} ( 2 K \delta) \rfloor - \kappa.
\end{equation}
Note that it follows from $\delta \in (0, \delta_2]$, $K\geq 1$, $\eta \in (0,1)$, and (\ref{e_Pf_l_Local_Closing_Lemma_l_delta}) that $N\in\N_0$.

We first construct an $(N+l)$-flower $W^{N+l} \bigl( v^{N+l} \bigr)$ containing $x$ whose closure is contained in its image under $f^l$ as detailed below. Compare Figure~\ref{f_1} through Figure~\ref{f_3} for the construction.

\begin{figure}
    \centering
    \begin{overpic}
    [width=8cm, 
    tics=20]{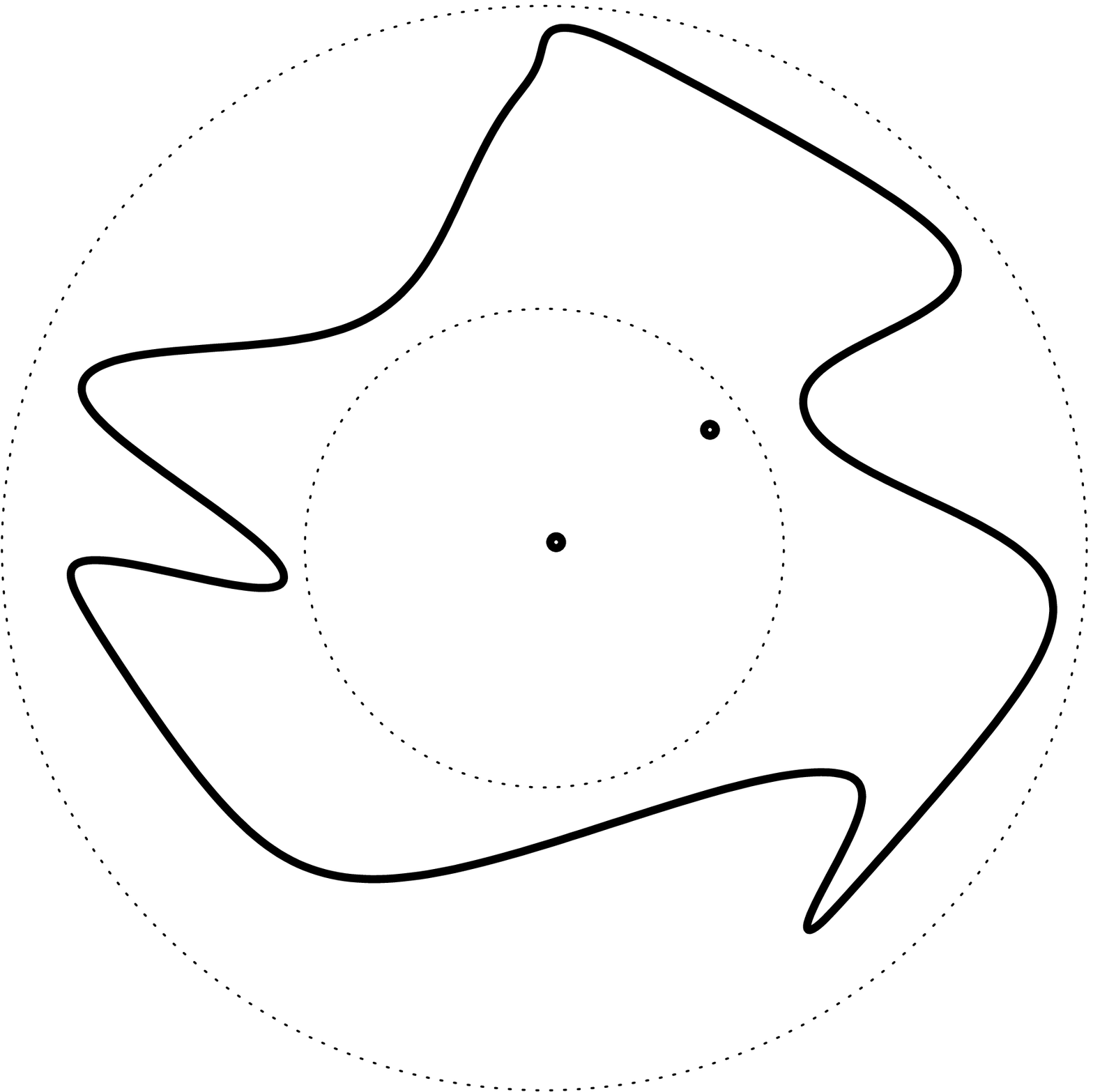}
    \put(138,128){$x$}    
    \put(100,100){$f^l(x)$}
    \put(104,40){$U^{N+\kappa} \bigl( f^l (x) \bigr)$}
    \end{overpic}
    \caption{Closing a periodic pseudo-orbit I: $U^{N+\kappa} \bigl( f^l (x) \bigr)$.}
    \label{f_1}
\end{figure}

\begin{figure}
    \centering
    \begin{overpic}
    [width=10cm, 
    tics=20]{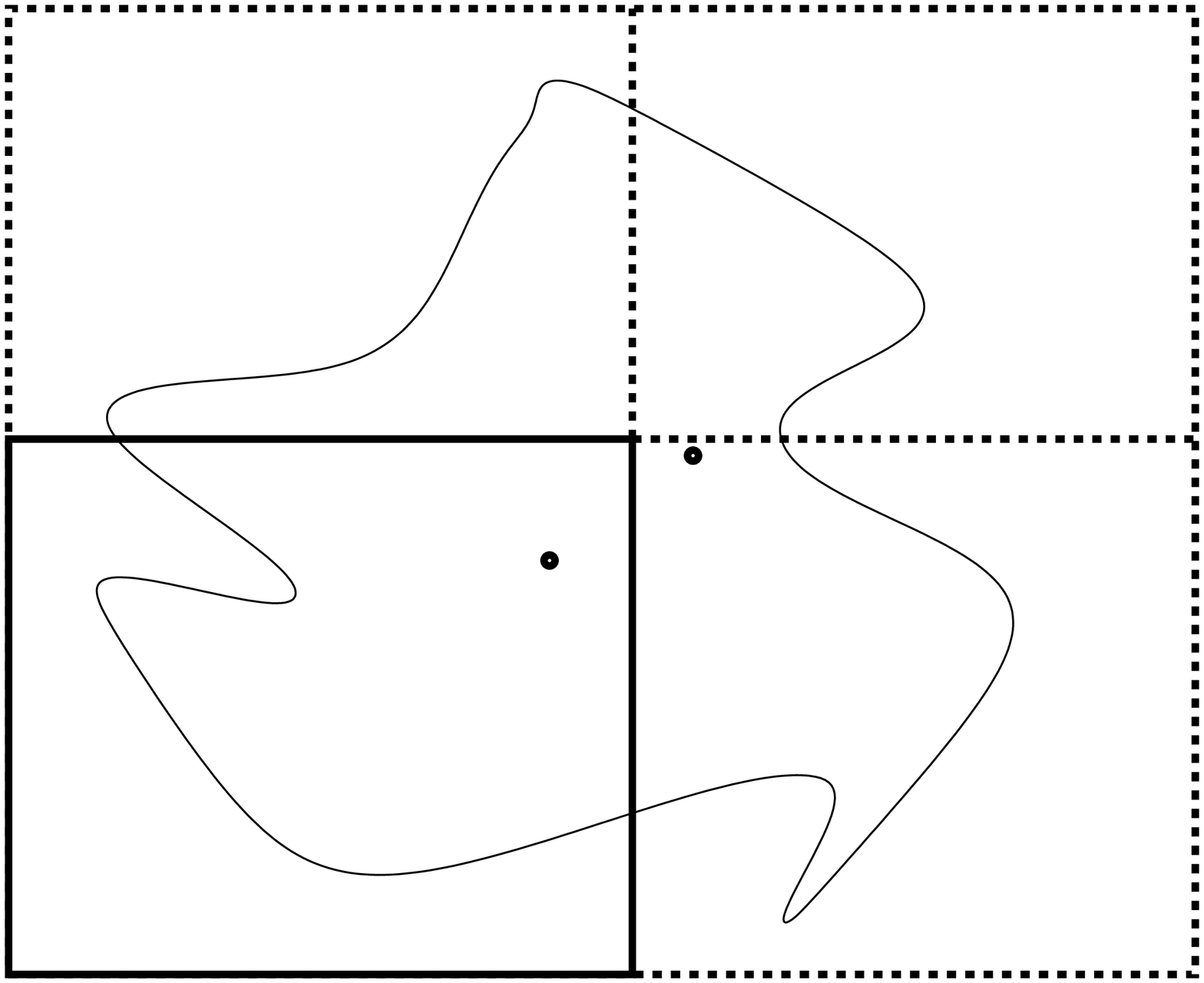}
    \put(158,112){$x$}    
    \put(114,85){$f^l(x)$}
    \put(65,36){$U^{N+\kappa} \bigl( f^l (x) \bigr)$}
    \put(10,10){$X^N$} 
    \put(151,131){$v^N$} 
    \put(233,212){$W^N(v^N)$}             
    \end{overpic}
    \caption{Closing a periodic pseudo-orbit II: $X^N$.}
    \label{f_2}
\end{figure}

\begin{figure}
    \centering
    \begin{overpic}
    [width=10cm, 
    tics=20]{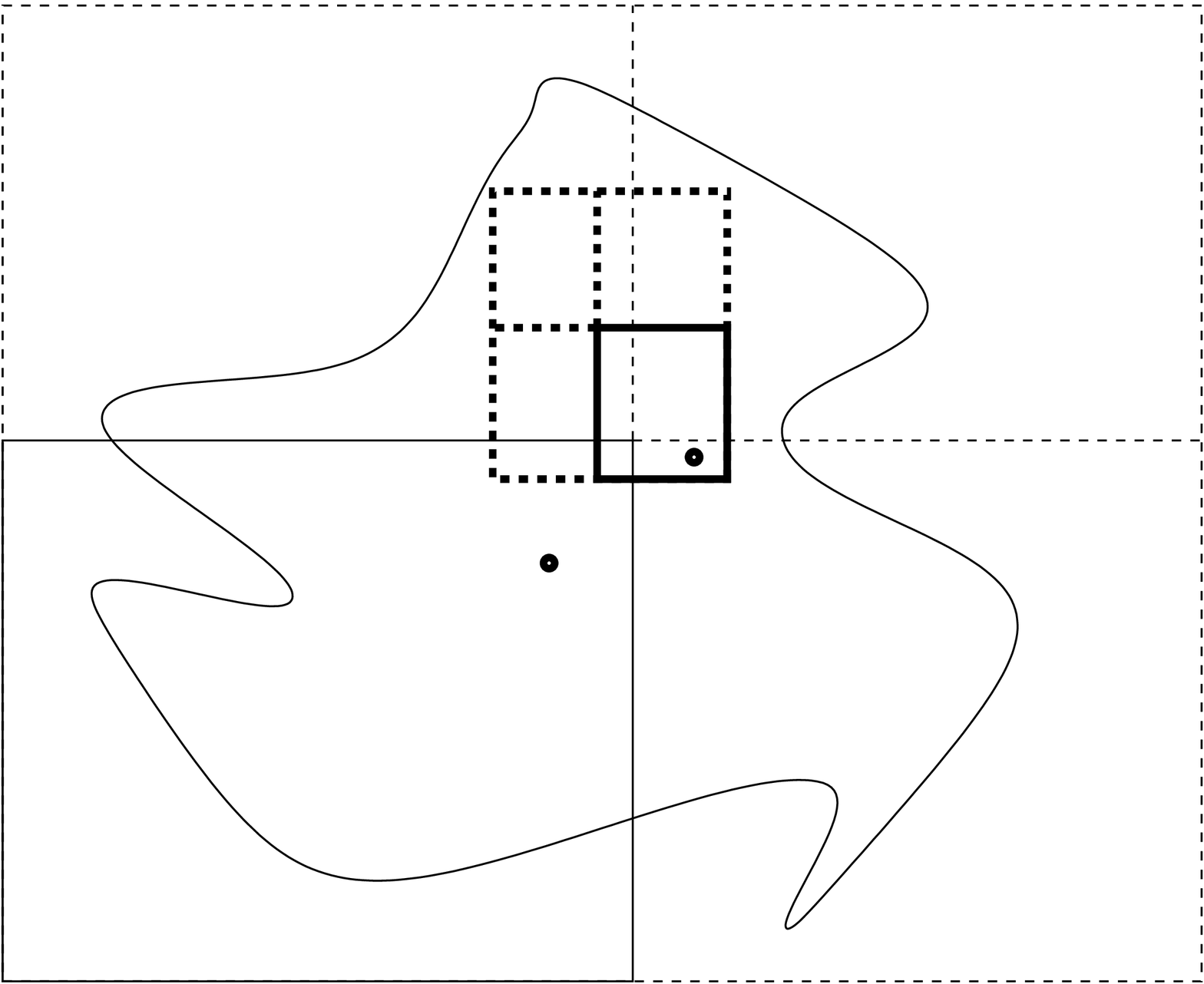}
    \put(158,112){$x$}    
    \put(114,85){$f^l(x)$}
    \put(65,36){$U^{N+\kappa} \bigl( f^l (x) \bigr)$}
    \put(10,10){$X^N$} 
    \put(151,131){$v^N$} 
    \put(233,212){$W^N(v^N)$} 
    \put(173,163){$W^{N+l}(v^{N+l})$}     
    \put(168,108){$X^{N+l}$}           
    \put(142,158){$v^{N+l}$}     
    \end{overpic}
    \caption{Closing a periodic pseudo-orbit III: $X^{N+l}$.}
    \label{f_3}
\end{figure}

\begin{figure}
    \centering
    \begin{overpic}
    [width=10cm, 
    tics=20]{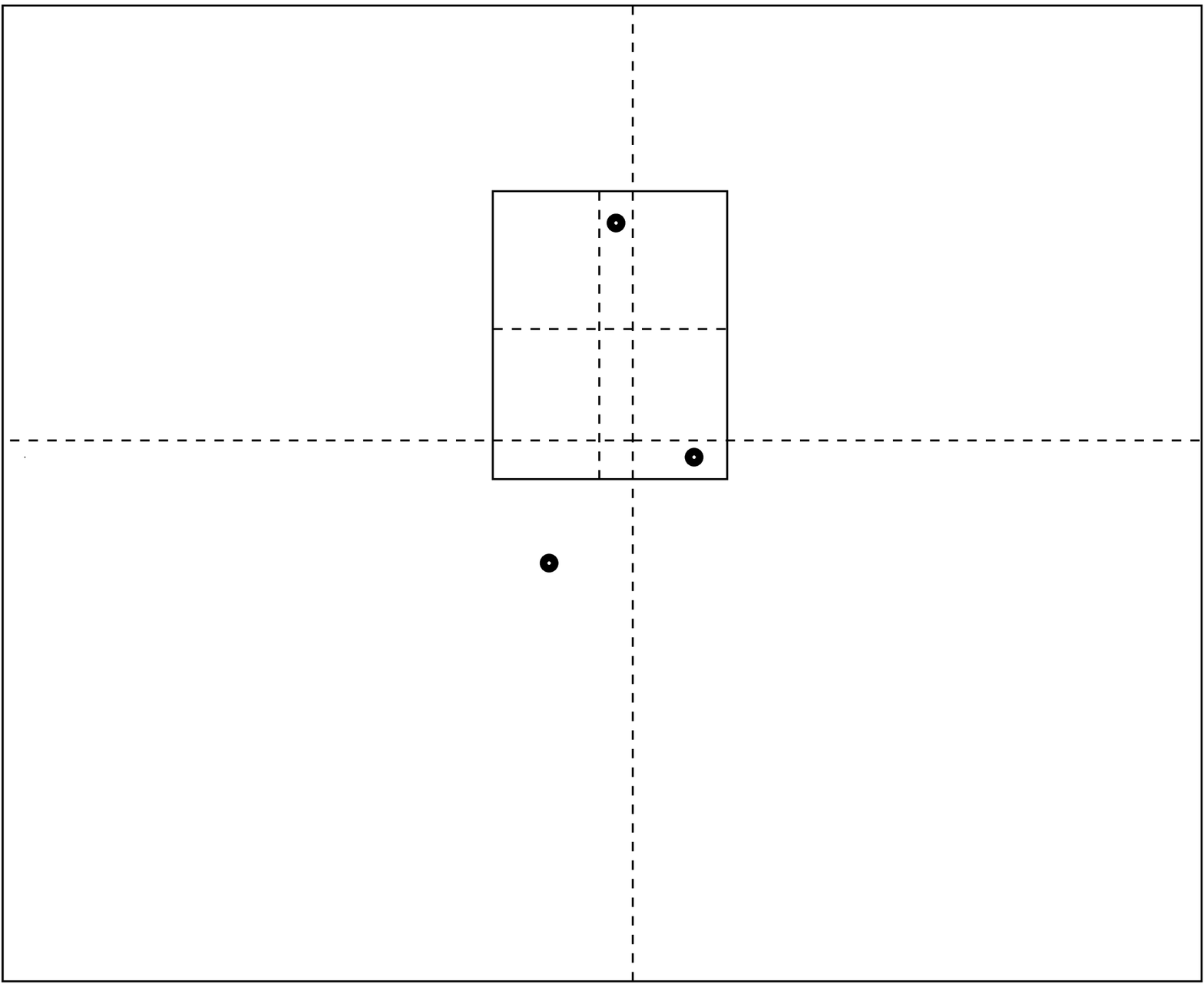}
    \put(158,112){$x$}    
    \put(114,85){$f^l(x)$}
    \put(151,131){$v^N$} 
    \put(233,212){$W^N(v^N)$} 
    \put(173,163){$W^{N+l}(v^{N+l})$}          
    \put(142,158){$v^{N+l}$}   
    \put(132,178){$y$}     
    \end{overpic}
    \caption{Closing a periodic pseudo-orbit IV: $y$.}
    \label{f_4}
\end{figure}

Let $X^N \in \X^N$ be an $N$-tile containing $f^l(x)$. By Lemma~\ref{l_U_in_W}, there exists an $N$-vertex $v^N \in \V^N \cap X^N$ such that $U^{N+\kappa} \bigl( f^l(x) \bigr) \subseteq W^N (v^N)$ (see Figure~\ref{f_2}). By Proposition~\ref{p_CellDecomp}, there exist $X^{N+l} \in \X^{N+l}$ and $v^{N+l} \in \V^{N+l} \cap X^{N+l}$ such that $x \in X^{N+l}$, $f^l \bigl( X^{N+l} \bigr) = X^N$, and $f^l \bigl( v^{N+l} \bigr) = v^N$ (see Figure~\ref{f_3}). Then $x \in W^{N+l} \bigl( v^{N+l} \bigr)$ by Proposition~\ref{p_CellDecomp}~(i). Since $d \bigl( x, f^l(x) \bigr) \leq \delta$, $l \geq M_0$, and $W^{N+l}( v^{N+l} ) \subseteq U^{N+l}(x)$, we get from (\ref{e_Pf_l_Local_Closing_Lemma_l_U}), (\ref{e_Pf_l_Local_Closing_Lemma_l_N}), and (\ref{e_Pf_l_Local_Closing_Lemma_l_M}) that if $z \in W^{N+l} \bigl( v^{N+l} \bigr)$, then
\begin{align}  \label{e_Pf_l_Local_Closing_Lemma_l_W_in_U}
d \bigl( f^l(x) , z \bigr)
& \leq d \bigl( f^l(x) , x \bigr) + d ( x, z ) 
   \leq \delta + 2 K \Lambda^{-(N+l)}  \notag \\ 
& \leq \frac{\Lambda^{-(N+\kappa)} }{2K}  + \frac{ 2 K \Lambda^{-(N+\kappa)} }{ 4 K^2 }
   \leq K^{-1} \Lambda^{ - ( N + \kappa ) }.
\end{align}
Thus, by (\ref{e_Pf_l_Local_Closing_Lemma_l_U}), (\ref{e_Pf_l_Local_Closing_Lemma_l_W_in_U}), $f^l \bigl( v^{N+l} \bigr) = v^N$, and (\ref{e_Flower_iterate}), we get (see Figure~\ref{f_3})
\begin{equation}  \label{e_Pf_l_Local_Closing_Lemma_l_W}
\oW^{N+l} \bigl( v^{N+l} \bigr)
\subseteq U^{ N + \kappa } \bigl( f^l(x) \bigr) 
\subseteq W^N \bigl( v^N \bigr)
= f^l \bigl(  W^{N+l} \bigl( v^{N+l} \bigr)   \bigr).
\end{equation}

Next, we claim that $f^l$ is injective on $W^{N+l} \bigl( v^{N+l} \bigr)$.

\smallskip

Indeed, we consider an arbitrary integer $i\in \{0, \, 1, \, \dots, \, l \}$. Since $x \in  W^{N+l} \bigl( v^{N+l} \bigr)$ and consequently $f^i(x) \in f^i \bigl( W^{N+l} \bigl( v^{N+l} \bigr) \bigr)$, we get from Remark~\ref{r_Flower}, (\ref{e_Def_U^n}), and Proposition~\ref{p_CellDecomp} that 
\begin{equation*}
f^i \bigl( W^{N+l} \bigl( v^{N+l} \bigr) \bigr) \subseteq U^{ N+l-i } \bigl( f^i(x) \bigr).
\end{equation*}
By (\ref{e_Pf_l_Local_Closing_Lemma_l_U}), (\ref{e_Pf_l_Local_Closing_Lemma_l_N}), and $\delta \in (0, \delta_2)$, we get
\begin{align}
U^{ N+l-i } \bigl( f^i (x) \bigr) 
& \subseteq B_d \bigl( f^i(x), K \Lambda^{-N-l+i} \bigr)  \notag \\
& \subseteq B_d \bigl( f^i(x), 2K^2 \delta \Lambda^{ \kappa - l + 1  + i} \bigr)
   \subseteq B_d \bigl( f^i(x), 2K^2 \delta_2 \Lambda^{ \kappa + 1 } \bigr).   \label{e_Pf_l_Local_Closing_Lemma_l_U_radius}
\end{align}
Thus, by $d \bigl( f^i(x), \crit f \bigr) \geq \eta$, (\ref{e_Pf_l_Local_Closing_Lemma_l_delta}), (\ref{e_Pf_l_Local_Closing_Lemma_I_delta2}), and Lemma~\ref{l_Loc_Inject_Expansion_Away_from_Crit}~(ii), $f$ is injective on $U^{N+l-i} \bigl( f^i (x) \bigr) \supseteq f^i \bigl( W^{N+l} \bigl( v^{N+l} \bigr) \bigr)$. The claim now follows.

\smallskip

Hence, by (\ref{e_Pf_l_Local_Closing_Lemma_l_W}), the claim above, Lemma~\ref{l_Local_Fixed_Pt_Flower}, and $x \in W^{N+l} \bigl( v^{N+l} \bigr)$, there exists $y \in \oW^{N+l} ( v^{N+l} )    \subseteq U^{N+l} (x)$ such that $f^l(y) = y$. See Figure~\ref{f_4}.

We set 
\begin{equation}  \label{e_Pf_l_Local_Closing_Lemma_l_beta}
\beta_0 \= 2K^2 \Lambda^{\kappa +1} \geq 1.
\end{equation}

It suffices now to verify that $ d \bigl( f^i (x) , f^i (y) \bigr) \leq \beta_0 \delta \Lambda^{ - (l-i) } \leq \eta / 2$ for each $i\in\{ 0,\,1,\,\dots,\, l\}$. Indeed, by Remark~\ref{r_Flower}, (\ref{e_Def_U^n}), the fact that $x,\,y \in \oW^{N+l} \bigl( v^{N+l} \bigr)$, and Lemma~\ref{l_Un}, we get
\begin{equation*}
f^i (y)
\in f^i \bigl( U^{N+l} (x) \bigr)
=  U^{N+l-i} \bigl( f^i (x) \bigr)
\end{equation*}
for each $i \in \{0,\,1,\,\dots,\,l\}$. Thus, by (\ref{e_Pf_l_Local_Closing_Lemma_l_U_radius}), (\ref{e_Pf_l_Local_Closing_Lemma_l_beta}), and (\ref{e_Pf_l_Local_Closing_Lemma_l_delta}),
\begin{equation*}
d \bigl( f^i(x), f^i(y) \bigr)
\leq 2 K^2 \delta \Lambda^{ \kappa + 1 - (l - i) } 
= \beta_0 \delta \Lambda^{ - (l-i) } 
\leq \beta_0  \delta_2
\leq \eta / 2.
\end{equation*}

The proof is now complete.
\end{proof}

\begin{lemma}   \label{l_expansion_upper_bound}
Let $f$, $d$, $\Lambda$ satisfy the Assumptions in Section~\ref{sct_Assumptions}. Let $\CC\subseteq S^2$ be a Jordan curve with $\post f \subseteq \CC$. For all $x,\,y \in S^2$ and $n\in\N_0$, we have 
\begin{equation*}
d( f^n (x), f^n(y) )  \leq K^2 \Lambda^{n+1} d(x, y),
\end{equation*}
where $K \geq 1$ is a constant depending only on $f$, $\CC$, and $d$ from Lemma~\ref{l_CellBoundsBM}.
\end{lemma}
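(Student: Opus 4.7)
The plan is to exploit the combinatorial structure of the bouquets $U^n(\cdot)$ together with their behavior under $f$ captured by Lemma~\ref{l_Un}. If $x=y$ the inequality is trivial, so assume $x \neq y$. For each $j \in \N_0$ I would define $m_j$ to be the largest integer with $f^j(y) \in U^{m_j}(f^j(x))$. The maximum exists because Lemma~\ref{l_CellBoundsBM}~(iii) forces $\diam_d(U^m(z)) \to 0$ as $m \to +\infty$, and it satisfies $m_j \geq 0$ since $U^0(z) = S^2$ for every $z \in S^2$ (the two $0$-tiles cover $S^2$ and share the boundary $\CC$).

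The first main step is to track how $m_j$ evolves under $f$. From $f^j(y) \in U^{m_j}(f^j(x))$, Lemma~\ref{l_Un} gives
\begin{equation*}
f^{j+1}(y) \in f(U^{m_j}(f^j(x))) = U^{m_j - 1}(f^{j+1}(x)),
\end{equation*}
so by maximality $m_{j+1} \geq m_j - 1$. Telescoping over $j=0, \, 1, \, \dots, \, n-1$ yields $m_n \geq m_0 - n$. The second step is to translate the bouquet containments back to metric inequalities, again using Lemma~\ref{l_CellBoundsBM}~(iii): since $m_n \in \N_0$, one has $f^n(y) \in U^{m_n}(f^n(x)) \subseteq B_d(f^n(x), K \Lambda^{-m_n})$, and combined with $m_n \geq m_0 - n$ this gives $d(f^n(x), f^n(y)) \leq K\Lambda^{-m_n} \leq K \Lambda^{n - m_0}$; while $y \notin U^{m_0 + 1}(x)$ together with $B_d(x, K^{-1}\Lambda^{-(m_0+1)}) \subseteq U^{m_0 + 1}(x)$ gives $d(x, y) \geq K^{-1} \Lambda^{-(m_0 + 1)}$. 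Dividing, the exponents telescope to the desired estimate $d(f^n(x), f^n(y)) \leq K^2 \Lambda^{n+1} d(x, y)$.

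I do not anticipate any genuine obstacle. The only point deserving a line of care is that $m_n \geq 0$ is required in order to apply Lemma~\ref{l_CellBoundsBM}~(iii) to the image-side bouquet, and this is automatic from $U^0 = S^2$; when $m_0 < n$ the bound $m_n \geq m_0 - n$ is weaker than $m_n \geq 0$, but this only strengthens the resulting conclusion $\Lambda^{-m_n} \leq \Lambda^{n - m_0}$, so the estimate is uniform in the sign of $m_0 - n$.
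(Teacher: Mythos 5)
Your argument is correct and is essentially identical to the paper's own proof: the same quantity $m_j$ (largest level with $f^j(y)\in U^{m_j}(f^j(x))$), the same use of Lemma~\ref{l_Un} to get $m_{j+1}\geq m_j-1$, and the same two-sided metric bounds from Lemma~\ref{l_CellBoundsBM}~(iii) combined by division. Your explicit remark that $x\neq y$ is needed for $m_j$ to be finite is a minor point the paper leaves implicit, but otherwise there is nothing to add.
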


\begin{proof}

Fix arbitrary $x,\,y \in S^2$ and $n\in\N_0$. It suffices to consider $n \geq 1$. Recall $U^0(z) = S^2$ for all $z\in S^2$. For each $j \in \N_0$, denote by $m_j \in \N_0$ the largest integer with $f^j (y) \in U^{m_j} \bigl( f^j(x) \bigr)$. Thus, $f^j(y) \notin U^{m_j + 1} \bigl( f^j(x) \bigr)$. By Lemma~\ref{l_CellBoundsBM}~(iii), such an integer $m_j$ exists and
\begin{equation*}
K^{-1} \Lambda^{ - m_j  - 1} 
\leq d \bigl( f^j(x), f^j(y) \bigr)
<    K \Lambda^{ - m_j }.
\end{equation*}
By Lemma~\ref{l_Un}, for each $j\in\N_0$, $f^{j+1}(y) \in f \bigl( U^{m_j} \bigl( f^j (y) \bigr) \bigr) =  U^{m_j - 1} \bigl( f^{j+1} (y) \bigr)$. So $m_{j+1} \geq m_j -1$. Therefore, $d( f^n (x), f^n(y) )  
< K \Lambda^{- m_n} 
\leq  K \Lambda^{ - m_0 + n} 
\leq K^2 \Lambda^{n+1} \bigl( K^{-1} \Lambda^{ - m_0 - 1 } \bigr) 
\leq K^2 \Lambda^{n+1} d(x, y)$.
\end{proof}

In order to close pseudo-orbits of period smaller than $M_0$, we need a local Anosov closing lemma (away from critical points) using Lemma~\ref{l_Local_Closing_Lemma_l} above and the uniform local expansion property (see Lemma~\ref{l_Loc_Inject_Expansion_Away_from_Crit}).

\begin{lemma}[Local Anosov Closing Lemma]     \label{l_Local_Closing_Lemma}
Let $f$, $d$, $\Lambda$ satisfy the Assumptions in Section~\ref{sct_Assumptions}. Let $\CC \subseteq S^2$ be a Jordan curve containing $\post f$. Fix a real number $\eta > 0$. Then there exists a constant $\delta_3 \in (0,1)$ such that the following statement holds:

For each $\delta \in  (0, \delta_3 ]$, if $x\in S^2$ and $l\in\N$ satisfy $d \bigl( x,f^l(x) \bigr) \leq \delta$ and $d \bigl( f^i (x) , \crit f \bigr) \geq \eta$ for all $i\in\{ 0, \, 1, \, \dots, \, l \}$, then there exists $y\in S^2$ such that $f^l(y)=y$ and $d(f^i(x),f^i(y))\leq \beta_0\delta\Lambda^{-(l-i)} \leq \eta / 2$ for each $i\in\{0,\,1,\,\dots,\,l\}$. Here $\beta_0 > 1$ is a constant from Lemma~\ref{l_Local_Closing_Lemma_l} depending only on $f$, $\CC$, and $d$.
\end{lemma}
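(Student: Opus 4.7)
The plan is to split into two cases according to the size of $l$ relative to the threshold $M_0$ from Lemma~\ref{l_Local_Closing_Lemma_l}. The case $l \geq M_0$ is immediate from Lemma~\ref{l_Local_Closing_Lemma_l} by choosing $\delta_3 \leq \delta_2(\eta)$ and taking the same $\beta_0$. The only genuine work is in the short-period range $l \in \{1, 2, \dots, M_0 - 1\}$, which is a finite set, so all constants can eventually be chosen uniformly in $l$.

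\textbf{Strategy for short periods.} Fix a constant $N_0 = N_0(f,\CC,d) \in \N$ with $N_0 \geq M_0$ and $C_2 \Lambda^{N_0} > 1$. For $l < M_0$ the plan is to apply Lemma~\ref{l_Local_Closing_Lemma_l} at period $N \= N_0 l \geq M_0$, extract a fixed point $y$ of $f^N$ close to $x$, and then use a uniqueness argument to promote it to a fixed point of $f^l$. The three steps are as follows. First, one bootstraps the hypothesis $d(x, f^l(x)) \leq \delta$ into the hypotheses of Lemma~\ref{l_Local_Closing_Lemma_l} at period $N$. Setting $x_k \= f^{kl}(x)$ for $0 \leq k \leq N_0$, Lemma~\ref{l_expansion_upper_bound} gives $d(x_k, x_{k+1}) \leq (K^2 \Lambda^{l+1})^k \delta$ by induction on $k$, whence $d(x, f^N(x)) \leq C_1 \delta$ for some $C_1 = C_1(f,\CC,d)$; writing an arbitrary index $i \in \{0, \dots, N\}$ as $i = jl + r$ with $0 \leq r < l$ and estimating $d(f^i(x), f^r(x)) = d(f^r(x_j), f^r(x_0)) \leq K^2 \Lambda^{r+1} d(x_j, x_0) \leq C_1' \delta$ shows that $f^i(x)$ is $(\eta/2)$-far from $\crit f$ provided $\delta_3$ is taken small enough that $C_1' \delta_3 \leq \eta/2$.

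\textbf{Applying the long-period lemma and promoting to period $l$.} Second, choose $\delta_3 \leq \delta_2(\eta/2)/C_1$, so that Lemma~\ref{l_Local_Closing_Lemma_l} applies with $\eta$ replaced by $\eta/2$ and produces $y \in S^2$ with $f^N(y) = y$ satisfying
\begin{equation*}
d\bigl(f^i(x), f^i(y)\bigr) \leq \beta_0 C_1 \delta \, \Lambda^{-(N - i)} \qquad \text{for } i \in \{0, \dots, N\}.
\end{equation*}
Since $N - i \geq l - i$ for $i \leq l$, this already yields the desired estimate after enlarging the constant $\beta_0$ in the statement (or absorbing $C_1$ into a redefined $\beta_0$). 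Third, one shows $f^l(y) = y$. Since $f^l$ and $f^N$ commute, $f^l(y)$ is also a fixed point of $f^N$; by the triangle inequality and the above estimate, $d(y, f^l(y)) = O(\delta)$. For any two fixed points $y, y'$ of $f^N$ whose orbits of length $N$ lie in $S^2 \setminus N^{\iota(\delta^*)}_d(\crit f)$ and stay within $C_2 \delta^*$ of each other (which holds for $\delta_3$ small by the tracking estimate and by ensuring $\iota(\delta^*) \leq \eta/2$), Lemma~\ref{l_Loc_Inject_Expansion_Away_from_Crit}(iii) gives
\begin{equation*}
d(y, y') = d\bigl(f^N(y), f^N(y')\bigr) \geq C_2 \Lambda^{N} d(y, y') \geq C_2 \Lambda^{N_0} d(y, y'),
\end{equation*}
which forces $y = y'$ by the choice of $N_0$. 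Applying this uniqueness to the pair $y$ and $f^l(y)$ yields $f^l(y) = y$, as required.

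\textbf{Main obstacle.} The principal technical hurdle is the promotion step: Lemma~\ref{l_Local_Closing_Lemma_l} only delivers a periodic point of period $N = N_0 l$, not of period $l$. Overcoming this requires the local-uniqueness argument, which in turn needs $C_2 \Lambda^{N_0} > 1$ and careful verification that the orbits of $y$ and $f^l(y)$ remain inside the region of uniform local expansion. Since $l$ ranges over the finite set $\{1, \dots, M_0 - 1\}$ and $N_0$ depends only on $f$, $\CC$, $d$, all the constants $C_1$, $C_1'$, $C_2'$ introduced above are uniform in $l$, and a single choice of $\delta_3 \in (0,1)$ suffices for both cases after shrinking it appropriately.
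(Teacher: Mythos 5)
Your proposal is correct and follows essentially the same route as the paper: split at $M_0$, bootstrap the gap $d(x,f^l(x))\leq\delta$ to a pseudo-period that is a multiple of $l$ via Lemma~\ref{l_expansion_upper_bound}, apply Lemma~\ref{l_Local_Closing_Lemma_l} at that longer period, and use the uniform local expansion of Lemma~\ref{l_Loc_Inject_Expansion_Away_from_Crit}~(iii) to force $f^l(y)=y$. The only cosmetic difference is in the last step: you fix $N_0$ with $C_2\Lambda^{N_0}>1$ and apply the expansion once over a single period of $f^{N}$, whereas the paper takes $N=lM_0$ and iterates the expansion estimate over $k$ periods so that $\Lambda^{lM_0k}\to\infty$, thereby avoiding any lower bound on $C_2\Lambda^{M_0}$; both versions hinge on the same wrap-around verification that $d\bigl(f^j(y),f^{j+l}(y)\bigr)$ stays small along the entire periodic orbit, which you correctly identify as the point requiring care.
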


\begin{proof}
Fix $f$, $\CC$, $d$, and $\eta$ as in the statement of this lemma. 

By Lemma~\ref{l_Loc_Inject_Expansion_Away_from_Crit}, we can fix a positive number $\delta' < \min \{ \delta_2 ,\, \delta_1,\, \eta \}$ satisfying
\begin{equation}  \label{e_Pf_l_Local_Closing_Lemma_half_distance}
\iota (\delta') < \eta / 2.
\end{equation}
Here the constant $\delta_1 \in (0,1]$ and the function $\iota\: [0,\delta_1] \rightarrow (0, +\infty)$ are from Lemma~\ref{l_Loc_Inject_Expansion_Away_from_Crit} depending only on $f$, $\CC$, and $d$; and the constant $\delta_2 \in (0,1)$ is from Lemma~\ref{l_Local_Closing_Lemma_l} depending only on $f$, $\CC$, $d$, and $\eta$. Define
\begin{equation}  \label{e_Pf_l_Local_Closing_Lemma_delta}
\delta_3 \= \min \biggl\{ \delta' \frac{ \Lambda^{- M_0^2 - 1 } }{4 M_0} C_2 K^{-2} , \, \frac{ \eta }{ 8 \beta_0 } , \, \frac{ \delta' }{ 4 \beta_0 } \biggr\} < \delta_2.
\end{equation}
Here $C_2 \in (0,1)$, $K \geq 1$, and $M_0 \in \N$ are constants depending only on $f$, $\CC$, and $d$ from Lemma~\ref{l_Loc_Inject_Expansion_Away_from_Crit}, Lemma~\ref{l_CellBoundsBM}, and Lemma~\ref{l_Local_Closing_Lemma_l}, respectively.

Fix arbitrary $\delta \in ( 0, \delta_3 )$, $x \in S^2$, and $l \in \N$. Assume that $d(x , f^l(x) ) \leq \delta$ and $d \bigl( f^i (x) , \crit f \bigr) \geq \eta$ for all $i\in\{ 0, \, 1, \, \dots, \, l \}$. Write $z \= f^l(x)$.

The case when $l \geq M_0$ follows immediately from Lemma~\ref{l_Local_Closing_Lemma_l}. Thus, without loss of generality, we can assume that $l < M_0$ and $M_0 \geq 2$. 

Since $d(x, z) \leq \delta < \delta_3$, we get from Lemma~\ref{l_expansion_upper_bound} and (\ref{e_Pf_l_Local_Closing_Lemma_delta}) that for each $i \in \{ 0, \, 1, \, \dots , \, l M_0 \}$, 
\begin{equation}   \label{e_Pf_l_Local_Closing_Lemma_xz_distance_bound}
d \bigl( f^i(x), f^i(z) \bigr)
\leq K^2 \Lambda^{i+1} d(x,z)
<     K^2 \Lambda^{i+1} \delta_3
\leq \frac{ C_2 \delta' \Lambda^{ - M_0^2 + i } }{ 4 M_0 }
\leq \frac{ C_2 \delta' }{ 4 M_0 }.
\end{equation}
Hence, by the triangular inequality, for each $j \in \{ 1, \, 2, \, \dots, \, M_0 \}$ and $k \in \{ 0, \, 1, \, \dots, \, l \}$,
\begin{equation}  \label{e_Pf_l_Local_Closing_Lemma_xk}
d \bigl( f^k (x), f^{ l j + k }(x) \bigr)
\leq  \sum_{m=0}^{ j - 1 }  d \bigl( f^{ l m + k } (x), f^{ l m + k } (z) \bigr)
\leq \frac{ j C_2 \delta' }{ 4 M_0 }
\leq \frac{ C_2 \delta' }{4}.
\end{equation} 
In particular, since $d\bigl( f^k (x), \crit f \bigr) \geq \eta$, $\delta' \leq \eta$, and $C_2 \in (0,1)$, we get
\begin{equation}  \label{e_Pf_l_Local_Closing_Lemma_x_orbit_Ml_crit_f}
d \bigl( f^i (x), \crit f \bigr) > 3\eta / 4 
\qquad \text{ for all } i \in \{ 0, \, 1, \, \dots , \, lM_0 + l \}.
\end{equation}
On the other hand, it follows from (\ref{e_Pf_l_Local_Closing_Lemma_xk}) that $d \bigl( x, f^{lM_0} (x) \bigr) \leq \delta'$.
Since $\delta' < \delta_2$, by Lemma~\ref{l_Local_Closing_Lemma_l}, there exists $y \in S^2$ such that $f^{ l M_0 } (y) = y$ and
\begin{equation}  \label{e_Pf_l_Local_Closing_Lemma_y_shadow_x}
d \bigl( f^i (x), f^i(y) \bigr) 
\leq \beta_0 \delta \Lambda^{ - ( l M_0 - i ) }
\end{equation}
for each $i \in \{0, \, 1, \, \dots, \, l M_0 \}$. In particular when $i \in \{0, \, 1, \, \dots, \, l \}$ we have $d \bigl( f^i (x), f^i(y) \bigr) \leq \beta_0 \delta \Lambda^{ - ( l - i ) }$.

It now suffices to show that $f^l(y) = y$. We argue by contradiction and assume that $f^l(y) \neq y$. Then by (\ref{e_Pf_l_Local_Closing_Lemma_y_shadow_x}), $\delta \in (0, \delta_3)$, and (\ref{e_Pf_l_Local_Closing_Lemma_delta}), for each $i \in \{0, \, 1, \, \dots, \, l M_0 \}$, we have
\begin{equation}   \label{e_Pf_l_Local_Closing_Lemma_y_shadow_K}
d \bigl( f^i (x), f^i(y) \bigr) 
\leq \beta_0 \delta
\leq \eta / 8.
\end{equation}
We now get from (\ref{e_Pf_l_Local_Closing_Lemma_half_distance}), (\ref{e_Pf_l_Local_Closing_Lemma_x_orbit_Ml_crit_f}), and (\ref{e_Pf_l_Local_Closing_Lemma_y_shadow_K}) that $f^i(y) \in S^2 \setminus N_d^{\iota( \delta' )} (\crit f )$ for each $i \in \{ 0, \, 1, \, \dots, \, l M_0 \}$.

Next, we verify that for each $i \in \{ 0, \, 1, \, \dots, \, l M_0 \}$ we have 
\begin{equation*}
d \bigl( f^i (y), f^{l+i} (y) \bigr) \leq C_2 \delta'.
\end{equation*}
We establish this bound by discussing in the following two cases:

\smallskip

\emph{Case~1.} We assume $i \leq l M_0 - l$. Then by (\ref{e_Pf_l_Local_Closing_Lemma_y_shadow_K}), (\ref{e_Pf_l_Local_Closing_Lemma_xz_distance_bound}), $\delta \in (0, \delta_3)$, and (\ref{e_Pf_l_Local_Closing_Lemma_delta}), we get
\begin{align*}
   d\bigl( f^i( y ), f^{l+i} (y) \bigr)   
& \leq d\bigl( f^i( y ), f^i (x) \bigr)  +  d\bigl( f^i( x ), f^{l+i} (x) \bigr)  +  d\bigl( f^{l+i}( x ), f^{l+i} (y) \bigr)  \\
& \leq \beta_0 \delta_3 + \frac{ C_2 \delta' }{4} + \beta_0 \delta_3
   \leq \frac{ C_2 \delta' }{4} + \frac{ C_2 \delta' }{4} + \frac{ C_2 \delta' }{4}
   \leq  C_2 \delta'.
\end{align*}

\smallskip

\emph{Case~2.} We assume $l M_0 - l < i \leq l M_0$. Then $n \= l+i - l M_0 \in [1, l]$. By (\ref{e_Pf_l_Local_Closing_Lemma_y_shadow_K}), (\ref{e_Pf_l_Local_Closing_Lemma_xz_distance_bound}), (\ref{e_Pf_l_Local_Closing_Lemma_xk}), $\delta \in (0, \delta_3)$, (\ref{e_Pf_l_Local_Closing_Lemma_delta}), and the fact that $f^{l M_0} (y) = y$, we get
\begin{align*}
            d\bigl( f^i( y ), f^{l+i} (y) \bigr)  
& \leq d\bigl( f^i( y ), f^i (x) \bigr)  +  d\bigl( f^i( x ), f^{l+i} (x) \bigr)  +   d\bigl( f^{l+i}( x ), f^n (x) \bigr) + d( f^n( x ), f^n (y) )   \\
& \leq \beta_0 \delta_3 + \frac{ C_2 \delta' }{4} + \frac{ C_2 \delta' }{4} + \beta_0 \delta_3
   \leq \frac{ C_2 \delta' }{4} + \frac{ C_2 \delta' }{4} + \frac{ C_2 \delta' }{4} + \frac{ C_2 \delta' }{4} 
   =  C_2 \delta'.
\end{align*}

\smallskip

Finally, we recall again that $f^{l M_0} (y) = y$. Thus, we have $f^i(y) \in S^2 \setminus N_d^{\iota( \delta' )} (\crit f )$ and $d \bigl( f^i (y), f^{l+i} (y) \bigr) \leq C_2 \delta'$ for all $i\in\N_0$.

Hence, we can apply the uniform local expansion property from Lemma~\ref{l_Loc_Inject_Expansion_Away_from_Crit}~(iii) to get
\begin{equation*}
d \bigl( y, f^l(y) \bigr)
     = d \bigl( f^{ l M_0 k } (y), f^{ l M_0 k + l} (y) \bigr) 
\geq C_2 \Lambda^{ l M_0 k } d \bigl( y, f^l(y) \bigr)
\end{equation*}
for all $k \in \N$. This can only be true when $y = f^l (y)$, a contradiction. 

We have now concluded that $y = f^l (y)$.
\end{proof}

\subsection{Bressaud--Quas closing lemma}   \label{subsct_BQ}

\begin{definition}   \label{d_BQ}
Let $T\: X \rightarrow X$ be a map on a metric space $(X,d)$. We say that the dynamical system $(X,T)$ has the \emph{Bressaud--Quas shadowing property} if for every nonempty compact $T$-forward-invariant set $\cK$ and every number $\kappa > 0$, there exists a number $\epsilon_0 > 0$ such that for each $\epsilon \in (0, \epsilon_0)$, we can find a periodic orbit $\O$ of $T$ of period $p < (1/\epsilon)^{\kappa}$ contained entirely in the $\epsilon$-neighborhood $N_d^{\epsilon} (\cK)$ of $\cK$.
\end{definition}

We first prove some properties of the Bressaud--Quas shadowing property.

\begin{prop}  \label{p_BQ_factor}
Let $f\: X \rightarrow X$ and $g \: Y \rightarrow Y$ be maps on compact metric spaces $(X,d_X)$ and $(Y,d_Y)$, respectively. Assume that $(X,f)$ is a factor of $(Y,g)$ with a H\"older continuous factor map $\pi\:Y \rightarrow X$. If $(Y,g)$ has the Bressaud--Quas shadowing property, then so does $(X,f)$.
\end{prop}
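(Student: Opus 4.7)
The idea is to lift a compact forward-invariant set in $X$ to one in $Y$, apply the Bressaud--Quas shadowing property there to obtain a short-period periodic orbit, and then push it back down via $\pi$, using the H\"older continuity of $\pi$ to convert the neighborhood estimate.

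More precisely, suppose $\pi$ is $\alpha$-H\"older continuous with constant $C \geq 1$, that is, $d_X(\pi(y_1),\pi(y_2)) \leq C d_Y(y_1,y_2)^\alpha$ for all $y_1,y_2 \in Y$. Fix a nonempty compact $f$-forward-invariant set $\cK \subseteq X$ and a number $\kappa > 0$. First I would set $\wt\cK \= \pi^{-1}(\cK) \subseteq Y$. Since $\pi$ is continuous and $\cK$ is closed in the compact space $X$, the set $\wt\cK$ is compact; since $\pi \circ g = f \circ \pi$, for any $y \in \wt\cK$ we have $\pi(g(y)) = f(\pi(y)) \in \cK$, so $\wt\cK$ is $g$-forward-invariant. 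Next, I would choose an auxiliary exponent $\kappa' \in (0, \kappa\alpha)$, for instance $\kappa' \= \kappa\alpha / 2$.

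Applying the Bressaud--Quas shadowing property of $(Y,g)$ to $\wt\cK$ and $\kappa'$, we obtain a number $\delta_0 > 0$ such that for every $\delta \in (0,\delta_0)$ there exists a periodic orbit $\wt\O$ of $g$ of period $p < (1/\delta)^{\kappa'}$ entirely contained in $N_{d_Y}^{\delta}(\wt\cK)$. Set $\O \= \pi(\wt\O)$. Since $\pi$ is a factor map and $\wt\O$ is $g$-periodic, $\O$ is an $f$-periodic orbit whose period divides $p$, hence is at most $p$. For every $\wt y \in \wt\O$, choose $\wt z \in \wt\cK$ with $d_Y(\wt y,\wt z) < \delta$; then $\pi(\wt z) \in \cK$ and
\begin{equation*}
  d_X(\pi(\wt y), \cK) \leq d_X(\pi(\wt y),\pi(\wt z)) \leq C \delta^\alpha.
\end{equation*}
Therefore $\O \subseteq N_{d_X}^{C\delta^\alpha}(\cK)$.

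To complete the argument, given $\epsilon > 0$ set $\delta \= (\epsilon/C)^{1/\alpha}$, and define $\epsilon_0 > 0$ small enough that $(\epsilon/C)^{1/\alpha} < \delta_0$ and
\begin{equation*}
  C^{\kappa'/\alpha} (1/\epsilon)^{\kappa'/\alpha} < (1/\epsilon)^{\kappa}
\end{equation*}
for all $\epsilon \in (0,\epsilon_0)$; the second condition is feasible since $\kappa'/\alpha < \kappa$. For any such $\epsilon$, the construction above yields a periodic orbit $\O$ of $f$ contained in $N_{d_X}^\epsilon(\cK)$ of period at most $(1/\delta)^{\kappa'} = C^{\kappa'/\alpha}(1/\epsilon)^{\kappa'/\alpha} < (1/\epsilon)^\kappa$, as required. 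I do not anticipate any serious obstacle: the proof is essentially a bookkeeping exercise matching the H\"older exponent $\alpha$ against the polynomial growth rate $\kappa$, with the mildest subtlety being that the period of $\O = \pi(\wt\O)$ may be a proper divisor of the period of $\wt\O$, which only helps the bound.
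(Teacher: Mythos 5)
Your proof is correct and follows essentially the same route as the paper's: lift $\cK$ to $\pi^{-1}(\cK)$, apply the Bressaud--Quas property upstairs with a reduced exponent, project the periodic orbit down, and use the H\"older estimate to convert the neighborhood radius while absorbing the constant into the polynomial period bound. The only (trivial) point left implicit is that $\pi^{-1}(\cK)$ is nonempty, which follows from the surjectivity of the factor map.
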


\begin{proof}
Assume that $\pi$ is $\beta$-H\"older continuous, $\beta \in (0,1]$. Denote 
\begin{equation*}
\Hseminorm{\beta}{\pi} \= \sup \bigl\{ \abs{ d_X (\pi (x), \pi (y) ) } / d_Y (x,y)^\beta : x,\,y \in Y , \, x \neq y \bigr\}.
\end{equation*}

We fix a nonempty compact $f$-forward-invariant set $\cK' \subseteq X$. Denote $\cK \= \pi^{-1} (\cK')$. Then $\cK$ is nonempty compact and satisfies
$\pi  ( g (\cK) ) =  f ( \pi (\cK) ) = f (\cK') \subseteq \cK'$. Thus, $\cK$ is $g$-forward-invariant.

Fix an arbitrary $\kappa' > 0$. Let $\epsilon_0>0$  be the constant from the Bressaud--Quas shadowing property applied to $(Y, g)$, $\cK$, and $\kappa \=  \kappa' \beta /2$. Without loss of generality, we assume that $\epsilon_0 \leq \Hseminorm{\beta}{\pi}^{ - 2 / \beta }$.

Set $\epsilon'_0 \= \Hseminorm{\beta}{\pi} \cdot (\epsilon_0)^{\beta} \leq \Hseminorm{\beta}{\pi}^{-1}$. Fix an arbitrary $\epsilon' \in (0, \epsilon'_0)$ and write $\epsilon \= \bigl( \epsilon' / \Hseminorm{\beta}{\pi} \bigr)^{ 1 / \beta }$. Then $0 < \epsilon < \bigl( \epsilon'_0 / \Hseminorm{\beta}{\pi} \bigr)^{ 1 / \beta } = \epsilon_0$. Thus, by the Bressaud--Quas shadowing property of $(Y, g)$ (applied to $\cK$, $\kappa$, and $\epsilon$) from the hypothesis, there exists a periodic orbit $\O$ of $g$ of period $p < (1/\epsilon)^{\kappa} = ( \Hseminorm{\beta}{\pi} / \epsilon' )^{ \kappa' / 2 } \leq (1/\epsilon')^{\kappa'}$ contained in the $\epsilon$-neighborhood of $\cK$. The last inequality follows from $\Hseminorm{\beta}{\pi} \leq 1 / \epsilon'_0 < 1 / \epsilon'$. Then $\O' \= \pi( \O )$ is a periodic orbit of $f$ of period $p' \leq p < (1/ \epsilon' )^{\kappa'}$, contained in $N_d^{\Hseminorm{\beta}{\pi} \epsilon^{\beta} } (\cK') = N_d^{\epsilon'}(\cK')$.
\end{proof}

\begin{prop}  \label{p_BQ_iterate}
Let $f\: X \rightarrow X$ be a map on a compact metric space $(X,d)$ and $n\in\N$. Assume that $f$ is Lipschitz with respect to $d$. Denote $F \= f^n$. Then $(X,f)$ has the Bressaud--Quas shadowing property if and only if $(X, F)$ has the Bressaud--Quas shadowing property.
\end{prop}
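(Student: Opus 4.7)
The plan is to prove both implications, exploiting that $F = f^n$ inherits the Lipschitz property (with constant $L^n$, where $L \geq 1$ is the Lipschitz constant of $f$) and that $f$- and $F$-periodic orbits are related by period-scaling: if $y$ is $F$-periodic of $F$-period $p_F$, then $y$ is $f$-periodic of $f$-period $q$ dividing $np_F$, and an $f$-periodic orbit of period $p$ decomposes under $F$ into $\gcd(p,n)$ sub-orbits of equal size $p/\gcd(p,n)$. The two directions differ substantially in difficulty.

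For the direction $(X, F) \Rightarrow (X, f)$, which is the more straightforward one, I would fix a compact $f$-forward-invariant $\cK$ (automatically $F$-forward-invariant) and $\kappa > 0$, then apply the Bressaud--Quas property of $F$ with parameter $\kappa/2$ to obtain an $F$-periodic point $y$ of $F$-period $p_F < (1/\epsilon_F)^{\kappa/2}$ whose $F$-orbit lies in $N_d^{\epsilon_F}(\cK)$. Writing any $f^i(y) = f^{i \bmod n}\bigl( F^{\lfloor i/n \rfloor}(y) \bigr)$ and using the $L$-Lipschitz continuity of $f$ together with $f$-forward-invariance of $\cK$, one gets $d(f^i(y), \cK) \leq L^{n-1}\epsilon_F$ uniformly in $i$. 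Setting $\epsilon_f \= L^{n-1}\epsilon_F$ makes the $f$-orbit of $y$ sit inside $N_d^{\epsilon_f}(\cK)$, with $f$-period bounded by $np_F < nL^{(n-1)\kappa/2}(1/\epsilon_f)^{\kappa/2}$; choosing $\epsilon_{f,0}$ small enough that this is dominated by $(1/\epsilon_f)^\kappa$ completes this direction.

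For the reverse direction $(X, f) \Rightarrow (X, F)$, given $\cK$ compact $F$-forward-invariant, I would pass to the $f$-forward-invariant hull $\cK' \= \bigcup_{i=0}^{n-1} f^i(\cK)$, which is compact and satisfies $f(\cK') \subseteq \cK'$ because $f^n(\cK) = F(\cK) \subseteq \cK$. Applying BQ of $f$ to $\cK'$ with parameter $\kappa/2$ produces an $f$-orbit $\O = \{x_0, \ldots, x_{p-1}\}$ of period $p < (1/\epsilon_f)^{\kappa/2}$ in $N_d^{\epsilon_f}(\cK')$. The heart of the argument is to extract from $\O$ an $F$-sub-orbit close to the smaller target $\cK$ rather than merely to $\cK'$. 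To that end, I would assign to each $x_i$ an approximate color $j_i \in \{0, \ldots, n-1\}$ with $d(x_i, f^{j_i}(\cK)) < \epsilon_f$; the identity $f(f^j(\cK)) = f^{(j+1) \bmod n}(\cK)$ (with $f^n(\cK) \subseteq \cK$) combined with the $L$-Lipschitz bound yields the transition estimate $d(x_{i+r}, f^{(j_i+r) \bmod n}(\cK)) \leq L^r \epsilon_f$ for $r = 0, 1, \ldots, n$. Then $\O$ splits under $F$ into $d \= \gcd(p,n)$ sub-orbits cyclically permuted by $f$; pairing the transition rule after a full $F$-step with the independent membership $x_{r+n} \in N_d^{\epsilon_f}(\cK')$ lets one consistently confine each $F$-sub-orbit to $N_d^{L^n \epsilon_f}(f^{j(r)}(\cK))$ for a color $j(r)$ with $j(r+1) \equiv j(r)+1 \pmod n$. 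If some $j(r) = 0$, the corresponding $F$-sub-orbit of $F$-period $p/d \leq p$ lies in $N_d^{L^n\epsilon_f}(\cK)$, and standard rescaling with $\epsilon \= L^n\epsilon_f$ gives BQ for $F$.

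The main obstacle will be guaranteeing that color $0$ is actually realized by some $F$-sub-orbit. In the non-degenerate case where the sets $\{f^j(\cK)\}_{j=0}^{n-1}$ are pairwise separated by some $\delta_0 > 0$, taking $\epsilon_f$ with $(1+L)\epsilon_f < \delta_0$ forces the $j_i$ to be uniquely determined and satisfy $j_{i+1} \equiv j_i + 1 \pmod n$ exactly; the periodicity $x_p = x_0$ then enforces $n \mid p$, whence $d = n$ and all $n$ colors including $0$ appear. In the degenerate case where some sets $f^j(\cK)$ coincide or overlap, additional care is required: one first replaces $\cK$ by the maximal (two-sided) $F$-invariant subset $\cK_\infty \= \bigcap_{k\geq 0} F^k(\cK) \subseteq \cK$, observing that any $F$-periodic orbit close to $\cK_\infty$ is automatically close to $\cK$, and then reduces to the non-degenerate situation by passing to the minimal positive period $n^*$ for which the $f^j(\cK_\infty)$ become distinct, with the coloring argument above applied at level $n^*$.
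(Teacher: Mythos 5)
Your proof of the implication ``$(X,F)$ has BQ $\Rightarrow$ $(X,f)$ has BQ'' is correct and is essentially the paper's argument: the paper likewise starts from a compact $f$-forward-invariant set $\cK$ (hence $F$-forward-invariant), takes an $F$-periodic orbit $\O_F \subseteq N_d^{\epsilon^2}(\cK)$ of period $< (1/\epsilon)^{\kappa/2}$, and passes to the full $f$-orbit $\bigcup_{i=1}^{n} f^i(\O_F)$, which has cardinality at most $n\cdot\card\O_F < (1/\epsilon)^\kappa$ and lies in $N_d^{\epsilon^2 (\LIP_d(f))^n}(\cK) \subseteq N_d^{\epsilon}(\cK)$; your linear rescaling $\epsilon_f = L^{n-1}\epsilon_F$ plays the role of the paper's quadratic one. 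This is the only direction the paper proves in detail, and the only one invoked later (in the proof of Theorem~\ref{t_BQ_for_ETM}), so the substantive content of your proposal is sound and matches the paper.

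The divergence is in the other implication, which the paper dismisses as ``straightforward.'' You are right to be suspicious of that: an $F$-forward-invariant set need not be $f$-forward-invariant, so one must pass to $\cK' = \bigcup_{i=0}^{n-1} f^i(\cK)$ and then extract an $F$-sub-orbit close to $\cK$ rather than merely to $\cK'$, and your coloring scheme is the natural attack. However, your handling of the degenerate case has a genuine gap. Replacing $\cK$ by $\cK_\infty = \bigcap_{k\geq 0} F^k(\cK)$ and passing to ``the minimal $n^*$ for which the $f^j(\cK_\infty)$ become distinct'' does not restore the non-degenerate hypothesis: pairwise distinct compact sets need not be positively separated, and the sets $f^j(\cK_\infty)$ can overlap without coinciding (for instance, whenever $\cK$ contains an $f$-fixed point, all of them share that point). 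In that situation the color $j_i$ of $x_i$ is not well defined near the overlap, the transition rule $j_{i+1} \equiv j_i + 1 \pmod n$ can fail, and the conclusion $n \mid p$ --- on which the existence of a constant-color sub-orbit of color $0$ rests --- is lost; note also that when $\gcd(p,n) < n$ each $F$-sub-orbit necessarily visits $n/\gcd(p,n)$ distinct phases, so no sub-orbit can be confined to a single $f^j(\cK)$ unless those sets essentially merge. So as written the forward implication is not established; it needs either a genuinely different argument or a correct reduction, whereas the backward implication --- the one the paper actually proves and uses --- is fine.
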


\begin{proof}
The forward implication is straightforward. To show the backward implication, we fix a nonempty compact $f$-forward-invariant set $\cK \subseteq S^2$. Then $F(\cK) \subseteq \cK$. By the Bressaud--Quas shadowing property, for each $\kappa > 0$, there exists $\epsilon_0 (F, \kappa ) > 0$ such that for each $\epsilon \in ( 0, \epsilon_0 (F, \kappa) )$, we can find a periodic orbit $\O_F = \bigl\{x,\,F(x), \, \dots,\, F^{p-1}(x) \bigr\}$ of $F$ satisfying $p \= \card \O_F < ( 1 / \epsilon )^{\kappa}$ and $\O_F \subseteq N_d^{\epsilon} (\cK)$.

Fix an arbitrary number $\kappa > 0$. Set
\begin{equation}   \label{p_BQ_iterate_eps0}
\epsilon_0(f,\kappa) \= \min \bigl\{  \epsilon_0( F, \kappa / 4 ), \, ( \LIP_d(f) )^{-n}, \, n^{ - 2 / \kappa}  \bigr\} \in (0,1].
\end{equation}

Fix an arbitrary number $\epsilon \in ( 0 , \epsilon_0 ( f, \kappa ))$. Then we also have $\epsilon^2  \in ( 0, \epsilon_0 (F, \kappa / 4 ) )$. There exists a periodic orbit $\O_F$ of $F$ satisfying
\begin{equation}   \label{p_BQ_iterate_O_F}
\card \O_F < \bigl( 1 / \epsilon^2 \bigr)^{\kappa / 4} = ( 1 / \epsilon )^{\kappa / 2}
\qquad \text{and} \qquad
\O_F \subseteq N_d^{\epsilon^2} ( \cK ).
\end{equation}
Define $\O_f \= \bigcup_{i=1}^{n} f^i ( \O_F )$. Then $\O_f$ is a periodic orbit of $f$, and by (\ref{p_BQ_iterate_O_F}) and (\ref{p_BQ_iterate_eps0}), $\card \O_f \leq n \cdot \card \O_F < n (1/\epsilon)^{\kappa / 2} < (1/\epsilon)^\kappa$. On the other hand, it follows from (\ref{p_BQ_iterate_O_F}), Lemma~\ref{l_ETM_Lipschitz}, and (\ref{p_BQ_iterate_eps0}) that $\O_f \subseteq N_d^{ \epsilon^2 ( \LIP_d (f) ) ^{n} } ( \cK ) \subseteq N_d^{\epsilon} (\cK)$.
\end{proof}

\begin{theorem}[Bressaud--Quas \cite{BQ07}]   \label{t_BQ}
Let $\bigl(\Sigma_A^+, \sigma_A \bigr)$ be a one-sided subshift of finite type defined by a transition matrix $A$. Equip $\Sigma_A^+$ with a metric $d=d_\theta$, $\theta \in (0,1)$, as given in Subsection~\ref{subsct_SFT}. Then $\bigl(\Sigma_A^+, \sigma_A \bigr)$ has the Bressaud--Quas shadowing property.
\end{theorem}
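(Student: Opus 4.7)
The plan is to follow the Bressaud--Quas closing strategy: starting from a recurrent point in $\cK$, I will close up an initial segment of its orbit into a periodic orbit of $\sigma_A$, using the SFT transition rule to ensure admissibility and the recurrence property to ensure closeness to $\cK$. First, I will extract a recurrent point $y \in \cK$, either by applying Birkhoff's recurrence theorem to the compact $\sigma_A$-forward-invariant set $\cK$, or by passing to a minimal $\sigma_A$-forward-invariant subset of $\cK$ (nonempty by Zorn's lemma). For each $n \in \N$, this yields a return time $m_n \in \N$ satisfying $y_{m_n + i} = y_i$ for all $i \in \{0, 1, \dots, n-1\}$. I then define
\begin{equation*}
z^{(n)} \= (y_0, y_1, \dots, y_{m_n - 1})^\infty.
\end{equation*}
The admissibility $A(y_{m_n - 1}, y_0) = 1$ follows from $A(y_{m_n - 1}, y_{m_n}) = 1$ (since $y \in \Sigma_A^+$) together with $y_{m_n} = y_0$, so $z^{(n)} \in \Sigma_A^+$ and is periodic of period dividing $m_n$.

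The main combinatorial observation is that every iterate $\sigma_A^j(z^{(n)})$ lies in $N^{\theta^n}_{d_\theta}(\cK)$. I will establish this by computing the length-$n$ prefix of $\sigma_A^j(z^{(n)})$ for each $j \in \{0, 1, \dots, m_n - 1\}$ and comparing it with $\sigma_A^j(y)$. When $j + n - 1 < m_n$ the prefix is manifestly the length-$n$ block of $y$ at position $j$; when $j + n - 1 \geq m_n$ the prefix reads $(y_j, \dots, y_{m_n - 1}, y_0, \dots, y_{j + n - 1 - m_n})$, and invoking the return condition $y_{m_n + k} = y_k$ for $k \in \{0, 1, \dots, n-1\}$, the wrap-around tail is rewritten as $(y_{m_n}, \dots, y_{j + n - 1})$, identifying the prefix once again with the length-$n$ block of $y$ at position $j$. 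Since $\sigma_A^j(y) \in \cK$ by forward invariance, this gives $d_\theta(\sigma_A^j(z^{(n)}), \sigma_A^j(y)) \leq \theta^n$, so the orbit $\O^{(n)} \= \{\sigma_A^j(z^{(n)}) : j \in \N_0\}$ is contained in $N^{\theta^n}_{d_\theta}(\cK)$.

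Finally, I will translate the discrete scale $n$ into the continuous scale $\epsilon$ by choosing $n$ so that $\theta^n \leq \epsilon < \theta^{n-1}$, which gives $\O^{(n)} \subseteq N^\epsilon_{d_\theta}(\cK)$, and then bound the period $p_n \leq m_n$ polynomially in $1/\epsilon$. A naive pigeonhole on the length-$n$ blocks of $y$ shows $m_n \leq L_n(\cK) \leq L_n(\Sigma_A^+) = O(\lambda_A^n)$, where $\lambda_A$ is the spectral radius of $A$ and $L_n$ counts length-$n$ admissible words; this already yields the Bressaud--Quas shadowing property for any $\kappa > \log \lambda_A / \log(1/\theta)$. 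The main obstacle I expect to face is promoting this bound to arbitrary $\kappa > 0$, since for $\cK$ of positive topological entropy the naive pigeonhole is insufficient. Following \cite{BQ07}, this is overcome by selecting $y$ inside a subset of $\cK$ of suitably low complexity (for instance by descending to a minimal set with controlled recurrence function, or by a more refined counting of cyclic length-$n$ words appearing in $\cK$), thereby forcing the first-return times $m_n$ to grow subexponentially in $n$ and delivering the required period bound $p_n < (1/\epsilon)^\kappa$ for every $\kappa > 0$.
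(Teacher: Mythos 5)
Your construction of the periodic point $z^{(n)}$ from a return time of a recurrent point $y\in\cK$, and the verification that every iterate of $z^{(n)}$ agrees with an iterate of $y$ to depth $n$ (hence lies within $\theta^n$ of $\cK$), are correct. But you have correctly located, and then not closed, the actual content of the theorem: the period bound for \emph{arbitrary} $\kappa>0$. The definition of the Bressaud--Quas shadowing property quantifies over all $\kappa>0$, and the paper genuinely uses small $\kappa$ (in the proof of Lemma~\ref{l_bound_by_gap}, $\kappa=c\alpha$ with $c$ small), so the threshold $\kappa>\log\lambda_A/\log(1/\theta)$ coming from your pigeonhole bound $m_n=O(\lambda_A^n)$ is not enough whenever $\cK$ has positive topological entropy. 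Neither of your suggested repairs works as stated. Passing to a minimal forward-invariant subset of $\cK$ does not control the return times: there exist minimal subshifts of positive entropy (Grillenberger's construction), and even for zero-entropy minimal systems the recurrence function need not grow subexponentially, so first-return times $m_n$ for the $n$-prefix can still be exponential in $n$. And "a more refined counting of cyclic length-$n$ words" is not an argument; the single-scale pigeonhole is structurally stuck at the trade-off between period $\le p_{\cK}(n)$ at depth $n$ versus period $\le n$ at depth only $O(\log n)$. The superpolynomial rate (period $\le n$ at depth roughly $n/\log n$, say) is exactly the theorem of Bressaud--Quas, and its proof is a multi-scale recursive block construction, not a refinement of the closing-up-a-recurrent-orbit scheme.

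For comparison, the paper does not reprove this core combinatorial fact either: it cites \cite[Theorem~4]{BQ07} and, for the form stated here, \cite[Theorem~7.3 and Appendix~A.6]{BG19}, and only explains the modifications needed to transplant that proof to one-sided subshifts of finite type (replacing expansiveness by forward expansiveness, using the one-sided Lipschitz shadowing lemma from \cite[Corollary~4.2.4]{PU10}, etc.). So the acceptable resolutions are either to cite the result as the paper does, or to actually reproduce the Bressaud--Quas recursive construction; your proposal does neither, and as written the proof is incomplete precisely where the difficulty lies.
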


X.~Bressaud and A.~Quas first established a form of the above closing lemma for the one-sided full shifts in \cite[Theorem~4]{BQ07}. A Bressaud--Quas Closing Lemma in the current form for hyperbolic homeomorphisms is formulated in \cite[Theorem~7.3]{BG19} and proved in \cite[Appendix~A.6]{BG19}. A proof of Theorem~\ref{t_BQ} above can be obtained verbatim the same from the proof in \cite[Appendix~A.6]{BG19} if we (1) replace $(Y,d)$ by $\bigl(\Sigma_A^+, \sigma_A \bigr)$, $f$ and $T$ by the shift map $\sigma_A$, ``expansive'' by ``forward-expansive'', (2) update the reference for Lipschitz shadowing lemma to \cite[Corollary~4.2.4]{PU10} that applies to one-sided subshift of finite type, (3) observe that (A.27) follows from \cite[Section~1.4]{Ki98} in our context, and finally (4) observe that in our context, for sufficiently small $\varepsilon>0$, each periodic $(\varepsilon,d_n,\sigma_A)$-pseudo-orbit is a periodic $(\varepsilon_n,d,\sigma_A)$-pseudo-orbit where $\varepsilon_n \= \theta^n \epsilon$ and $n\in\N$.

Combining Lemma~\ref{l_CexistsBM}, Proposition~\ref{p_TileSFT}, Lemma~\ref{l_ETM_Lipschitz}, Theorem~\ref{t_BQ}, Proposition~\ref{p_BQ_factor}, and Propostion~\ref{p_BQ_iterate}, we immediately conclude that expanding Thurston maps have the Bressaud--Quas shadowing property.

\begin{theorem}[Bressaud--Quas closing lemma for expanding Thurston maps]  \label{t_BQ_for_ETM}
Let $f$, $\CC$, $d$ satisfy the Assumptions in Section~\ref{sct_Assumptions}. Then $(S^2,f)$ has the Bressaud--Quas shadowing property. More precisely, for every nonempty compact $f$-forward-invariant set $\cK \subseteq S^2$ and every $\kappa > 0$, there exists a number $\epsilon_0 > 0$ such that for each $\epsilon \in (0, \epsilon_0)$, we can find a periodic orbit $\O$ of period $p < (1/\epsilon)^{\kappa}$ contained entirely in the $\epsilon$-neighborhood $N_d^{\epsilon} (\cK)$ of $\cK$.
\end{theorem}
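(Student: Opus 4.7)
The plan is to chain together the four ingredients that immediately precede the theorem statement, and the only real content is to verify that the hypotheses of each lemma/proposition are met in turn. I would split the argument into three steps.

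First, I would reduce to an iterate that admits a genuinely invariant Jordan curve. By the Assumptions in Section~\ref{sct_Assumptions} there exists $n_\CC \in \N$ such that $F \= f^{n_\CC}$ satisfies $F(\CC) \subseteq \CC$ and $\post F = \post f \subseteq \CC$. By Remarks~\ref{r_Expanding}, $F$ is again an expanding Thurston map and the same metric $d$ is a visual metric for $F$ (with expansion factor $\Lambda^{n_\CC} > 1$ and the same linear local connectivity constant). Thus the triple $(F, \CC, d)$ satisfies the hypotheses of Proposition~\ref{p_TileSFT}.

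Second, I would transfer the Bressaud--Quas shadowing property from symbols to $F$. Proposition~\ref{p_TileSFT} realizes $F$ as a factor of the one-sided subshift of finite type $\bigl(\Sigma_{A_\ti}^+, \sigma_{A_\ti}\bigr)$ with factor map $\pi_\ti \: \Sigma_{A_\ti}^+ \rightarrow S^2$ which is H\"older continuous (with respect to the metric $d_\theta$ on $\Sigma_{A_\ti}^+$ for any fixed $\theta \in (0,1)$ and $d$ on $S^2$). Theorem~\ref{t_BQ} asserts that $\bigl(\Sigma_{A_\ti}^+, \sigma_{A_\ti}\bigr)$ has the Bressaud--Quas shadowing property, so Proposition~\ref{p_BQ_factor} applies and yields that $(S^2, F)$ has the Bressaud--Quas shadowing property.

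Third, I would lift the property from $F = f^{n_\CC}$ back to $f$. By Lemma~\ref{l_ETM_Lipschitz}, $f$ is Lipschitz with respect to $d$, so the hypothesis of Proposition~\ref{p_BQ_iterate} (applied with $n = n_\CC$) is satisfied. The backward implication of that proposition then gives that $(S^2, f)$ has the Bressaud--Quas shadowing property, which is the statement of the theorem. No step here presents a genuine obstacle; the work has been packaged into the previously established symbolic model (Proposition~\ref{p_TileSFT}) and the two abstract transfer principles for the shadowing property (Propositions~\ref{p_BQ_factor} and~\ref{p_BQ_iterate}), so the proof reduces to checking that the respective hypotheses are in force.
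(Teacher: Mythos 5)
Your proposal is correct and is exactly the argument the paper gives (the paper compresses it into one sentence citing Lemma~\ref{l_CexistsBM}, Proposition~\ref{p_TileSFT}, Lemma~\ref{l_ETM_Lipschitz}, Theorem~\ref{t_BQ}, Proposition~\ref{p_BQ_factor}, and Proposition~\ref{p_BQ_iterate}): pass to the iterate $f^{n_\CC}$ with invariant curve, transfer the shadowing property from the subshift via the H\"older factor map, and return to $f$ via the iterate proposition. Your hypothesis checks at each step are accurate.
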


\subsection{Proof of Lemma~\ref{l_bound_by_gap}}    \label{subsct_Local_Closing_Lemma_ETM_proof}

We combine our Local Anosov Closing Lemma and Bressaud--Quas closing lemma to establish Lemma~\ref{l_bound_by_gap}.

\begin{proof}[Proof of Lemma~\ref{l_bound_by_gap}]
Fix the set $\cK$, real numbers $r>0$, $\theta>0$, $\alpha \in (0,1]$, and $\tau > 0$ as in the statement of this lemma.

We will construct recursively a finite sequence $\O_0,\,\O_1,\,\dots,\,\O_m$ of periodic orbits, the last of which satisfies (\ref{e_l_bound_by_gap}). For each $i\in \{ 0,\, 1,\, \dots,\, m\}$, we denote
\begin{equation}  \label{e_Pf_l_bound_by_gap_Sigma_Delta_i}
\Sigma_i \= \sum_{x\in\O_i} d(x,\cK)^\alpha \text{ and } 
\Delta_i \=  \Delta_{r,\,\theta} ( \O_i ).
\end{equation}

The following constants defined below will be needed in the proof:
\begin{align}
\eta        &\=   \min\{ d ( \cK, x ) / 2 : x \in \crit f \} > 0,      \label{e_Pf_l_bound_by_gap_eta} \\
D            &\=   1 +  \frac{  \beta_0^\alpha \theta^{-\alpha} }{1- \Lambda^{-\alpha} } \cdot \frac{1}{\tau} >1,      \label{e_Pf_l_bound_by_gap_D} \\
c             &\=   1 / ( 2 + 2\log_2(D) ) \in (0, 1/2),    \label{e_Pf_l_bound_by_gap_c} \\
\kappa    &\= c \alpha > 0,  \label{e_Pf_l_bound_by_gap_kappa} \\
\epsilon  &\= \min \bigl\{  1, \, \epsilon_0 / 2, \,\eta^2, \, \tau^{ 2 / \alpha } \min \bigl\{ r^2, \,  \theta^2 \delta_3^2    \bigr\}  \bigr\} \in (0,1],  \label{e_Pf_l_bound_by_gap_epsilon}
\end{align}
where $\delta_3 \in (0,1)$ is a constant depending only on $f$, $\CC$, $d$, and $\cK$ from Lemma~\ref{l_Local_Closing_Lemma} (applied to $f$, $\CC$, $d$, and $\eta$), $\beta_0>1$ is a constant depending only on $f$, $\CC$, and $d$ from Lemma~\ref{l_Local_Closing_Lemma_l}, and $\epsilon_0>0$ is a constant from Theorem~\ref{t_BQ_for_ETM} applied to $f$, $\CC$, $d$, $\cK$, and $\kappa$. Thus, $\epsilon_0$ depends only on $f$, $\CC$, $d$, $\cK$, and $\alpha$.

Applying Theorem~\ref{t_BQ_for_ETM} to $f$, $\CC$, $d$, and $\cK$ with $\kappa$ and $\epsilon$ defined above, we can find a periodic orbit $\O_0$ of $f$ of period
\begin{equation}    \label{e_Pf_l_bound_by_gap_p_0}
p_0 < \epsilon^{ - c \alpha} 
\end{equation}
contained in $N_d^\epsilon (\cK)$. If $\O_0$ satisfies $\Sigma_0 \leq \tau \Delta_0^\alpha$, then $\O_0$ is the orbit $\O$ we are looking for. So, without loss of generality, we may assume that 
\begin{equation}   \label{e_Pf_l_bound_by_gap_O_0}
\Sigma_0 > \tau \Delta_0^\alpha.
\end{equation}
Note that by (\ref{e_Pf_l_bound_by_gap_O_0}), (\ref{e_Pf_l_bound_by_gap_Sigma_Delta_i}), (\ref{e_Pf_l_bound_by_gap_p_0}), (\ref{e_Pf_l_bound_by_gap_epsilon}), and (\ref{e_Pf_l_bound_by_gap_c}), we have
\begin{equation}   \label{e_Pf_l_bound_by_gap_Delta_0}
\tau \Delta_0^\alpha < \Sigma_0 \leq p_0 \epsilon^\alpha < \epsilon^{ - c \alpha + \alpha}  \leq \epsilon^{\alpha / 2}   \leq \eta^\alpha.
\end{equation}

\smallskip

\emph{Base step.} We have found a periodic orbit $\O_0$ with $\tau \Delta_0^\alpha < \Sigma_0   \leq \epsilon^{\alpha / 2} \leq \eta^\alpha$.

\smallskip

\emph{Recursive step.} Assume that we have found a periodic orbit $\O_{k-1}$, for some $k\in\N$, such that $p_{k-1} \= \card \O_{k-1} \leq 2^{-k+1} p_0$, $\Sigma_{k-1} \leq D^{k-1} \Sigma_0$, and $\tau \Delta_{k-1}^\alpha < \Sigma_{k-1} < \epsilon^{\alpha / 2} \leq \eta^\alpha$.

Then by the recursion hypothesis and (\ref{e_Pf_l_bound_by_gap_epsilon}),
\begin{equation}  \label{e_Pf_l_bound_by_gap_Delta_k-1}
 \Delta_{k-1} < \tau^{-1/\alpha} \epsilon^{1/2}  \leq \min\{ r, \, \theta \delta_3 \}.
\end{equation}
It follows that $\Delta_{k-1} = \theta \cdot \Delta ( \O_{k-1} )$ and $p_{k-1}  = \card \O_{k-1}$ is at least $2$, since otherwise we would have $\Delta_{k-1} = r$ (see (\ref{e_Def_gap_r_theta})). We choose distinct points $x,\,x' \in \O_{k-1}$ with the properties that $d(x,x') = \Delta ( \O_{k-1} ) = \theta^{-1} \Delta_{k-1}$ and $x' = f^n(x)$ for some positive integer $n \leq p_{k-1} / 2$.

By (\ref{e_Pf_l_bound_by_gap_Delta_k-1}), $d(x,x') = \theta^{-1} \Delta_{k-1} \leq \delta_3$. On the other hand, $ \min\{ d( \O_{k-1}, z ) : z\in \crit f \} \geq \eta$ by $\Sigma_{k-1} \leq \eta^\alpha$, (\ref{e_Pf_l_bound_by_gap_Sigma_Delta_i}), and (\ref{e_Pf_l_bound_by_gap_eta}). Then by Lemma~\ref{l_Local_Closing_Lemma} and the recursion hypothesis, we get a periodic point $y$ of $f$ of period $p_k \leq n \leq p_{k-1} / 2 \leq 2^{-k} p_0$. Let $\O_k$ be the orbit of $y$ under $f$. Finally, to verify the recursion hypothesis for $\O_k$, we get from (\ref{e_Pf_l_bound_by_gap_Sigma_Delta_i}), $\alpha \in (0,1]$, Lemma~\ref{l_Local_Closing_Lemma}, $d(x,x') =\theta^{-1} \Delta_{k-1}$, the recursion hypothesis for $\O_{k-1}$, (\ref{e_Def_gap_r_theta}), (\ref{e_Pf_l_bound_by_gap_Delta_k-1}), and (\ref{e_Pf_l_bound_by_gap_D}),
\begin{align*}
\Sigma_k
& \leq  \sum_{i=0}^{n-1}  d \bigl( f^i(y), \cK \bigr)^\alpha
    \leq  \sum_{i=0}^{n-1}    \bigl( d \bigl( f^i(x), \cK \bigr)^\alpha +   d \bigl( f^i(x), f^i(y) \bigr)^\alpha \bigr) \\
& \leq  \Sigma_{k-1} +  \beta_0^{\alpha} d(x,x')^\alpha \sum_{i=0}^{n-1} \Lambda^{- \alpha (n-i) } 
    \leq  \Sigma_{k-1} + \frac{ \beta_0^{\alpha} \theta^{-\alpha} }{ 1- \Lambda^{ - \alpha } } \Delta_{k-1}^\alpha \\
& \leq \biggl( 1 + \frac{ \beta_0^{\alpha} \theta^{-\alpha} }{ 1- \Lambda^{ - \alpha } } \cdot \frac{1}{\tau} \biggr)  \Sigma_{k-1} 
      =   D  \Sigma_{k-1} 
    \leq D^{k}  \Sigma_0.
\end{align*}
Since $k \leq \log_2 (p_0)$, we get from (\ref{e_Pf_l_bound_by_gap_Delta_0}), (\ref{e_Pf_l_bound_by_gap_p_0}), (\ref{e_Pf_l_bound_by_gap_c}), and (\ref{e_Pf_l_bound_by_gap_epsilon}),
\begin{equation}   \label{e_Pf_l_bound_by_gap_Sigma_k}
\Sigma_k 
\leq  \Sigma_0  p_0^{\log_2(D)} 
\leq \epsilon^\alpha  p_0^{1 + \log_2(D)} 
<  \epsilon^{ ( 1 - c ( 1 + \log_2(D) )) \alpha}
= \epsilon^{\alpha / 2}
\leq \eta^\alpha.
\end{equation}

Finally, if $\Sigma_k \leq \tau \Delta_k^\alpha$, then $\O_k$ is the periodic orbit we are looking for. So without loss of generality, we can assume that $\tau \Delta_k^\alpha < \Sigma_k$, and this completes the recursive step.

\smallskip

If this procedure continues until an orbit $\O_m$ which consists of exactly one fixed point of $f$, then by (\ref{e_Pf_l_bound_by_gap_Sigma_k}), (\ref{e_Pf_l_bound_by_gap_epsilon}), (\ref{e_Def_gap_r_theta}), and (\ref{e_Def_gap}),
\begin{equation*}
\Sigma_m < \epsilon^{\alpha / 2}  \leq \tau r^\alpha = \tau \Delta_m^\alpha.
\end{equation*}
Hence, this orbit $\O_m$ is what we are looking for.
\end{proof}

\section{Proofs of the first parts of Theorems~\ref{t_Density_Thurston} and~\ref{t_Density_Lattes}}  \label{sct_Holder_Density}

In this section, we first give a proof of Theorem~\ref{t_Density_Lattes}~(i), then describe the modifications needed on this proof to establish Theorem~\ref{t_Density_Thurston}~(i). The remaining parts of these theorems will be proved in Section~\ref{sct_lip}.

In the proof of Theorem~\ref{t_Density_Lattes}~(i), we show that for an arbitrary $\alpha$-H\"older continuous potential $\varphi \in \Lip(\wh\C, \sigma^{\alpha})$ with respect to the chordal metric $\sigma$, any perturbation of the form $\varphi' = \varphi - \epsilon \sigma (\cdot, \O)^\alpha$, with $\epsilon>0$ sufficiently small, belongs to $\sP^\alpha(\wh\C,\sigma)$, where $\O$ is some special periodic orbit produced by our local closing lemma away from critical points. The quantitative analysis is, however, carried out in the canonical orbifold metric $d$ on the related potentials $\wt\varphi$ and $\psi$, which are $\alpha$-H\"older continuous with respect to $d$ but not with respect to $\sigma$. In the case of Latt\`es maps, the canonical orbifold metric is also a visual metric. The technical parts are (1) to quantitatively avoid critical points $\crit f$ where the combinatorics are more involved in order to apply our local closing lemma as well as uniform local expansion property and (2) to quantitatively avoid postcritical points $\post f$ where the conversion between $d$ and $\sigma$ is more involved. In fact, by applying various properties of the canonical orbifold metric and considering the orbifold ramification function, we get that the canonical orbifold metric and the chordal metric are ``locally comparable away from postcritical points''. We know that the identity map on $\wh\C$ between these two metrics is never bi-Lipschitz (see \cite[Appendix~A.10]{BM17}).

\begin{proof}[Proof of Theorem~\ref{t_Density_Lattes}~(i)]
J.~Bochi and the second-named author demonstrated in \cite[Proposition~1]{BZ15} that for every continuous map $T\: X \rightarrow X$ on a compact metric space $(X,d)$, the set $\Lock (X,d)$ is equal to the interior of $\sP(X) \cap \Lip(X,d)$ and $\Lock (X,d)$ is dense in $\sP(X) \cap \Lip(X,d)$ (with respect to the Lipschitz norm). Thus, it suffices to show that, in our setting, $\Lock (\wh\C, \sigma^\alpha)$ is dense in $\sP( \wh\C ) \cap \Lip(\wh\C, \sigma^\alpha)$.

Recall that Latt\`es maps are, in particular, (rational) expanding Thurston maps. Let $d$ be the canonical orbifold metric of $f$ on $\wh\C$. Then $d$ is a visual metric (see Remark~\ref{r_Canonical_Orbifold_Metric}). Let $\Lambda>1$ be the expansion factor of $d$ (under $f$). Let  $\CC \subseteq S^2$ be a Jordan curve containing $\post f$ satisfying the condition that $f^{n_\CC}(\CC) \subseteq \CC$ for some integer $n_\CC \in \N$ (see Lemma~\ref{l_CexistsBM}). Here $n_\CC$ is assumed to be the smallest of such integers associated to $\CC$.

In this proof, for each periodic orbit $\O$ of $f$ we define a measure $\mu_{\O}$ supported on $\O$ as
\begin{equation}   \label{e_Pf_t_Density_Lattes_delta_measure}
\mu_{\O} \= \frac{1}{ \card \O} \sum_{x\in \O} \delta_x \in \MMM(\wh\C, f).
\end{equation}

By inequality (A.43) in \cite[Appendix~A.10]{BM17}, there exist constants $C_3 \geq 1$ and $\eta \in (0,1)$ such that
\begin{equation}   \label{e_Pf_t_Density_Lattes_metrics_compare_global}
C_3^{-1} \sigma (x,y) \leq d(x,y) \leq C_3 \sigma(x,y)^\eta
\end{equation}
for all $x,\,y \in \wh\C$. Thus,
\begin{equation}  \label{e_Pf_t_Density_Lattes_Holder_spaces_compare_global}
\Lip( \wh\C, \sigma^{\alpha}) 
\subseteq \Lip ( \wh\C, d^{\alpha} ) 
\subseteq \Lip ( \wh\C, \sigma^{\eta \alpha} ) .
\end{equation}

Fix an arbitrary $\varphi \in \Lip(\wh\C, \sigma^{\alpha}) \subseteq \Lip(\wh\C, d^{\alpha})$ with no $\varphi$-maximizing measure in $\Mmax(f, \,\varphi)$ supported on a periodic orbit of $f$.

Let $u_\varphi \in \Lip (\wh\C, d^{\alpha})$ be a calibrated sub-action for $\varphi$ and $f$ (i.e., a fixed point of $\RR_{\overline\varphi}$) from Proposition~\ref{p_calibrated_sub-action_exists}. Since $f$ is Lipschitz with respect to $d$ (see Lemma~\ref{l_ETM_Lipschitz}), define
\begin{equation}   \label{e_Pf_t_Density_Lattes_tphi}
\wt{\varphi} \= \varphi - Q(f,\varphi) + u_\varphi - u_\varphi  \circ f \in \Lip(\wh\C , d^{\alpha}) ,
\end{equation}
where $Q(f,\varphi)$ is defined in (\ref{e_Def_beta}). Then 
\begin{equation}    \label{e_Pf_t_Density_Lattes_wtvarphi}
\wt\varphi (x) \leq 0 \qquad \text{ for all }x\in\wh\C,
\end{equation}
and the set 
\begin{equation} \label{e_Pf_t_Density_Lattes_K}
\cK \= \bigcap_{j=0}^{+\infty} f^{-j} \bigl( \wt\varphi^{-1} ( 0 ) \bigr)
\end{equation}
is a nonempty compact $f$-forward-invariant set (see Lemma~\ref{l_Bousch_Op_normalizing_potential}~(ii)).

If $\cK$ contains a periodic orbit $\O$ of $f$, then by Lemma~\ref{l_Bousch_Op_normalizing_potential}, the measure $\mu_{\O}$ defined in (\ref{e_Pf_t_Density_Lattes_delta_measure}) satisfies $\int\! \wt\varphi \, \mathrm{d}\mu_{\O} = 0 = Q(f, \wt\varphi )$, and consequently $\mu_{\O} \in \Mmax( f, \, \wt\varphi ) = \Mmax(f, \,\varphi)$. Thus, from our assumption on $\varphi$, we get that $\cK$ does not contain any periodic orbit of $f$. In particular, $\cK \cap (\crit f \cup \post f) = \emptyset$.

Recall from Subsection~\ref{subsct_ThurstonMap} that $\crit f$ and $\post f$ are finite sets. Denote a constant $\iota$ depending only on $f$, $d$, and $\varphi$ as the following:
\begin{equation}   \label{e_Pf_t_Density_Lattes_iota}
\iota \= \min \{ d(x,\cK) : x \in \crit f \cup \post f \} >0.
\end{equation}

Note that the ramification function $\alpha_f (z) = 1$ for all $z\in\widehat{\C}\setminus \post f$ (see Definition~\ref{d_Ramification_Fn}). Recall the notion of \emph{singular conformal metrics} from \cite[Appendix~A.1]{BM17}. By Proposition~A.33 and the discussion proceeding it in \cite[Appendix~A.10]{BM17}, the following statements hold:
\begin{enumerate}
\smallskip
\item[(1)] The canonical orbifold metric $d$ is a singular conformal metric with a conformal factor $\rho$ that is continuous and positive everywhere except at the points in $\supp(\alpha_f) \subseteq \post f$ (see (\ref{e_Def_supp})).

\smallskip
\item[(2)] $d(z_1,z_2) = \inf\limits_{\gamma} \int_\gamma \! \rho \,\mathrm{d} \sigma$, where the infimum is taken over all $\sigma$-rectifiable paths $\gamma$ in $\widehat{\C}$ joining $z_1$ and $z_2$.  

\smallskip
\item[(3)] For each $z\in\widehat{\C} \setminus \supp(\alpha_f)$, there exists a neighborhood $U_z \subseteq \widehat{\C}$ containing $z$ and a constant $C_z \geq 1$ such that $C_z^{-1} \leq \rho(u) \leq C_z$ for all $u \in U_z$. 
\end{enumerate}

It follows from the above statements, (\ref{e_Pf_t_Density_Lattes_metrics_compare_global}), and a compactness argument that there exists a constant $C_4 > 1$ depending only on $f$, $d$, and $\cK$ such that
\begin{equation}  \label{e_Pf_t_Density_Lattes_metrics_compare_away_crit}
C_4^{-1} \sigma (x, y) \leq d(x, y) \leq C_4 \sigma (x, y) \quad \text{ for all } x,\,y \in \wh\C \setminus N_d^{\iota / 2} (\post f).
\end{equation}

\smallskip
\emph{Claim~1.} There exists $\delta_0 \in (0,1]$ depending only on $f$, $\CC$, and $d$ such that for all $n\in\N$ and $x,\,y \in \wh\C$, if for each $j\in\{0,\,1,\,\dots,\,n-1\}$ we have $f^j(x) \notin N_d^{\iota / 2} ( \crit f )$ and $d \bigl( f^j(x), f^j(y) \bigr) < C_2 \delta_0$, then for each $i\in\{0,\,1,\,\dots,\,n\}$, we have
\begin{equation*} 
C_2 \Lambda^i d(x,y) \leq d \bigl( f^i(x), f^i(y) \bigr) \leq C_2^{-1} \Lambda^i d(x,y),
\end{equation*}
where $C_2 \in (0, 1)$ is a constant depending only on $f$, $\CC$, and $d$ from Lemma~\ref{l_Loc_Inject_Expansion_Away_from_Crit}.

\smallskip

Claim~1 follows immediately from Lemma~\ref{l_Loc_Inject_Expansion_Away_from_Crit}.

\smallskip

Fix an arbitrary positive real number 
\begin{equation} \label{e_Pf_t_Density_Lattes_epsilon}
\epsilon < \min \{ \iota / 8, \, 1\} .
\end{equation}
Define constants
\begin{align}
C_5    &\= \frac{ ( \Hseminorm{d^\alpha }{\wt\varphi}  + C_3 )  C_4^\alpha  }{ C_2^4 ( 1 - \Lambda^{-\alpha} ) } ,  \label{e_C12}  \\
\lambda   &\= C_2^2 \min \biggl\{ \frac{1}{ 8 \LIP_d(f) }, \,  \frac{ C_2 \delta_0 }{ \iota }   \biggr\} < \frac18  , \label{e_Pf_t_Density_Lattes_lambda}  \\
\tau          &\= \min\bigg\{ 1, \, \frac{ \epsilon }{ \Hseminorm{d^\alpha }{\wt\varphi} C_4^{\alpha}   },  \,  \frac{ \epsilon C_4^{-\alpha} }{  ( 1 + C_5 \epsilon^{-1} )   \Hseminorm{d^\alpha }{\wt\varphi} } \biggr\} \leq 1 , \label{e_Pf_t_Density_Lattes_tau}   \\
\theta       &\=  \min \biggl\{   \frac{ C_2^2 }{3 \LIP_d(f)}, \, \frac13 \Lambda^{-1} \biggr\} < \frac13  . \label{e_Pf_t_Density_Lattes_theta} 
\end{align}

By Lemma~\ref{l_bound_by_gap}, there exists a periodic orbit $\O$ of $f$ of a period $p \= \card \O$ satisfying
\begin{equation}   \label{e_Pf_t_Density_Lattes_bound_by_gap}
\sum_{x\in\O} d( x, \cK )^\alpha \leq \tau \cdot ( \Delta_{\lambda \iota,\,\theta} ( \O ) )^{\alpha}.
\end{equation}
By (\ref{e_Pf_t_Density_Lattes_iota}), (\ref{e_Pf_t_Density_Lattes_bound_by_gap}), (\ref{e_Pf_t_Density_Lattes_tau}), (\ref{e_Def_gap_r_theta}), and (\ref{e_Pf_t_Density_Lattes_lambda}), we have
\begin{equation}    \label{e_Pf_t_Density_Lattes_O_crit_distance}
\min \{ d (x, \crit f \cup \post f) : x \in \O \} 
\geq \iota - \max_{x\in \O } d(x,\cK ) 
\geq \iota - \lambda \iota 
\geq \frac{7}{8} \iota. 
\end{equation}

Define potentials
\begin{align}
\varphi' &\= \varphi - \epsilon \sigma (\cdot, \O)^\alpha \in \Lip(\wh\C, \sigma^{\alpha}) \subseteq \Lip( \wh\C, d^{\alpha} )  \text{ and}  \label{e_Pf_t_Density_Lattes_varphi'} \\
\psi            &\= \wt\varphi -  \epsilon  \sigma (\cdot, \O)^\alpha  =  \varphi' - Q(f,\varphi) + u_\varphi - u_\varphi \circ f \in  \Lip( \wh\C, d^{\alpha} ). \label{e_Pf_t_Density_Lattes_psi} 
\end{align}
Here (\ref{e_Pf_t_Density_Lattes_varphi'}) follows from (\ref{e_Pf_t_Density_Lattes_Holder_spaces_compare_global}) and the hypothesis that $\varphi \in \Lip( \wh\C, \sigma^{\alpha})$, whereas (\ref{e_Pf_t_Density_Lattes_psi}) follows from (\ref{e_Pf_t_Density_Lattes_varphi'}), the fact that $f$ is Lipschitz with respect to $d$, and $u_\varphi \in \Lip( \wh\C, d^{\alpha})$.

Note that by Lemma~\ref{l_Bousch_Op_normalizing_potential}~(i) and (\ref{e_Pf_t_Density_Lattes_psi}),
\begin{equation}  \label{e_Pf_t_Density_Lattes_Mmax_identical}
\Mmax (f, \, \varphi')  = \Mmax(f, \, \psi).
\end{equation} 

\smallskip
\emph{Claim~2.} The measure $\mu_{\O}$ defined in (\ref{e_Pf_t_Density_Lattes_delta_measure}) is in $\Mmax (f, \, \psi)$, i.e., $Q(f,\psi) = \gamma$, where
\begin{equation}  \label{e_Pf_t_Density_Lattes_gamma}
\gamma \= \int\! \psi \, \mathrm{d}\mu_\O 
= \frac{1}{ p } \sum_{x\in\O} \psi (x) 
= \frac{1}{ p } \sum_{x\in\O} \wt\varphi (x)
< 0.
\end{equation}
We observe that the last equality follows from (\ref{e_Pf_t_Density_Lattes_psi}), whereas the last inequality follows from (\ref{e_Pf_t_Density_Lattes_wtvarphi}), (\ref{e_Pf_t_Density_Lattes_K}), and our assumption that $\cK$ contains no periodic orbit of $f$.

\smallskip

Assuming that Claim~2 holds. Then Claim~2 and (\ref{e_Pf_t_Density_Lattes_psi}) imply $\psi \in \sP^\alpha(\wh\C,d)$. Thus, by (\ref{e_Pf_t_Density_Lattes_Mmax_identical}) and (\ref{e_Pf_t_Density_Lattes_varphi'}), $\varphi' \in \sP^\alpha(\wh\C,\sigma)$. On the other hand, by (\ref{e_Pf_t_Density_Lattes_varphi'}),
\begin{equation*}
\Hnorm{\sigma^\alpha}{\varphi - \varphi'}{ \wh\C } 
= \Hnorm{\sigma^\alpha}{ \epsilon \sigma ( \cdot, \O )^\alpha }{ \wh\C }  
\leq \epsilon ( 1 + \diam_{\sigma} (\wh\C)^\alpha ).
\end{equation*}
Since $\epsilon$ from (\ref{e_Pf_t_Density_Lattes_epsilon}) can be chosen arbitrarily small and $\varphi' \in \Lip( \wh\C, \sigma^{\alpha})$ (see (\ref{e_Pf_t_Density_Lattes_varphi'})), we finally conclude that
$\varphi$ is in the closure of $\sP^\alpha(\wh\C,\sigma)$ in $\Lip(\wh\C, \sigma^{\alpha})$ with respect to the H\"older norm.

\smallskip
Hence, it suffices to establish Claim~2. By the definition of $\gamma$ in (\ref{e_Pf_t_Density_Lattes_gamma}), it suffices to show that $Q(f,\psi) \leq \gamma$. Fix an arbitrary point $y \in S^2$. We analyze below the value of $\psi(y)$ according to the location of $y$.

We first observe that by (\ref{e_Pf_t_Density_Lattes_gamma}) and (\ref{e_Pf_t_Density_Lattes_bound_by_gap}), we have
\begin{equation}  \label{e_Pf_t_Density_Lattes_p_gamma}
p \abs{\gamma} 
\leq \sum_{x\in\O} \abs{ \wt\varphi (x) - 0 }
\leq \sum_{x\in\O} \Hseminorm{d^\alpha }{\wt\varphi}  d(x,\cK)^\alpha
\leq \Hseminorm{d^\alpha }{\wt\varphi}  \tau \cdot ( \Delta_{\lambda \iota,\,\theta} ( \O ) )^{\alpha}.
\end{equation}

By (\ref{e_Pf_t_Density_Lattes_p_gamma}), (\ref{e_Pf_t_Density_Lattes_tau}), (\ref{e_Def_gap_r_theta}), (\ref{e_Pf_t_Density_Lattes_lambda}), and $\LIP_d(f)>1$ (see Lemma~\ref{l_ETM_Lipschitz}), we have 
\begin{align}  \label{e_Pf_t_Density_Lattes_rho}
\rho  \=&  C_4 \epsilon^{-1/\alpha} \abs{\gamma}^{1/\alpha}  
     \leq   C_4 ( \Hseminorm{d^\alpha }{\wt\varphi}  \tau / \epsilon )^{1/\alpha}  \Delta_{\lambda \iota, \, \theta} ( \O )  \\
     \leq  &  \Delta_{\lambda \iota,\, \theta} ( \O )  \leq \lambda \iota \leq C_2^2 \min\{ C_2 \delta_0, \, \iota / 8 \} < \iota / 8. \notag
\end{align}

Let $U \= \overline{N}_d^{\rho} (\O) \subseteq \overline{N}_d^{\iota / 8} (\O)$. 

If $y \notin U \cup N_d^{\iota / 2} ( \crit f \cup \post f )$, then by (\ref{e_Pf_t_Density_Lattes_metrics_compare_away_crit}),  $\sigma (y,\O) \geq C_4^{-1} d(y, \O) > C_4^{-1} \rho$, and consequently by (\ref{e_Pf_t_Density_Lattes_psi}), (\ref{e_Pf_t_Density_Lattes_wtvarphi}), (\ref{e_Pf_t_Density_Lattes_rho}), and (\ref{e_Pf_t_Density_Lattes_gamma}), we have
\begin{equation}  \label{e_Pf_t_Density_Lattes_outside_U}
\psi(y) 
=      \wt\varphi(y) - \epsilon  \sigma (y, \O )^\alpha 
\leq   -  \epsilon \sigma (y, \O )^\alpha
<       -  \epsilon C_4^{-\alpha} \rho^\alpha = - \abs{\gamma} = \gamma.
\end{equation}

If $y \in N_d^{\iota / 2} ( \crit f \cup \post f)$, then we will choose a point $y' \notin U \cup N_d^{\iota / 2} ( \crit f \cup \post f )$ satisfying $\sigma (y', \O ) < \sigma (y, \O )$ in the following way. We first fix a point $z\in \O$ with $d(y,z) = d(y, \O)$. Let $\Gamma$ be the geodesic arc (with respect to the chordal metric $\sigma$) connecting $y$ and $z$ with the minimal length. By the definition of $\iota$ in  (\ref{e_Pf_t_Density_Lattes_iota}) and the fact that  $U \subseteq \overline{N}_d^{\iota / 8} (\O)$, it is clear that we can choose a point $y'\in \Gamma$ satisfying $y' \notin U \cup N_d^{\iota / 2} ( \crit f  \cup \post f )$. Then $\sigma (y', \O ) < \sigma (y, \O )$. Hence, it follows from (\ref{e_Pf_t_Density_Lattes_outside_U}) that 
\begin{equation}  \label{e_Pf_t_Density_Lattes_near_crit_f}
\psi(y) 
=      \wt\varphi(y) - \epsilon  \sigma (y, \O )^\alpha 
\leq  -  \epsilon \sigma (y, \O )^\alpha
<      -  \epsilon \sigma (y', \O )^\alpha
<      \gamma.
\end{equation}

Recall a characterization of $Q(f,\psi)$ (see \cite[Equation~(30)]{Boc19}):
\begin{equation*}
Q(f, \psi )
= \sup_{x\in \wh\C}  \inf_{n\in\N} \frac{ S_n \psi (x) }{n}.
\end{equation*}
This follows immediately from a lemma of Y.~Peres \cite[Lemma~2]{Pe88}. See \cite[Theorem~A.3]{Mo13} for a proof of a generalization of the above characterization.

Thus, in order to show that $Q(f,\psi) \leq \gamma$, by the estimates (\ref{e_Pf_t_Density_Lattes_outside_U}) for $y \notin U \cup N_d^{\iota / 2} ( \crit f \cup \post f )$, (\ref{e_Pf_t_Density_Lattes_near_crit_f}) for $y \in N_d^{\iota / 2} ( \crit f \cup \post f )$, and the equality $\frac{1}{p} S_p \psi (y)  = \gamma$ for $y\in \O$, it suffices to prove that if $y\in U \setminus \O$ then there exists $N \in \N$ such that $S_{N} \psi (y) \leq N \gamma$.

To this end, we assume now that $y\in U \setminus \O$. So $0< d( y, \O ) \leq \rho$. By (\ref{e_Pf_t_Density_Lattes_rho}), (\ref{e_Def_gap_r_theta}), and (\ref{e_Pf_t_Density_Lattes_theta}), $\rho \leq \Delta_{\lambda \iota,\, \theta} (\O) \leq \frac13 \Delta(\O)$. So by (\ref{e_Def_gap}), there is a unique point $z \in \O$ which is closest to $y$ among points in the periodic orbit $\O$.

Let $N \in \N \cup \{ +\infty \}$ be the smallest positive integer satisfying 
\begin{equation}  \label{e_Pf_t_Density_Lattes_N}
d \bigl( f^{N-1}(y), f^{N-1}(z) \bigr) \geq C_2^{-2} \Delta_{\lambda \iota , \, \theta} (\O),
\end{equation}
or set $N = +\infty$ if such a positive integer does not exist. Note that $N\geq 2$ since $d(y, \O) \leq \Delta_{\lambda \iota,\, \theta} (\O)$. We use the convention that $N-1 = N$ when $N = +\infty$. Then by (\ref{e_Def_gap_r_theta}) and (\ref{e_Pf_t_Density_Lattes_lambda}), for all $j\in \N_0$ with $j < N-1$,
\begin{equation}   \label{e_Pf_t_Density_Lattes_N_less} 
 d \bigl( f^j (y) , \O \bigr) 
\leq   d \bigl( f^j (y) , f^j(z) \bigr) 
<         \frac{ \Delta_{\lambda \iota, \, \theta} (\O) }{C_2^2}
\leq     \frac{ \lambda \iota  }{C_2^2}
\leq   \min \biggl\{ C_2 \delta_0,\, \frac{ \iota }{ 8 \LIP_d(f) }   \biggr\} .  
\end{equation} 
In particular, $d \bigl( f^{j+1} (y), \O \bigr) \leq d \bigl( f^{j+1}(y), f^{j+1}(z) \bigr) \leq \iota / 8$. Thus, by (\ref{e_Pf_t_Density_Lattes_iota}), (\ref{e_Pf_t_Density_Lattes_bound_by_gap}), (\ref{e_Pf_t_Density_Lattes_tau}), (\ref{e_Def_gap_r_theta}), and Lemma~\ref{l_ETM_Lipschitz}, for all $i\in \N_0$ with $i < N$, we get that for each $x \in \crit f \cup \post f$,
\begin{align*}   
d \bigl( x, f^i(y) \bigr) 
&\geq  d(x, \cK) - d \bigl(  f^i(y) , \cK \bigr)   
\geq \iota - \min\bigl\{  d \bigl( f^i(y) , o \bigr) + d ( o, \cK )  :  o \in \O \bigr\}  \\
&\geq \iota - d \bigl( f^i(y) , \O \bigr) - \max \{ d ( o, \cK ) :  o \in \O \} 
\geq \frac{ 7\iota }{ 8 } - \Delta_{\lambda \iota, \, \theta} (\O)
\geq \frac{ 7\iota }{ 8 }  - \lambda \iota,
\end{align*} 
and therefore,
\begin{equation}   \label{e_Pf_t_Density_Lattes_away_from_crit}
d \bigl( \crit f \cup \post f, f^i(y) \bigr)  > \frac{ \iota }{ 2 }   .
\end{equation}
It follows from Claim~1, (\ref{e_Pf_t_Density_Lattes_N_less}), and (\ref{e_Pf_t_Density_Lattes_away_from_crit}) that for all $i\in \N_0$,
\begin{equation}  \label{e_Pf_t_Density_Lattes_yz_orbit_distance}
C_2 \Lambda^i d(y,z) \leq d \bigl( f^i(y), f^i(z) \bigr) \leq C_2^{-1} \Lambda^i d(y,z)    \quad \text{ provided } i < N.
\end{equation}
Now, by the definition of $N$ and the lower bound in (\ref{e_Pf_t_Density_Lattes_yz_orbit_distance}), we know that $N$ is finite. On the other hand, by (\ref{e_Def_gap_r_theta}), (\ref{e_Pf_t_Density_Lattes_theta}), Lemma~\ref{l_ETM_Lipschitz}, and the definition of $N$ in (\ref{e_Pf_t_Density_Lattes_N}), we get that for each $j \in \{ 0, \, 1, \, \dots, \, N-2 \}$,
\begin{equation*}
d \bigl( f^j(y), f^j(z) \bigr) < \frac{ \Delta_{\lambda \iota, \, \theta} (\O) }{ C_2^2 }
\leq  \frac{  \theta \Delta(\O)  }{ C_2^2 }
\leq  \frac{ \Delta ( \O )  }{3 \LIP_d(f)}
\leq \frac{ \Delta ( \O )  }{3 }  .
\end{equation*}
In particular, $d \bigl( f^{N-1}(y), f^{N-1}(z) \bigr) \leq \LIP_d(f) \cdot d \bigl( f^{N-2}(y), f^{N-2}(z) \bigr) \leq \Delta(\O) / 3$. Thus, by (\ref{e_Def_gap}) and (\ref{e_Pf_t_Density_Lattes_yz_orbit_distance}), for each $i\in\{0, \, 1, \, \dots, \, N - 1 \}$,
\begin{equation}  \label{e_Pf_t_Density_Lattes_y_orbit_distance_from_O}
 d \bigl( f^i(y), \O \bigr)  = d \bigl( f^i(y), f^i(z) \bigr) \in \bigl[  C_2 \Lambda^i d(y,z)   ,   \,  C_2^{-1} \Lambda^i d(y,z) \bigr].
\end{equation}

We now proceed to show that $S_N \psi (y) \leq N \gamma$.

Let $n \in \N$ be the smallest positive integer satisfying 
\begin{equation}  \label{e_Pf_t_Density_Lattes_n_property}
d(f^n(y), f^n(z)) > \rho  C_2^{-2}. 
\end{equation}
Such an integer $n$ exists and satisfies $n\leq N-1$ due to $d(y,z) \leq \rho < \rho  C_2^{-2} \leq C_2^{-2} \Delta_{\lambda \iota, \, \theta} (\O)$ (see (\ref{e_Pf_t_Density_Lattes_rho})) and the definition of $N$ above. Moreover, we have
\begin{equation}  \label{e_Pf_t_Density_Lattes_n}
d \bigl( f^{n-1}(y), f^{n-1}(z) \bigr) \leq  \rho C_2^{-2} .
\end{equation}

We will estimate two parts in the sum 
\begin{equation}  \label{e_Pf_t_Density_Lattes_sum12}
S_N ( \gamma - \psi ) (y) = S_n ( \gamma - \psi ) (y) + S_{N-n} ( \gamma - \psi ) ( f^n(y) )  \eqqcolon \operatorname{I}  +  \operatorname{II}
\end{equation}
separately.

For each $j\in \N$ satisfying $j \in [n , N-1]$, by (\ref{e_Pf_t_Density_Lattes_y_orbit_distance_from_O}), (\ref{e_Pf_t_Density_Lattes_n_property}), and the fact that $\Lambda > 1$, we have
\begin{equation*}
d \bigl( f^j(y), \O \bigr) 
= d \bigl( f^j(y) , f^j(z) \bigr) 
\geq C_2^2 \Lambda^{j-n} d(f^n(y), f^n(z)) 
>\rho.
\end{equation*}
Thus, $f^j (y) \notin U$, and by (\ref{e_Pf_t_Density_Lattes_outside_U}), $\gamma - \psi \bigl( f^j(y) \bigr) >0$ for each $j\in \{n, \, n+1, \, \dots, \, N-1 \}$. By (\ref{e_Pf_t_Density_Lattes_away_from_crit}) and (\ref{e_Pf_t_Density_Lattes_O_crit_distance}), we have $f^{N-1}(y), \, f^{N-1}(z) \notin N^{\iota/2}_d (\post f)$. Hence, by (\ref{e_Pf_t_Density_Lattes_psi}), (\ref{e_Pf_t_Density_Lattes_wtvarphi}), (\ref{e_Pf_t_Density_Lattes_metrics_compare_away_crit}), (\ref{e_Pf_t_Density_Lattes_y_orbit_distance_from_O}), (\ref{e_Pf_t_Density_Lattes_N}), and $C_2 \in (0,1)$, we have
\begin{align*} \label{e_Pf_t_Density_Lattes_bound_II}
\operatorname{II}
& \geq  \gamma - \psi \bigl( f^{N-1} (y) \bigr)\\
&  =      \gamma - \wt\varphi \bigl( f^{N-1} (y) \bigr)  + \epsilon  \sigma \bigl( f^{N-1}(y), \O \bigr)^\alpha \notag \\
& \geq  \gamma + \epsilon C_4^{-\alpha}  d \bigl( f^{N-1}(y), f^{N-1} (z) \bigr)^\alpha \notag\\
& \geq  \gamma + \epsilon C_4^{-\alpha}  ( \Delta_{\lambda \iota, \, \theta} (\O) )^\alpha. \notag
\end{align*}

To estimate $\operatorname{I}$, we write
\begin{equation}   \label{e_Pf_t_Density_Lattes_sum34}
\operatorname{I} = ( n \gamma - S_n \psi(z) ) + ( S_n \psi(z) - S_n \psi(y) ) \eqqcolon \operatorname{III} + \operatorname{IV}
\end{equation}
and bound each part below.

We write $n = pq + r$ for $q,\,r\in\N_0$ with $0 \leq r \leq p - 1$. Then by (\ref{e_Pf_t_Density_Lattes_psi}), (\ref{e_Pf_t_Density_Lattes_wtvarphi}), and (\ref{e_Pf_t_Density_Lattes_gamma}), we have $S_n \psi (z) \leq S_n \wt\varphi (z) = S_{pq} \wt\varphi (z) + S_r \wt\varphi (z) \leq pq\gamma$. Thus, considering $\gamma<0$ (see (\ref{e_Pf_t_Density_Lattes_gamma})), we get
\begin{equation*}    \label{e_Pf_t_Density_Lattes_bound_III}
\operatorname{III} \geq r \gamma \geq (p-1) \gamma.
\end{equation*}
Next, by (\ref{e_Pf_t_Density_Lattes_psi}), (\ref{e_Pf_t_Density_Lattes_metrics_compare_global}), (\ref{e_Pf_t_Density_Lattes_y_orbit_distance_from_O}), (\ref{e_Pf_t_Density_Lattes_epsilon}), (\ref{e_Pf_t_Density_Lattes_n}), (\ref{e_Pf_t_Density_Lattes_rho}), and (\ref{e_C12}), we have
\begin{align*}
\abs{\operatorname{IV}}
&  \leq \sum_{j=0}^{n-1}  \Absbig{  \psi \bigl( f^j (z) \bigr) - \psi \bigl( f^j (y) \bigr) }  \\
&  \leq \sum_{j=0}^{n-1}  \bigl( \Absbig{  \wt\varphi \bigl( f^j (z) \bigr) - \wt\varphi \bigl( f^j (y) \bigr) } + \epsilon \sigma \bigl( f^j(y), \O \bigr)^\alpha \bigr)  \\
&  \leq \sum_{j=0}^{n-1}  \bigl( \Absbig{  \wt\varphi \bigl( f^j (z) \bigr) - \wt\varphi \bigl( f^j (y) \bigr) } + \epsilon C_3 d \bigl( f^j(y), f^j(z) \bigr)^\alpha \bigr)  \\ 
&  \leq \sum_{j=0}^{n-1}  ( \Hseminorm{d^\alpha }{\wt\varphi} + \epsilon C_3 ) d \bigl( f^j(y), f^j(z) \bigr)^\alpha  \\
&  \leq \sum_{j=0}^{n-1}  ( \Hseminorm{d^\alpha }{\wt\varphi} +  C_3 ) C_2^{- 2 \alpha} \Lambda^{-(n-1-j) \alpha} d \bigl( f^{n-1}(y), f^{n-1}(z) \bigr)^\alpha  \\
&  \leq \sum_{j=0}^{n-1}  ( \Hseminorm{d^\alpha }{\wt\varphi} +  C_3 ) C_2^{- 2 \alpha} \Lambda^{-(n-1-j) \alpha} \rho^\alpha C_2^{-2\alpha}  \\
&  \leq \frac{ \Hseminorm{d^\alpha }{\wt\varphi} +  C_3 }{ C_2^4 (1-\Lambda^{-\alpha} )} \cdot \rho^\alpha   \\
&  \leq \frac{ \Hseminorm{d^\alpha }{\wt\varphi} +  C_3}{ C_2^4 (1-\Lambda^{-\alpha} )} \cdot  C_4^\alpha \epsilon^{-1} \abs{\gamma} \\
&  \leq C_5   \epsilon^{-1} \abs{\gamma} .
\end{align*}

Combining the above estimates for $\operatorname{II}$, $\operatorname{III}$, and $\operatorname{IV}$, we get from (\ref{e_Pf_t_Density_Lattes_sum12}), (\ref{e_Pf_t_Density_Lattes_sum34}), (\ref{e_Pf_t_Density_Lattes_p_gamma}), and (\ref{e_Pf_t_Density_Lattes_tau}) the final estimate
\begin{align*}
N \gamma -  S_N\psi (y) 
&       = \operatorname{II} + \operatorname{III} + \operatorname{IV}  \\
& \geq   \gamma + \epsilon C_4^{-\alpha}  ( \Delta_{\lambda \iota, \, \theta} (\O) )^\alpha - (p-1) \abs{\gamma} - C_5 \epsilon^{-1} \abs{\gamma} \\
& \geq    \epsilon C_4^{-\alpha}  ( \Delta_{\lambda \iota, \, \theta} (\O) )^\alpha -  \bigl( 1 + C_5 \epsilon^{-1} \bigr) p \abs{\gamma}\\
& \geq   \bigl(  \epsilon C_4^{-\alpha}    -  \bigl( 1 + C_5 \epsilon^{-1} \bigr) \Hseminorm{d^\alpha }{\wt\varphi} \tau  \bigr)      ( \Delta_{\lambda \iota, \, \theta} (\O) )^\alpha\\
& \geq 0.
\end{align*}

\smallskip

Claim~2 is now established.
\end{proof}

The proof of Theorem~\ref{t_Density_Thurston}~(i) is verbatim the same as that of Theorem~\ref{t_Density_Lattes}~(i) once we replace the chordal metric $\sigma$ by the visual metric $d$ (while the counterparts in the proof of Theorem~\ref{t_Density_Thurston}~(i) to some inequalities and inclusions in the proof of Theorem~\ref{t_Density_Lattes}~(i) become vacuously true as a result).

Combining Theorem~\ref{t_Density_Thurston}~(i) with the fact that for a rational expanding Thurston map $f \: \wh\C \rightarrow \wh\C$ on the Riemann sphere $\wh\C$, the identity map between the chordal sphere $(\wh\C, \sigma)$ and a visual sphere $(\wh\C,d)$, where $d$ is any visual metric, is a quasisymmetry (see \cite[Lemma~18.10]{BM17}), and is therefore bi-H\"older (see Definition~\ref{d_Quasi_Symmetry} and Remark~\ref{r_ChordalVisualQSEquiv}), we get the following result, which serves as a supplement to Theorem~\ref{t_Density_Rational_little}. We recall that a subspace $Y$ of a topological space $X$ is dense in another subspace $Z$ of $X$ if $Z$ is a subset of the closure of $Y$ in $X$ \cite[Definition~2.14]{Gr67}.

\begin{theorem}[Density of periodic maximization for Misiurewicz--Thurston rational maps]  \label{t_Density_Rational}
Let $f\: \wh\C \rightarrow \wh\C$ be a Misiurewicz--Thurston rational map (i.e., a postcritically-finite rational map without periodic critical points). Let $\sigma$ and $d$ be the chordal metric and a visual metric, respectively, on the Riemann sphere $\wh\C$. Fix a number $\alpha \in (0,1]$. Then the set $\sP(\wh\C) \cap \Lip ( \wh\C, \sigma^{\alpha} )$ is dense in $\Lip(\wh\C, d^{\alpha})$ with respect to the $\alpha$-H\"older norm. Moreover, there exists a number $\beta \in (0, \alpha)$ such that the set $\sP(\wh\C) \cap \Lip ( \wh\C, \sigma^{\beta} )$ is dense in $\Lip(\wh\C, \sigma^{\alpha})$ with respect to the $\beta$-H\"older norm.
\end{theorem}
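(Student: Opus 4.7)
The plan is to derive Theorem~\ref{t_Density_Rational} as a corollary of Theorem~\ref{t_Density_Thurston}(i) combined with the bi-H\"older equivalence between the chordal metric $\sigma$ and any visual metric $d$ on $\wh\C$ for the Misiurewicz--Thurston rational map $f$; the bi-H\"older equivalence itself comes from the quasisymmetric equivalence recorded in \cite[Lemma~18.10]{BM17} (see Remark~\ref{r_ChordalVisualQSEquiv}). First I would record the bi-H\"older inequalities explicitly: there exist constants $C_0\geq 1$ and $\alpha_1,\,\beta_1\in(0,1]$ such that
\begin{equation*}
C_0^{-1}\sigma(x,y)^{1/\beta_1}\leq d(x,y)\leq C_0\sigma(x,y)^{\alpha_1}, \qquad x,\,y\in\wh\C,
\end{equation*}
yielding, for every $\gamma\in(0,1]$, continuous inclusions $\Lip(\wh\C,d^\gamma)\subseteq\Lip(\wh\C,\sigma^{\gamma\alpha_1})$ and $\Lip(\wh\C,\sigma^\gamma)\subseteq\Lip(\wh\C,d^{\gamma\beta_1})$ with explicit H\"older-seminorm comparison constants.

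I would dispatch the second part of the theorem first, as it is a direct diagram chase. Given $\phi\in\Lip(\wh\C,\sigma^\alpha)$, the second inclusion places $\phi$ in $\Lip(\wh\C,d^{\alpha\beta_1})$; applying Theorem~\ref{t_Density_Thurston}(i) to $f$, $d$, and exponent $\alpha\beta_1\in(0,1]$ produces, for every $\epsilon>0$, an element $\psi\in\Lock(\wh\C,d^{\alpha\beta_1})\subseteq\sP(\wh\C)$ with $\Hnorm{d^{\alpha\beta_1}}{\psi-\phi}{\wh\C}<\epsilon$. The first inclusion with $\gamma\=\alpha\beta_1$ then places $\psi$ in $\Lip(\wh\C,\sigma^{\alpha\alpha_1\beta_1})$, so that with $\beta\=\alpha\alpha_1\beta_1\in(0,\alpha)$ (strict whenever $\alpha_1\beta_1<1$; in the degenerate bi-Lipschitz case any $\beta\in(0,\alpha)$ works via the snowflake inclusion $\Lip(\wh\C,\sigma^\alpha)\subseteq\Lip(\wh\C,\sigma^\beta)$) the $d^{\alpha\beta_1}$-H\"older closeness transfers to $\sigma^\beta$-H\"older closeness by the same bi-H\"older comparison, delivering $\psi\in\sP(\wh\C)\cap\Lip(\wh\C,\sigma^\beta)$ arbitrarily close to $\phi$ in the $\beta$-H\"older norm.

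For the first part I would apply Theorem~\ref{t_Density_Thurston}(i) directly to $\phi\in\Lip(\wh\C,d^\alpha)$ with the visual metric $d$, obtaining $\psi\in\Lock(\wh\C,d^\alpha)\subseteq\sP(\wh\C)\cap\Lip(\wh\C,d^\alpha)$ arbitrarily close to $\phi$ in the $d^\alpha$-H\"older norm. To ensure $\psi$ also lies in $\Lip(\wh\C,\sigma^\alpha)$, I would revisit the explicit construction of $\psi$ in the proof of Theorem~\ref{t_Density_Thurston}(i), which parallels that of Theorem~\ref{t_Density_Lattes}(i) and produces an approximation of the form $\phi-\epsilon'\rho(\cdot,\O)^\alpha$ for a suitable periodic orbit $\O$ of $f$. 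Crucially, the Latt\`es proof already uses the chordal perturbation $\sigma(\cdot,\O)^\alpha$ while reserving the canonical orbifold metric for quantitative estimates; in the Misiurewicz--Thurston setting I would follow exactly the same pattern, performing the closing-lemma and calibrated sub-action arguments with respect to $d$ but perturbing by $\sigma(\cdot,\O)^\alpha$ so that the resulting function lies in $\Lip(\wh\C,\sigma^\alpha)\cap\sP(\wh\C)$, with closeness to $\phi$ in the $d^\alpha$-H\"older norm controlled via the bi-H\"older comparison.

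The main obstacle is that the Latt\`es proof invokes the bi-Lipschitz equivalence between the chordal metric and the canonical orbifold metric, as used in the local comparison (\ref{e_Pf_t_Density_Lattes_metrics_compare_away_crit}), whereas a general Misiurewicz--Thurston rational map and a general visual metric $d$ are only bi-H\"older equivalent to $\sigma$ globally. The substitute I would establish is the \emph{local} bi-Lipschitz equivalence of $\sigma$ with $d$ on compact subsets of $\wh\C\setminus\post f$, which follows from the fact that (away from $\post f$) $d$ is given by a conformal density that is continuous and bounded above and below by positive constants on every such compact set---this being precisely the ingredient that made (\ref{e_Pf_t_Density_Lattes_metrics_compare_away_crit}) work. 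Once this local comparison is in place, the quantitative argument of the Latt\`es proof (in particular Claim~2 therein) transfers essentially verbatim, completing Part 1.
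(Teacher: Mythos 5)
Your treatment of the second (``Moreover'') claim is correct and is exactly the paper's route: it is the same exponent chase that the paper carries out in detail for the little Lipschitz analogue (Theorem~\ref{t_Density_Rational_little}), namely $\Lip(\wh\C,\sigma^\alpha)\subseteq\Lip(\wh\C,d^{\alpha\beta_1})$, apply Theorem~\ref{t_Density_Thurston}(i) with exponent $\alpha\beta_1$, and transfer the approximant back via $\Lip(\wh\C,d^{\alpha\beta_1})\subseteq\Lip(\wh\C,\sigma^{\alpha\alpha_1\beta_1})$ together with the corresponding seminorm comparison. No issues there.

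Your Part~1 has a genuine gap --- two, in fact. First, even granting that the mixed perturbation scheme of Section~\ref{sct_Holder_Density} (perturb by $\epsilon\,\sigma(\cdot,\O)^\alpha$, estimate in $d$) goes through for a Misiurewicz--Thurston map, the resulting function $\phi-\epsilon\,\sigma(\cdot,\O)^\alpha$ lies in $\Lip(\wh\C,\sigma^\alpha)$ only if $\phi$ itself does; your hypothesis is only $\phi\in\Lip(\wh\C,d^\alpha)$, and bi-H\"olderness yields merely $\Lip(\wh\C,d^\alpha)\subseteq\Lip(\wh\C,\sigma^{\alpha\alpha_1})$, which is strictly weaker than membership in $\Lip(\wh\C,\sigma^\alpha)$ whenever $\alpha_1<1$. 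So the conclusion ``the resulting function lies in $\Lip(\wh\C,\sigma^\alpha)\cap\sP(\wh\C)$'' does not follow. Second, your proposed substitute for (\ref{e_Pf_t_Density_Lattes_metrics_compare_away_crit}) --- a local bi-Lipschitz comparison of $\sigma$ and $d$ on compacta avoiding $\post f$, justified by a conformal density for $d$ --- is false for a general visual metric. The conformal-density description is a feature of the \emph{canonical orbifold metric}, and that metric is visual only in the parabolic (Latt\`es) case; an arbitrary visual metric is only snowflake/quasisymmetrically related to $\sigma$, and a genuine snowflake $\sigma^\theta$ with $\theta<1$ is nowhere locally bi-Lipschitz to $\sigma$ (the ratio $\sigma^\theta/\sigma$ blows up at small scales), even far from $\post f$; this already happens for Latt\`es maps once the expansion factor of $d$ is smaller than that of the orbifold metric. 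The paper does not intend any re-run of the quantitative argument here: the first claim is meant to follow by the same soft route as the second, i.e.\ take the approximant $\psi\in\Lock(\wh\C,d^{\alpha})$ supplied by Theorem~\ref{t_Density_Thurston}(i) and note that it already carries chordal H\"older regularity through the inclusion $\Lip(\wh\C,d^\alpha)\subseteq\Lip(\wh\C,\sigma^{\alpha\alpha_1})$. (That soft route naturally produces the exponent $\alpha\alpha_1$ on the chordal side; matching the exponent $\alpha$ in the statement requires the one-sided Lipschitz bound $d\lesssim\sigma$ rather than mere bi-H\"olderness --- this is the one piece of bookkeeping that neither your proposal nor a bare appeal to quasisymmetry settles, and your local-comparison device does not repair it.)
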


\section{Proofs of genericity of the locking property in little Lipschitz spaces}   \label{sct_lip}
In this section, we prove the remaining results from Section~\ref{sct_Introduction}.  We denote by $\cl_{d}(\sF)$ the closure of a subset $\sF \subseteq \Lip(X, d)$ in $\Lip(X,d)$. Recall that for a metric space $(X,d)$ and a number $\alpha \in (0,1]$, the snowflake $d^\alpha (x,y) \= (d(x,y))^\alpha$ of $d$ is also a metric with the same topology.

\begin{theorem}  \label{t_P_dense_in_lip}
Let $T\: X \rightarrow X$ be a continuous map on a compact metric space $(X,d)$. Assume that $\sP(X) \cap \Lip(X,d)$ is dense in $\Lip(X,d)$. Then $\sP(X) \cap \lip(X,d^\beta)$ is dense in $\lip(X,d^\beta)$ for each $\beta\in (0,1)$.
\end{theorem}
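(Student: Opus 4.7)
The strategy is a straightforward two-step approximation argument that reduces the density claim in $\lip(X,d^\beta)$ to the hypothesized density in $\Lip(X,d)$. The bridge between these two function spaces is the observation that for $\beta \in (0,1)$, every Lipschitz function (with respect to $d$) is automatically a little Lipschitz function (with respect to $d^\beta$), and moreover the inclusion $\Lip(X,d) \hookrightarrow \lip(X,d^\beta)$ is continuous in the relevant norms. Indeed, if $\phi \in \Lip(X,d)$ then for any $x,y \in X$ with $x \neq y$ one has
\begin{equation*}
\frac{ |\phi(x) - \phi(y)| }{ d(x,y)^\beta } \leq \Hseminorm{d}{\phi} \cdot d(x,y)^{1-\beta},
\end{equation*}
which both tends to $0$ as $d(x,y) \to 0$ (confirming $\phi \in \lip(X,d^\beta)$) and yields the bound $\Hseminorm{d^\beta}{\phi} \leq \Hseminorm{d}{\phi} \cdot \diam_d(X)^{1-\beta}$. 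Combined with the trivial bound on uniform norms, this gives
\begin{equation*}
\Hnorm{d^\beta}{\phi}{X} \leq \bigl( 1 + \diam_d(X)^{1-\beta} \bigr) \Hnorm{d}{\phi}{X}.
\end{equation*}

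With this continuity at hand, the plan is as follows. Fix $\phi \in \lip(X,d^\beta)$ and $\varepsilon > 0$. First I would invoke Proposition~\ref{p_Lip_dense_lip_alpha} (which states that $\Lip(X,d)$ is dense in $\lip(X,d^\beta)$ when $\beta \in (0,1)$) to choose $\psi \in \Lip(X,d)$ with $\Hnorm{d^\beta}{\phi - \psi}{X} < \varepsilon / 2$. Next, by the standing hypothesis that $\sP(X) \cap \Lip(X,d)$ is dense in $\Lip(X,d)$, I would choose $\eta \in \sP(X) \cap \Lip(X,d)$ with
\begin{equation*}
\Hnorm{d}{\psi - \eta}{X} < \frac{\varepsilon}{2 \bigl( 1 + \diam_d(X)^{1-\beta} \bigr) }.
\end{equation*}
By the displayed inequality of the previous paragraph applied to $\psi - \eta \in \Lip(X,d)$, this forces $\Hnorm{d^\beta}{\psi - \eta}{X} < \varepsilon / 2$. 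The triangle inequality then yields $\Hnorm{d^\beta}{\phi - \eta}{X} < \varepsilon$.

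It remains only to verify that $\eta \in \sP(X) \cap \lip(X,d^\beta)$. The membership in $\sP(X)$ is preserved by construction, and $\eta \in \lip(X,d^\beta)$ follows from the continuous inclusion $\Lip(X,d) \subseteq \lip(X,d^\beta)$ noted at the outset. Since $\phi \in \lip(X,d^\beta)$ and $\varepsilon > 0$ were arbitrary, this proves the density claim.

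There is essentially no substantive obstacle in this argument: once one recognises the correct continuous inclusion between the two Banach spaces of regularity, both approximation steps are provided directly by results already in hand (Proposition~\ref{p_Lip_dense_lip_alpha} and the hypothesis). The only point requiring care is to state the norm comparison quantitatively so that the $\varepsilon/2$ tolerance chosen in the $\Lip(X,d)$ step actually translates into an $\varepsilon/2$ tolerance in the $\lip(X,d^\beta)$ norm; this is handled uniformly by the factor $1 + \diam_d(X)^{1-\beta}$, which is finite thanks to the compactness of $X$.
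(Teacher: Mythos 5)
Your argument is correct and is essentially the paper's proof: both rest on the continuous inclusion $\Lip(X,d)\hookrightarrow\lip(X,d^\beta)$ (via $d(x,y)\leq d(x,y)^\beta\diam_d(X)^{1-\beta}$) together with Proposition~\ref{p_Lip_dense_lip_alpha}, the paper merely phrasing the same two-step approximation through the closure identity $\cl_{d^\beta}(\sF)=\cl_{d^\beta}(\cl_d(\sF))$ rather than an explicit $\varepsilon/2$ argument.
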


\begin{proof}
Fix an arbitrary $\beta \in (0,1)$. By Theorem~\ref{t_lock} it suffices to show that $\lock(X, d^\beta  )$ is dense in $\sP(X  ) \cap \lip(X, d^\beta  )$.

Note that $\Lip(X, d) \subseteq \Lip  (X, d^\beta  )$ and $d(x,y) \leq d^\beta (x,y) \diam_{d}(X)^{1-\beta}$ for all $x,\,y\in X$. Then it is straightforward to check that
\begin{equation} \label{e_Pf_t_Density_Lattes_lip_closures_sigma}
\cl_{d^\beta} ( \sF )  = \cl_{d^\beta} ( \cl_{d} ( \sF ) )
\end{equation}
for each subset $\sF$ of $\Lip(X, d)$. Thus, by (\ref{e_Pf_t_Density_Lattes_lip_closures_sigma}), the density of $\sP (X) \cap \Lip( X, d )$ in $\Lip( X, d)$, Proposition~\ref{p_lip_closed}, and Proposition~\ref{p_Lip_dense_lip_alpha},
\begin{equation*}
\cl_{d^\beta} ( \sP( X ) \cap \Lip( X, d ) ) 
= \cl_{d^\beta} ( \cl_{d} ( \sP( X )  \cap \Lip( X, d ) )  ) 
= \cl_{d^\beta} ( \Lip( X, d ) ) 
= \lip ( X, d^\beta ).
\end{equation*}
Since $\Lip(X, d) \subseteq \lip (X, d^\beta )$ and $\lip (X, d^\beta )$ is closed in $\Lip (X, d^\beta )$ (by Proposition~\ref{p_Lip_dense_lip_alpha} and Proposition~\ref{p_lip_closed}), we get that $\cl_{d^\beta} ( \sP( X )  \cap \lip ( X, d^\beta ) ) = \lip ( X, d^\beta )$.
\end{proof}

We are now ready to establish the remaining results from Section~\ref{sct_Introduction}.

Theorem~\ref{t_Density_Thurston}~(ii) and Theorem~\ref{t_Density_Lattes}~(ii) now follows from Theorem~\ref{t_P_dense_in_lip} together with Theorem~\ref{t_Density_Thurston}~(i) and Theorem~\ref{t_Density_Lattes}~(i), respectively.

\begin{proof}[Proof of Theorem~\ref{t_Density_Rational_little}]
Let $d$ be a visual metric on $\wh\C$ for $f$.

By \cite[Lemma~18.10]{BM17}, the identity map between $(\wh\C, \sigma)$ and $(\wh\C, d)$ is a quasisymmetry, and is thus bi-H\"older (see Remark~\ref{r_ChordalVisualQSEquiv}). Hence, there exist numbers $C_6> 0$, $C_7>0$, and $0 < \gamma < \eta < 1$ such that for all $x,\,y\in \wh\C$,
\begin{equation}     \label{e_Pf_t_density_Rational_lip_visual_vs_chordal}
\sigma (x, y) \leq C_{6} d(x,y)^\eta \leq C_7 \sigma(x,y)^\gamma
\end{equation}
and
\begin{equation}     \label{e_Pf_t_density_Rational_lip_Lip_inclusions}
\Lip( \wh\C, \sigma ) \subseteq \Lip ( \wh\C, d^\eta ) \subseteq \Lip ( \wh\C, \sigma^\gamma ).
\end{equation}

Fix an arbitrary $\beta \in (0,\gamma)$. 

By Theorem~\ref{t_lock} it suffices to show that $\lock(\wh\C, \sigma^\beta  )$ is dense in $\sP(\wh\C  ) \cap \lip(\wh\C, \sigma^\beta  )$.

By (\ref{e_Pf_t_density_Rational_lip_visual_vs_chordal}) and (\ref{e_Pf_t_density_Rational_lip_Lip_inclusions}) it is straightforward to check that
\begin{equation}  \label{e_Pf_t_density_Rational_lip_closures}
\cl_{d^\eta} ( \sF )  \subseteq \cl_{\sigma^\gamma} (  \sF ) 
\end{equation}
for each subset $\sF$ of $\Lip(\wh\C, d^\eta)$. Similarly, it follows from $\Lip(\wh\C, \sigma^\gamma) \subseteq \Lip (\wh\C, \sigma^\beta )$ and $\sigma^\gamma(x,y) \leq \sigma^\beta (x,y) \diam_{\sigma}(\wh\C)^{\gamma-\beta}$ for all $x,\,y\in \wh\C$ that 
\begin{equation} \label{e_Pf_t_density_Rational_lip_closures_sigma}
\cl_{\sigma^\beta} ( \sF )  = \cl_{\sigma^\beta} ( \cl_{\sigma^\gamma} ( \sF ) )
\end{equation}
for each subset $\sF$ of $\Lip(\wh\C, \sigma^\gamma)$.

It follows from Proposition~\ref{p_lip_closed}, Proposition~\ref{p_Lip_dense_lip_alpha}, and (\ref{e_Pf_t_density_Rational_lip_Lip_inclusions}) that
\begin{equation}  \label{e_Pf_t_density_Rational_lip_closures_d}
\cl_{\sigma^\beta} ( \Lip ( \wh\C, d^\eta ) )  = \lip ( \wh\C, \sigma^\beta ). 
\end{equation}

Then by (\ref{e_Pf_t_density_Rational_lip_closures_d}), the density of $\sP( \wh\C )  \cap \Lip( \wh\C, d^\eta )$ in $\Lip( \wh\C, d^\eta)$, (\ref{e_Pf_t_density_Rational_lip_closures}), (\ref{e_Pf_t_density_Rational_lip_closures_sigma}), (\ref{e_Pf_t_density_Rational_lip_Lip_inclusions}), Proposition~\ref{p_Lip_dense_lip_alpha}, and Proposition~\ref{p_lip_closed}, we have
\begin{align*}
\lip ( \wh\C, \sigma^\beta )
& = \cl_{\sigma^\beta} ( \Lip ( \wh\C, d^\eta ) ) \\
&= \cl_{\sigma^\beta} ( \cl_{d^\eta} ( \sP( \wh\C )  \cap \Lip ( \wh\C, d^\eta ) ) ) \\
&\subseteq  \cl_{\sigma^\beta} ( \cl_{\sigma^\gamma } ( \sP( \wh\C )  \cap \Lip ( \wh\C, d^\eta ) ) )  \\
& = \cl_{\sigma^\beta} (  \sP( \wh\C )  \cap \Lip ( \wh\C, d^\eta ) )  \\
&\subseteq \cl_{\sigma^\beta} (  \sP( \wh\C )  \cap \Lip ( \wh\C, \sigma^\gamma ) )   \\
&\subseteq \cl_{\sigma^\beta} (  \sP( \wh\C )  \cap \lip ( \wh\C, \sigma^\beta ) ) \\
&\subseteq \lip ( \wh\C, \sigma^\beta ).  
\end{align*}
Therefore, $\cl_{\sigma^\beta} (  \sP( \wh\C )  \cap \lip ( \wh\C, \sigma^\beta ) )  = \lip ( \wh\C, \sigma^\beta )$.
\end{proof}

\begin{proof}[Proof of Theorem~\ref{t_Distance_expanding}]
It follows from Theorems~\ref{t_lock} and~\ref{t_P_dense_in_lip} that the conclusion of this theorem follows from the property that $\sP(X) \cap \Lip(X,d)$ is dense in $\Lip(X,d)$ for each specific system. Indeed, the last property for distance expanding maps follows from \cite[Theorem~A]{Co16} and \cite{BZ15}. Moreover, for Axiom A attractors and Anosov diffeomorphisms, this property is established in \cite[Theorem~2.1]{HLMXZ19}.
\end{proof}

\begin{proof}[Proof of Theorem~\ref{t_gound_states}]
Let $f$ be a postcritically-finite rational map without periodic critical points or an expanding Thurston map on $X$, where $(X,\rho)$ is the Riemann sphere equipped with the chordal metric $(\wh\C, \sigma)$ in the former case or the topological $2$-sphere equipped with a visual metric $(S^2,d)$ in the latter case. Consider $\alpha \in (0,1]$ and $\phi\in \Lip(X,\rho^{\alpha})$. Since there exists a unique equilibrium state $\mu_{t\phi}$ for each $t\in\R$ (\cite[Theorem~1.1 and Corollary~1.2]{Li18}), we get from the definition of equilibrium states and (\ref{e_pressure}) that
\begin{equation} \label{e_equilibrium_state}
h_{\mu_{t\phi}}(f) + t \int\! \phi \,\mathrm{d}\mu_{t\phi} 
\geq h_{\nu}(f) + t \int\! \phi \,\mathrm{d}\nu
\end{equation}
for all $\nu \in \MMM(X,f)$. Note the topological entropy $h_{\operatorname{top}}(f) = \log(\deg f) < +\infty$, where $\deg f$ is the topological degree of $f$ (\cite{HP09,BM17}). If $\mu = \lim\limits_{i\to+\infty} \mu_{t_i\phi}$ in the weak$^*$ topology for some sequence $t_i$, $i\in\N$, of real numbers tending to $+\infty$, we divide both sides of (\ref{e_equilibrium_state}) by $t=t_i$ and get that
\begin{equation*}
\int\! \phi \,\mathrm{d}\mu
=\lim_{i\to+\infty}\int\! \phi \,\mathrm{d}\mu_{t_i\phi}
\geq \int\! \phi \,\mathrm{d}\nu.
\end{equation*}
Hence, $\mu\in\Mmax(f, \, \phi)$. The theorem now follows from the above together with Theorems~\ref{t_Density_Rational_little},~\ref{t_Density_Thurston}, and~\ref{t_Density_Lattes}.
\end{proof}

\appendix

\section{Illustrations of combinatorial structures of expanding Thurston maps}   \label{apx_illustrations}

The rest of the paper is completely independent of this Appendix, so readers fluent in expanding Thurston maps can safely skip it. The purpose of this appendix is to provide illustrations of the combinatorial objects we heavily rely on in this paper to readers who are less familiar with these maps so that they can gain some intuition. We give three examples of expanding Thurston maps $f$ and illustrate the associated combinatorial structures from the cell decompositions $\DD^n(f,\CC)$, $n\in\N_0$, of $S^2$ assuming the existence of $f$-invariant Jordan curves $\CC\subseteq S^2$ containing postcritical points $\post f$. We emphasize here that such a Jordan curve may not always exist for certain expanding Thurston maps (see \cite[Example~15.11]{BM17}), but these combinatorial constructions are always available for all Jordan curves containing postcritical points $\post f$. We impose the additional assumption to simplify the illustrations.

\begin{figure}[h]
 \begin{subfigure}{0.3\textwidth}
    \center
    \begin{overpic}
    [width=4.5cm, 
    tics=20]{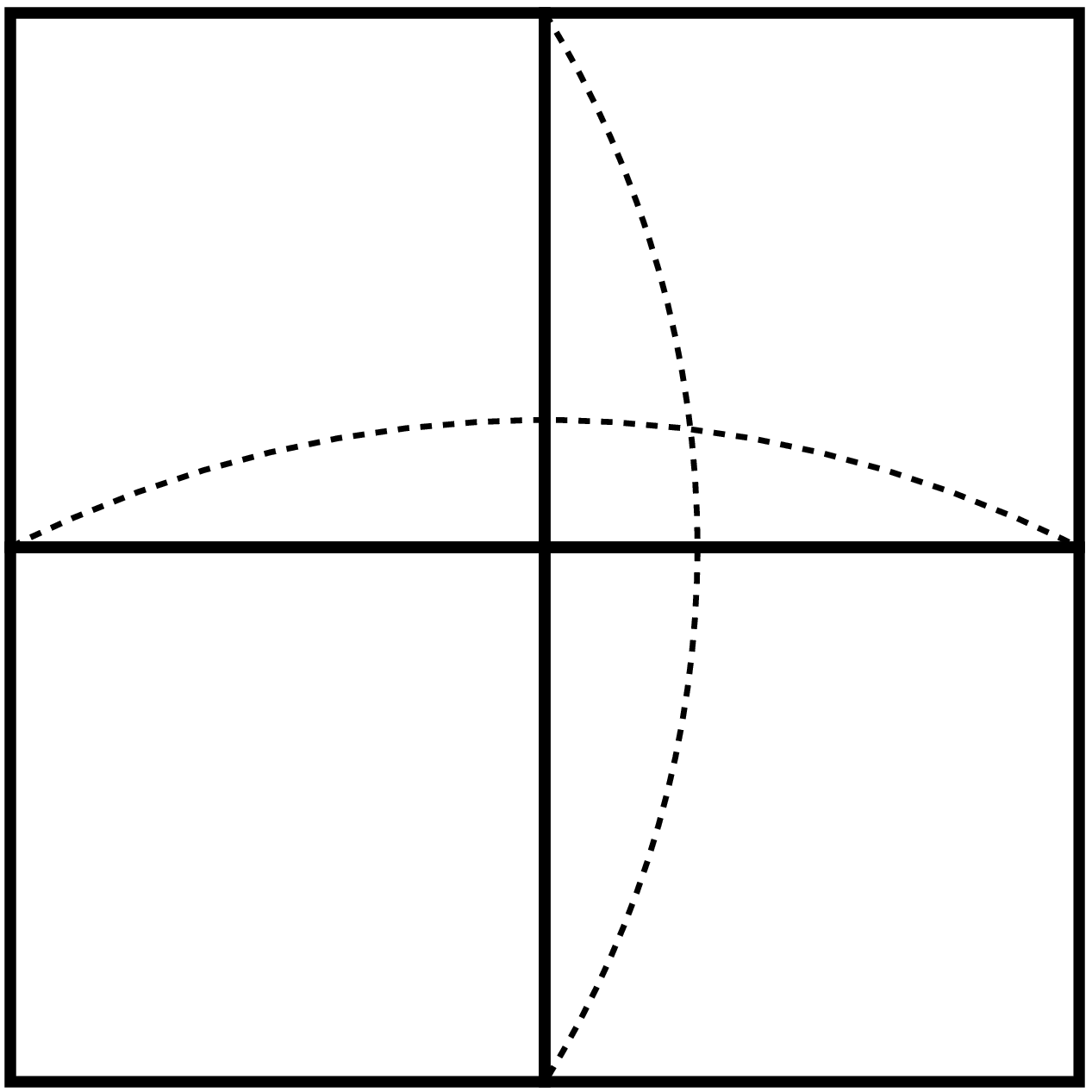}
    \put(4,4){$A$}    
    \put(114,4){$B$}
    \put(114,114){$C$}
    \put(4,114){$D$}  
    \put(52,4){$E$}  
    \put(52,52){$F$}  
    \put(4,52){$G$} 
    \put(114,52){$H$}  
    \put(54,114){$I$}      
    \put(82,80){$J$}  
    \end{overpic}
    \caption{}
    \label{f_a1}
 \end{subfigure}
 \begin{subfigure}{0.3\textwidth}   
    \center 
    \begin{overpic}
    [width=4.5cm, 
    tics=20]{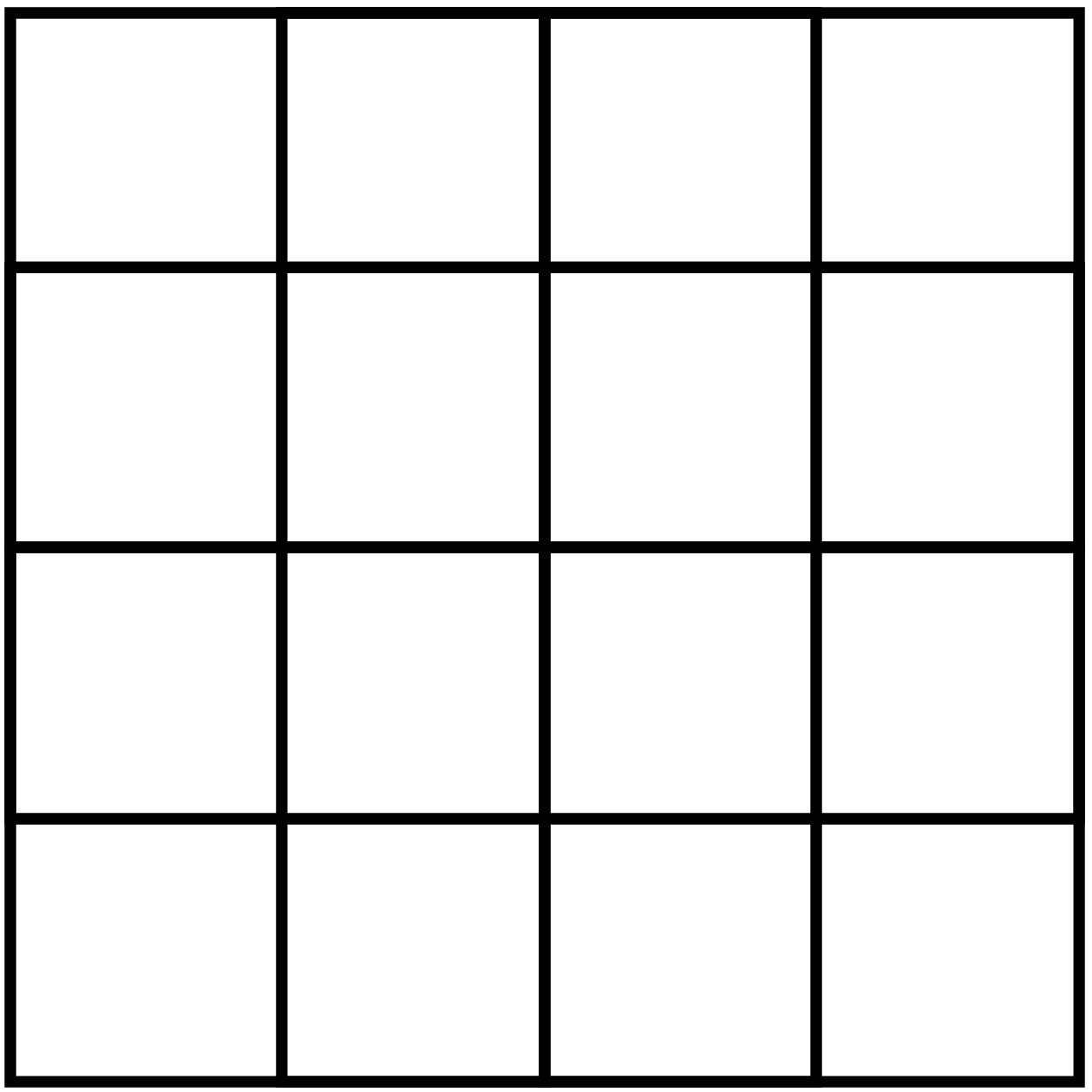}
    \end{overpic}
    \caption{}
    \label{f_a2}  
 \end{subfigure}  
 \begin{subfigure}{0.3\textwidth}   
    \center 
    \begin{overpic}
    [width=4.5cm, 
    tics=20]{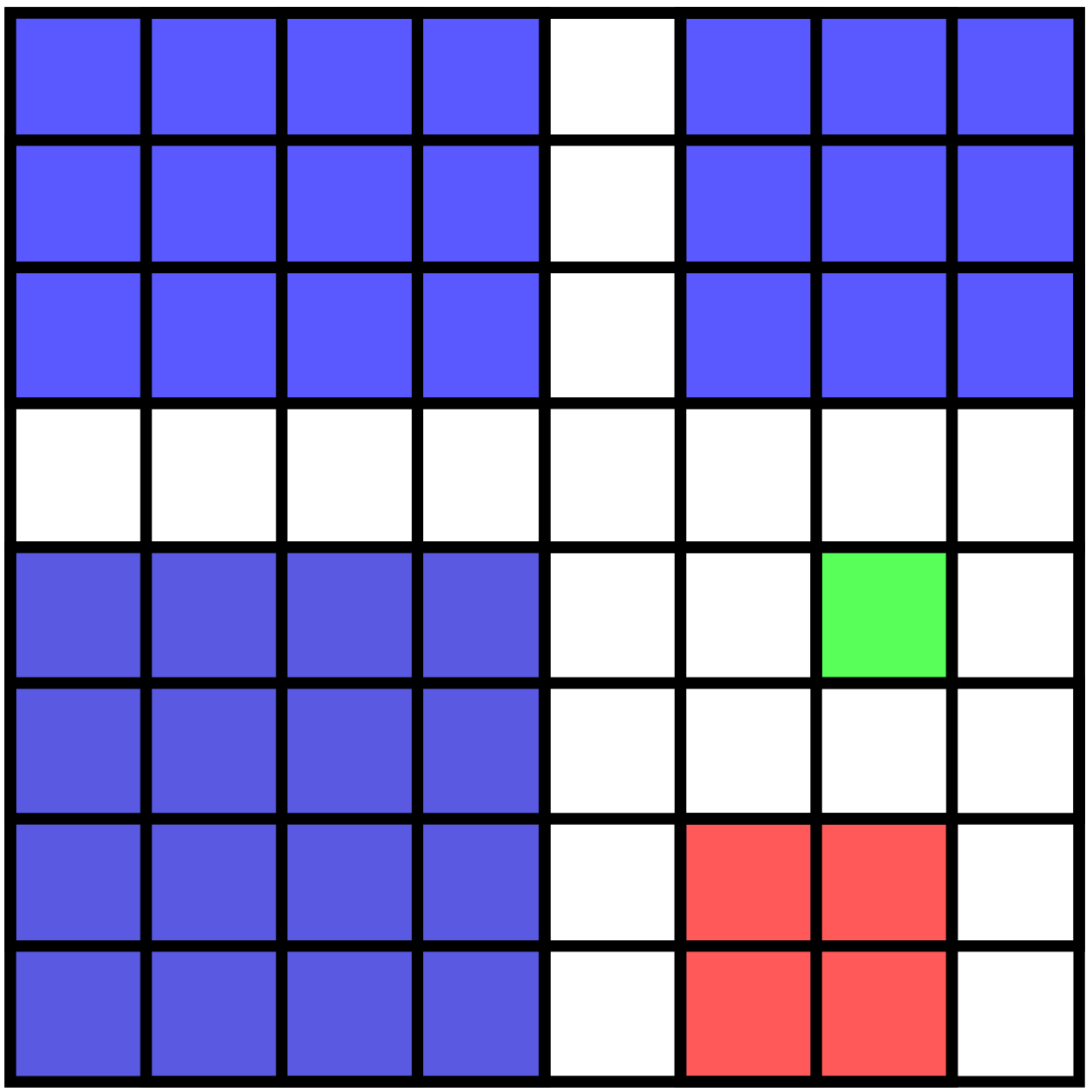}
    \put(34,36){$y$}    
    \put(97,99){$z$}
    \put(36,102){$x$}
    \put(98,20){$v$}
    \put(31,32){\color{yellow}${}_\bullet$}           
    \put(31,105){\color{yellow}${}_\bullet$} 
    \put(94,17){\color{yellow}${}_\bullet$} 
    \put(103,103){\color{yellow}${}_\bullet$}             
    \end{overpic}
    \caption{}
    \label{f_a3}  
 \end{subfigure}  
    \caption{Example 1: $\DD^n(f,\CC)$, $n = 1$ to $3$.}
    \label{f_Lattes}  
\end{figure}

The examples we choose are related to rational maps in different senses, in terms of topological conjugacy and a weaker notion called \emph{Thurston equivalence}. We refer the reader to \cite[Definition~2.4]{BM17} for the precise definition of the latter notion.

With the permission of the authors, Figures~\ref{f_Barycentric_Iterations} and~\ref{f_flap} are adapted from Figures~12.7 and~1.3 of their book \cite{BM17}, respectively.

\smallskip

\emph{Example 1.} The first example is an expanding Thurston map topologically conjugate to a Latt\`es map $g$ given by
\begin{equation*}
g(z) = 4 \frac{z(1-z^2)}{(1+z^2)^2} \qquad \text{for } z \in \widehat{\C}.
\end{equation*}
We consider the topological $2$-sphere $S^2$ as a homeomorphic copy of the \emph{pillow} $\Pillow$ obtained from two copies of the unit square $[0,1]^2 \subseteq \R^2 \cong \C$ glued together along their boundaries (Figure~\ref{f_Lattes}.(A)). Denote $A\= (0,0)$, $B\=(1,0)$, etc.\ as marked on Figure~\ref{f_Lattes}.(A). In particular, $J$ denotes the center of the square on the back side of the pillow. We divide each square into four small squares of equal size. We define a continuous and orientation-preserving map $f\: \Pillow \rightarrow \Pillow$ by requiring that $f$ maps each of the eight small squares by first enlarging it linearly by a factor of $2$ and then mapping the resulting square isometrically (with respect to the Euclidean metric) to the big square $ABCD$ either on the front or the back side of $\Pillow$ (depending on the small square). The map $f$ is uniquely determined if we specify that the small square $AEFG$ is mapped to the front side $ABCD$ of $\Pillow$ with $A\mapsto A$, $E\mapsto B$, $F\mapsto C$, and $G\mapsto D$. In this case, for example, the small square $EBHF$ is mapped to the big square $BADC$ on the back side of $\Pillow$ with $B\mapsto A$ and $H\mapsto D$. Then $f$ is a Thurston map with the set of critical points $\crit f = \{F,\,J,\,E,\,H,\,I,\,G\}$ and the set of postcritical points $\post f = \{A,\,B,\,C,\,D\}$. It is not difficult to check that $f$ is an expanding Thurston map. See \cite[Section~1.1]{BM17} for more details.

We denote the boundary of the big squares by $\CC$. The Jordan curve $\CC$ is $f$-invariant and contains $\post f$. All the line segments (including the dotted ones on the back of $\Pillow$) represent the set $f^{-1}(\CC)$. The map $f$ and Jordan curve $\CC$ induce the cellular Markov partitions $\DD^n(f,\CC)$, $n\in\N_0$, as recalled in Subsection~\ref{subsct_ThurstonMap}. The set $\X^0(f,\CC)$ of $0$-tiles (resp.\ $\X^1(f,\CC)$ of $1$-tiles) consists of the two big (resp.\ eight small) squares on the front side and back side of $\Pillow$. The set $\V^0(f,\CC)$ of $0$-vertices consists of $A$, $B$, $C$, and $D$. Points $A$ through $J$ form the set $\V^1(f,\CC)$ of $1$-vertices.

The line segments in Figure~\ref{f_Lattes}.(B) illustrate the set $f^{-2}(\CC)$ on either side of $\Pillow$. The $16$ small squares are $2$-tiles. There are a total of $32$ $2$-tiles in $\Pillow$.

\begin{figure}[h]
    \center
    \begin{overpic}
    [width=6cm, 
    tics=20]{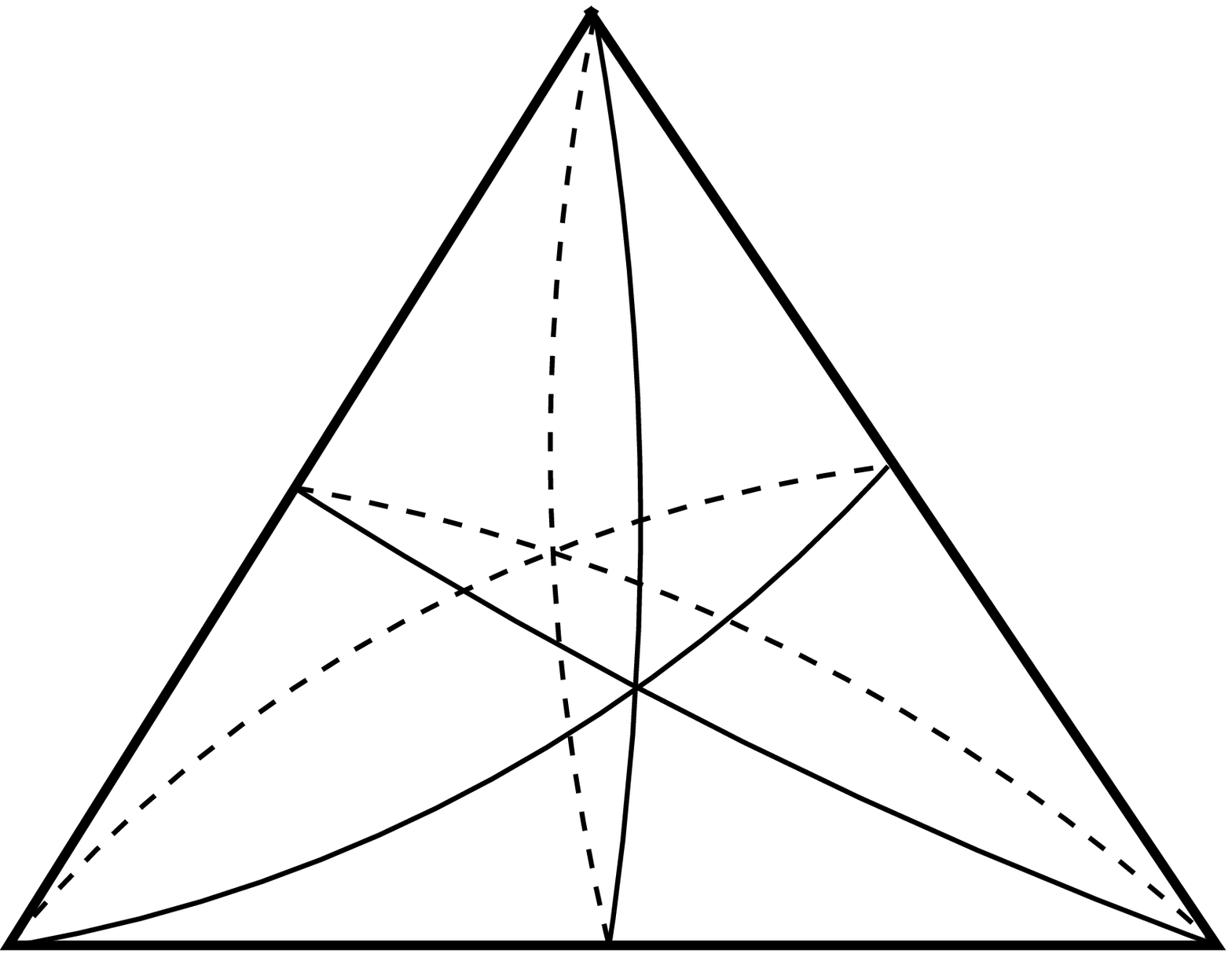}
    \end{overpic}
    \caption{Example 2.}
    \label{f_Barycentric}
\end{figure}

\begin{figure}
 \begin{subfigure}{0.45\textwidth}   
    \center 
    \begin{overpic}
    [width=7cm, 
    tics=20]{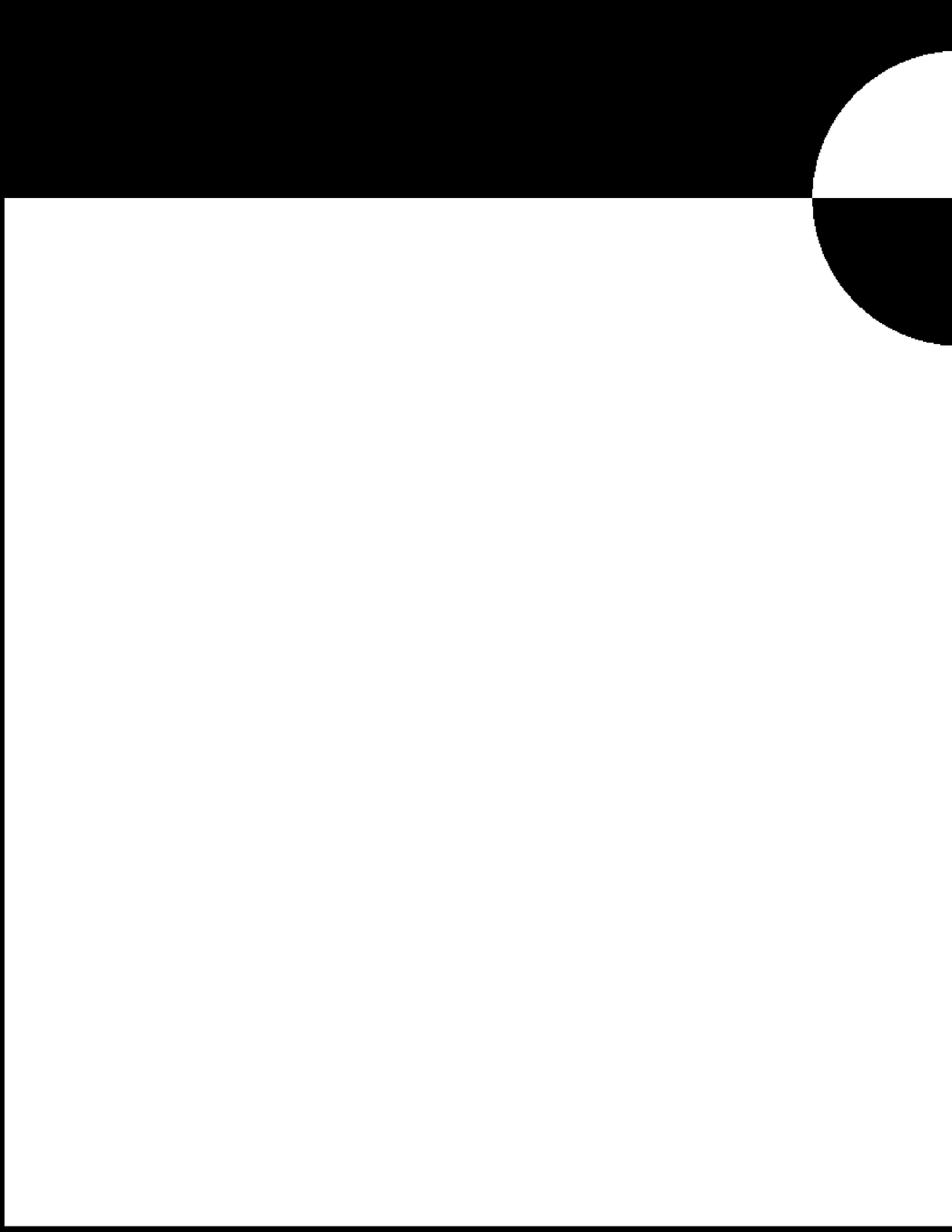}    
    \end{overpic}
 \end{subfigure}
 \begin{subfigure}{0.45\textwidth}   
    \center 
    \begin{overpic}
    [width=7cm, 
    tics=20]{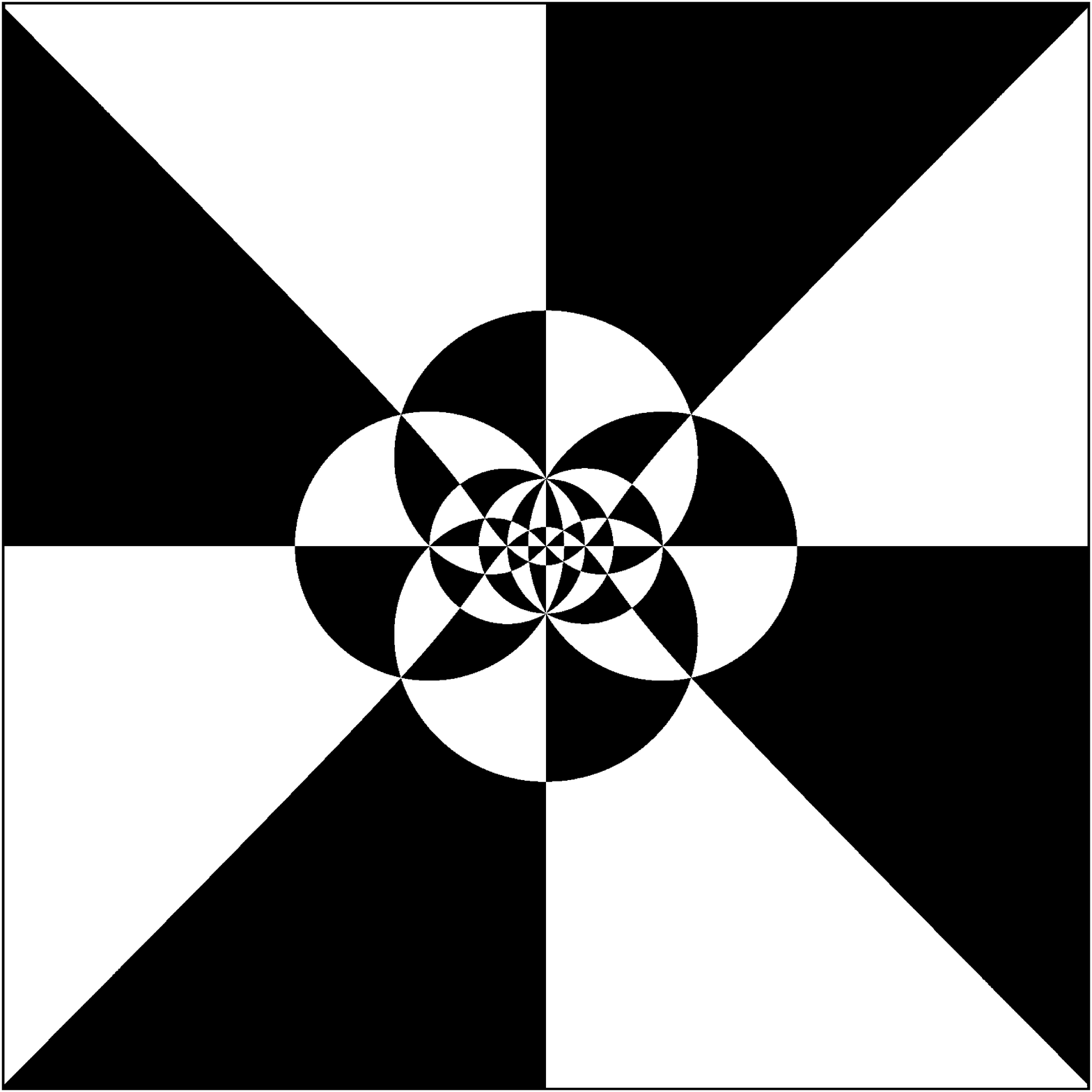}    
    \end{overpic}
 \end{subfigure} 
 \begin{subfigure}{0.45\textwidth}   
    \center 
    \begin{overpic}
    [width=7cm, 
    tics=20]{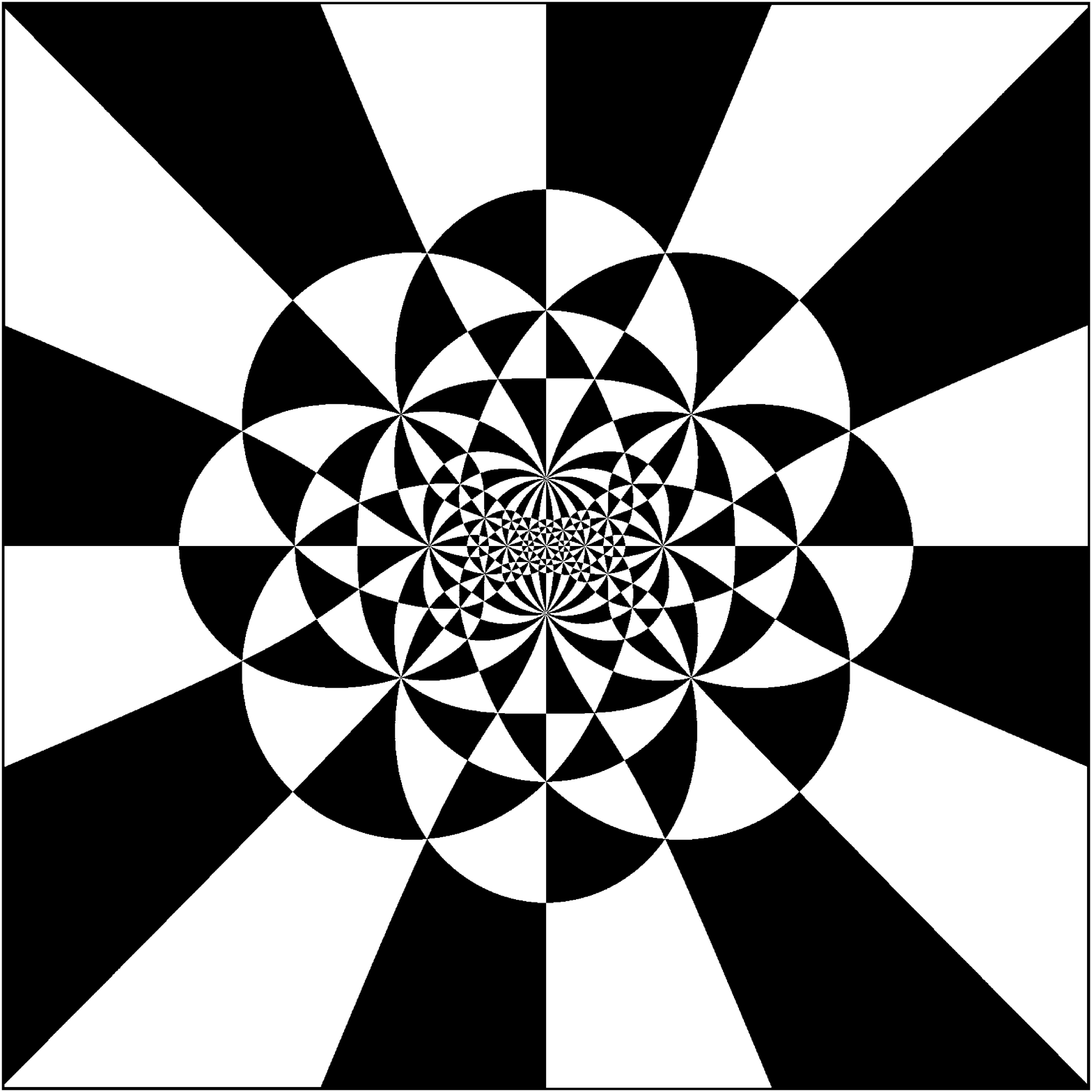}    
    \end{overpic}
 \end{subfigure}  
 \begin{subfigure}{0.45\textwidth}   
    \center 
    \begin{overpic}
    [width=7cm, 
    tics=20]{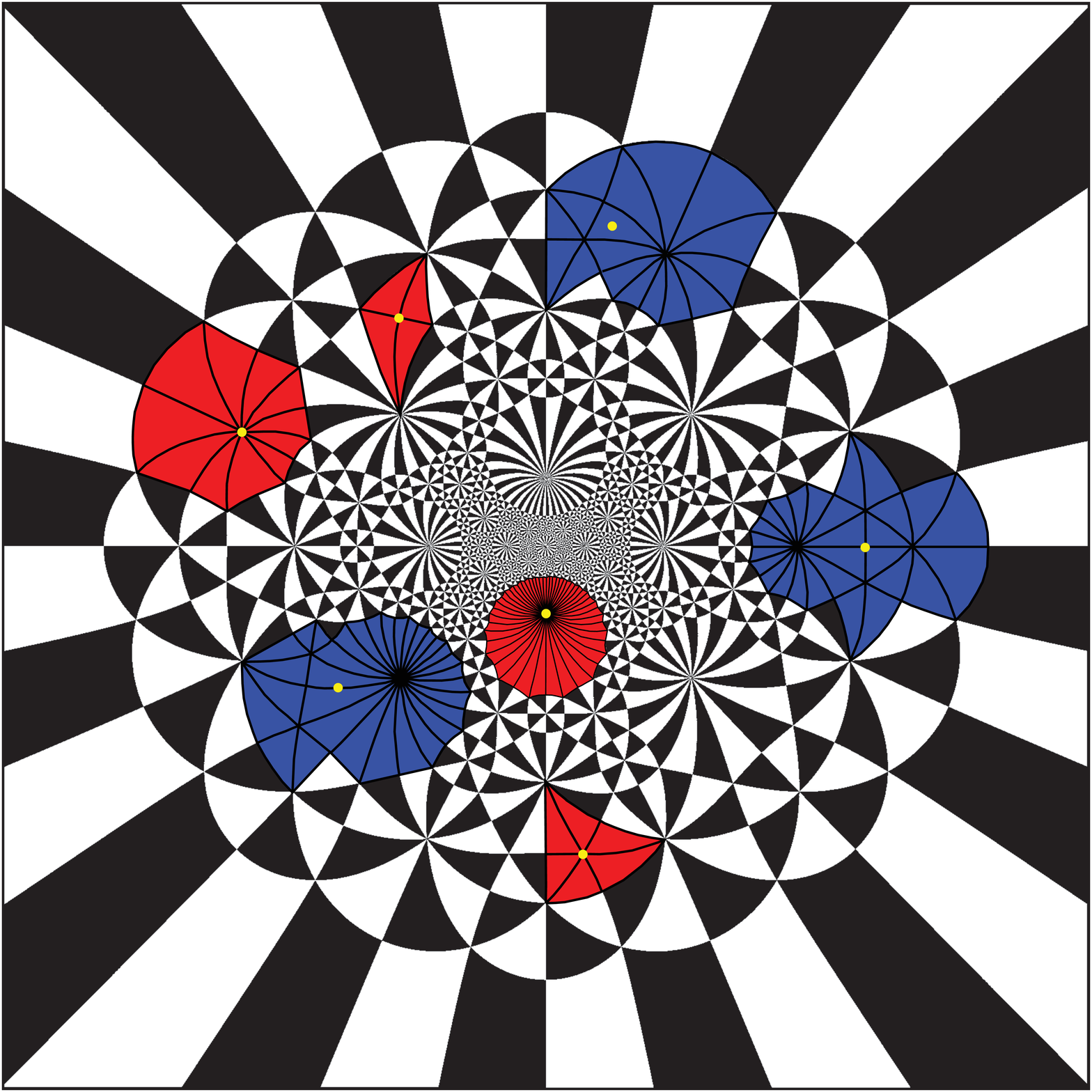}    
    \end{overpic}
 \end{subfigure}  
 \begin{subfigure}{0.45\textwidth}   
    \center 
    \begin{overpic}
    [width=7cm, 
    tics=20]{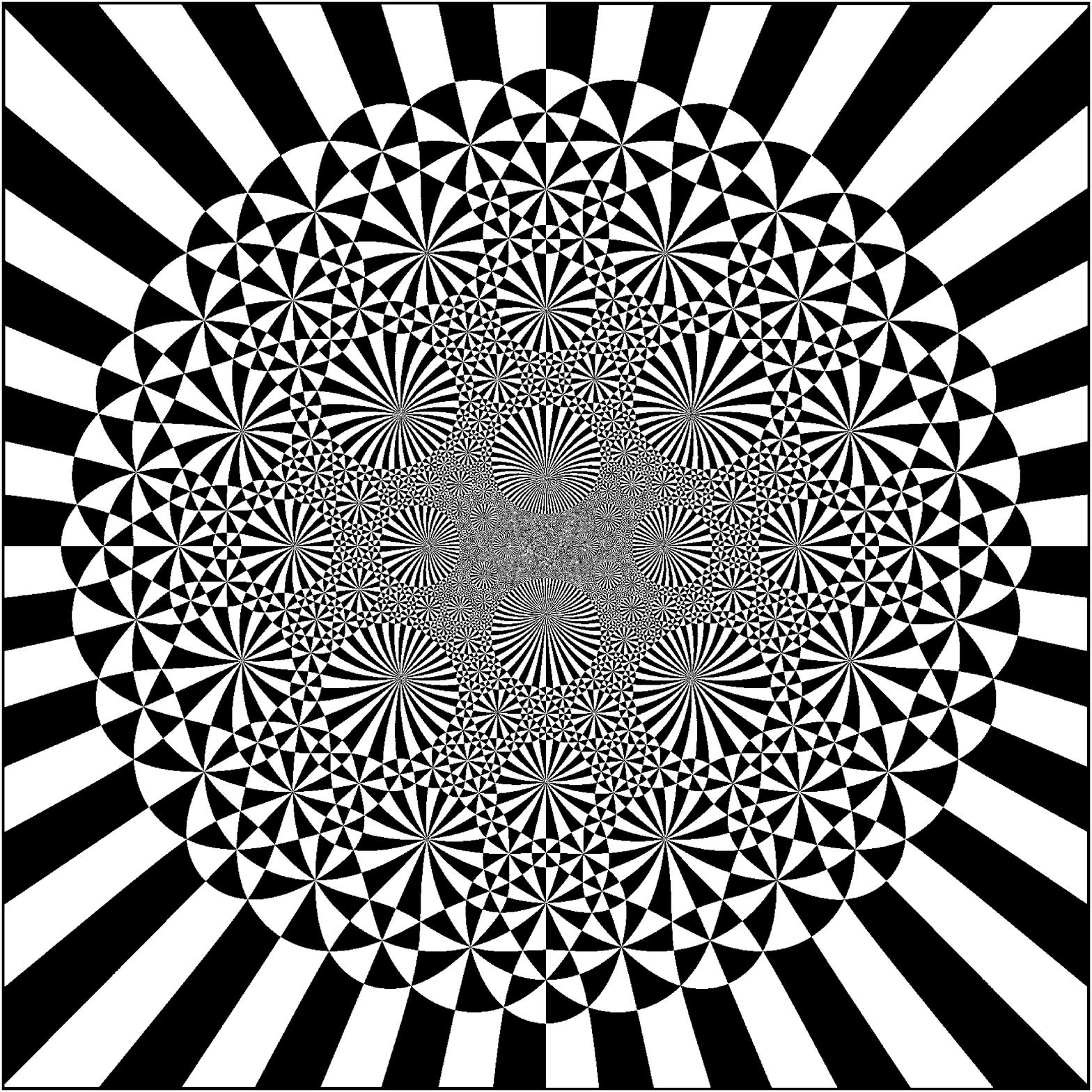}    
    \end{overpic}
 \end{subfigure}  
 \begin{subfigure}{0.45\textwidth}   
    \center 
    \begin{overpic}
    [width=7cm, 
    tics=20]{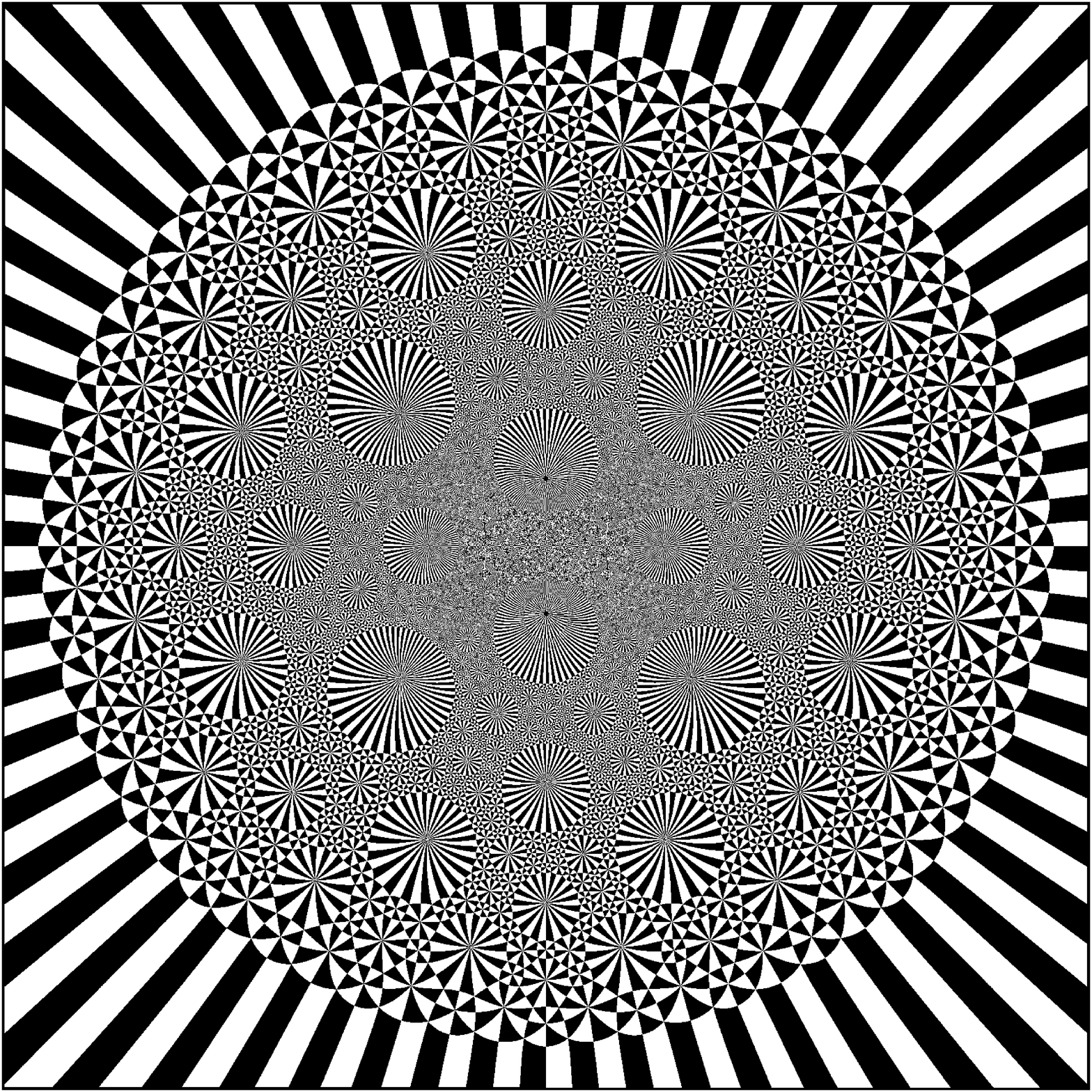}    
    \end{overpic}
 \end{subfigure}    
    \caption{Example 2: $\DD^n(f,\CC)$, $n = 1$ to $6$, adapted from \cite{BM17}.}
    \label{f_Barycentric_Iterations}  
\end{figure}

In Figure~\ref{f_Lattes}.(C), the line segments illustrate the set $f^{-3}(\CC)$ on either side of $\Pillow$. Each small square, such as the green one, is a $3$-tile. Recall the notions of flowers and bouquets from (\ref{e_Def_Flower}) and (\ref{e_Def_U^n}). The red $3$-tiles (minus the boundary of their union) form a $3$-flower $W^n(v)$ of a $3$-vertex $v \in \V^3(f,\CC)$. Three $3$-bouquets $U^3(x)$, $U^3(y)$, and $U^3(z)$ are illustrated in blue. Their shapes depend on the (combinatorial) locations of $x$, $y$, and $z$.

\smallskip

\begin{figure}[h]
    \center
    \begin{overpic}
    [width=14cm, 
    tics=20]{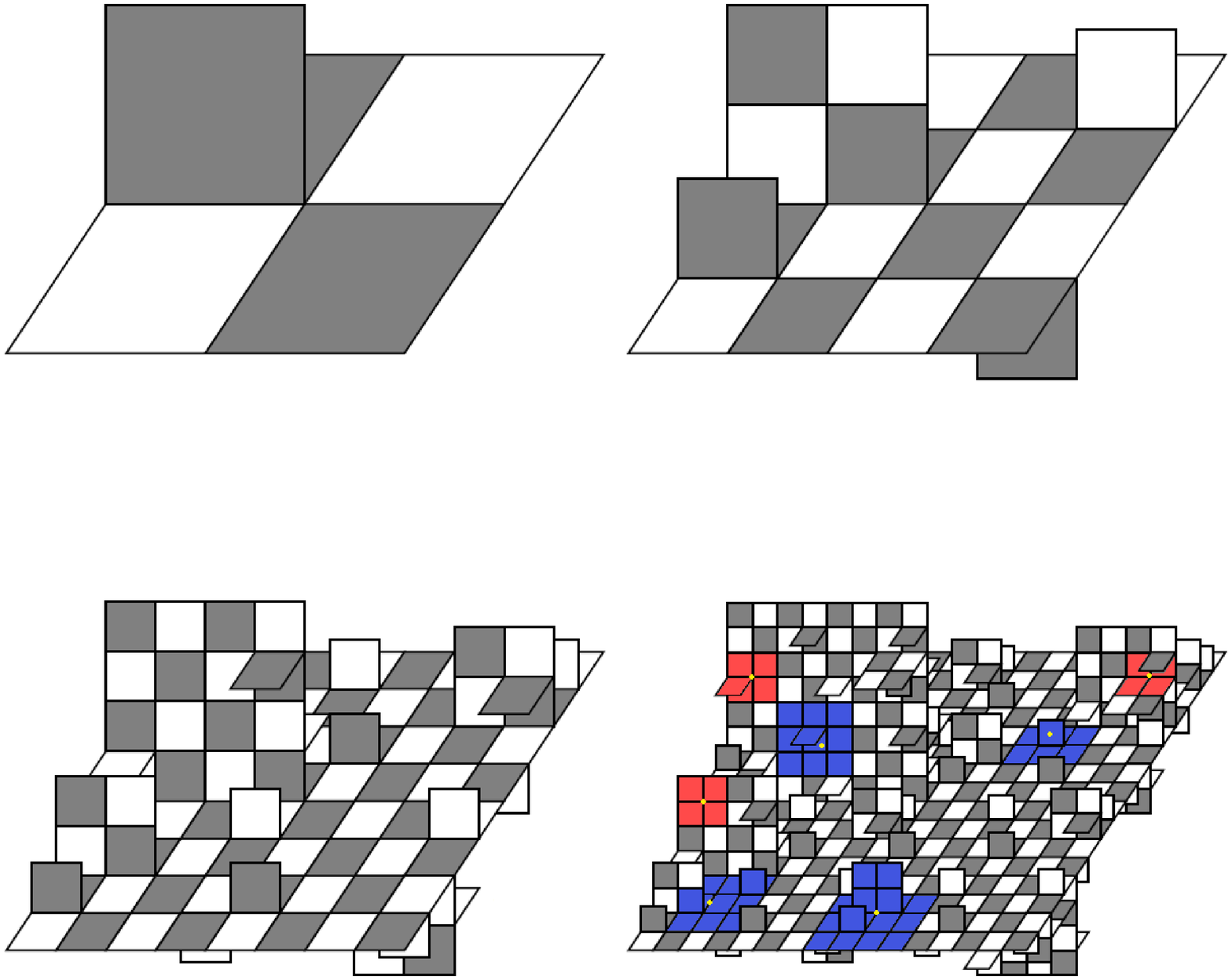}
    \end{overpic}
    \caption{Example 3: $\DD^n(f,\CC)$, $n = 1$ to $3$, adapted from \cite{BM17}.}
    \label{f_flap}
\end{figure}

\emph{Example 2.} The next example is obtained in a similar way. We take two copies of an equilateral triangle of equal size in $\R^2$ and glue them together along their boundaries to form a topological $2$-sphere $S^2$ (Figure~\ref{f_Barycentric}). The bisectors divide each equilateral triangle into six small triangles. We define a continuous and orientation-preserving map $f\: S^2 \rightarrow S^2$ by requiring that $f$ maps each of the twelve triangles by first enlarging it linearly to the shape of the original equilateral triangles and then mapping the resulting triangle isometrically (with respect to the Euclidean metric) to the equilateral triangle either on the front or the back side of $S^2$ (depending on the small triangle). The map $f$ is uniquely determined if we specify how one of the twelve small triangles is mapped. A map $f$ constructed in this way is an expanding Thurston map with three postcritical points. It is not \emph{obstructed}, and in fact one such $f$ is \emph{Thurston equivalent} to a rational map $g$ given by
\begin{equation*}
g(z) = 1 - \frac{ 54 (z^2 - 1)^2 }{ (z^2 + 3)^3} \qquad \text{for } z \in \widehat{\C}.
\end{equation*} 
However, $f$ cannot be conjugate to a rational map since it has a periodic critical point (compare \cite[Proposition~2.3]{BM17}). The boundary of the two original equilateral triangles is an $f$-forward-invariant Jordan curve $\CC$ containing $\post f$. See \cite[Examples~12.21 and~18.11]{BM17} for more details.

In Figure~\ref{f_Barycentric_Iterations}, the cell decompostions $\DD^n(f,\CC)$, $n= 1$ to $6$, are illustrated. The tiles are colored black or white, some $4$-flowers red, and some $4$-bouquets blue.

\smallskip

\emph{Example 3.} The last example is obtained from cutting, gluing, and pasting a big pillow $\Pillow$ and a small pillow $\Pillow_0$. Both pillows are marked the same way as the pillow in Figure~\ref{f_Lattes}.(A). We cut the edge $GF$ of $\Pillow$ open to create an opening of $\Pillow$. The boundary of the opening is homeomorphic to $S^1$ consisting of two arcs $e_1$ and $e_2$, both of which start at $G$ and end at $F$. The small pillow $\Pillow_0$ is $1/2$ the scale of $\Pillow$. We cut the edge $AB$ of $\Pillow_0$ open in a similar way to form two arcs $e'_1$ and $e'_2$, both of which start at $A$ and end at $B$. Note that $e_1$, $e_2$, $e'_1$, $e'_2$ are of the same length. We glue $e'_1$ to $e_1$ and $e'_2$ to $e_2$. The resulting surface is a topological $2$-sphere $S^2$. It contains $10$ small squares. We can define an expanding Thurston map $f\: S^2 \rightarrow S^2$ in a similar way as the two examples above, which maps each of the ten small squares to either the front or the back side of $S^2$. The Jordan curve $\CC$ on the original big pillow $\Pillow$ is the boundary of the front side and the back side of the resulting $S^2$. It remains invariant under $f$ and contains $\post f$. The cell decompostions $\DD^n(f,\CC)$, $n= 1,\,2,\,3,\,4$, of such a map $f$ are illustrated in Figure~\ref{f_flap}. The tiles are colored white or grey, some $4$-flowers red, and some $4$-bouquets blue. This map is obstructed, i.e., not Thurston equivalent to a rational map (\cite[Theorem~1.2]{BHI21}). In particular, it is not topologically conjugate to a rational map.

\subsection*{Acknowledgments} 
We want to extend our gratitude to Mario~Bonk and Daniel~Meyer for the permission to include in Appendix~\ref{apx_illustrations} two figures adapted from the original ones in their book \cite{BM17}. We also want to thank Yinying~Huang for pointing out several typos, and Tim Mesikepp as well as the anonymous referee(s) for valuable comments. Y.~Zhang is grateful to the Beijing International Center for Mathematical Research (BICMR), Peking University, for the hospitality during his visits when part of this work was done. Z.~Li was partially supported by NSFC Nos.~12101017, 12090010, 12090015, 12471083, and BJNSF No.~1214021. Y.~Zhang was partially supported by NSFC Nos.~12161141002, 11871262, 12271432, and Guangdong Basic and Applied Basic Research Foundation No.~2024A1515010974.

\subsection*{Data availability} 
Data sharing is not applicable to this article as no new data was created or analyzed in this study.

\section*{Declarations}
{\bf Competing Interests:} The authors have no conflicts of interest to disclose.

\end{document}